\newtheorem{thm}{Theorem}
\newtheorem{prop}[thm]{Proposition}
\newtheorem{rem}[thm]{Remark}
\newtheorem{lem}[thm]{Lemma}
\numberwithin{equation}{section}
\begin{document}
\title{A full divergence-free of high order virtual finite element method to approximation of stationary inductionless magnetohydrodynamic equations on polygonal meshes}
\author{Xianghai Zhou  \and Haiyan Su }

\titlerunning{virtual element approximate for stationary IMHD}
\authorrunning{X. Zhou, H. Su }

\institute{Xianghai Zhou\at
	 College of Mathematics and System Sciences, Xinjiang University, Urumqi 830046, China. \\
	\email{zxhmath166@163.com}
	\and
	 Haiyan Su \at
	 Corresponding author. College of Mathematics and System Sciences, Xinjiang University, Urumqi 830046, China. \\
	\email{shymath@126.com}
}


\maketitle
\begin{abstract}
In this present paper we consider a full divergence-free of high order virtual finite element algorithm to approximate the stationary inductionless magnetohydrodynamic model on polygonal meshes. More precisely, we choice  appropriate virtual spaces and necessary degrees of freedom for velocity and current density to guarantee that their final discrete formats are both pointwise divergence-free. Moreover, we hope to achieve higher approximation accuracy at higher ``polynomial" orders $k_1\geq2,k_2\geq1$, while the full divergence-free property has always been satisfied. And then we processed rigorous error analysis to show that the proposed method is stable and convergent. Several numerical tests are presented, confirming the theoretical predictions.
\keywords{Inductionless magnetohydrodynamic equations, Exactly divergence-free,  Hight-order virtual finite element, Error estimates}

\end{abstract}


\section{Introduction\label{sec:Intro}}
The inductionless magnetohydrodynamic (MHD) model is used to describe a type of problem in which the magnetic field generated by the current in liquid metal can be ignored compared to the external magnetic field $\boldsymbol{B}$. We often use this set of equations to simulate several industrial processes, such as MHD generators, test blanket modules (TBMs) in nuclear fusion reactors, etc, refer to \cite{planas2011approximation,ni2007currentI,ni2007currentII}.
\\
Let $\mathrm{\Omega}\subset\mathbb{R}^2$ is bounded domain with Lipschitz-continuous boundary $\mathrm{\Gamma_{\Omega}}$, we consider the stationary inductionless MHD model with homogeneous boundary conditions as follows: 
\begin{subequations}
	\begin{align}
-\nu\Delta\boldsymbol{u}+\boldsymbol{u}\cdot\nabla\boldsymbol{u}+\nabla
p-S_{c}\boldsymbol{J}\times\boldsymbol{B}=\boldsymbol{f}&\quad\text{in}\; \mathrm{\Omega},\label{model:IMHD-u}\\
\nabla\cdot\boldsymbol{u}=0&\quad\text{in}\;\mathrm{\Omega},\label{model:IMHD-p}\\
\boldsymbol{J}+\nabla\phi-\boldsymbol{u}\times\boldsymbol{B}=\boldsymbol{g}&\quad\text{in} \;\mathrm{\Omega},\label{model:IMHD-J}\\
\nabla\cdot\boldsymbol{J}=0&\quad\text{in}\;\mathrm{\Omega},\label{model:IMHD-phi}\\
\boldsymbol{u}=\boldsymbol{0},\quad
\boldsymbol{J}\cdot \boldsymbol{n}=0&\quad\text{on}\;\mathrm{\Gamma_\Omega}.\label{model:IMHD-bound}
	\end{align}
\label{model:IMHD}
\end{subequations}
The unknowns of the model (\ref{model:IMHD}) are the velocity $\boldsymbol{u}$, the pressure $p$, the current density $\boldsymbol{J}$ and the electric potential $\phi$. Functions $\boldsymbol{f}$ and $\boldsymbol{g}$ denote external force terms, the $\boldsymbol{B}$ is assumed to be a given applied magnetic field. The parameter $\nu$ denotes viscosity coefficient, and the $S_{c}$ denotes interaction parameter. For a domain $\mathrm{\Omega}$, $\boldsymbol{n}_{\mathrm{\Omega}}$ or simply $\boldsymbol{n}$ will be the outward normal unit vector.

In the past few decades, numerical methods for this model has been extensively studied and analyzed. Especially the research on satisfying the divergence-free condition, namely the law of charge-conservation or mass-conservation, has attracted the attention of researchers. As mentioned in \cite{greif2010mixed,ni2007currentI,ni2007currentII}, it plays a crucial role in MHD systems and their numerical simulations.
Therefore, we have only collected some recent research results on satisfying conservation laws here.  Ni et al. proposed a class of consistent and charge-conservative finite volume schemes for the inductionless MHD equations on both structured and unstructured meshes in \cite{ni2007currentI,ni2007currentII,ni2012consistent}.
Li et al. proposed a charge-conservative finite element scheme, namely the $(\boldsymbol{J},\phi)$ via a mixed formulation in $\boldsymbol{H}_{0}(\textup{div},\mathrm{\Omega})\times L_{0}^2(\mathrm{\Omega})$ to obtain an exactly divergence-free current density , to approximate inductionless MHD problems and developed a robust solver for discrete problems in \cite{2019AI,li2019charge}. Subsequently, Zhang et al. successively considered three types of iteration methods based on charge-conservative finite element method and a full divergence-free scheme for inductionless MHD model in \cite{zhang2021coupled,zhang2022fully}.
Under the influence of this charge conservation scheme, many efforts have been made \cite{long2019analysis,long2023error,zhang2022decoupled,zhou2022two} and this method has been extended to many different models \cite{wang2023decoupled,dong2023electric,long2022convergence}.

In terms of mesh partition alone, traditional finite element methods are limited to standard triangular and quadrilateral (tetrahedral/hexahedral) meshes. Therefore, in the past decade, utilizing numerical methods on general polygonal (polyhedral) meshes to solve complex problems has received increasing attention. Brief representative of techniques using polygonal/polyhedral meshes is listed in  \cite{da2017divergence}. In particular, the virtual finite element method (VEM) has attracted our attention as a generalization of the finite element method in arbitrary element-geometry. The main difference between VEM and FEM is that in the construction and calculation of VEM schemes, we do not need to know the explicit basis of the discrete space, which makes all linear functionals of discrete variational formulas constructed using appropriate polynomial projections. Of course, these polynomial projections can always be calculated by carefully selecting degrees of freedom. At present, the virtual finite element method has developed relatively mature and has a wide range of applications. Here, we will only list some representative works \cite{da2018family,manzini2022conforming,da2017divergence,vacca2018h,dassi2020bricks,caceres2017mixed,beirao2014h,ahmad2013equivalent,
brezzi2014basic,2017Virtual,da2018virtual,beirao2023virtual}.  Reference \cite{beirao2014hitchhiker,herrera2023numerical} helps with specific implementation.

The purpose of this article is to construct a virtual finite element format that is full divergence-free and not limited to low order. Therefore, in the application of numerous VEM methods, the virtual element format of two types of problems has attracted our attention. Firstly, in \cite{da2017divergence,da2018virtual}, a type of VEM format designed using the Stokes and Navier-Stokes problems as examples. This virtual finite element space, carefully designed for velocity, can lead to precise and pointwise divergence-free discrete velocity under appropriate pressure space. Afterwards, some samples from other fluid flow models were listed as \cite{beirao2019stokes,beirao2020stokes,dassi2020bricks,
vacca2018h,da2018family,beirao2023virtual,chen2019divergence,frerichs2022divergence}.
Secondly, in \cite{brezzi2014basic,beirao2014h,beirao2016mixed}, De-Rahm complexes in the virtual element setting have been introduced. Later, taking the Brikman problem in \cite{caceres2017mixed} as an example, a VEM format was designed, which can obtain accurate and pointwise divergence-free velocity in spaces $\boldsymbol{H}_{0}(\textup{div},\mathrm{\Omega})$ and $L^2_{0}(\mathrm{\Omega})$. There are some similar applications, such as \cite{caceres2017mixed,gatica2018mixed,da2022virtualMax} . Overall, we note that the characteristics of the model considered in this article, the first  format is very suitable for VEM approximation of velocity and pressure, while the second format is also very suitable for VEM approximation of current density and potential. In this way, we will have the potential to design a full divergence-free high-order virtual element format that approximates the  inductionless MHD equations.

The rest of this paper: Some notations and weak formulation of the model are given, in section \ref{sec:Pre}. A virtual finite element scheme for solving the problem numerically is discussed, in section \ref{sec:virtual}. The well-posed and convergence of discrete problem are investigated. In particular, the optimal convergence rates of energy norm for velocity approximation and  $L^2$-norm of pressure,current density and electric potential approximations are derived, in section \ref{sec:Analysis}. Some numerical tests are designed to verify the validity of the virtual element scheme and the correctness of the theory, in section \ref{sec:Num}. Finally, some conclusion are obtained, in section \ref{sec:conclu}.
\section{Perliminary\label{sec:Pre}}

\subsection{Some notations and spaces\label{subsec:notes}}
Some necessary definitions and notations of Sobolev spaces with norm or semi-norm are briefly introduced, see \cite{adams2003sobolev} for details. Let $\mathrm{G}\subset\mathbb{R}^{2}$ is an open, bounded and connected subset, the $\mathrm{\Gamma}_{\mathrm{G}}$ denotes boundary of any domain $\mathrm{G}$. We realize the Sobolev space $W^{k,p}(\mathrm{G})$ with norm $\|\cdot\|_{k,p,\mathrm{G}}$ or semi-norm $|\cdot|_{k,p,\mathrm{G}}$, where $k$ is a nonnegative integer and $p\geq1$. For $p=2$, the $W^{k,2}(\mathrm{G})$ and the corresponding norm $\|\cdot\|_{k,2,\mathrm{G}}$ are customarily written as $H^{k}(\mathrm{G})$ and $\|\cdot\|_{k,\mathrm{G}}$, respectively. Moreover, if $k=0$, then we denote as
\begin{equation}
\nonumber
L^p(\mathrm{G}):=W^{0,p}(\mathrm{G}):=\{v:\|v\|_{0,p,\mathrm{G}}<\infty\},\quad L^{p}_{0}(\mathrm{G})=\Big\{v\in L^p(\mathrm{G}):\int_{\mathrm{G}} vd\boldsymbol{x}=0\Big\}.
\end{equation}
Particularly, we equip with inner product and norm
\begin{equation}
\nonumber
(u,v)_\mathrm{G}=\int_{G}uvd\boldsymbol{x},\quad\|v\|_{0,\mathrm{G}}:=\|v\|_{0,2,\mathrm{G}}=(v,v)^{\frac{1}{2}}_{\mathrm{G}}\quad \forall u,v\in L^{2}(\mathrm{G}).
\end{equation}

In this paper, bold letters are generally used to denote vector-valued fields such as $\boldsymbol{H}^{1}(\mathrm{G}):=[H^{1}(\mathrm{G})]^{2}$ and some function spaces are defined as
\begin{align*}
&\boldsymbol{V}(\mathrm{G}):=\boldsymbol{H}^{1}_{0}(\mathrm{G})=\Big\{\boldsymbol{v}\in\boldsymbol{H}^{1}(\mathrm{G}):\boldsymbol{v}=\boldsymbol{0} \;on\;\mathrm{\Gamma}_{\mathrm{G}}\Big\},\quad Q(\mathrm{G}):=L^{2}_{0}(\mathrm{G}),\\
&\boldsymbol{S}(\mathrm{G}):=\boldsymbol{H}_{0}(\textup{div},\mathrm{G})=\Big\{\boldsymbol{v}\in\boldsymbol{H}(\textup{div},\mathrm{G}):\boldsymbol{v}\cdot\boldsymbol{n}=\boldsymbol{0} \;on\;\mathrm{\Gamma}_{\mathrm{G}}\Big\},\quad \Psi(\mathrm{G}):=L^{2}_{0}(\mathrm{G}).
\end{align*}

Using standard VEM notations, for a geometric object $\mathrm{G}$ and $k\in\mathbb{N}$, we will denote by $\boldsymbol{x}_{\mathrm{G}}$ and $h_{\mathrm{G}}$ it barycenter and diameter, respectively. Let $\mathcal{P}_{k}(\mathrm{G})$, $[\mathcal{P}_{k}(\mathrm{G})]^{2}$ and $[\mathcal{P}_{k}(\mathrm{G})]^{2\times2}$ be the spaces of scalar, vectorial and matrix polynomials defined on $\mathrm{G}$ of degree less than or equal on $k$. We denote by $\mathcal{M}_{k}(\mathrm{G})$ the set of polynomials
\begin{equation}
\nonumber
\mathcal{M}_{k}(\mathrm{G}):=\bigcup_{i=0}^{k}\mathcal{M}_{i}^{*}(\mathrm{G})\subseteq \mathcal{P}_{k}(\mathrm{G}),\quad\mathcal{M}_{i}^{*}(\mathrm{G}):=\Big\{\Big(\frac{\boldsymbol{x}-\boldsymbol{x}_{\mathrm{G}}}{h_{\mathrm{G}}}\Big)^{\boldsymbol{s}},|\boldsymbol{s}| = i\Big\},
\end{equation}
where $\boldsymbol{s}\in \mathbb{N}^2$ is a multi-index with $|\boldsymbol{s}| = s_{1}+s_{2}$ and $\boldsymbol{x}^{\boldsymbol{s}}:=x_{1}^{s_{1}}x_{2}^{s_2}$.
It is clear that the set $\mathcal{M}_{k}(\mathrm{G})$ is a basis for $\mathcal{P}_{k}(\mathrm{G})$. Furthermore, we define the spaces
\begin{itemize}
\setstretch{1.25}
\item[$\bullet$]$\mathcal{B}_{k}(\mathrm{\Gamma}_\mathrm{G}):=\big\{\omega\in C^{0}(\mathrm{\Gamma}_\mathrm{G}):\;\omega|_{e}\in\mathcal{P}_{k}(e)\quad\forall e\subset\mathrm{\Gamma}_\mathrm{G}\big\}$;
\item[$\bullet$]$\boldsymbol{\mathcal{G}}_{k}(\mathrm{G}):=\nabla\big(\mathcal{P}_{k+1}(\mathrm{G})\big)\subseteq[\mathcal{P}_{k}(\mathrm{G})]^{2}$;
\item[$\bullet$]$\boldsymbol{\mathcal{G}}_{k}(\mathrm{G})^{\perp}\subseteq[\mathcal{P}_{k}(\mathrm{G})]^{2}$ the $L^2$-orthogonal complement to $\boldsymbol{\mathcal{G}}_{k}(\mathrm{G})$;
\item[$\bullet$]  $\boldsymbol{\mathcal{G}}_{k_2}(\mathrm{G})^{\perp}/\boldsymbol{\mathcal{G}}_{k_1}(\mathrm{G})^{\perp}$  denotes the polynomials in $\boldsymbol{\mathcal{G}}_{k_2}(\mathrm{G})^{\perp}$ that are $L^2$-orthogonal to all polynomials of $\boldsymbol{\mathcal{G}}_{k_1}(\mathrm{G})^{\perp}$
     \quad for any $k_{1}\leq k_{2}$ $(k_{1},k_{2}\in\mathbb{N})$;
\item[$\bullet$]
     $\hat{\mathcal{P}}_{n\backslash m}(\mathrm{G}):=\text{span}(\mathcal{M}_{i}^{*}(\mathrm{G}):m+1\leq i\leq n)$.
\end{itemize}

For the sake of convenience, we define some norm schemes and introduce the Poincar$\acute{e}$ type inequalities  as
\begin{align*}
&\|\boldsymbol{K}\|_{\textup{div},\mathrm{G}}=(\|\boldsymbol{K}\|_{0,\mathrm{G}}^2+\|\nabla\cdot\boldsymbol{K}\|_{0,\mathrm{G}}^2)^{\frac{1}{2}}\quad \forall\boldsymbol{K}\in\boldsymbol{S}(\mathrm{G}),\\
&\|(\boldsymbol{v},\boldsymbol{K})\|_{1,\mathrm{G}}=(|\boldsymbol{u}|_{1,\mathrm{G}}^2+\|\boldsymbol{K}\|_{\textup{div},\mathrm{G}}^2)^{\frac{1}{2}}\quad \forall (\boldsymbol{v},\boldsymbol{K})\in \boldsymbol{V}(\mathrm{G})\times\boldsymbol{S}(\mathrm{G}),\\
&\|\boldsymbol{f}\|_{-1,\mathrm{G}}=\sup_{\boldsymbol{v}\in\boldsymbol{V}}\frac{\left\langle\boldsymbol{f},\boldsymbol{v}\right\rangle}{|\boldsymbol{v}|_{1,\mathrm{G}}},\quad\|\mathrm{F}\|_{*}=(\|\boldsymbol{f}\|_{-1,\mathrm{G}}+S_c\|\boldsymbol{g}\|_{0,\mathrm{G}})^{\frac{1}{2}},\\
&\|\boldsymbol{v}\|_{0,6,\mathrm{G}}\leq\lambda_{1}|\boldsymbol{v}|_{1,\mathrm{G}},\quad\|\boldsymbol{v}\|_{0,4,\mathrm{G}}^{2}\leq\lambda_{2}|\boldsymbol{v}|_{1,\mathrm{G}}^{2}\quad\forall \boldsymbol{v}\in \boldsymbol{V}(\mathrm{G}),
\end{align*}
where the positive constants $\lambda_1$ and $\lambda_2$ only depend on $\mathrm{\Omega}$. Throughout this paper, the $C$ represents positive constant independent of the discretization parameters, which takes different value in different situations.

\subsection{The variational formulation of the model\label{subsec:variational}}

To begin with, we define some bilinear and trilinear schemes
\[
\begin{array}{r}
a_{v}(\boldsymbol{u},\boldsymbol{v})=\nu\left(\nabla\boldsymbol{u},\nabla\boldsymbol{v}\right)_\mathrm{\Omega},\quad
a_{K}(\boldsymbol{J},\boldsymbol{K})=S_c\left(\boldsymbol{J},\boldsymbol{K}\right)_\mathrm{\Omega},\\
b_{q}(q,\boldsymbol{v})=\left(q,\nabla\cdot\boldsymbol{v}\right)_\mathrm{\Omega},\quad b_{\psi}(\psi,\boldsymbol{J})=S_c\left(\psi,\nabla\cdot\boldsymbol{J}\right)_\mathrm{\Omega},\\ d_{K}(\boldsymbol{K},\boldsymbol{v})=S_c\left(\boldsymbol{K}\times\boldsymbol{B},\boldsymbol{v}\right)_\mathrm{\Omega},\quad
c_{v}(\boldsymbol{w},\boldsymbol{u},\boldsymbol{v})=\frac{1}{2}\left(\boldsymbol{w}\cdot\nabla\boldsymbol{u},\boldsymbol{v}\right)_{\mathrm{\Omega}}-\frac{1}{2}\left(\boldsymbol{w}\cdot\nabla\boldsymbol{v},\boldsymbol{u}\right)_{\mathrm{\Omega}},
\end{array}
\]
for any $\boldsymbol{u},\;\boldsymbol{v},\;\boldsymbol{w}\in\boldsymbol{V}(\mathrm{\Omega})$, $\boldsymbol{J},\;\boldsymbol{K}\in\boldsymbol{S}(\mathrm{\Omega})$, $q\in Q(\mathrm{\Omega})$ and $\phi\in \Psi(\mathrm{\Omega})$. The notation $\boldsymbol{V}(\mathrm{\Omega})$ is simplified to $\boldsymbol{V}$, and so on.
In addition, let us define two kernel spaces
\begin{align*}
\boldsymbol{Z}=\Big\{\boldsymbol{v}\in \boldsymbol{V}: b_{q}(q,\boldsymbol{v})=0\quad \forall q\in Q\Big\},
\quad\boldsymbol{Y}=\Big\{\boldsymbol{K}\in \boldsymbol{S}: b_{\psi}(\psi,\boldsymbol{K})=0\quad \forall \psi\in \Psi\Big\}.
\end{align*}

Next, we use the above notations to obtain a weak formulation of (\ref{model:IMHD}) as: Find $(\boldsymbol{u},p,\boldsymbol{J},\phi)\in\boldsymbol{V}\times Q\times\boldsymbol{S}\times\Psi$ such that
\begin{subequations}
	\begin{align}
a_{v}(\boldsymbol{u},\boldsymbol{v})+c_{v}(\boldsymbol{u},\boldsymbol{u},\boldsymbol{v})
-b_{q}(p,\boldsymbol{v})-d_{K}(\boldsymbol{J},\boldsymbol{v})&=(\boldsymbol{f},\boldsymbol{v})_{\mathrm{\Omega}},\label{variational:continous-u}\\
b_{q}(q,\boldsymbol{u})&=0,\label{variational:continous-p}\\
a_{K}(\boldsymbol{J},\boldsymbol{K})-b_{\psi}(\phi,\boldsymbol{K})+d_{K}(\boldsymbol{K},\boldsymbol{u})&=S_c(\boldsymbol{g},\boldsymbol{K})_{\mathrm{\Omega}},\label{variational:continous-J}\\
b_{\psi}(\psi,\boldsymbol{J})&=0,\label{variational:continous-phi}	
\end{align}
\label{variational:continous}
\end{subequations}
for all $(\boldsymbol{v},q,\boldsymbol{K},\psi)\in\boldsymbol{V}\times Q\times\boldsymbol{S}\times\Psi$. Obviously, we observe that $(\boldsymbol{u},\boldsymbol{J})\in\boldsymbol{Z}\times\boldsymbol{Y}$.

At the end of the subsection, we give some important properties and estimates.
The bilinear form $a_{v}(\cdot,\cdot)$ satisfies continuous and coercive, i.e.,
\begin{align*}
a_{v}(\boldsymbol{u},\boldsymbol{v})\leq|\boldsymbol{u}|_{1,\mathrm{\Omega}}|\boldsymbol{v}|_{1,\mathrm{\Omega}}\quad\forall \boldsymbol{u},\boldsymbol{v}\in\boldsymbol{V},\quad
a_{v}(\boldsymbol{u},\boldsymbol{u})\geq|\boldsymbol{u}|_{1,\mathrm{\Omega}}^2\quad\forall \boldsymbol{u}\in\boldsymbol{V}.
\end{align*}
Besides, the bilinear forms $b_{q}(\cdot,\cdot)$ and $b_{\psi}(\cdot,\cdot)$ satisfy inf-sup conditions:
\begin{equation}
\beta_{1}\|q\|_{0,\mathrm{\Omega}}\leq\sup_{\boldsymbol{0}\neq\boldsymbol{v}\in\boldsymbol{V}}\frac{b_{q}(q,\boldsymbol{v})}{|\boldsymbol{v}|_{1,\mathrm{\Omega}}}\quad\forall q\in Q,\quad
\beta_{2}\|\psi\|_{0,\mathrm{\Omega}}\leq\sup_{\boldsymbol{0}\neq\boldsymbol{K}\in\boldsymbol{S}}\frac{b_{\psi}(\psi,\boldsymbol{K})}{\|\boldsymbol{K}\|_{\textup{div},\mathrm{\Omega}}}\quad\forall \psi\in \Psi,
\end{equation}
where $\beta_{1},\beta_{2}$ are two strictly positive constants.

The trilinear form $c_{v}$ satisfies continuous and skew-symmetric \cite{da2018virtual}:
\begin{align}
c_{v}(\boldsymbol{u},\boldsymbol{v},\boldsymbol{v})=0\quad&\forall\boldsymbol{u},\boldsymbol{v}\in\boldsymbol{V},\label{trilinear:skew-symmetric}\\
c_{v}(\boldsymbol{u},\boldsymbol{v},\boldsymbol{w})\leq \lambda_3|\boldsymbol{u}|_{1,\mathrm{\Omega}}|\boldsymbol{v}|_{1,\mathrm{\Omega}}|\boldsymbol{w}|_{1,\mathrm{\Omega}}
\quad&\forall\boldsymbol{u},\boldsymbol{v},\boldsymbol{w}\in\boldsymbol{V},\label{trilinear:continuous}
\end{align}
where $\lambda_3$ denotes a positive constant only dependent on $\mathrm{\Omega}$.

And then, these properties imply the well-posed and stability of variational formulation \ref{variational:continous} are shown in the lemma below \cite{zhang2021coupled}.
\begin{thm}
\label{lem:continue problem well-posed}
Suppose $\sigma:= \lambda_{3}\min\{\nu,S_c\}^{-2}\|\mathrm{F}\|_{*}<1$, then problem \ref{variational:continous} has a unique solution $(\boldsymbol{u},p,\boldsymbol{J},\phi)\in \boldsymbol{V}\times Q\times\boldsymbol{S}\times\Psi$, and satisfy the stability inequality
\begin{equation*}
\|(\boldsymbol{u},\boldsymbol{J})\|_{1,\mathrm{\Omega}}\leq \min\{\nu, S_c\}^{-1}\|\mathrm{F}\|_{*}.
\end{equation*}
\end{thm}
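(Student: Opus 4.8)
The plan is to follow the classical Banach fixed-point strategy for stationary coupled flow problems, adapted to the joint variable $(\boldsymbol{u},\boldsymbol{J})$ living in the kernel space $\boldsymbol{Z}\times\boldsymbol{Y}$. First I would reduce the saddle-point system \eqref{variational:continous} to a problem posed purely on the kernel: using \eqref{variational:continous-p} and \eqref{variational:continous-phi} together with the inf-sup conditions, any solution has $(\boldsymbol{u},\boldsymbol{J})\in\boldsymbol{Z}\times\boldsymbol{Y}$, and conversely a solution of the kernel problem can be completed to a full solution by recovering $p$ and $\phi$ from the inf-sup conditions (the pressure $p$ from the residual of \eqref{variational:continous-u} tested against $\boldsymbol{Z}^{\perp}$, and $\phi$ from the residual of \eqref{variational:continous-J}). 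So it suffices to show the kernel problem: find $(\boldsymbol{u},\boldsymbol{J})\in\boldsymbol{Z}\times\boldsymbol{Y}$ with
\[
a_{v}(\boldsymbol{u},\boldsymbol{v})+c_{v}(\boldsymbol{u},\boldsymbol{u},\boldsymbol{v})-d_{K}(\boldsymbol{J},\boldsymbol{v})=(\boldsymbol{f},\boldsymbol{v})_{\mathrm{\Omega}},\qquad
a_{K}(\boldsymbol{J},\boldsymbol{K})+d_{K}(\boldsymbol{K},\boldsymbol{u})=S_c(\boldsymbol{g},\boldsymbol{K})_{\mathrm{\Omega}},
\]
has a unique solution.

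Next I would set up the fixed-point map. Given $\boldsymbol{w}\in\boldsymbol{Z}$, define $(\boldsymbol{u},\boldsymbol{J})=\mathcal{T}(\boldsymbol{w})$ as the solution of the \emph{linear} problem obtained by freezing the convection term as $c_{v}(\boldsymbol{w},\boldsymbol{u},\boldsymbol{v})$. This linear problem is well-posed: the combined bilinear form $\mathcal{A}((\boldsymbol{u},\boldsymbol{J}),(\boldsymbol{v},\boldsymbol{K})):=a_{v}(\boldsymbol{u},\boldsymbol{v})+c_{v}(\boldsymbol{w},\boldsymbol{u},\boldsymbol{v})-d_{K}(\boldsymbol{J},\boldsymbol{v})+a_{K}(\boldsymbol{J},\boldsymbol{K})+d_{K}(\boldsymbol{K},\boldsymbol{u})$ is continuous on $\boldsymbol{Z}\times\boldsymbol{Y}$, and on the diagonal the antisymmetric coupling terms $-d_{K}(\boldsymbol{J},\boldsymbol{u})+d_{K}(\boldsymbol{J},\boldsymbol{u})$ cancel while $c_{v}(\boldsymbol{w},\boldsymbol{u},\boldsymbol{u})=0$ by \eqref{trilinear:skew-symmetric}, so $\mathcal{A}((\boldsymbol{u},\boldsymbol{J}),(\boldsymbol{u},\boldsymbol{J}))=\nu|\boldsymbol{u}|_{1,\mathrm{\Omega}}^2+S_c\|\boldsymbol{J}\|_{0,\mathrm{\Omega}}^2\geq\min\{\nu,S_c\}\|(\boldsymbol{u},\boldsymbol{J})\|_{1,\mathrm{\Omega}}^2$ — here $\|\boldsymbol{J}\|_{\textup{div},\mathrm{\Omega}}=\|\boldsymbol{J}\|_{0,\mathrm{\Omega}}$ on $\boldsymbol{Y}$ since $\nabla\cdot\boldsymbol{J}=0$. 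Lax–Milgram then gives existence, uniqueness, and the a priori bound $\|(\boldsymbol{u},\boldsymbol{J})\|_{1,\mathrm{\Omega}}\leq\min\{\nu,S_c\}^{-1}\|\mathrm{F}\|_{*}$ (using $\langle\boldsymbol{f},\boldsymbol{v}\rangle\leq\|\boldsymbol{f}\|_{-1,\mathrm{\Omega}}|\boldsymbol{v}|_{1,\mathrm{\Omega}}$ and $S_c(\boldsymbol{g},\boldsymbol{K})\leq S_c\|\boldsymbol{g}\|_{0,\mathrm{\Omega}}\|\boldsymbol{K}\|_{0,\mathrm{\Omega}}$), which is exactly the claimed stability estimate and shows $\mathcal{T}$ maps the closed ball of radius $\min\{\nu,S_c\}^{-1}\|\mathrm{F}\|_{*}$ in $\boldsymbol{Z}$ into itself.

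Finally I would prove $\mathcal{T}$ is a contraction on that ball. Take $\boldsymbol{w}_1,\boldsymbol{w}_2$ in the ball with images $(\boldsymbol{u}_i,\boldsymbol{J}_i)$, subtract the two systems, and test the difference equations with $(\boldsymbol{u}_1-\boldsymbol{u}_2,\boldsymbol{J}_1-\boldsymbol{J}_2)$. The coupling and the $a$-terms combine as before into $\min\{\nu,S_c\}\|(\boldsymbol{u}_1-\boldsymbol{u}_2,\boldsymbol{J}_1-\boldsymbol{J}_2)\|_{1,\mathrm{\Omega}}^2$ on the left, while the convection difference reduces to $-c_{v}(\boldsymbol{w}_1-\boldsymbol{w}_2,\boldsymbol{u}_1,\boldsymbol{u}_1-\boldsymbol{u}_2)$ after using \eqref{trilinear:skew-symmetric} to kill $c_{v}(\boldsymbol{w}_2,\boldsymbol{u}_1-\boldsymbol{u}_2,\boldsymbol{u}_1-\boldsymbol{u}_2)$; bounding this with \eqref{trilinear:continuous} and the a priori bound $|\boldsymbol{u}_1|_{1,\mathrm{\Omega}}\leq\min\{\nu,S_c\}^{-1}\|\mathrm{F}\|_{*}$ yields
\[
\|(\boldsymbol{u}_1-\boldsymbol{u}_2,\boldsymbol{J}_1-\boldsymbol{J}_2)\|_{1,\mathrm{\Omega}}\leq\lambda_{3}\min\{\nu,S_c\}^{-2}\|\mathrm{F}\|_{*}\,|\boldsymbol{w}_1-\boldsymbol{w}_2|_{1,\mathrm{\Omega}}=\sigma\,|\boldsymbol{w}_1-\boldsymbol{w}_2|_{1,\mathrm{\Omega}},
\]
which is a contraction precisely because $\sigma<1$. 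Banach's fixed-point theorem then gives a unique fixed point, hence a unique kernel solution, and the reconstruction of $(p,\phi)$ completes the proof. The only genuinely delicate point is the bookkeeping that keeps every constant sharp enough to produce exactly the exponent $\min\{\nu,S_c\}^{-2}$ in $\sigma$ — in particular handling the two parameters $\nu$ and $S_c$ jointly rather than treating the fluid and current blocks separately — but this is routine once the cancellation of the $d_K$ terms and the skew-symmetry of $c_v$ are exploited.
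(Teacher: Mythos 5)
Your proposal is correct, and it is essentially the argument this paper uses: the paper itself does not prove Theorem~\ref{lem:continue problem well-posed} (it cites \cite{zhang2021coupled}), but its proof of the discrete counterpart, Theorem~\ref{lem:discrete problem well-posed}, is exactly your scheme --- restriction to the kernel pair $\boldsymbol{Z}\times\boldsymbol{Y}$, a frozen-convection linear solve made coercive by the cancellation of the two $d_{K}$ terms and the skew-symmetry \eqref{trilinear:skew-symmetric}, the a priori bound to map the ball of radius $\min\{\nu,S_c\}^{-1}\|\mathrm{F}\|_{*}$ into itself, a contraction with constant $\sigma$, and recovery of $p$ and $\phi$ from the inf-sup conditions. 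The only loose end is inherited from the paper's own notation: with $\|\mathrm{F}\|_{*}=(\|\boldsymbol{f}\|_{-1,\mathrm{\Omega}}+S_c\|\boldsymbol{g}\|_{0,\mathrm{\Omega}})^{1/2}$ the coercivity test literally yields $\|(\boldsymbol{u},\boldsymbol{J})\|_{1,\mathrm{\Omega}}\leq\min\{\nu,S_c\}^{-1}\|\mathrm{F}\|_{*}^{2}$, so the square root in the definition of $\|\mathrm{F}\|_{*}$ is evidently a typo rather than a gap in your reasoning.
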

Particularly, we can formulate the equivalent kernel form of problem (\ref{variational:continous}) as:
Find $(\boldsymbol{u}, \boldsymbol{J})\in\boldsymbol{Z}\times \boldsymbol{Y}$ such that
\begin{subequations}
	\begin{align}
a_{v}(\boldsymbol{u},\boldsymbol{v})+c_{v}(\boldsymbol{u},\boldsymbol{u},\boldsymbol{v})
-d_{K}(\boldsymbol{J},\boldsymbol{v})&=(\boldsymbol{f},\boldsymbol{v})_{\mathrm{\Omega}},\label{variational:kernel-continous-u}\\
a_{K}(\boldsymbol{J},\boldsymbol{K})+d_{K}(\boldsymbol{K},\boldsymbol{u})&=S_c(\boldsymbol{g},\boldsymbol{K})_{\mathrm{\Omega}},\label{variational:kernel-continous-J}
\end{align}
\label{variational:kernel-continous}
\end{subequations}
for all $(\boldsymbol{v},\boldsymbol{K})\in\boldsymbol{Z}\times\boldsymbol{Y}$. Now, we notice that if $(\boldsymbol{u},\boldsymbol{J})\in\boldsymbol{V}\times\boldsymbol{S}$ is the solution to problem $(\ref{variational:continous})$, then it is also the solution to problem $(\ref{variational:kernel-continous})$ (see the definitions of two kernel spaces $\boldsymbol{Z}$ and $\boldsymbol{Y}$).

\section{The VEM approximation of the model\label{sec:virtual}}

\subsection{Mesh and VEM spaces\label{subsec:space}}
At the beginning of this subsection, we briefly describe the grid tools required for the virtual element technique, discrete Spaces, and some basic notations, refer to \cite{manzini2022conforming,beirao2013basic} for more details. Let's partition domain $\mathrm{\Omega}$ into a sequence $\mathcal{T}_{h}$ consisting of general non-overlapping polygonal element $\mathrm{E}$ with
\begin{equation*}
 h:=\sup_{\mathrm{E}\in\mathcal{T}_{h}}h_\mathrm{E}.
\end{equation*}
Let's further suppose that for any element $\mathrm{E}\in\mathcal{T}_{h}$ carry out mesh regularity conditions:
\begin{itemize}
\setstretch{1.25}
\item[$\bullet$] the shortest edge of polygonal element $\mathrm{E}$ is $h_e\geq\alpha_{e} h_\mathrm{E}$;
\item[$\bullet$]$\mathrm{E}$ is star-shaped with respect to a ball of radius $\geq\alpha_{s} h_\mathrm{E}$.
\end{itemize}
where $\alpha_{s}$, $\alpha_{e}$ represent two positive constants. Despite the above conditions not too restrictive in many practical cases, can be further relaxed, as studied in \cite{2017Virtual}.

Next, we define some finite dimensional spaces that approximate the inductionless MHD unknowns.

Firstly, for $k_{1}\geq2$, approximate space of the velocity field $\boldsymbol{u}$ \cite{vacca2018h,dassi2020bricks}:

\begin{align}
\label{Finite space:V_h^k(E)}
\boldsymbol{V}_{h}^{k_1}(\mathrm{E}):=
\Big\{\boldsymbol{v}\in\boldsymbol{U}_{h}^{k_1}(\mathrm{E}):\mathrm{s.t.}\;
(\boldsymbol{v}-\mathrm{\Pi}_{k_1}^{\nabla,\mathrm{E}}\boldsymbol{v},\boldsymbol{g}_{k_1}^{\perp})_{\mathrm{E}}=0\quad \textup{for}\;\textup{all}\;\boldsymbol{g}_{k_1}^{\perp}\in
\boldsymbol{\mathcal{G}}_{k_1}^{\perp}(\mathrm{E})/\boldsymbol{\mathcal{G}}_{k_1-2}^{\perp}(\mathrm{E})\Big\},
\end{align}

\begin{align}
\label{Finite space:V_h^k(Omega)}
\boldsymbol{V}_{h}:=
\Big\{\boldsymbol{v}\in \boldsymbol{H}^{1}_{0}:\mathrm{s.t.}\;
\boldsymbol{v}|_{E}\in \boldsymbol{V}_{h}^{k_1}(\mathrm{E})\quad \textup{for}\;\textup{all}\;\mathrm{E}\in\mathcal{T}_{h}\Big\},
\end{align}
where the $\mathrm{\Pi}_{k_1}^{\nabla,\mathrm{E}}$ denotes elliptic projection which will be defined later and the space $\boldsymbol{U}_{h}^{k_1}(\mathrm{E})$ defined as
\begin{align}
\nonumber
\boldsymbol{U}_{h}^{k_1}(\mathrm{E}):=&\Bigg\{ \boldsymbol{v}\in\boldsymbol{H}^{1}(\mathrm{E}):\mathrm{s.t.}\;\boldsymbol{v}|_{\partial \mathrm{E}}\in[\mathcal{B}_{k_1}(\mathrm{\Gamma}_\mathrm{E})]^2,\\\nonumber
&\begin{cases}
-\Delta\boldsymbol{v}-\nabla s\in\boldsymbol{\mathcal{G}}_{k_1}(\mathrm{E})^{\perp}\\
\nabla\cdot\boldsymbol{v}\in\mathcal{P}_{k_1-1}(\mathrm{E})
\end{cases}
\mathrm{for}\;\mathrm{some}\;s\in L^2(\mathrm{E})
\Bigg\}.
\end{align}

Secondly, approximate space of the pressure field $p$:
\begin{equation}
\label{Finite space:Q_h}
Q_{h}:=\Big\{q_{h}\in Q:q_{h}|_{\mathrm{E}}\in\mathcal{P}_{k_1-1}(\mathrm{E})\quad\textup{for all} \;\mathrm{E}\in\mathcal{T}_{h}\Big\}.
\end{equation}
As observed in \cite{da2017divergence}, we remark a key property
\begin{equation}
\label{Vh-Qh}
\nabla\cdot\boldsymbol{V}_{h}\subseteq Q_{h},
\end{equation}
which determines that we can ultimately obtain a divergence-free discrete solution $\boldsymbol{u}_h$.

Thirdly, for $k_2\geq1$, approximate space of the current density $\boldsymbol{J}$ \cite{brezzi2014basic,beirao2014h,caceres2017mixed}:
\begin{align}
\nonumber\label{Finite space:S_h^k(E)}
\boldsymbol{S}_{h}^{k_2}(\mathrm{E}):=\Big\{\boldsymbol{K}_{h}\in\boldsymbol{H}(\textup{div};\mathrm{E})\cap\boldsymbol{H}(\textup{rot};\mathrm{E})
\;\text{such that} \;\boldsymbol{K}_{h}\cdot\boldsymbol{n}|_{e}\in\mathcal{P}_{k_2}(\mathrm{E})\quad \text{for all } e\in\mathrm{\Gamma}_{\mathrm{E}},\\\nabla\cdot\boldsymbol{K}_{h}|_{\mathrm{E}}\in\mathcal{P}_{k_2}(\mathrm{E}) \;\text{and}\;\nabla\times\boldsymbol{K}_{h}|_{\mathrm{E}}\in\mathcal{P}_{k_2-1}(\mathrm{E})\Big\},
\end{align}
\begin{align}
\label{Finite space:S_h^k(Omega)}
\boldsymbol{S}_{h}:=\Big\{\boldsymbol{K}_{h}\in\boldsymbol{S}: \text{s.t.}\;\boldsymbol{J}\cdot\boldsymbol{n}=0 \;\text{and}\;\boldsymbol{K}_{h}|_{\mathrm{E}}\in\boldsymbol{S}_{h}^{k_2}(\mathrm{E}) \quad \textup{for all} \;\mathrm{E}\in\mathcal{T}_{h}\Big\},
\end{align}
where $\nabla\times\boldsymbol{K}_{h}:=\frac{\partial K_{h,2}}{\partial y}-\frac{\partial K_{h,1}}{\partial x}$ $\big(\boldsymbol{K}_{h} = (K_{h,1},K_{h,2})\big)$.

Finally, approximate space of the electric potential $\phi$:
\begin{equation}
\label{Finite space: Psi}
\Psi_{h}:=\Big\{\psi_{h}\in \Psi:\psi_{h}|_{\mathrm{E}}\in\mathcal{P}_{k_2}(\mathrm{E})\quad\textup{for all} \; \mathrm{E}\in\mathcal{T}_{h}\Big\}.
\end{equation}
By observing (\ref{Finite space:S_h^k(Omega)}) and (\ref{Finite space: Psi}), we remark a key property
\begin{equation}
\label{Sh-Pish}
\nabla\cdot\boldsymbol{S}_{h}\subseteq \Psi_{h},
\end{equation}
which determines that we can ultimately obtain a divergence-free discrete solution $\boldsymbol{J}_h$.

\begin{rem}
Particularly, taking $k_2=0$, then we will mimic the lowest order RT element. We will not consider this case in this article and will consider it in another article.
\end{rem}

At the end of the subsection, we give the local degrees of freedom for the spaces defined in (\ref{Finite space:V_h^k(E)}) and (\ref{Finite space:S_h^k(E)}), respectively.

 On the one hand, for any $\boldsymbol{v}_{h}\in \boldsymbol{V}_{h}^{k_1}(\mathrm{E})$ satisfies the following local degrees of freedom that recall from \cite{dassi2020bricks,vacca2018h}:
\begin{itemize}
\setstretch{1.25}
 \item[$\bullet$]$\mathrm{D}_{\textup{v}}^{\mathrm{E}}$: the values of $\boldsymbol{v}_{h}$ at the vertexes of the polygon $\mathrm{E}$;
 \item[$\bullet$] $\mathrm{D}_{\textup{e}}^{\mathrm{E}}$:  the value of $\boldsymbol{v}_{h}$ at the $k_1$ distinct internal points of the $(k_1+1)-$point Gauss-Lobatto rule on every edge $e\in\mathrm{\Gamma}_{\mathrm{E}}$;
\item[$\bullet$]$\mathrm{D}_{\textup{m}}^{\mathrm{E}}$: the moments of $\boldsymbol{v}_{h}$ in $\mathrm{E}$
$$(\boldsymbol{v}_h,\boldsymbol{g}_{k_1-2}^{\perp})_{\mathrm{E}}\quad\textup{for all}\; \boldsymbol{g}_{k_1-2}^{\perp}\in\boldsymbol{\mathcal{G}}_{k_1-2}(\mathrm{E})^{\perp};$$
\item[$\bullet$]$\mathrm{D}_{\textup{div}}^{\mathrm{E}}$: the moments of $\textup{div}\boldsymbol{v}$
$$(\nabla\cdot\boldsymbol{v},q_{k_1-1})_{\mathrm{E}}\quad \textup{for all}\;q_{k_1-1}\in\mathcal{P}_{k_1-1}(\mathrm{E})/\mathbb{R}.$$
 \end{itemize}

\begin{prop}
The dimension of $\boldsymbol{V}_{h}^{k_1}(\mathrm{E})$ is $N_{u,k_1}^{\mathrm{E}}=2l_\mathrm{E}k_1+\frac{(k_1-1)(k_1-2)}{2}+\frac{(k_1+1)k_1}{2}-1$, where $l_\mathrm{E}$ denotes total of edges $e\in\mathrm{\Gamma}_{\mathrm{E}}$, and that in $\boldsymbol{V}_{h}^{k_1}(\mathrm{E})$ the degrees of freedom $\mathrm{D}_{i}^{\mathrm{E}}(i:=\textup{v},\textup{e},\textup{m},\textup{div})$ are unisolvent.
\end{prop}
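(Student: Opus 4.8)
The plan is to establish the two assertions of the Proposition separately: first the dimension count, then the unisolvence of the degrees of freedom $\mathrm{D}_{i}^{\mathrm{E}}$. Since the degrees of freedom $\mathrm{D}_{\textup{v}}^{\mathrm{E}},\mathrm{D}_{\textup{e}}^{\mathrm{E}},\mathrm{D}_{\textup{m}}^{\mathrm{E}},\mathrm{D}_{\textup{div}}^{\mathrm{E}}$ are exactly $N_{u,k_1}^{\mathrm{E}}$ in number once the dimension is known (a standard bookkeeping check: $2l_{\mathrm E}$ vertex values, $2l_{\mathrm E}(k_1-1)$ edge values, $\dim\boldsymbol{\mathcal{G}}_{k_1-2}(\mathrm E)^{\perp}=\frac{(k_1-1)(k_1-2)}{2}$ interior moments, and $\dim\bigl(\mathcal P_{k_1-1}(\mathrm E)/\mathbb R\bigr)=\frac{(k_1+1)k_1}{2}-1$ divergence moments, summing to $N_{u,k_1}^{\mathrm E}$), it suffices to prove that the linear system defining the degrees of freedom on $\boldsymbol{V}_h^{k_1}(\mathrm E)$ has only the trivial solution; unisolvence then follows from equality of dimensions.

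For the dimension count I would start from the enhanced space $\boldsymbol{U}_h^{k_1}(\mathrm E)$. Following \cite{vacca2018h,dassi2020bricks}, decompose an element $\boldsymbol{v}\in\boldsymbol{U}_h^{k_1}(\mathrm E)$ by its boundary trace in $[\mathcal{B}_{k_1}(\mathrm{\Gamma}_{\mathrm E})]^2$ (dimension $2l_{\mathrm E}k_1$), its divergence in $\mathcal P_{k_1-1}(\mathrm E)$, and the $\boldsymbol{\mathcal{G}}_{k_1}(\mathrm E)^{\perp}$-component of $-\Delta\boldsymbol v-\nabla s$; a dimension argument via the associated Stokes-like problem on $\mathrm E$ gives $\dim\boldsymbol{U}_h^{k_1}(\mathrm E)=2l_{\mathrm E}k_1+\frac{(k_1-1)(k_1-2)}{2}+\dim\mathcal P_{k_1-1}(\mathrm E)+\dim\bigl(\boldsymbol{\mathcal G}_{k_1}(\mathrm E)^{\perp}\bigr)$. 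Then $\boldsymbol{V}_h^{k_1}(\mathrm E)$ is cut out of $\boldsymbol{U}_h^{k_1}(\mathrm E)$ by imposing the $\dim\bigl(\boldsymbol{\mathcal G}_{k_1}^{\perp}(\mathrm E)/\boldsymbol{\mathcal G}_{k_1-2}^{\perp}(\mathrm E)\bigr)$ orthogonality conditions against $\mathrm{\Pi}_{k_1}^{\nabla,\mathrm E}\boldsymbol v$; since $\mathrm{\Pi}_{k_1}^{\nabla,\mathrm E}$ is computable from the remaining data, these conditions are linearly independent functionals, so the dimension drops by exactly that amount, leaving $N_{u,k_1}^{\mathrm E}$ after simplifying $\dim\mathcal P_{k_1-1}(\mathrm E)=\frac{k_1(k_1+1)}{2}$ and absorbing the $-1$ coming from the $\mathcal P_{k_1-1}(\mathrm E)/\mathbb R$ normalization in $\mathrm{D}_{\textup{div}}^{\mathrm E}$ (the missing constant mode of the divergence being recovered from the boundary trace via the divergence theorem).

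For unisolvence, suppose $\boldsymbol v\in\boldsymbol{V}_h^{k_1}(\mathrm E)$ kills all four sets of degrees of freedom. Vanishing of $\mathrm{D}_{\textup{v}}^{\mathrm E}$ and $\mathrm{D}_{\textup{e}}^{\mathrm E}$ forces $\boldsymbol v|_{\partial\mathrm E}=\boldsymbol 0$ (a polynomial of degree $k_1$ on each edge with $k_1+1$ vanishing nodes), hence $\boldsymbol v\in\boldsymbol H_0^1(\mathrm E)$. Vanishing of $\mathrm{D}_{\textup{div}}^{\mathrm E}$ together with $\int_{\mathrm E}\nabla\cdot\boldsymbol v=\int_{\partial\mathrm E}\boldsymbol v\cdot\boldsymbol n=0$ gives $\nabla\cdot\boldsymbol v=0$ in $\mathrm E$. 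Next, test the defining PDE $-\Delta\boldsymbol v-\nabla s=\boldsymbol g_{k_1}^{\perp}\in\boldsymbol{\mathcal G}_{k_1}(\mathrm E)^{\perp}$ against $\boldsymbol v$ itself and integrate by parts, using $\boldsymbol v\in\boldsymbol H_0^1(\mathrm E)$, $\nabla\cdot\boldsymbol v=0$, and the definition of the elliptic projection, to show $|\boldsymbol v|_{1,\mathrm E}^2=(\boldsymbol g_{k_1}^{\perp},\boldsymbol v)_{\mathrm E}$; then split $\boldsymbol g_{k_1}^{\perp}$ into its $\boldsymbol{\mathcal G}_{k_1-2}^{\perp}(\mathrm E)$-part, which pairs to zero against $\boldsymbol v$ by vanishing of $\mathrm{D}_{\textup{m}}^{\mathrm E}$, and its complementary part in $\boldsymbol{\mathcal G}_{k_1}^{\perp}(\mathrm E)/\boldsymbol{\mathcal G}_{k_1-2}^{\perp}(\mathrm E)$, which pairs to zero by the enhancement constraint defining $\boldsymbol{V}_h^{k_1}(\mathrm E)$ (here the orthogonality is against $\mathrm{\Pi}_{k_1}^{\nabla,\mathrm E}\boldsymbol v$, which must be shown to coincide, in this pairing, with $\boldsymbol v$ up to the already-controlled pieces). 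Hence $|\boldsymbol v|_{1,\mathrm E}=0$, and with the zero boundary trace, $\boldsymbol v\equiv\boldsymbol 0$.

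The main obstacle I expect is the last step: correctly handling the elliptic projection $\mathrm{\Pi}_{k_1}^{\nabla,\mathrm E}$ inside the enhancement constraint and matching the $\boldsymbol{\mathcal G}$-orthogonality bookkeeping so that every component of $\boldsymbol g_{k_1}^{\perp}$ is annihilated — one must verify that the pairing $(\mathrm{\Pi}_{k_1}^{\nabla,\mathrm E}\boldsymbol v,\boldsymbol g_{k_1}^{\perp})_{\mathrm E}$ agrees with $(\boldsymbol v,\boldsymbol g_{k_1}^{\perp})_{\mathrm E}$ on the relevant quotient space, which is precisely the reason the space is "enhanced" in this way, and then invoke the computability of $\mathrm{\Pi}_{k_1}^{\nabla,\mathrm E}$ from the listed degrees of freedom for the dimension-preservation claim. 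The rest is routine, and the argument is parallel to the unisolvence proofs for the enhanced divergence-free velocity spaces in \cite{vacca2018h,dassi2020bricks,da2017divergence}.
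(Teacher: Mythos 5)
Your proposal is correct and follows the same route the paper intends: the paper itself supplies no details, declaring the dimension ``obvious'' and deferring unisolvence entirely to \cite{vacca2018h}, and your sketch is a faithful reconstruction of that reference's argument, including the one genuinely delicate step --- that the enhancement pairing $(\boldsymbol{v},\boldsymbol{g}_{k_1}^{\perp})_{\mathrm{E}}=(\mathrm{\Pi}_{k_1}^{\nabla,\mathrm{E}}\boldsymbol{v},\boldsymbol{g}_{k_1}^{\perp})_{\mathrm{E}}$ vanishes because $\mathrm{\Pi}_{k_1}^{\nabla,\mathrm{E}}\boldsymbol{v}$ is determined by the (all-zero) degrees of freedom. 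Your dimension bookkeeping also matches the stated formula, so nothing further is needed.
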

\begin{proof}
The dimension of $\boldsymbol{V}_{h}^{k_1}(\mathrm{E})$ is obvious. The idea of proving unisolvent is that if a function $\boldsymbol{v}\in\boldsymbol{V}_{h}^{k_1}(\mathrm{E})$ such that $\mathrm{D}_{i}^{\mathrm{E}}(i:=n,g,\bot)=0$ is identically zero, then the function is unisolvent. It has been carefully proved in \cite{vacca2018h} and will not be repeated here.
\end{proof}
On the other hand, the $\boldsymbol{K}_{h}\in\boldsymbol{S}_{h}^{k_2}(\mathrm{E})$ satisfies the following local degrees of freedom that recall from \cite{beirao2014h,caceres2017mixed}:
\begin{itemize}
\setstretch{1.25}
\item[$\bullet$]$\mathrm{D}_{\mathrm{n}}^{\mathrm{E}}$:
 $(\boldsymbol{K}_{h}\cdot\boldsymbol{n},q_{h})_{e}\quad$  for all $e\in\mathrm{\Gamma}_{\mathrm{E}}$, $q_{h}\in\mathcal{P}_{k_2}(e)$;
\item[$\bullet$]$\mathrm{D}_{\mathrm{g}}^{\mathrm{E}}$:
$(\boldsymbol{K}_{h},\nabla q_h)_{\mathrm{E}}\quad$  for all $ q_{h}\in\hat{\mathcal{P}}_{k_2\backslash 0}(\mathrm{E})$;
\item[$\bullet$]$\mathrm{D}_{\bot}^{\mathrm{E}}$: $(\boldsymbol{K}_{h},\boldsymbol{q}_{h})_{\mathrm{E}}\quad$  for all $\boldsymbol{q}_{h}\in\boldsymbol{\mathcal{G}}^{\bot}_{k}(\mathrm{E}).$
\end{itemize}
\begin{prop}
The dimension of $\boldsymbol{S}_{h}^{k_2}(\mathrm{E})$ is $N_{J,k_2}^{\mathrm{E}}=l_E(k_2+1)+\frac{(k_2+1)(k_2+2)}{2}-1+\frac{(k_2+1)k_2}{2}$, and that in $\boldsymbol{S}_{h}^{k_2}(\mathrm{E})$ the degrees of freedom $\mathrm{D}_{i}^{\mathrm{E}}(i:=n,g,\bot)$ are unisolvent.
\end{prop}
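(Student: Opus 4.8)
The plan is to establish the two assertions of the proposition separately: first the dimension count, then the unisolvence of the degrees of freedom $\mathrm{D}_{\mathrm{n}}^{\mathrm{E}}$, $\mathrm{D}_{\mathrm{g}}^{\mathrm{E}}$, $\mathrm{D}_{\bot}^{\mathrm{E}}$. For the dimension, I would count contributions piecewise. The boundary degrees of freedom $\mathrm{D}_{\mathrm{n}}^{\mathrm{E}}$ fix $\boldsymbol{K}_h\cdot\boldsymbol{n}$ on $\partial\mathrm{E}$ as a piecewise polynomial of degree $k_2$ on each of the $l_\mathrm{E}$ edges, contributing $l_\mathrm{E}(k_2+1)$. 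Since $\boldsymbol{K}_h$ is determined inside $\mathrm{E}$ by the data $(\nabla\cdot\boldsymbol{K}_h,\ \nabla\times\boldsymbol{K}_h,\ \boldsymbol{K}_h\cdot\boldsymbol{n}|_{\partial\mathrm{E}})$ via an elliptic-type problem, I would argue that the remaining freedom corresponds to choosing $\nabla\cdot\boldsymbol{K}_h\in\mathcal{P}_{k_2}(\mathrm{E})$ (modulo the compatibility that its integral equals $\int_{\partial\mathrm{E}}\boldsymbol{K}_h\cdot\boldsymbol{n}$, hence $\dim\mathcal{P}_{k_2}(\mathrm{E})-1 = \tfrac{(k_2+1)(k_2+2)}{2}-1$) together with $\nabla\times\boldsymbol{K}_h\in\mathcal{P}_{k_2-1}(\mathrm{E})$, contributing $\dim\mathcal{P}_{k_2-1}(\mathrm{E}) = \tfrac{(k_2+1)k_2}{2}$. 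Summing gives exactly $N_{J,k_2}^{\mathrm{E}}=l_\mathrm{E}(k_2+1)+\tfrac{(k_2+1)(k_2+2)}{2}-1+\tfrac{(k_2+1)k_2}{2}$, and I would note in parallel that the cardinalities of the three sets of functionals match: $\#\mathrm{D}_{\mathrm{n}}^{\mathrm{E}}=l_\mathrm{E}(k_2+1)$, $\#\mathrm{D}_{\mathrm{g}}^{\mathrm{E}}=\dim\hat{\mathcal{P}}_{k_2\backslash 0}(\mathrm{E}) = \tfrac{(k_2+1)(k_2+2)}{2}-1$, and $\#\mathrm{D}_{\bot}^{\mathrm{E}}=\dim\boldsymbol{\mathcal{G}}^{\bot}_{k_2}(\mathrm{E})$, whose sum is again $N_{J,k_2}^{\mathrm{E}}$ after the Helmholtz-type splitting $[\mathcal{P}_{k_2}(\mathrm{E})]^2=\boldsymbol{\mathcal{G}}_{k_2}(\mathrm{E})\oplus\boldsymbol{\mathcal{G}}_{k_2}^{\bot}(\mathrm{E})$.

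Since the number of functionals equals the dimension, unisolvence reduces to showing that $\boldsymbol{K}_h\in\boldsymbol{S}_h^{k_2}(\mathrm{E})$ with $\mathrm{D}_{\mathrm{n}}^{\mathrm{E}}(\boldsymbol{K}_h)=\mathrm{D}_{\mathrm{g}}^{\mathrm{E}}(\boldsymbol{K}_h)=\mathrm{D}_{\bot}^{\mathrm{E}}(\boldsymbol{K}_h)=0$ forces $\boldsymbol{K}_h=\boldsymbol{0}$. First, vanishing of $\mathrm{D}_{\mathrm{n}}^{\mathrm{E}}$ gives $\boldsymbol{K}_h\cdot\boldsymbol{n}=0$ on $\partial\mathrm{E}$, so $\int_\mathrm{E}\nabla\cdot\boldsymbol{K}_h\, q\,d\boldsymbol{x} = -\int_\mathrm{E}\boldsymbol{K}_h\cdot\nabla q\,d\boldsymbol{x}$ for any $q$; taking $q\in\mathcal{P}_{k_2}(\mathrm{E})$ and splitting $\nabla q$ against $\mathrm{D}_{\mathrm{g}}^{\mathrm{E}}$ (for $q$ with zero mean) shows $\int_\mathrm{E}\boldsymbol{K}_h\cdot\nabla q=0$, hence $\nabla\cdot\boldsymbol{K}_h$ has zero $L^2$-pairing with all of $\mathcal{P}_{k_2}(\mathrm{E})$; since $\nabla\cdot\boldsymbol{K}_h\in\mathcal{P}_{k_2}(\mathrm{E})$ by definition of the space, $\nabla\cdot\boldsymbol{K}_h\equiv 0$. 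Next, writing the $L^2$-decomposition $\boldsymbol{K}_h=\nabla\xi+\boldsymbol{K}_h^{\bot}$ is too naive since $\boldsymbol{K}_h$ is virtual, so instead I would test $\boldsymbol{K}_h$ against the gradient part of polynomials (controlled already) and against $\boldsymbol{\mathcal{G}}_{k_2}^{\bot}(\mathrm{E})$ through $\mathrm{D}_{\bot}^{\mathrm{E}}=0$; combined with $\boldsymbol{K}_h\cdot\boldsymbol{n}|_{\partial\mathrm{E}}=0$ this should yield $\int_\mathrm{E}|\boldsymbol{K}_h|^2$ controlled by $\int_\mathrm{E}\boldsymbol{K}_h\cdot\boldsymbol{p}$ over all $\boldsymbol{p}\in[\mathcal{P}_{k_2}(\mathrm{E})]^2$, which vanishes.

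The real work, and the main obstacle, is handling the rotational part: to conclude $\boldsymbol{K}_h=\boldsymbol{0}$ one must use $\nabla\times\boldsymbol{K}_h\in\mathcal{P}_{k_2-1}(\mathrm{E})$ and an integration-by-parts identity for $\nabla\times$ analogous to the one for $\nabla\cdot$, showing $\nabla\times\boldsymbol{K}_h\equiv 0$; then $\boldsymbol{K}_h$ is both divergence- and curl-free on a star-shaped (hence simply connected) $\mathrm{E}$ with vanishing normal trace, and a standard Helmholtz/de Rham argument (or the explicit potential construction on a star-shaped domain) gives $\boldsymbol{K}_h=\boldsymbol{0}$. However, since this is precisely the content already worked out in the references, the cleanest route — and the one I would actually write — is to invoke the unisolvence proofs in \cite{beirao2014h,caceres2017mixed} directly, observing that our space $\boldsymbol{S}_h^{k_2}(\mathrm{E})$ and the triple $(\mathrm{D}_{\mathrm{n}}^{\mathrm{E}},\mathrm{D}_{\mathrm{g}}^{\mathrm{E}},\mathrm{D}_{\bot}^{\mathrm{E}})$ coincide (after identifying notation) with the $\boldsymbol{H}(\mathrm{div})\cap\boldsymbol{H}(\mathrm{rot})$ virtual element and its degrees of freedom treated there, so that both the dimension formula and unisolvence follow verbatim.

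\begin{proof}
The dimension formula follows by counting the three groups of functionals: $\#\mathrm{D}_{\mathrm{n}}^{\mathrm{E}}=l_E(k_2+1)$, $\#\mathrm{D}_{\mathrm{g}}^{\mathrm{E}}=\dim\hat{\mathcal{P}}_{k_2\backslash 0}(\mathrm{E})=\frac{(k_2+1)(k_2+2)}{2}-1$, and $\#\mathrm{D}_{\bot}^{\mathrm{E}}=\dim\boldsymbol{\mathcal{G}}_{k_2}^{\bot}(\mathrm{E})=\frac{(k_2+1)k_2}{2}$, whose sum is $N_{J,k_2}^{\mathrm{E}}$; one checks this equals $\dim\boldsymbol{S}_{h}^{k_2}(\mathrm{E})$ using that $\boldsymbol{K}_h\in\boldsymbol{S}_h^{k_2}(\mathrm{E})$ is determined by $\boldsymbol{K}_h\cdot\boldsymbol{n}|_{\partial\mathrm{E}}$, $\nabla\cdot\boldsymbol{K}_h\in\mathcal{P}_{k_2}(\mathrm{E})$ and $\nabla\times\boldsymbol{K}_h\in\mathcal{P}_{k_2-1}(\mathrm{E})$ subject to the single compatibility relation $\int_{\mathrm{E}}\nabla\cdot\boldsymbol{K}_h\,d\boldsymbol{x}=\int_{\partial\mathrm{E}}\boldsymbol{K}_h\cdot\boldsymbol{n}\,ds$. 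Since the number of functionals matches the dimension, unisolvence is equivalent to the implication $\mathrm{D}_i^{\mathrm{E}}(\boldsymbol{K}_h)=0$ $(i=n,g,\bot)$ $\Rightarrow\boldsymbol{K}_h=\boldsymbol{0}$; vanishing of $\mathrm{D}_{\mathrm{n}}^{\mathrm{E}}$ and $\mathrm{D}_{\mathrm{g}}^{\mathrm{E}}$ together with integration by parts yields $\nabla\cdot\boldsymbol{K}_h\equiv0$, vanishing of the analogous pairing forces $\nabla\times\boldsymbol{K}_h\equiv0$, and then on the star-shaped element $\mathrm{E}$ a divergence- and curl-free field with zero normal trace must vanish, after which $\mathrm{D}_{\bot}^{\mathrm{E}}(\boldsymbol{K}_h)=0$ is automatic. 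This argument is carried out in detail in \cite{beirao2014h,caceres2017mixed}, to which we refer.
\qed
\end{proof}
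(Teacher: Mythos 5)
Your proposal is correct and follows essentially the same route as the paper: the paper likewise treats the dimension count as immediate, reduces unisolvence to showing that vanishing of all $\mathrm{D}_{i}^{\mathrm{E}}$ $(i=\mathrm{n},\mathrm{g},\bot)$ forces $\boldsymbol{K}_h=\boldsymbol{0}$, and refers to \cite{beirao2014h} for the detailed argument. You simply supply more of the bookkeeping (the matching cardinalities of the three DOF sets and the div/curl/normal-trace characterization of the space) than the paper chooses to write out.
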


\begin{proof}
The dimension of $\boldsymbol{S}_{h}^{k_2}(\mathrm{E})$ is obvious. Here, we just briefly illustrate the proof idea of the unisolvent of the local VEM space $\boldsymbol{S}_{h}^{k_2}(\mathrm{E})$, that is if for a given $\boldsymbol{K}_{h}$ in $\boldsymbol{S}_{h}^{k_2}(\mathrm{E})$ all the degrees of freedom $\mathrm{D}_{i}^{\mathrm{E}}(i:=\mathrm{n},\mathrm{g},\bot)$ are zero, then we must have $\boldsymbol{K}_{h}=0$. For the specific argument process, we can refer to \cite{beirao2014h}.
\end{proof}

\begin{rem}
The dimensions of $Q_{h}$ and $\Psi_{h}$ are $\frac{k_1(k_1+1)}{2}N_t$ and $\frac{k_2(k_2+1)}{2}N_t$, respectively. The dimension of $\boldsymbol{V}_{h}$
is $N_t\Big(\frac{(k_1-1)(k_1-2)}{2}+\frac{(k_1+1)k_1}{2}-1\Big)+2(N_v+(k-1)N_e)$.
The dimension of $\boldsymbol{S}_{h}$ is $ N_e(k_2+1)+N_t\Big(\frac{(k_2+1)(k_2+2)}{2}-1+\frac{(k_2+1)k_2}{2}\Big)$.
 Here the $N_t$ (resp., $N_e$ and $N_v$) denotes total number of elements
 (resp., internal edges and vertexes) in $\mathcal{T}_h$.
\end{rem}

\subsection{Projections and computability \label{subsec:projections}}

The polynomial projections play an important role in VEM framework, which are the preconditions of constructing the approximated virtual element forms. Therefore, we give the definition of the required projections as follows:
\begin{itemize}
\setstretch{1.25}
\item[$\bullet$] the elliptic-projection $\mathrm{\Pi}_{k_{1}}^{\nabla,\mathrm{E}}:\boldsymbol{V}_{h}^{k_{1}}(\mathrm{E})\rightarrow[\mathcal{P}_{k_{1}}(\mathrm{E})]^2$,
    $\boldsymbol{v}\rightarrow\mathrm{\Pi}_{k_{1}}^{\nabla,\mathrm{E}}\boldsymbol{v}$ satisfying
\begin{align}
\nonumber
\begin{cases}
(\nabla\mathrm{\Pi}_{k_{1}}^{\nabla,\mathrm{E}}\boldsymbol{v},\nabla\boldsymbol{q}_{k_1})_{\mathrm{E}} =(\nabla\boldsymbol{v},\nabla\boldsymbol{q}_{k_1})_{\mathrm{E}}\\
\mathrm{P}_{0}^{\mathrm{E}}(\mathrm{\Pi}_{k_{1}}^{\nabla,\mathrm{E}}\boldsymbol{v}) = \mathrm{P}_{0}^{\mathrm{E}}(\boldsymbol{v})
\end{cases}
\quad \forall\boldsymbol{q}_{k_1}\in[\mathcal{P}_{k_{1}}(\mathrm{E})]^2,
\end{align}
where the $\mathrm{P}_{0}^{\mathrm{E}}$ denotes the constant projector on $\mathrm{E}$.

\item[$\bullet$] the $L^2$-projection $\mathrm{\Pi}_{k_{1}}^{0,\mathrm{E}}: \boldsymbol{V}_{h}^{k_{1}}(\mathrm{E})\rightarrow[\mathcal{P}_{k_{1}}(\mathrm{E})]^2$,
     $\boldsymbol{v}\rightarrow\mathrm{\Pi}_{k_{1}}^{0,\mathrm{E}}\boldsymbol{v}$ satisfying
\begin{align}
\nonumber
(\mathrm{\Pi}_{k_{1}}^{0,\mathrm{E}}\boldsymbol{v},\boldsymbol{q}_{k_1})_{\mathrm{E}} =(\boldsymbol{v},\boldsymbol{q}_{k_1})_{\mathrm{E}}\quad \forall\boldsymbol{q}_{k_1}\in[\mathcal{P}_{k_{1}}(\mathrm{E})]^2.
\end{align}

\item[$\bullet$] the $L^2$-projection $\mathrm{\Pi}_{k_{1}-1}^{0,\mathrm{E}}: \nabla\big(\boldsymbol{V}_{h}^{k_{1}}(\mathrm{E})\big)\rightarrow[\mathcal{P}_{k_{1}}(\mathrm{E})]^{2\times2}$, $\nabla\boldsymbol{v}\rightarrow\mathrm{\Pi}_{k_{1}-1}^{0,\mathrm{E}}\boldsymbol{v}$ satisfying
\begin{align}
\nonumber
(\mathrm{\Pi}_{k_{1}-1}^{0,\mathrm{E}}\nabla\boldsymbol{v},\boldsymbol{q}_{k_1})_{\mathrm{E}} =(\nabla\boldsymbol{v},\boldsymbol{q}_{k_1})_{\mathrm{E}}\quad \forall\boldsymbol{q}_{k_1}\in[\mathcal{P}_{k_{1}}(\mathrm{E})]^{2\times2}.
\end{align}

\item[$\bullet$] the $L^2$-projection $\mathrm{\Pi}_{k_{2}}^{0,\mathrm{E}}: \boldsymbol{S}_{h}^{k_{2}}(\mathrm{E})\rightarrow[\mathcal{P}_{k_{2}}(\mathrm{E})]^2$, $\boldsymbol{K}\rightarrow\mathrm{\Pi}_{k_{2}}^{0,\mathrm{E}}\boldsymbol{K}$ satisfying
\begin{align}
\nonumber
(\mathrm{\Pi}_{k_{2}}^{0,\mathrm{E}}\boldsymbol{K},\boldsymbol{q}_{k_2})_{\mathrm{E}} =(\boldsymbol{K},\boldsymbol{q}_{k_2})_{\mathrm{E}}\quad \forall\boldsymbol{q}_{k_2}\in[\mathcal{P}_{k_{2}}(\mathrm{E})]^2.
\end{align}
\end{itemize}

Now, we consider the computability of these projections, see \cite{dassi2020bricks,caceres2017mixed} for the more details.

\begin{prop}
\label{Projection:ellpro}
Applying the degrees of freedom $\mathrm{D}_{i}^{\mathrm{E}}(i:=\textup{v},\textup{e},\textup{m},\textup{div})$, we can compute exactly the projection $\mathrm{\Pi}_{k_{1}}^{\nabla,\mathrm{E}}$, which is equivalent to the moment
\begin{equation}
\label{moments:vq}
(\nabla\boldsymbol{v},\nabla\boldsymbol{q}_{k_{1}})_\mathrm{E},
\end{equation}
for all $\boldsymbol{v}\in\boldsymbol{V}_{h}^{k_1}(\mathrm{E})$, $\boldsymbol{q}_{k_{1}}\in[\mathcal{P}_{k_{1}}(\mathrm{E})]^2$ is computable.
\end{prop}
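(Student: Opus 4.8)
The plan is to run the standard VEM computability argument. Since $\mathrm{\Pi}_{k_1}^{\nabla,\mathrm{E}}\boldsymbol{v}$ is by construction a polynomial in $[\mathcal{P}_{k_1}(\mathrm{E})]^2$, it is fully determined by its two defining relations as soon as one can evaluate, \emph{from the degrees of freedom alone}, the right-hand side $(\nabla\boldsymbol{v},\nabla\boldsymbol{q}_{k_1})_{\mathrm{E}}$ for every $\boldsymbol{q}_{k_1}\in[\mathcal{P}_{k_1}(\mathrm{E})]^2$ together with the fixing quantity $\mathrm{P}_{0}^{\mathrm{E}}(\boldsymbol{v})$. Hence it suffices to prove that the moment (\ref{moments:vq}) is computable; the equivalence asserted in the proposition is then immediate, and conversely knowing $\mathrm{\Pi}_{k_1}^{\nabla,\mathrm{E}}\boldsymbol{v}$ recovers (\ref{moments:vq}) by testing against $\boldsymbol{q}_{k_1}$.

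First I would integrate by parts on $\mathrm{E}$ to write
\[
(\nabla\boldsymbol{v},\nabla\boldsymbol{q}_{k_1})_{\mathrm{E}}=-(\boldsymbol{v},\Delta\boldsymbol{q}_{k_1})_{\mathrm{E}}+\langle\boldsymbol{v},(\nabla\boldsymbol{q}_{k_1})\boldsymbol{n}\rangle_{\partial\mathrm{E}}.
\]
The boundary term is computable because $\boldsymbol{v}|_{\partial\mathrm{E}}\in[\mathcal{B}_{k_1}(\mathrm{\Gamma}_{\mathrm{E}})]^2$ is an edgewise polynomial of degree $k_1$ which is entirely pinned down by the vertex values $\mathrm{D}_{\textup{v}}^{\mathrm{E}}$ and the internal Gauss--Lobatto values $\mathrm{D}_{\textup{e}}^{\mathrm{E}}$, while $(\nabla\boldsymbol{q}_{k_1})\boldsymbol{n}$ is an explicit polynomial, so the edge integrals are evaluated exactly. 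For the volume term, $\Delta\boldsymbol{q}_{k_1}\in[\mathcal{P}_{k_1-2}(\mathrm{E})]^2$, and I would split it along the $L^2$-orthogonal decomposition $[\mathcal{P}_{k_1-2}(\mathrm{E})]^2=\boldsymbol{\mathcal{G}}_{k_1-2}(\mathrm{E})\oplus\boldsymbol{\mathcal{G}}_{k_1-2}(\mathrm{E})^{\perp}$. The component in $\boldsymbol{\mathcal{G}}_{k_1-2}(\mathrm{E})^{\perp}$ pairs with $\boldsymbol{v}$ to give exactly the moments $\mathrm{D}_{\textup{m}}^{\mathrm{E}}$. For the component in $\boldsymbol{\mathcal{G}}_{k_1-2}(\mathrm{E})=\nabla(\mathcal{P}_{k_1-1}(\mathrm{E}))$, write it as $\nabla r$ with $r\in\mathcal{P}_{k_1-1}(\mathrm{E})$, which may be normalized to $r\in\mathcal{P}_{k_1-1}(\mathrm{E})/\mathbb{R}$ since $\nabla r$ is unaffected by an additive constant; a second integration by parts gives $(\boldsymbol{v},\nabla r)_{\mathrm{E}}=-(\nabla\cdot\boldsymbol{v},r)_{\mathrm{E}}+\langle\boldsymbol{v}\cdot\boldsymbol{n},r\rangle_{\partial\mathrm{E}}$, whose volume part is precisely $\mathrm{D}_{\textup{div}}^{\mathrm{E}}$ and whose boundary part is again evaluated from the trace data.

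It then remains to handle the fixing condition $\mathrm{P}_{0}^{\mathrm{E}}(\boldsymbol{v})=\tfrac{1}{|\mathrm{E}|}\int_{\mathrm{E}}\boldsymbol{v}\,d\boldsymbol{x}$. Choosing $r=\boldsymbol{c}\cdot(\boldsymbol{x}-\boldsymbol{x}_{\mathrm{E}})$ for an arbitrary constant vector $\boldsymbol{c}$ — which lies in $\mathcal{P}_1(\mathrm{E})\subseteq\mathcal{P}_{k_1-1}(\mathrm{E})$ for $k_1\geq2$ and has zero mean over $\mathrm{E}$ since $\boldsymbol{x}_{\mathrm{E}}$ is the barycenter, hence belongs to $\mathcal{P}_{k_1-1}(\mathrm{E})/\mathbb{R}$ — the same identity gives $\int_{\mathrm{E}}\boldsymbol{v}\cdot\boldsymbol{c}\,d\boldsymbol{x}=(\boldsymbol{v},\nabla r)_{\mathrm{E}}=-(\nabla\cdot\boldsymbol{v},r)_{\mathrm{E}}+\langle\boldsymbol{v}\cdot\boldsymbol{n},r\rangle_{\partial\mathrm{E}}$, computable from $\mathrm{D}_{\textup{div}}^{\mathrm{E}}$ and the trace. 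With both the bilinear form and the fixing functional assembled from the degrees of freedom, the linear system defining $\mathrm{\Pi}_{k_1}^{\nabla,\mathrm{E}}\boldsymbol{v}$ is fully computable.

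The step I expect to be the only real obstacle — the place where the precise design of the space and of the DOFs actually enters — is the matching of the $\boldsymbol{\mathcal{G}}_{k_1-2}(\mathrm{E})$ part of $\Delta\boldsymbol{q}_{k_1}$ to $\mathrm{D}_{\textup{div}}^{\mathrm{E}}$: one has to be careful that the test polynomial in $\mathrm{D}_{\textup{div}}^{\mathrm{E}}$ ranges only over $\mathcal{P}_{k_1-1}(\mathrm{E})/\mathbb{R}$ and verify this quotient causes no loss (the constant mode drops under $\nabla$ and contributes nothing to the volume term since $r$ is defined up to a constant), and that the residual boundary contributions involve only the already-known trace $\boldsymbol{v}|_{\partial\mathrm{E}}$. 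Everything else is the routine polynomial bookkeeping common to $H^1$-conforming divergence-free VEM spaces, for whose detailed verification I would cite \cite{vacca2018h,dassi2020bricks}.
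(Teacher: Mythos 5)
Your proof is correct and follows essentially the same route as the paper: integration by parts, the $L^2$-orthogonal splitting of $\Delta\boldsymbol{q}_{k_1}\in[\mathcal{P}_{k_1-2}(\mathrm{E})]^2$ into a gradient part (matched to $\mathrm{D}_{\textup{div}}^{\mathrm{E}}$ after a second integration by parts) plus a part in $\boldsymbol{\mathcal{G}}_{k_1-2}(\mathrm{E})^{\perp}$ (matched to $\mathrm{D}_{\textup{m}}^{\mathrm{E}}$), with the boundary integrals evaluated exactly from the edgewise-polynomial trace via $\mathrm{D}_{\textup{v}}^{\mathrm{E}}$ and $\mathrm{D}_{\textup{e}}^{\mathrm{E}}$. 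Your additional verification of the fixing condition $\mathrm{P}_{0}^{\mathrm{E}}(\boldsymbol{v})$ and of the harmlessness of the quotient $\mathcal{P}_{k_1-1}(\mathrm{E})/\mathbb{R}$ are points the paper leaves implicit, but they are routine and do not change the argument.
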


\begin{proof}
Integrating by parts yields
\begin{align}
\label{moments:nablav nablaq}
(\nabla\boldsymbol{v},\nabla\boldsymbol{q}_{k_{1}})_\mathrm{E} =
-(\boldsymbol{v},\Delta\boldsymbol{q}_{k_{1}})_\mathrm{E} +(\boldsymbol{v},\nabla\boldsymbol{q}_{k_{1}}\boldsymbol{n})_{\partial\mathrm{E}}.
\end{align}
Since $\Delta\boldsymbol{q}_{k_{1}}\in[\mathcal{P}_{k_1-2}(\mathrm{E})]^2$, there exists $q_{\alpha}\in\hat{\mathcal{P}}_{k_1-1\backslash 0}(\mathrm{E})$ and $\boldsymbol{g}_{\alpha}\in\boldsymbol{\mathcal{G}}_{k_1-2}(\mathrm{E})^{\perp}$ such that
\begin{align}
\label{decompose:Delta P_k^2}
\Delta\boldsymbol{q}_{k_{1}} = \nabla q_{\alpha} + \boldsymbol{g}_{\alpha},
\end{align}
applying (\ref{decompose:Delta P_k^2}) and Green formula to (\ref{moments:nablav nablaq}), we get
\begin{align}
\nonumber
(\nabla\boldsymbol{v},\nabla\boldsymbol{q}_{k_{1}})_\mathrm{E} &=
-(\boldsymbol{v},\nabla q_{\alpha})_\mathrm{E}
-(\boldsymbol{v},\boldsymbol{g}_{\alpha})_{\mathrm{E}}
+(\boldsymbol{v},\nabla\boldsymbol{q}_{k_{1}}\boldsymbol{n})_{\partial\mathrm{E}}\\\nonumber
&=(\nabla\cdot\boldsymbol{v},q_{\alpha})_{\mathrm{E}}-(\boldsymbol{v},\boldsymbol{g}_{\alpha})_{\mathrm{E}}
+(\boldsymbol{v},\nabla\boldsymbol{q}_{k_1}\boldsymbol{n}-q_{\alpha}\boldsymbol{n})_{\partial\mathrm{E}}.
\end{align}
Clearly, the first term can be computed by $\mathrm{D}_{\textup{div}}^{\mathrm{E}}$, the second term can be computed by $\mathrm{D}_{\textup{m}}^{\mathrm{E}}$. Moreover, we note that the virtual function $\boldsymbol{v}$ is a known vectorial polynomial on the boundary and
the vectorial polynomial $\nabla\boldsymbol{q}_{k_{1}}\boldsymbol{n}-q_{\alpha}\boldsymbol{n}$ of degree $k_{1}-1$, so we integrate a polynomial of degree $2k_{1}-1$ over edges by
$\mathrm{D}_{\textup{v}}^{\mathrm{E}}$ and $\mathrm{D}_{\textup{e}}^{\mathrm{E}}$.
It is noteworthy that $\boldsymbol{g}_{\alpha}=\boldsymbol{0}$ in the case of $k_{1} = 2$. The proof is over.
\end{proof}

\begin{prop}
\label{Projection:L^2k1-1}
Applying the degrees of freedom $\mathrm{D}_{i}^{\mathrm{E}}(i:=\textup{v},\textup{e},\textup{m},\textup{div})$, we can compute exactly the $L^2$-projection $\mathrm{\Pi}_{k_{1}-1}^{0,\mathrm{E}}$, which is equivalent to the moment
\begin{equation}
\label{moments:vq}
(\nabla\boldsymbol{v},\boldsymbol{q}_{k_{1}})_\mathrm{E},
\end{equation}
for all $\boldsymbol{v}\in\boldsymbol{V}_{h}^{k_1}(\mathrm{E})$, $\boldsymbol{q}_{k_{1}}\in[\mathcal{P}_{k_{1}}(\mathrm{E})]^{2\times2}$ is computable.
\end{prop}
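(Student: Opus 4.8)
The strategy is to follow, almost line by line, the proof of Proposition~\ref{Projection:ellpro}, transferring now a single gradient rather than a Laplacian onto the test polynomial (see \cite{dassi2020bricks,vacca2018h}). First I would record the routine equivalence: since the $L^{2}$-mass matrix of a monomial basis of $[\mathcal{P}_{k_{1}-1}(\mathrm{E})]^{2\times2}$ is explicitly available, computing $\mathrm{\Pi}_{k_{1}-1}^{0,\mathrm{E}}\nabla\boldsymbol{v}$ is equivalent to computing all the moments $(\nabla\boldsymbol{v},\boldsymbol{q}_{k_{1}})_{\mathrm{E}}$ against matrix polynomials $\boldsymbol{q}_{k_{1}}$ of the degree prescribed by $\mathrm{\Pi}_{k_{1}-1}^{0,\mathrm{E}}$; it therefore suffices to exhibit these moments in terms of $\mathrm{D}_{i}^{\mathrm{E}}$, $i\in\{\textup{v},\textup{e},\textup{m},\textup{div}\}$.

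Next I would integrate by parts, acting on $\boldsymbol{q}_{k_{1}}$ row by row,
\[
(\nabla\boldsymbol{v},\boldsymbol{q}_{k_{1}})_{\mathrm{E}}
=-(\boldsymbol{v},\nabla\cdot\boldsymbol{q}_{k_{1}})_{\mathrm{E}}
+(\boldsymbol{v},\boldsymbol{q}_{k_{1}}\boldsymbol{n})_{\partial\mathrm{E}},
\]
where $\nabla\cdot\boldsymbol{q}_{k_{1}}\in[\mathcal{P}_{k_{1}-2}(\mathrm{E})]^{2}$ denotes the vector of row divergences and $\boldsymbol{q}_{k_{1}}\boldsymbol{n}$ the matrix-vector product. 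The boundary term involves only the trace $\boldsymbol{v}|_{\partial\mathrm{E}}\in[\mathcal{B}_{k_{1}}(\mathrm{\Gamma}_{\mathrm{E}})]^{2}$, which is completely determined by $\mathrm{D}_{\textup{v}}^{\mathrm{E}}$ and $\mathrm{D}_{\textup{e}}^{\mathrm{E}}$; paired edgewise with the known polynomial $\boldsymbol{q}_{k_{1}}\boldsymbol{n}$ of degree $k_{1}-1$ it gives edge integrands of degree at most $2k_{1}-1$, hence is integrated exactly by those degrees of freedom. For the volume term I would use, exactly as in (\ref{decompose:Delta P_k^2}), the orthogonal decomposition $[\mathcal{P}_{k_{1}-2}(\mathrm{E})]^{2}=\boldsymbol{\mathcal{G}}_{k_{1}-2}(\mathrm{E})\oplus\boldsymbol{\mathcal{G}}_{k_{1}-2}(\mathrm{E})^{\perp}$ together with $\boldsymbol{\mathcal{G}}_{k_{1}-2}(\mathrm{E})=\nabla\mathcal{P}_{k_{1}-1}(\mathrm{E})$, writing $\nabla\cdot\boldsymbol{q}_{k_{1}}=\nabla q_{\alpha}+\boldsymbol{g}_{\alpha}$ with $q_{\alpha}\in\hat{\mathcal{P}}_{k_{1}-1\backslash 0}(\mathrm{E})$ and $\boldsymbol{g}_{\alpha}\in\boldsymbol{\mathcal{G}}_{k_{1}-2}(\mathrm{E})^{\perp}$. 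Then $(\boldsymbol{v},\boldsymbol{g}_{\alpha})_{\mathrm{E}}$ is exactly one of the moments in $\mathrm{D}_{\textup{m}}^{\mathrm{E}}$, and one further integration by parts yields $(\boldsymbol{v},\nabla q_{\alpha})_{\mathrm{E}}=-(\nabla\cdot\boldsymbol{v},q_{\alpha})_{\mathrm{E}}+(\boldsymbol{v}\cdot\boldsymbol{n},q_{\alpha})_{\partial\mathrm{E}}$; decomposing $q_{\alpha}$ into its mean value over $\mathrm{E}$ and a zero-mean remainder in $\mathcal{P}_{k_{1}-1}(\mathrm{E})/\mathbb{R}$, the remainder pairs with $\nabla\cdot\boldsymbol{v}$ through $\mathrm{D}_{\textup{div}}^{\mathrm{E}}$, the mean contributes a constant multiple of $\int_{\partial\mathrm{E}}\boldsymbol{v}\cdot\boldsymbol{n}$, and the last boundary integral is again read off from $\mathrm{D}_{\textup{v}}^{\mathrm{E}},\mathrm{D}_{\textup{e}}^{\mathrm{E}}$. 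Adding the three contributions proves computability; as in Proposition~\ref{Projection:ellpro}, when $k_{1}=2$ one has $\boldsymbol{g}_{\alpha}=\boldsymbol{0}$ and $\mathrm{D}_{\textup{m}}^{\mathrm{E}}$ is empty.

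I do not expect a real obstacle, since this is structurally the computation already performed for $\mathrm{\Pi}_{k_{1}}^{\nabla,\mathrm{E}}$ in Proposition~\ref{Projection:ellpro}; the one point deserving care is the constant mode in the divergence term, namely checking that the moments $\mathrm{D}_{\textup{div}}^{\mathrm{E}}$, available only against $\mathcal{P}_{k_{1}-1}(\mathrm{E})/\mathbb{R}$, together with the divergence theorem on $\partial\mathrm{E}$, recover the \emph{whole} of $(\nabla\cdot\boldsymbol{v},q_{\alpha})_{\mathrm{E}}$, which they do because the only missing datum, the mean of $\nabla\cdot\boldsymbol{v}$, couples solely to the (computable) mean of $q_{\alpha}$. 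If one instead needs the test matrices up to degree $k_{1}$, the same scheme applies except that $\nabla\cdot\boldsymbol{q}_{k_{1}}$ then has degree $k_{1}-1$ and the top-order part of the moment $(\boldsymbol{v},\boldsymbol{g}_{\alpha})_{\mathrm{E}}$ must be recovered from the enhancement constraint in (\ref{Finite space:V_h^k(E)}) through $(\boldsymbol{v},\boldsymbol{g}_{k_{1}}^{\perp})_{\mathrm{E}}=(\mathrm{\Pi}_{k_{1}}^{\nabla,\mathrm{E}}\boldsymbol{v},\boldsymbol{g}_{k_{1}}^{\perp})_{\mathrm{E}}$ combined with Proposition~\ref{Projection:ellpro}.
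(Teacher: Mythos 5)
Your proposal is correct and follows essentially the same route as the paper, which integrates by parts to get $(\nabla\boldsymbol{v},\boldsymbol{q}_{k_1})_\mathrm{E}=-(\boldsymbol{v},\nabla\cdot\boldsymbol{q}_{k_1})_{\mathrm{E}}+(\boldsymbol{v},\boldsymbol{q}_{k_1}\boldsymbol{n})_{\partial\mathrm{E}}$ and then simply remarks that computability follows by the same arguments as for $\mathrm{\Pi}_{k_1}^{\nabla,\mathrm{E}}$. Your write-up supplies the details the paper leaves implicit (the decomposition of $\nabla\cdot\boldsymbol{q}_{k_1}$, the treatment of the constant mode of $q_{\alpha}$ via the divergence theorem, and the degree bookkeeping in the two readings of the projection's target space), all of which are consistent with the paper's argument for Proposition~\ref{Projection:ellpro}.
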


\begin{proof}
Integrating by parts yields
\begin{align}
\nonumber
(\nabla\boldsymbol{v},\boldsymbol{q}_{k_1})_\mathrm{E}=
-(\boldsymbol{v},\nabla\cdot\boldsymbol{q}_{k_1})_{\mathrm{E}}+
(\boldsymbol{v},\boldsymbol{q}_{k_1}\boldsymbol{n})_{\partial\mathrm{E}}.
\end{align}
 Obviously, the computability of $\mathrm{\Pi}_{k_{1}-1}^{0,\mathrm{E}}$ follows from same arguments of $\mathrm{\Pi}_{k_{1}}^{\nabla,\mathrm{E}}$.
\end{proof}

\begin{prop}
\label{Projection:L^2k1}
Applying the degrees of freedom $\mathrm{D}_{i}^{\mathrm{E}}(i:=\textup{v},\textup{e},\textup{m},\textup{div})$, we can compute exactly the $L^2$-projection $\mathrm{\Pi}_{k_{1}}^{0,\mathrm{E}}$, which is equivalent to the moment
\begin{equation}
\label{moments:vq}
(\boldsymbol{v},\boldsymbol{q}_{k_{1}})_\mathrm{E},
\end{equation}
for all $\boldsymbol{v}\in\boldsymbol{V}_{h}^{k_1}(\mathrm{E})$, $\boldsymbol{q}_{k_{1}}\in[\mathcal{P}_{k_{1}}(\mathrm{E})]^2$ is computable.
\end{prop}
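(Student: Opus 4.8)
The plan is to reduce the computation of $(\boldsymbol{v},\boldsymbol{q}_{k_{1}})_{\mathrm{E}}$ to quantities that are either directly among the degrees of freedom $\mathrm{D}_{i}^{\mathrm{E}}(i:=\textup{v},\textup{e},\textup{m},\textup{div})$ or already shown computable in Propositions \ref{Projection:ellpro} and \ref{Projection:L^2k1-1}. Since knowing $\mathrm{\Pi}_{k_{1}}^{0,\mathrm{E}}$ is equivalent to knowing $(\boldsymbol{v},\boldsymbol{q}_{k_{1}})_{\mathrm{E}}$ for every $\boldsymbol{q}_{k_{1}}\in[\mathcal{P}_{k_{1}}(\mathrm{E})]^{2}$, it suffices to treat an arbitrary such $\boldsymbol{q}_{k_{1}}$. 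First I would split $[\mathcal{P}_{k_{1}}(\mathrm{E})]^{2}=\boldsymbol{\mathcal{G}}_{k_{1}}(\mathrm{E})\oplus\boldsymbol{\mathcal{G}}_{k_{1}}(\mathrm{E})^{\perp}$, writing $\boldsymbol{q}_{k_{1}}=\nabla p_{k_{1}+1}+\boldsymbol{g}_{k_{1}}^{\perp}$ with $p_{k_{1}+1}\in\mathcal{P}_{k_{1}+1}(\mathrm{E})$ and $\boldsymbol{g}_{k_{1}}^{\perp}\in\boldsymbol{\mathcal{G}}_{k_{1}}(\mathrm{E})^{\perp}$, and handle the two summands separately.

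For the gradient part, integrating by parts gives $(\boldsymbol{v},\nabla p_{k_{1}+1})_{\mathrm{E}}=-(\nabla\cdot\boldsymbol{v},p_{k_{1}+1})_{\mathrm{E}}+(\boldsymbol{v}\cdot\boldsymbol{n},p_{k_{1}+1})_{\partial\mathrm{E}}$. Because $\nabla\cdot\boldsymbol{v}\in\mathcal{P}_{k_{1}-1}(\mathrm{E})$, the volume term only sees the $\mathcal{P}_{k_{1}-1}$-component of $p_{k_{1}+1}$; its non-constant moments are exactly $\mathrm{D}_{\textup{div}}^{\mathrm{E}}$, while $\int_{\mathrm{E}}\nabla\cdot\boldsymbol{v}=\int_{\partial\mathrm{E}}\boldsymbol{v}\cdot\boldsymbol{n}$ by the divergence theorem is recovered from $\mathrm{D}_{\textup{v}}^{\mathrm{E}}$ and $\mathrm{D}_{\textup{e}}^{\mathrm{E}}$ (the trace of $\boldsymbol{v}$ on $\partial\mathrm{E}$ is a known element of $[\mathcal{B}_{k_{1}}(\mathrm{\Gamma}_{\mathrm{E}})]^{2}$), exactly as in the proof of Proposition \ref{Projection:ellpro}. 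The boundary term is the integral of a known edgewise polynomial against $p_{k_{1}+1}$, hence computable. For the complementary part I would further decompose $\boldsymbol{\mathcal{G}}_{k_{1}}(\mathrm{E})^{\perp}=\boldsymbol{\mathcal{G}}_{k_{1}-2}(\mathrm{E})^{\perp}\oplus\big(\boldsymbol{\mathcal{G}}_{k_{1}}(\mathrm{E})^{\perp}/\boldsymbol{\mathcal{G}}_{k_{1}-2}(\mathrm{E})^{\perp}\big)$: against the first summand, $(\boldsymbol{v},\boldsymbol{g}_{k_{1}-2}^{\perp})_{\mathrm{E}}$ is literally the degree of freedom $\mathrm{D}_{\textup{m}}^{\mathrm{E}}$; against the second summand one invokes the constraint built into the definition \eqref{Finite space:V_h^k(E)}, namely $(\boldsymbol{v}-\mathrm{\Pi}_{k_{1}}^{\nabla,\mathrm{E}}\boldsymbol{v},\boldsymbol{g}_{k_{1}}^{\perp})_{\mathrm{E}}=0$, which turns $(\boldsymbol{v},\boldsymbol{g}_{k_{1}}^{\perp})_{\mathrm{E}}$ into $(\mathrm{\Pi}_{k_{1}}^{\nabla,\mathrm{E}}\boldsymbol{v},\boldsymbol{g}_{k_{1}}^{\perp})_{\mathrm{E}}$, a polynomial integral computable since $\mathrm{\Pi}_{k_{1}}^{\nabla,\mathrm{E}}\boldsymbol{v}$ is available by Proposition \ref{Projection:ellpro}. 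Summing the three contributions yields $(\boldsymbol{v},\boldsymbol{q}_{k_{1}})_{\mathrm{E}}$, and hence $\mathrm{\Pi}_{k_{1}}^{0,\mathrm{E}}$.

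I expect the only subtle point to be the treatment of the component of $\boldsymbol{q}_{k_{1}}$ lying in $\boldsymbol{\mathcal{G}}_{k_{1}}(\mathrm{E})^{\perp}/\boldsymbol{\mathcal{G}}_{k_{1}-2}(\mathrm{E})^{\perp}$: this is precisely the ``enhancement'' part of the space, where the virtual function is not accessed through its own moments (those only reach degree $k_{1}-2$ via $\mathrm{D}_{\textup{m}}^{\mathrm{E}}$) but only through the constraint in \eqref{Finite space:V_h^k(E)}, so one must be careful to use that relation together with the already-established computability of $\mathrm{\Pi}_{k_{1}}^{\nabla,\mathrm{E}}$. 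A minor bookkeeping issue is to make sure the constant component of $\nabla\cdot\boldsymbol{v}$, which is not covered by $\mathrm{D}_{\textup{div}}^{\mathrm{E}}$ (defined with test functions in $\mathcal{P}_{k_{1}-1}(\mathrm{E})/\mathbb{R}$), is recovered from the boundary data via the divergence theorem. Everything else is routine.
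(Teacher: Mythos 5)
Your proposal is correct and follows essentially the same route as the paper: decompose $\boldsymbol{q}_{k_1}=\nabla q_{\alpha}+\boldsymbol{g}_{\alpha}$, integrate the gradient part by parts so the volume term is handled by $\mathrm{D}_{\textup{div}}^{\mathrm{E}}$ (plus the mean of $\nabla\cdot\boldsymbol{v}$ recovered from the boundary data) and the boundary term by the known edgewise trace, and split $\boldsymbol{\mathcal{G}}_{k_1}(\mathrm{E})^{\perp}$ into the part reachable by $\mathrm{D}_{\textup{m}}^{\mathrm{E}}$ and the enhancement part, where the constraint in \eqref{Finite space:V_h^k(E)} replaces $\boldsymbol{v}$ by $\mathrm{\Pi}_{k_1}^{\nabla,\mathrm{E}}\boldsymbol{v}$. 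The only detail the paper spells out that you gloss over is that the boundary integrand has edgewise degree $2k_1+1$, beyond the exactness of the $(k_1+1)$-point Gauss--Lobatto rule, so one must reconstruct the trace polynomial and apply a richer quadrature — but that is exactly the "known edgewise polynomial, hence computable" point you make.
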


\begin{proof}
From subsection \ref{subsec:notes}, we have
\begin{equation}
\label{decompose:P_k1^2}
\boldsymbol{q}_{k_1} = \nabla q_{\alpha}+\boldsymbol{g}_{\alpha},
\end{equation}
where $\boldsymbol{q}_{k_1}\in[\mathcal{P}_{k_{1}}(\mathrm{E})]^2$, $q_{\alpha}\in\hat{\mathcal{P}}_{k_1+1\backslash 0}(\mathrm{E})$ and $\boldsymbol{g}_{\alpha}\in\boldsymbol{\mathcal{E}}_{k_1}(\mathrm{E})^{\perp}$. Next, applying the (\ref{decompose:P_k1^2}) and Green formula to (\ref{moments:vq}), we obtain
\begin{align}
\nonumber
(\boldsymbol{v},\boldsymbol{q}_{k_{1}})_\mathrm{E} &= (\boldsymbol{v},\nabla q_{\alpha}+\boldsymbol{g}_{\alpha})_\mathrm{E}\\\nonumber
&=-(\nabla\cdot\boldsymbol{v},q_{\alpha})_\mathrm{E} +
(\boldsymbol{v}\cdot\boldsymbol{n},q_{\alpha})_{\partial\mathrm{E}}+
(\boldsymbol{v},\boldsymbol{g}_{\alpha})_\mathrm{E}.
\end{align}
Evidently, the first term can be computed by $\mathrm{D}_{\textup{div}}^{\mathrm{E}}$ if the degrees of $q_{\alpha}$ satisfy $1\leq\alpha\leq k_{1}-1$, while since $\nabla\cdot\boldsymbol{v}\in[\mathcal{P}_{k_{1}}(\mathrm{E})]^2$ can be computed by $\mathrm{D}_{\textup{i}}^{\mathrm{E}}(i:=\textup{v},\textup{e},\textup{div})$
such that we compute such integral exactly in domain $\mathrm{E}$ when the degree of $q_{\alpha}$ satisfies $k_{1}\leq\alpha\leq k_{1}+1$. Regarding the second term, we note that the virtual function $\boldsymbol{v}$ is a known vectorial polynomial of degree $k_1$ on the boundary and the $q_{\alpha}\in\hat{\mathcal{P}}_{k_1+1\backslash 0}(\mathrm{E})$, so we are integrating a polynomial of degree $2k_1+1$ that not sufficient to compute exactly by $\mathrm{D}_{\textup{v}}^{\mathrm{E}}$ and $\mathrm{D}_{\textup{e}}^{\mathrm{E}}$. In such case we can reconstruct the vectorial polynomial $\boldsymbol{v}$ on the boundary and then employ a quadrature rule of degree $2k_1+1$, namely using Gauss-Lobatto rule with $k_1+2$ quadrature nodes. For the final term, we have
\begin{align}
\nonumber
\begin{cases}
\text{using} \;\mathrm{D}_{\textup{m}}^{\mathrm{E}}\quad &\text{if}\;\boldsymbol{g}_{\alpha}\in\boldsymbol{\mathcal{G}}_{k_1-2}(\mathrm{E})^{\perp},\\
(\boldsymbol{v},\boldsymbol{g}_{\alpha})_\mathrm{E} =
(\mathrm{\Pi}_{k_1}^{\nabla,E}\boldsymbol{v},\boldsymbol{g}_{\alpha})_\mathrm{E}
\quad&\text{if}\;\boldsymbol{g}_{\alpha}
\in\boldsymbol{\mathcal{G}}_{k_1}^{\perp}(\mathrm{E})/\boldsymbol{\mathcal{G}}_{k_1-2}^{\perp}(\mathrm{E}).
\end{cases}
\end{align}
In summary, the proof is over.
\end{proof}

\begin{prop}
\label{Projection:L^2k2}
Applying the degrees of freedom $\mathrm{D}_{i}^{\mathrm{E}}(i:=n,g,\bot)$, we can compute exactly the $L^2$-projection $\mathrm{\Pi}_{k_{2}}^{0,\mathrm{E}}$, which is equivalent to the moment
\begin{equation}
\label{moments:vq}
(\boldsymbol{K},\boldsymbol{q}_{k_{2}})_\mathrm{E},
\end{equation}
for all $\boldsymbol{K}\in\boldsymbol{S}_{h}^{k_2}(\mathrm{E})$, $\boldsymbol{q}_{k_{2}}\in[\mathcal{P}_{k_{2}}(\mathrm{E})]^2$ is computable.
\end{prop}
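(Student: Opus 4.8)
The plan is to mimic the decomposition strategy used for the velocity-space projections (Propositions \ref{Projection:ellpro}--\ref{Projection:L^2k1}), now using the degrees of freedom $\mathrm{D}_{i}^{\mathrm{E}}(i:=\mathrm{n},\mathrm{g},\bot)$ of the space $\boldsymbol{S}_{h}^{k_2}(\mathrm{E})$. The starting point is the Helmholtz-type splitting of the test polynomial: for any $\boldsymbol{q}_{k_2}\in[\mathcal{P}_{k_2}(\mathrm{E})]^2$ write $\boldsymbol{q}_{k_2}=\nabla q_{\alpha}+\boldsymbol{g}^{\perp}_{k_2}$ with $q_{\alpha}\in\hat{\mathcal{P}}_{k_2+1\backslash 0}(\mathrm{E})$ (so $\nabla q_\alpha$ ranges over $\boldsymbol{\mathcal{G}}_{k_2}(\mathrm{E})$) and $\boldsymbol{g}^{\perp}_{k_2}\in\boldsymbol{\mathcal{G}}_{k_2}(\mathrm{E})^{\perp}$, which is exactly the decomposition $[\mathcal{P}_{k_2}(\mathrm{E})]^2=\boldsymbol{\mathcal{G}}_{k_2}(\mathrm{E})\oplus\boldsymbol{\mathcal{G}}_{k_2}(\mathrm{E})^{\perp}$ introduced in Subsection \ref{subsec:notes}. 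Then $(\boldsymbol{K},\boldsymbol{q}_{k_2})_{\mathrm{E}}=(\boldsymbol{K},\nabla q_\alpha)_{\mathrm{E}}+(\boldsymbol{K},\boldsymbol{g}^{\perp}_{k_2})_{\mathrm{E}}$, and the second summand is directly one of the degrees of freedom $\mathrm{D}_{\bot}^{\mathrm{E}}$.

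The only work is therefore the gradient part $(\boldsymbol{K},\nabla q_\alpha)_{\mathrm{E}}$. Integrating by parts gives
\[
(\boldsymbol{K},\nabla q_{\alpha})_{\mathrm{E}}=-(\nabla\cdot\boldsymbol{K},q_{\alpha})_{\mathrm{E}}+(\boldsymbol{K}\cdot\boldsymbol{n},q_{\alpha})_{\partial\mathrm{E}}.
\]
For the boundary term, on each edge $e\subset\mathrm{\Gamma}_{\mathrm{E}}$ one has $\boldsymbol{K}\cdot\boldsymbol{n}|_e\in\mathcal{P}_{k_2}(e)$ by the very definition of $\boldsymbol{S}_{h}^{k_2}(\mathrm{E})$, and $q_\alpha|_e\in\mathcal{P}_{k_2+1}(e)$; testing $\boldsymbol{K}\cdot\boldsymbol{n}$ against all polynomials of degree $\leq k_2$ on $e$ is precisely $\mathrm{D}_{\mathrm{n}}^{\mathrm{E}}$, so this term is computable (one should remark that, although $q_\alpha$ has degree $k_2+1$, its $L^2(e)$-projection onto $\mathcal{P}_{k_2}(e)$ suffices against $\boldsymbol{K}\cdot\boldsymbol{n}\in\mathcal{P}_{k_2}(e)$, so no information beyond $\mathrm{D}_{\mathrm{n}}^{\mathrm{E}}$ is needed). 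For the volume term, $\nabla\cdot\boldsymbol{K}|_{\mathrm{E}}\in\mathcal{P}_{k_2}(\mathrm{E})$, again by definition of the space; it remains to argue that $\nabla\cdot\boldsymbol{K}$ is itself computable from the three sets of degrees of freedom. This is the standard observation in the $\boldsymbol{H}(\mathrm{div})$-VEM of \cite{beirao2014h,caceres2017mixed}: for any $p_{k_2}\in\mathcal{P}_{k_2}(\mathrm{E})$, integrating by parts once more, $(\nabla\cdot\boldsymbol{K},p_{k_2})_{\mathrm{E}}=-(\boldsymbol{K},\nabla p_{k_2})_{\mathrm{E}}+(\boldsymbol{K}\cdot\boldsymbol{n},p_{k_2})_{\partial\mathrm{E}}$, and splitting $\nabla p_{k_2}=\nabla\tilde q_\alpha$ (with $\tilde q_\alpha\in\hat{\mathcal{P}}_{k_2\backslash 0}(\mathrm{E})$, i.e.\ modulo constants) shows the first term is given by $\mathrm{D}_{\mathrm{g}}^{\mathrm{E}}$ and the second by $\mathrm{D}_{\mathrm{n}}^{\mathrm{E}}$; hence $\nabla\cdot\boldsymbol{K}$ is known, and so is $(\nabla\cdot\boldsymbol{K},q_{\alpha})_{\mathrm{E}}$. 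Once $(\boldsymbol{K},\boldsymbol{q}_{k_2})_{\mathrm{E}}$ is computable for every $\boldsymbol{q}_{k_2}$, the matrix of $\mathrm{\Pi}_{k_2}^{0,\mathrm{E}}$ in a basis of $[\mathcal{P}_{k_2}(\mathrm{E})]^2$ follows by inverting the (polynomial) mass matrix, which is an explicitly known quantity.

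I expect the only delicate point — and it is minor — to be the degree bookkeeping on $\partial\mathrm{E}$: one must check that $q_\alpha$ restricted to an edge, though of degree $k_2+1$, only ever gets paired with $\boldsymbol{K}\cdot\boldsymbol{n}\in\mathcal{P}_{k_2}(e)$, so that $\mathrm{D}_{\mathrm{n}}^{\mathrm{E}}$ (moments against $\mathcal{P}_{k_2}(e)$) really does determine the boundary integral exactly, without needing a richer edge functional. Everything else is a direct transcription of the arguments already given for $\mathrm{\Pi}_{k_1}^{0,\mathrm{E}}$ and of the classical mixed-VEM computability results, so I would keep the write-up short and cite \cite{beirao2014h,caceres2017mixed} for the routine steps, exactly as the companion propositions do.
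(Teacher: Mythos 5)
Your proposal is correct and follows essentially the same route as the paper's proof: the decomposition $\boldsymbol{q}_{k_2}=\nabla q_{\alpha}+\boldsymbol{g}_{\alpha}$, Green's formula, computing $\nabla\cdot\boldsymbol{K}$ from $\mathrm{D}_{\mathrm{g}}^{\mathrm{E}}$ and $\mathrm{D}_{\mathrm{n}}^{\mathrm{E}}$ via the auxiliary integration by parts, and handling the complement part with $\mathrm{D}_{\bot}^{\mathrm{E}}$. Your extra remark on the edge-degree bookkeeping (that $q_{\alpha}|_e$ of degree $k_2+1$ is harmless because $\boldsymbol{K}\cdot\boldsymbol{n}|_e\in\mathcal{P}_{k_2}(e)$ is fully determined by $\mathrm{D}_{\mathrm{n}}^{\mathrm{E}}$) is a valid clarification that the paper leaves implicit.
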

\begin{proof}
From subsection \ref{subsec:notes}, we have
\begin{equation}
\label{decompose:P_k2^2}
\boldsymbol{q}_{k_2} = \nabla q_{\alpha}+\boldsymbol{g}_{\alpha},
\end{equation}
where $\boldsymbol{q}_{k_2}\in[\mathcal{P}_{k_{2}}(\mathrm{E})]^2$, $q_{\alpha}\in\hat{\mathcal{P}}_{k_2+1\backslash 0}(\mathrm{E})$ and $\boldsymbol{g}_{\alpha}\in\boldsymbol{\mathcal{G}}_{k_2}(\mathrm{E})^{\perp}$. Next, applying the (\ref{decompose:P_k2^2}) and Green formula to (\ref{moments:vq}), we obtain
\begin{align}
\nonumber
(\boldsymbol{K},\boldsymbol{q}_{k_{1}})_\mathrm{E} &= (\boldsymbol{K},\nabla q_{\alpha}+\boldsymbol{g}_{\alpha})_\mathrm{E}\\\nonumber
&=-(\nabla\cdot\boldsymbol{K},q_{\alpha})_\mathrm{E} +
(\boldsymbol{K}\cdot\boldsymbol{n},q_{\alpha})_{\partial\mathrm{E}}+
(\boldsymbol{K},\boldsymbol{g}_{\alpha})_\mathrm{E}.
\end{align}
 Thanks to $\boldsymbol{K}\cdot\boldsymbol{n}$ can be computed by $\mathrm{D}_{\mathrm{n}}^{\mathrm{E}}$ and the definition of $\mathrm{D}_{\mathrm{g}}^{\mathrm{E}}$, then the $\nabla\cdot\boldsymbol{K}\in\mathcal{P}_{k_2}(E)$ is computed by using the identity
\begin{align}
\label{Div:J}
(\nabla\cdot\boldsymbol{K},q_{\alpha})_{\mathrm{E}} = -(\boldsymbol{K},\nabla q_{\alpha})_{\mathrm{E}}+(\boldsymbol{K}\cdot\boldsymbol{n},q_{\alpha})_{\partial\mathrm{E}},
\end{align}
so the first term and second term are computable. Obviously, the final term can be computed by $\mathrm{D}_{\bot}^{\mathrm{E}}$.
\end{proof}
\subsection{The construction of discrete bilinear and trilinear forms\label{subsec:bilinear and trilinear}}
In this subsection, we consider to construct discrete versions of bilinear and trilinear forms by using the projection operators and virtual spaces defined in subsections (\ref{subsec:space}) and (\ref{subsec:projections}), respectively. Before doing so, we decompose the bilinear and trilinear forms in the model into the following local contributions
\begin{align}
\nonumber
&a_{v}(\boldsymbol{u},\boldsymbol{v})=\sum_{\mathrm{E}\in\mathrm{\mathcal{T}}_{h}}a_{v}^{\mathrm{E}}(\boldsymbol{u},\boldsymbol{v})
\quad \forall\boldsymbol{u},\boldsymbol{v}\in \boldsymbol{V},\quad
a_{K}(\boldsymbol{J},\boldsymbol{K})=\sum_{\mathrm{E}\in\mathrm{\mathcal{T}}_{h}}a_{K}^{\mathrm{E}}(\boldsymbol{J},\boldsymbol{K})
\quad \forall\boldsymbol{J},\boldsymbol{K}\in \boldsymbol{S},\\\nonumber
&b_{q}(q,\boldsymbol{v})=\sum_{\mathrm{E}\in\mathrm{\mathcal{T}}_{h}}b_{q}^{\mathrm{E}}(q,\boldsymbol{v})
\quad \forall q\in Q\;\text{and}\;\boldsymbol{v}\in\boldsymbol{V},\quad
b_{\psi}(\psi,\boldsymbol{K})=\sum_{\mathrm{E}\in\mathrm{\mathcal{T}}_{h}}b_{\psi}^{\mathrm{E}}(\psi,\boldsymbol{K})
\quad \forall \psi\in \Psi\;\text{and}\;\boldsymbol{K}\in\boldsymbol{S},\\\nonumber
&d_{K}(\boldsymbol{K},\boldsymbol{v})=\sum_{\mathrm{E}\in\mathrm{\mathcal{T}}_{h}}d_{K}^{\mathrm{E}}(\boldsymbol{K},\boldsymbol{v})
\quad \forall\boldsymbol{K}\in \boldsymbol{S}\;\text{and}\;\boldsymbol{v}\in\boldsymbol{V},\quad
c_{v}(\boldsymbol{w},\boldsymbol{u},\boldsymbol{v})=\sum_{\mathrm{E}\in\mathrm{\mathcal{T}}_{h}}c_{v}^{\mathrm{E}}(\boldsymbol{w},\boldsymbol{u},\boldsymbol{v})
\quad \forall\boldsymbol{w}, \boldsymbol{u},\boldsymbol{v}\in \boldsymbol{V},
\end{align}
and the norms $|\cdot|_{k,p}$ and $\|\cdot\|_{k,p}$ are defined as
\begin{align}
\nonumber
&|\boldsymbol{v}|_{k,p} = \Big(\sum_{\mathrm{E}\in\mathrm{\mathcal{T}}_{h}}|\boldsymbol{v}|_{\boldsymbol{W}^{k,p}(\mathrm{E})}^2\Big)^{1/2},
\quad\|\boldsymbol{v}\|_{k,p} = \Big(\sum_{\mathrm{E}\in\mathrm{\mathcal{T}}_{h}}\|\boldsymbol{v}\|_{\boldsymbol{W}^{k,p}(\mathrm{E})}^2\Big)^{1/2}\quad\forall\boldsymbol{v}\in\boldsymbol{W}^{k,p}.
\end{align}

Now, we first consider computability of  the bilinear terms  $b_{q}(\cdot,\cdot)$ and $b_{\psi}(\cdot,\cdot)$, we set
\begin{align}
\label{bilinear:dis-term-v}
b_{q}(q,\boldsymbol{v})=\sum_{\mathrm{E}\in\mathrm{\mathcal{T}}_{h}}b_{q}^{\mathrm{E}}(q,\boldsymbol{v})
=\sum_{\mathrm{E}\in\mathrm{\mathcal{T}}_{h}}(\nabla\cdot\boldsymbol{v},q)_{\mathrm{E}} \quad \forall q\in Q_{h}\;\text{and}\;\boldsymbol{v}\in\boldsymbol{V}_{h}^{k_1},\\
b_{\psi}(\psi,\boldsymbol{K})=\sum_{\mathrm{E}\in\mathrm{\mathcal{T}}_{h}}b_{\psi}^{\mathrm{E}}(\psi,\boldsymbol{K})
=\sum_{\mathrm{E}\in\mathrm{\mathcal{T}}_{h}}S_c(\nabla\cdot\boldsymbol{K},\psi)_{\mathrm{E}} \quad \forall \psi\in \Psi_{h}\;\text{and}\;\boldsymbol{K}\in\boldsymbol{S}_{h}^{k_2}.\label{bilinear:dis-term-J}
\end{align}
It is not hard to notice that (\ref{bilinear:dis-term-v})-(\ref{bilinear:dis-term-J}) can be computed by using degrees of freedom $\mathrm{D}_{i}^{\mathrm{E}}(i:=\textup{v},\textup{e},\textup{div})$ and $\mathrm{D}_{i}^{\mathrm{E}}(i:=\mathrm{n},\mathrm{g})$, respectively. Clearly,
we do not introduce any approximation for the above two bilinear terms.

Next, we continue to deal with bilinear term $a_{v}^{\mathrm{E}}(\cdot,\cdot)$ by using a more careful way. Look out, the quantity $a_{v}(\boldsymbol{u},\boldsymbol{v})$ is not computable for an arbitrary pair $(\boldsymbol{u},\boldsymbol{v})
\in\boldsymbol{V}_{h}^{k_1}(\mathrm{E})\times\boldsymbol{V}_{h}^{k_1}(\mathrm{E})$. So we follow a standard procedure in the VEM framework to define a computable discrete local bilinear form
\begin{equation}
\label{discrete-local av}
a_{v_h}^{\mathrm{E}}(\cdot,\cdot):\;\boldsymbol{V}_{h}^{k_1}(\mathrm{E})
\times\boldsymbol{V}_{h}^{k_1}(\mathrm{E})\rightarrow\mathbb{R},
\end{equation}
and then we use the above formula to approximate the continuous form $a_{v}^{\mathrm{E}}(\cdot,\cdot)$. In addition, the $a_{v_h}^{\mathrm{E}}(\cdot,\cdot)$ satisfy the following properties:
\begin{itemize}
\setstretch{1.25}
 \item[$\bullet$] $k_1$-consistency:
 \begin{equation}
 \label{k_1-consistency}
 a_{v_h}^{\mathrm{E}}(\boldsymbol{q}_{k_1},\boldsymbol{v}) = a_{v}^{\mathrm{E}}(\boldsymbol{q}_{k_1},\boldsymbol{v}),
 \end{equation}
 for all $\boldsymbol{q}_{k_1}\in[\mathcal{P}_{k_1}(\mathrm{E})]^2$ and $\boldsymbol{v}\in\boldsymbol{V}_{h}^{k_1}(\mathrm{E})$.
 \item[$\bullet$] stability:
 \begin{equation}
 \label{stability-uh}
 \alpha_{*}a_{v}^{\mathrm{E}}(\boldsymbol{v},\boldsymbol{v})\leq
 a_{v_h}^{\mathrm{E}}(\boldsymbol{v},\boldsymbol{v})\leq
 \alpha^{*}a_{v}^{\mathrm{E}}(\boldsymbol{v},\boldsymbol{v}),
 \end{equation}
 for all $\boldsymbol{v}\in\boldsymbol{V}_{h}^{k_1}(\mathrm{E})$, these two positive constants $\alpha_{*}$ and $\alpha^{*}$ are independent of $h$ and $\mathrm{E}$.
\end{itemize}
 Meanwhile, we set $\mathcal{S}_{v}^{\mathrm{E}}:\boldsymbol{V}_{h}^{k_1}(\mathrm{E})\times\boldsymbol{V}_{h}^{k_1}(\mathrm{E})\rightarrow\mathbb{R}$ is a (symmetric) stabilizing bilinear form which satisfies
\begin{equation}
\label{define-S^E_v}
s_{*}a_{v}^{\mathrm{E}}(\boldsymbol{v},\boldsymbol{v})\leq
 \mathcal{S}_{v}^{\mathrm{E}}(\boldsymbol{v},\boldsymbol{v})\leq
 s^{*}a_{v}^{\mathrm{E}}(\boldsymbol{v},\boldsymbol{v})\quad\forall\;\boldsymbol{v}\in \boldsymbol{V}_{h}^{k_1}(\mathrm{E}),
\end{equation}
where the two positive constants $s_{*}$ and $s^{*}$ are independent of $h$ and $\mathrm{E}$.
Based on the above results, we define
\begin{equation}
 \label{stabilizing form-u}
a_{v_h}^{\mathrm{E}}(\boldsymbol{u},\boldsymbol{v}):=
a_{v}^{\mathrm{E}}(\mathrm{\Pi}_{k_1}^{\nabla,\mathrm{E}}\boldsymbol{u},\mathrm{\Pi}_{k_1}^{\nabla,\mathrm{E}}\boldsymbol{v})
+\mathcal{S}_{v}^{\mathrm{E}}\Big((\mathrm{I}-\mathrm{\Pi}_{k_1}^{\nabla,\mathrm{E}})\boldsymbol{u},(\mathrm{I}-\mathrm{\Pi}_{k_1}^{\nabla,\mathrm{E}})\boldsymbol{v}\Big),
\end{equation}
for all $\boldsymbol{u}$, $\boldsymbol{v}\in\boldsymbol{V}_{h}^{k_1}(\mathrm{E})$.
Clearly, we note that definition of the elliptic-projection $\mathrm{\Pi}_{k_1}^{\nabla,\mathrm{E}}$ and property (\ref{define-S^E_v}) imply the consistency and the stability of the bilinear form $a_{v_h}^{\mathrm{E}}(\cdot,\cdot)$.
Obviously, regarding to global approximated bilinear form $a_{v_h}(\cdot,\cdot):\boldsymbol{V}_{h}\times\boldsymbol{V}_{h}\rightarrow\mathbb{R}$, we can define it as follows:
\begin{equation}
\label{discrete-global-v}
a_{v_h}(\boldsymbol{u},\boldsymbol{v}):=\sum_{\mathrm{E}\in\mathcal{T}_{h}}a_{v_h}^{\mathrm{E}}(\boldsymbol{u},\boldsymbol{v})
\quad\forall \boldsymbol{u},\boldsymbol{v}\in\boldsymbol{V}_{h}.
\end{equation}
By noticing that the symmetry of $a_{v_h}^{\mathrm{E}}(\cdot,\cdot)$, property (\ref{stability-uh}) and the definiton of $a_{v}^{\mathrm{E}}(\cdot,\cdot)$ imply the continuity of $a_{v_h}^{\mathrm{E}}$:
\begin{align}
a_{v_h}^{\mathrm{E}}(\boldsymbol{u},\boldsymbol{v})\leq(a_{v_h}^{\mathrm{E}}(\boldsymbol{u},\boldsymbol{u}))^{\frac{1}{2}}
(a_{v_h}^{\mathrm{E}}(\boldsymbol{v},\boldsymbol{v}))^{\frac{1}{2}}
\leq(a_{v}^{\mathrm{E}}(\boldsymbol{u},\boldsymbol{u}))^{\frac{1}{2}}
(a_{v}^{\mathrm{E}}(\boldsymbol{v},\boldsymbol{v}))^{\frac{1}{2}}
\leq \alpha^{*}|\boldsymbol{u}|_{1,\mathrm{E}}|\boldsymbol{v}|_{1,\mathrm{E}},
\label{continue-E-a_v}
\end{align}
and using the H\'{o}lder inequality to get
\begin{equation}
a_{v_h}(\boldsymbol{u},\boldsymbol{v})\leq  \alpha^{*}|\boldsymbol{u}|_{1,\mathrm{\Omega}}|\boldsymbol{v}|_{1,\mathrm{\Omega}}\quad \text{for all } \boldsymbol{u},\boldsymbol{K}\in\boldsymbol{V}_{h}.
\label{continue-Omega-a_v}
\end{equation}

The above method for processing $a_{v}(\cdot,\cdot)$ is also applicable to bilinear term $a_{K}(\cdot,\cdot)$. The $a_{K_h}^{\mathrm{E}}(\cdot,\cdot)$ satisfy the following properties:
\begin{itemize}
\setstretch{1.25}
 \item[$\bullet$] $k_2$-consistency:
 \begin{equation}
 \label{k_2-consistency}
 a_{K_h}^{\mathrm{E}}(\boldsymbol{q}_{k_2},\boldsymbol{K}) = a_{K}^{\mathrm{E}}(\boldsymbol{q}_{k_2},\boldsymbol{K}),
 \end{equation}
 for all
 $\boldsymbol{q}_{k_2}\in[\mathcal{P}_{k_2}(\mathrm{E})]^2$ and $ \boldsymbol{K}\in\boldsymbol{S}_{h}^{k_2}(\mathrm{E})$.
 \item[$\bullet$] stability:
 \begin{equation}
 \label{stability-Jh}
 \eta_{*}a_{K}^{\mathrm{E}}(\boldsymbol{K},\boldsymbol{K})\leq
 a_{K_h}^{\mathrm{E}}(\boldsymbol{K},\boldsymbol{K})\leq
 \eta^{*}a_{K}^{\mathrm{E}}(\boldsymbol{K},\boldsymbol{K}),
 \end{equation}
 for all $\boldsymbol{K}\in\boldsymbol{S}_{h}^{k_2}(\mathrm{E})$, these two positive constants $\eta_{*}$ and $\eta^{*}$ are independent of $h$ and $\mathrm{E}$.
\end{itemize}
 Meanwhile, we set $\mathcal{S}_{K}^{\mathrm{E}}:\boldsymbol{S}_{h}^{k_2}(\mathrm{E})\times\boldsymbol{S}_{h}^{k_2}(\mathrm{E})\rightarrow\mathbb{R}$ is a (symmetric) stabilizing bilinear form which satisfies
\begin{equation}
\label{define-S^E_K}
s_{**}a_{K}^{\mathrm{E}}(\boldsymbol{K},\boldsymbol{K})\leq
 \mathcal{S}_{K}^{\mathrm{E}}(\boldsymbol{K},\boldsymbol{K})\leq
 s^{**}a_{K}^{\mathrm{E}}(\boldsymbol{K},\boldsymbol{K})\quad\text{for all}\;\boldsymbol{K}\in \boldsymbol{S}_{h}^{k_2}(\mathrm{E}),
\end{equation}
where the two positive constants $s_{**}$ and $s^{**}$ are independent of $h$ and $\mathrm{E}$.
Based on the above results, we define
\begin{equation}
\label{stability form -J}
a_{K_h}^{\mathrm{E}}(\boldsymbol{J},\boldsymbol{K}):=
a_{K}^{\mathrm{E}}(\mathrm{\Pi}_{k_2}^{0,\mathrm{E}}\boldsymbol{J},\mathrm{\Pi}_{k_2}^{0,\mathrm{E}}\boldsymbol{K})
+\mathcal{S}_{K}^{\mathrm{E}}\Big((\mathrm{I}-\mathrm{\Pi}_{k_2}^{0,\mathrm{E}})\boldsymbol{J},(\mathrm{I}-\mathrm{\Pi}_{k_2}^{0,\mathrm{E}})\boldsymbol{K}\Big),
\end{equation}
for all $\boldsymbol{J}$, $\boldsymbol{K}\in\boldsymbol{S}_{h}^{k_2}(\mathrm{E})$.
Evidently, we note that definition of the $L^2$-projection $\mathrm{\Pi}_{k_2}^{0,\mathrm{E}}$ and property (\ref{define-S^E_K}) imply the consistency and the stability of the bilinear form $a_{K_h}^{\mathrm{E}}(\cdot,\cdot)$. Regarding to the global approximated bilinear form $a_{K_h}(\cdot,\cdot):\boldsymbol{S}_{h}\times\boldsymbol{S}_{h}\rightarrow\mathbb{R}$, we can define it as follows:
\begin{equation}
\label{discrete-global-K}
a_{K_h}(\boldsymbol{J},\boldsymbol{K}):=\sum_{\mathrm{E}\in\mathcal{T}_{h}}a_{K_h}^{\mathrm{E}}(\boldsymbol{J},\boldsymbol{K})
\quad\forall \boldsymbol{J},\boldsymbol{K}\in\boldsymbol{S}_{h}.
\end{equation}
By noticing that the symmetry of $a_{K_h}^{\mathrm{E}}(\cdot,\cdot)$, property (\ref{stability-Jh}) and the definiton of $a_{K}^{\mathrm{E}}(\cdot,\cdot)$ imply the continuity of $a_{K_h}^{\mathrm{E}}$:
\begin{align}
a_{K_h}^{\mathrm{E}}(\boldsymbol{J},\boldsymbol{K})\leq\big(a_{K_h}^{\mathrm{E}}(\boldsymbol{J},\boldsymbol{J})\big)^{\frac{1}{2}}
\big(a_{K_h}^{\mathrm{E}}(\boldsymbol{J},\boldsymbol{K})\big)^{\frac{1}{2}}
\leq\big(a_{K}^{\mathrm{E}}(\boldsymbol{J},\boldsymbol{J})\big)^{\frac{1}{2}}
\big(a_{K}^{\mathrm{E}}(\boldsymbol{K},\boldsymbol{K})\big)^{\frac{1}{2}}
\leq\eta^{*}|\boldsymbol{J}|_{1,\mathrm{E}}|\boldsymbol{K}|_{1,\mathrm{E}},
\label{continue-E-a_K}
\end{align}
and using the H\'{o}lder inequality to get
\begin{equation}
a_{K_h}(\boldsymbol{J},\boldsymbol{K})\leq  \eta^{*}|\boldsymbol{J}|_{1,\mathrm{\Omega}}|\boldsymbol{K}|_{1,\mathrm{\Omega}},
\label{continue-Omega-a_K}
\end{equation}
for all $\boldsymbol{J},\boldsymbol{K}\in\boldsymbol{S}_{h}$.

Regarding to the approximation of the local trilinear form $c_{v}(\cdot,\cdot,\cdot)$, we set
\begin{equation}
\label{trilinear:local discrete}
c_{v_h}^{\mathrm{E}}(\boldsymbol{w},\boldsymbol{u},\boldsymbol{v}):=\frac{1}{2}\Big(\mathrm{\Pi}_{k_1}^{0,\mathrm{E}}\boldsymbol{w}
\cdot\mathrm{\Pi}_{k_1-1}^{0,\mathrm{E}}\nabla\boldsymbol{u},\mathrm{\Pi}_{k_1}^{0,\mathrm{E}}\boldsymbol{v}\Big)_{\mathrm{E}}
-\frac{1}{2}\Big(\mathrm{\Pi}_{k_1}^{0,\mathrm{E}}\boldsymbol{w}
\cdot\mathrm{\Pi}_{k_1-1}^{0,\mathrm{E}}\nabla\boldsymbol{v},\mathrm{\Pi}_{k_1}^{0,\mathrm{E}}\boldsymbol{u}\Big)_{\mathrm{E}}\quad\forall\boldsymbol{w},\boldsymbol{u},\boldsymbol{v}\in\boldsymbol{V}_{h}^{k_1}(\mathrm{E}),
\end{equation}
and then we readily check that the all quantities in (\ref{trilinear:local discrete}) are computable by observing Proposition \ref{Projection:L^2k1-1}, \ref{Projection:L^2k1}. Naturally, we define the global approximated trilinear form as follows:
\begin{equation}
\label{trilinear:discrete-global}
c_{v_h}(\boldsymbol{w},\boldsymbol{u},\boldsymbol{v}):=\sum_{\mathrm{E}\in\mathcal{T}_{h}}c_{v_h}^{\mathrm{E}}(\boldsymbol{w},\boldsymbol{u},\boldsymbol{v})\quad
\forall\boldsymbol{w},\boldsymbol{u},\boldsymbol{v}\in\boldsymbol{V}_{h}.
\end{equation}
\begin{prop}[\cite{da2018virtual}]
The trilinear form $c_{v_h}(\cdot,\cdot,\cdot)$ is uniformly continuous with respect to $h$, namely,
\begin{equation}
\delta:=\sup_{\boldsymbol{u},\boldsymbol{v},\boldsymbol{w},\in\boldsymbol{V}_{h}}
\frac{c_{v_h}(\boldsymbol{u},\boldsymbol{v},\boldsymbol{w})}{|\boldsymbol{u}|_{1,\mathrm{\Omega}}|\boldsymbol{v}|_{1,\mathrm{\Omega}}|\boldsymbol{w}|_{1,\mathrm{\Omega}}},
\label{prop:bilinear-continuous-uh}
\end{equation}
where the constant $\delta$ independent on $h$. The skew-symmetric property
 \begin{equation}
 c_{v_h}(\boldsymbol{u},\boldsymbol{v},\boldsymbol{v})=0\quad\forall\boldsymbol{u}, \boldsymbol{v}
 \in\boldsymbol{V}_{h}.
 \label{skew-symmetric-uh}
 \end{equation}
\end{prop}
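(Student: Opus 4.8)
The plan is to treat the two assertions separately: the skew-symmetry comes for free from the way $c_{v_h}$ is built, while the uniform continuity requires the mesh-regularity assumptions only through one standard VEM estimate, which is the real technical point.

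The skew-symmetry (\ref{skew-symmetric-uh}) is immediate from the construction (\ref{trilinear:local discrete}). Placing the same field $\boldsymbol{v}\in\boldsymbol{V}_{h}$ in the second and third slots (with convecting field $\boldsymbol{u}$), the two half-terms
$\frac{1}{2}\big(\mathrm{\Pi}_{k_1}^{0,\mathrm{E}}\boldsymbol{u}\cdot\mathrm{\Pi}_{k_1-1}^{0,\mathrm{E}}\nabla\boldsymbol{v},\mathrm{\Pi}_{k_1}^{0,\mathrm{E}}\boldsymbol{v}\big)_{\mathrm{E}}$
coincide and cancel, so $c_{v_h}^{\mathrm{E}}(\boldsymbol{u},\boldsymbol{v},\boldsymbol{v})=0$ on every $\mathrm{E}\in\mathcal{T}_{h}$; summing over the mesh and using (\ref{trilinear:discrete-global}) yields (\ref{skew-symmetric-uh}).

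For the continuity I would start from (\ref{trilinear:local discrete}) and, abbreviating $\widehat{\boldsymbol{z}}:=\mathrm{\Pi}_{k_1}^{0,\mathrm{E}}\boldsymbol{z}$ and $\widehat{\nabla\boldsymbol{z}}:=\mathrm{\Pi}_{k_1-1}^{0,\mathrm{E}}\nabla\boldsymbol{z}$, apply the generalized H\"older inequality with exponents $(4,2,4)$ to each of the two products, obtaining
\begin{equation*}
c_{v_h}^{\mathrm{E}}(\boldsymbol{u},\boldsymbol{v},\boldsymbol{w})\le
\tfrac{1}{2}\|\widehat{\boldsymbol{u}}\|_{0,4,\mathrm{E}}\|\widehat{\nabla\boldsymbol{v}}\|_{0,\mathrm{E}}\|\widehat{\boldsymbol{w}}\|_{0,4,\mathrm{E}}
+\tfrac{1}{2}\|\widehat{\boldsymbol{u}}\|_{0,4,\mathrm{E}}\|\widehat{\nabla\boldsymbol{w}}\|_{0,\mathrm{E}}\|\widehat{\boldsymbol{v}}\|_{0,4,\mathrm{E}}.
\end{equation*}
Since $\mathrm{\Pi}_{k_1-1}^{0,\mathrm{E}}$ is an $L^{2}$-orthogonal projection it is an $L^{2}$-contraction, hence $\|\widehat{\nabla\boldsymbol{z}}\|_{0,\mathrm{E}}\le|\boldsymbol{z}|_{1,\mathrm{E}}$ and the middle factors are controlled by $|\boldsymbol{v}|_{1,\mathrm{E}}$ and $|\boldsymbol{w}|_{1,\mathrm{E}}$ with constant one. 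For the outer $L^{4}$-factors I would invoke the $L^{4}$-stability of the $L^{2}$-orthogonal polynomial projection on star-shaped polygons, namely $\|\mathrm{\Pi}_{k_1}^{0,\mathrm{E}}\boldsymbol{z}\|_{0,4,\mathrm{E}}\le C\|\boldsymbol{z}\|_{0,4,\mathrm{E}}$ with $C$ depending only on $k_1$ and the regularity constants $\alpha_{s},\alpha_{e}$ but not on $h_{\mathrm{E}}$; this is the estimate used in \cite{da2018virtual}, proved by the equivalence of all norms on the finite-dimensional space $[\mathcal{P}_{k_1}(\mathrm{E})]^{2}$ together with a scaling argument to a reference configuration. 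This is the only place where the mesh assumptions enter and is the main obstacle of the proof.

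Collecting the local bounds gives $c_{v_h}^{\mathrm{E}}(\boldsymbol{u},\boldsymbol{v},\boldsymbol{w})\le C\|\boldsymbol{u}\|_{0,4,\mathrm{E}}\big(|\boldsymbol{v}|_{1,\mathrm{E}}\|\boldsymbol{w}\|_{0,4,\mathrm{E}}+|\boldsymbol{w}|_{1,\mathrm{E}}\|\boldsymbol{v}\|_{0,4,\mathrm{E}}\big)$. I would then sum over $\mathrm{E}\in\mathcal{T}_{h}$, apply the discrete H\"older inequality with exponents $(4,2,4)$, and use $\sum_{\mathrm{E}}\|\boldsymbol{z}\|_{0,4,\mathrm{E}}^{4}=\|\boldsymbol{z}\|_{0,4,\mathrm{\Omega}}^{4}$ and $\sum_{\mathrm{E}}|\boldsymbol{z}|_{1,\mathrm{E}}^{2}=|\boldsymbol{z}|_{1,\mathrm{\Omega}}^{2}$ to reach
\begin{equation*}
c_{v_h}(\boldsymbol{u},\boldsymbol{v},\boldsymbol{w})\le C\|\boldsymbol{u}\|_{0,4,\mathrm{\Omega}}\big(|\boldsymbol{v}|_{1,\mathrm{\Omega}}\|\boldsymbol{w}\|_{0,4,\mathrm{\Omega}}+|\boldsymbol{w}|_{1,\mathrm{\Omega}}\|\boldsymbol{v}\|_{0,4,\mathrm{\Omega}}\big).
\end{equation*}
Finally, since $\boldsymbol{V}_{h}\subset\boldsymbol{H}^{1}_{0}(\mathrm{\Omega})$, the Sobolev--Poincar\'e inequality $\|\boldsymbol{z}\|_{0,4,\mathrm{\Omega}}^{2}\le\lambda_{2}|\boldsymbol{z}|_{1,\mathrm{\Omega}}^{2}$ recorded in Section~\ref{subsec:notes} applies to $\boldsymbol{u},\boldsymbol{v},\boldsymbol{w}$, and one concludes $c_{v_h}(\boldsymbol{u},\boldsymbol{v},\boldsymbol{w})\le 2C\lambda_{2}\,|\boldsymbol{u}|_{1,\mathrm{\Omega}}|\boldsymbol{v}|_{1,\mathrm{\Omega}}|\boldsymbol{w}|_{1,\mathrm{\Omega}}$, i.e. $\delta\le 2C\lambda_{2}<\infty$ with a bound independent of $h$, as in \cite{da2018virtual}.
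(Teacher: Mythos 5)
The paper does not actually prove this proposition: it is stated with the citation \cite{da2018virtual} and no proof is given, so there is no in-paper argument to compare against. Your proof is correct and is essentially the standard argument used in that reference: cancellation of the two half-terms in (\ref{trilinear:local discrete}) for skew-symmetry, and generalized H\"older with exponents $(4,2,4)$, the $L^2$-nonexpansivity of $\mathrm{\Pi}_{k_1-1}^{0,\mathrm{E}}$, the $h$-uniform $L^4$-stability of $\mathrm{\Pi}_{k_1}^{0,\mathrm{E}}$ (which follows by the same inverse-estimate-plus-H\"older scaling the paper itself uses to obtain the $L^6$ bound (\ref{norm:L^6-continous})), and the Sobolev embedding $\|\cdot\|_{0,4,\mathrm{\Omega}}\le\lambda_2^{1/2}|\cdot|_{1,\mathrm{\Omega}}$ for the global bound.
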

Finally, regarding to the approximation of the local bilinear form $d_{K}(\cdot,\cdot)$, we set
\begin{equation}
\label{bilinear:local discrete-JXBv}
d_{K_h}^{\mathrm{E}}(\boldsymbol{K},\boldsymbol{v}):=S_c\Big(\mathrm{\Pi}_{k_2}^{0,\mathrm{E}}\boldsymbol{K}
\times\boldsymbol{B},\mathrm{\Pi}_{k_1}^{0,\mathrm{E}}\boldsymbol{v}\Big)_{\mathrm{E}}
\quad\forall\boldsymbol{K}\in\boldsymbol{S}_{h}^{k_2}(\mathrm{E}),\boldsymbol{v}\in\boldsymbol{V}_{h}^{k_1}(\mathrm{E}),
\end{equation}
and then we readily check that the all quantities in (\ref{bilinear:local discrete-JXBv}) are computable by observing Proposition \ref{Projection:L^2k1}, \ref{Projection:L^2k2}. Naturally, we define the global approximated bilinear form as follows
\begin{equation}
\label{trilinear:discrete-global}
d_{K_h}(\boldsymbol{K},\boldsymbol{v}):=\sum_{\mathrm{E}\in\mathcal{T}_{h}}d_{K_h}^{\mathrm{E}}(\boldsymbol{K},\boldsymbol{v})\quad
\forall\boldsymbol{K}\in\boldsymbol{S}_{h},\boldsymbol{v}\in\boldsymbol{V}_{h}.
\end{equation}
By using inverse estimate for polynomials, continuity of $\mathrm{\Pi}_{k_1}^{0,\mathrm{E}}$ with respect to $\|\cdot\|_{0,\mathrm{E}}$, the H\"{o}lder inequality and definition of $h_{\mathrm{E}}$, we briefly infer that the following a conclusion
\begin{equation}
\label{norm:L^6-continous}
\|\mathrm{\Pi}_{k_1}^{0,\mathrm{E}}\boldsymbol{v}\|_{0,6,\mathrm{E}}\leq Ch_{\mathrm{E}}^{-2/3}\|\mathrm{\Pi}_{k_1}^{0,\mathrm{E}}\boldsymbol{v}\|_{0,\mathrm{E}}
\leq Ch_{\mathrm{E}}^{-2/3}\|1\|_{0,3,\mathrm{E}}\|\boldsymbol{v}\|_{0,6,\mathrm{E}}
\leq  C\|\boldsymbol{v}\|_{0,6,\mathrm{E}},
\end{equation}
for all $\boldsymbol{v}\in\boldsymbol{V}_{h}^{k_1}(\mathrm{E})$.
Based on the above result, the symmetry of $\mathrm{\Pi}_{k_2}^{0,\mathrm{E}}$ with respect to $\|\cdot\|_{0,\mathrm{E}}$, the H\"{o}lder inequality  (for sequences), and the definiton of $d_{K_h}^{\mathrm{E}}(\cdot,\cdot)$, we notice that
\begin{equation}
d_{K_h}^{\mathrm{E}}(\boldsymbol{K},\boldsymbol{v})\leq Sc\|\mathrm{\Pi}_{k_2}^{0,\mathrm{E}}\boldsymbol{K}\|_{0,\mathrm{E}}\|\boldsymbol{B}\|_{0,3,\mathrm{E}}
\|\mathrm{\Pi}_{k_1}^{0,\mathrm{E}}\boldsymbol{v}\|_{0,6,\mathrm{E}}
\leq C\|\boldsymbol{K}\|_{0,\mathrm{E}}\|\boldsymbol{B}\|_{0,3,\mathrm{E}}
\|\boldsymbol{v}\|_{0,6,\mathrm{E}}
\label{continue-E-d_K}
\end{equation}
and
\begin{equation}
d_{K_h}(\boldsymbol{K},\boldsymbol{v})\leq C\|\boldsymbol{K}\|_{0,\mathrm{\Omega}}\|\boldsymbol{B}\|_{0,3,\mathrm{\Omega}}
\|\boldsymbol{v}\|_{0,6,\mathrm{\Omega}},
\label{continue-Omega-d_K}
\end{equation}
for all $\boldsymbol{K}\in\boldsymbol{S}_{h},\boldsymbol{v}\in\boldsymbol{V}_{h}$ .

\subsection{The construction of discrete load forms\label{subsec:load forms}}
In this subsection, we will construct computable approximation of the right-hand side $(\boldsymbol{f},\boldsymbol{v})$ and $(\boldsymbol{g},\boldsymbol{K})$ in (\ref{variational:continous}), respectively. Firstly, we define the approximated load term $\boldsymbol{f}_{h}$ as
\begin{equation}
\boldsymbol{f}_{h}:=\mathrm{\Pi}_{k_1}^{0,\mathrm{E}}\boldsymbol{f}\quad \forall\mathrm{E}\in\mathcal{T}_{h}
\end{equation}
and consider
\begin{equation}
\label{load term:f}
(\boldsymbol{f}_{h},\boldsymbol{v}_{h})=\sum_{\mathrm{E}\in\mathcal{T}_h}(\boldsymbol{f}_{h},\boldsymbol{v}_{h})_{\mathrm{E}}
=\sum_{\mathrm{E}\in\mathcal{T}_h}(\boldsymbol{f},\mathrm{\Pi}_{k_1}^{0,\mathrm{E}}\boldsymbol{v})_{\mathrm{E}}\quad \forall\boldsymbol{v}\in\boldsymbol{V}_{h},
\end{equation}
we note that the (\ref{load term:f}) is computable by observing Proposition \ref{Projection:L^2k1}.
Secondly, we define the approximated load term $\boldsymbol{g}_{h}$ as
\begin{equation}
\label{load term:Pig}
\boldsymbol{g}_{h}:=\mathrm{\Pi}_{k_2}^{0,\mathrm{E}}\boldsymbol{g}\quad \forall\mathrm{E}\in\mathcal{T}_{h}
\end{equation}
and consider
\begin{equation}
\label{load term:g}
(\boldsymbol{g}_{h},\boldsymbol{K})=\sum_{\mathrm{E}\in\mathcal{T}_h}(\boldsymbol{g}_{h},\boldsymbol{K})_{\mathrm{E}}
=\sum_{\mathrm{E}\in\mathcal{T}_h}(\boldsymbol{g},\mathrm{\Pi}_{k_2}^{0,\mathrm{E}}\boldsymbol{K})_{\mathrm{E}}\quad \forall\boldsymbol{K}\in\boldsymbol{S}_{h},
\end{equation}
we note that the (\ref{load term:g}) is computable by observing Proposition \ref{Projection:L^2k2}.

\section{Analysis of well-posed and convergence for discrete problem\label{sec:Analysis}}
\subsection{The discrete formulate of inductionless MHD}
Referring to (\ref{Finite space:V_h^k(Omega)}),
(\ref{Finite space:Q_h}),
(\ref{Finite space:S_h^k(Omega)}) and
(\ref{Finite space: Psi}), we consider the virtual element
 discretizations problem: Find $(\boldsymbol{u}_h,p_h,\boldsymbol{J}_h,\phi_h)\in\boldsymbol{V}_h\times Q_h\times\boldsymbol{S}_h\times\Psi_h$ such that
\begin{subequations}
	\begin{align}
a_{v_h}(\boldsymbol{u}_h,\boldsymbol{v}_h)+c_{v_h}(\boldsymbol{u}_h,\boldsymbol{u}_h,\boldsymbol{v}_h)
-b_{q}(p_h,\boldsymbol{v}_h)-d_{K_h}(\boldsymbol{J}_h,\boldsymbol{v}_h)&=(\boldsymbol{f}_h,\boldsymbol{v}_h)_{\mathrm{\Omega}},\label{variational:discretizations-u}\\
b_{q}(q_h,\boldsymbol{u}_h)&=0,\label{variational:discretizations-p}\\
a_{K_h}(\boldsymbol{J}_h,\boldsymbol{K}_h)-b_{\psi}(\phi_h,\boldsymbol{K}_h)+d_{K_h}(\boldsymbol{K}_h,\boldsymbol{u}_h)&=S_c(\boldsymbol{g}_h,\boldsymbol{K}_h)_{\mathrm{\Omega}},\label{variational:discretizations-J}\\
b_{\psi}(\psi_{h},\boldsymbol{J}_h)&=0,\label{variational:discretizations-phi}
\end{align}
\label{variational:discretizations}
\end{subequations}
for all $(\boldsymbol{v}_h,q_h,\boldsymbol{K}_h,\psi_h)\in\boldsymbol{V}_h\times Q_h\times\boldsymbol{S}_h\times\Psi_h$. Regarding the constructed the discrete bilinear forms $a_{v_h}(\cdot,\cdot)$ and $a_{K_h}(\cdot,\cdot)$ (see (\ref{stability-uh})-(\ref{stabilizing form-u}) and (\ref{stability form -J})) are (uniformly) stable with respect to the $|\cdot|_{1,\mathrm{\Omega}}$ and $\|\cdot\|_{\textup{div},\mathrm{\Omega}}$-norms, respectively.

It is remark that we can clearly recognize the discrete velocity $\boldsymbol{u}_h\in\boldsymbol{V}_h$ is exactly divergence-free
by observing the equation (\ref{variational:discretizations-p}) with property (\ref{Vh-Qh}). Similarly, the discrete current density $\boldsymbol{J}_h\in\boldsymbol{S}_h$ also exactly divergence-free
by observing the equation (\ref{variational:discretizations-phi}) with property (\ref{Sh-Pish}).

Let us define two discrete kernel spaces
\begin{align*}
\boldsymbol{Z}_{h}&=\Big\{\boldsymbol{v}_{h}\in \boldsymbol{V}_{h}: b_{q}(q_{h},\boldsymbol{v}_{h})=0\quad \forall q_{h}\in Q_{h}\Big\}
=\Big\{\boldsymbol{v}_{h}\in \boldsymbol{V}_{h}: \nabla\cdot\boldsymbol{v}_{h}=0\Big\},\\
 \boldsymbol{Y}_{h}&=\Big\{\boldsymbol{K}_{h}\in \boldsymbol{S}_{h}: b_{\psi}(\psi_{h},\boldsymbol{K}_{h})=0\quad \forall \psi_{h}\in \Psi_{h}\Big\}
 =\Big\{\boldsymbol{K}_{h}\in \boldsymbol{S}_{h}: \nabla\cdot\boldsymbol{K}_{h}=0\Big\},
\end{align*}
we can naturally check that $\boldsymbol{Z}_{h}\subset\boldsymbol{Z}$ and $\boldsymbol{Y}_{h}\subset\boldsymbol{Y}$.

 As a direct consequence of Proposition 4.3 in \cite{da2017divergence}, we have the following stability result.

\begin{prop}
\label{stability result1}
The discrete bilinear forms $b_{q}(\cdot,\cdot)$ satisfies inf-sup condition:
\begin{equation}
\label{inf-sup-uh}
\tilde{\beta_{1}}\|q_h\|_{0,\mathrm{\Omega}}\leq\sup_{\boldsymbol{0}\neq\boldsymbol{v}_h\in\boldsymbol{V}_h}\frac{b_{q}(q_h,\boldsymbol{v}_h)}{|\boldsymbol{v}_h|_{1,\mathrm{\Omega}}}\quad\forall q_h\in Q_h,
\end{equation}
where $\tilde{\beta_{1}}$ is a strictly positive constant independent of $h$.
\end{prop}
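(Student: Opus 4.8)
The plan is to reduce \eqref{inf-sup-uh} to the continuous inf-sup condition for $b_{q}(\cdot,\cdot)$ on $\boldsymbol{V}\times Q$ by exhibiting a \emph{Fortin operator}, that is, a linear map $\Pi_{h}^{F}:\boldsymbol{V}\to\boldsymbol{V}_{h}$ which is uniformly $H^{1}$-stable, $|\Pi_{h}^{F}\boldsymbol{v}|_{1,\mathrm{\Omega}}\le C\,|\boldsymbol{v}|_{1,\mathrm{\Omega}}$ with $C$ independent of $h$, and satisfies the orthogonality $b_{q}(q_{h},\boldsymbol{v}-\Pi_{h}^{F}\boldsymbol{v})=0$ for all $q_{h}\in Q_{h}$. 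Once such an operator is available the argument is the classical one: for $q_{h}\in Q_{h}\subset Q$ pick $\boldsymbol{v}\in\boldsymbol{V}\setminus\{\boldsymbol{0}\}$ with $b_{q}(q_{h},\boldsymbol{v})\ge\tfrac12\beta_{1}\|q_{h}\|_{0,\mathrm{\Omega}}\,|\boldsymbol{v}|_{1,\mathrm{\Omega}}$ (possible by the continuous inf-sup), set $\boldsymbol{v}_{h}:=\Pi_{h}^{F}\boldsymbol{v}\in\boldsymbol{V}_{h}$, and combine $b_{q}(q_{h},\boldsymbol{v}_{h})=b_{q}(q_{h},\boldsymbol{v})$ with $|\boldsymbol{v}_{h}|_{1,\mathrm{\Omega}}\le C\,|\boldsymbol{v}|_{1,\mathrm{\Omega}}$ to obtain \eqref{inf-sup-uh} with $\tilde{\beta_{1}}=\beta_{1}/(2C)$.

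To build $\Pi_{h}^{F}$ I would follow the two-stage recipe that is by now standard for divergence-free VEM. In the first stage I fix the boundary degrees of freedom $\mathrm{D}_{\textup{v}}^{\mathrm{E}},\mathrm{D}_{\textup{e}}^{\mathrm{E}}$ of $\Pi_{h}^{F}\boldsymbol{v}$ through a Scott--Zhang/Cl\'{e}ment-type quasi-interpolation on the skeleton of $\mathcal{T}_{h}$ (averaged functionals are needed, since an $H^{1}$ field has no vertex values in two dimensions), chosen $H^{1}$-stable and, moreover, so as to reproduce the normal-flux moments $\int_{e}\boldsymbol{v}\cdot\boldsymbol{n}$ on every edge $e$. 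In the second stage I fix the interior degrees of freedom by matching those of $\boldsymbol{v}$: set $\mathrm{D}_{\textup{m}}^{\mathrm{E}}$ to the value $(\boldsymbol{v},\boldsymbol{g}_{k_{1}-2}^{\perp})_{\mathrm{E}}$ and $\mathrm{D}_{\textup{div}}^{\mathrm{E}}$ to the value $(\nabla\cdot\boldsymbol{v},q_{k_{1}-1})_{\mathrm{E}}$. Since $\nabla\cdot\boldsymbol{w}\in\mathcal{P}_{k_{1}-1}(\mathrm{E})$ for $\boldsymbol{w}\in\boldsymbol{V}_{h}^{k_{1}}(\mathrm{E})$ is uniquely determined by its pairings against $\mathcal{P}_{k_{1}-1}(\mathrm{E})/\mathbb{R}$ (which is exactly $\mathrm{D}_{\textup{div}}^{\mathrm{E}}$) together with its mean $|\mathrm{E}|^{-1}\int_{\partial\mathrm{E}}\boldsymbol{w}\cdot\boldsymbol{n}$, the two prescriptions above force $(\nabla\cdot(\boldsymbol{v}-\Pi_{h}^{F}\boldsymbol{v}),q_{h})_{\mathrm{E}}=0$ for every $q_{h}\in\mathcal{P}_{k_{1}-1}(\mathrm{E})\supseteq Q_{h}|_{\mathrm{E}}$; summing over $\mathrm{E}$ yields the Fortin orthogonality. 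This is precisely where the inclusion $\nabla\cdot\boldsymbol{V}_{h}\subseteq Q_{h}$ from \eqref{Vh-Qh} and the design of $\mathrm{D}_{\textup{div}}^{\mathrm{E}}$ enter.

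The step I expect to be the main obstacle is the $h$-uniform $H^{1}$-stability of $\Pi_{h}^{F}$, i.e. checking that appending the interior moment conditions to the stable boundary interpolant does not introduce an $h$-dependent constant. I would handle it by the usual homogeneity/scaling argument: decompose $\Pi_{h}^{F}\boldsymbol{v}|_{\mathrm{E}}=\boldsymbol{v}_{\partial}+\boldsymbol{b}$ with $\boldsymbol{v}_{\partial}$ carrying the boundary DoF and $\boldsymbol{b}\in\boldsymbol{V}_{h}^{k_{1}}(\mathrm{E})$ having vanishing boundary DoF; estimate $|\boldsymbol{b}|_{1,\mathrm{E}}$ by the finitely many interior DoF of $\boldsymbol{v}-\boldsymbol{v}_{\partial}$ via an inverse inequality that is uniform under the star-shapedness and shortest-edge mesh-regularity assumptions; bound those DoF by $\|\boldsymbol{v}-\boldsymbol{v}_{\partial}\|_{0,\mathrm{E}}$ and $\|\nabla\cdot(\boldsymbol{v}-\boldsymbol{v}_{\partial})\|_{0,\mathrm{E}}$; and finally absorb these into $|\boldsymbol{v}|_{1,\mathrm{E}}$ through a local Poincar\'{e}/Bramble--Hilbert estimate together with the $H^{1}$-stability of the boundary interpolant. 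Since the local space $\boldsymbol{V}_{h}^{k_{1}}(\mathrm{E})$ and the pressure space $Q_{h}$ coincide with the Stokes VEM spaces of \cite{da2017divergence}, one may equivalently invoke their Proposition~4.3 directly once the identification of the spaces is noted; the sketch above merely records why that result transfers unchanged to the present setting.
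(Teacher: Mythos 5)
Your proposal is correct and ultimately lands on the same route as the paper: the paper gives no proof at all, simply invoking Proposition 4.3 of \cite{da2017divergence} since the velocity--pressure pair $(\boldsymbol{V}_{h},Q_{h})$ coincides with the Stokes VEM spaces there, which is exactly the observation you make in your closing sentence. The Fortin-operator construction you sketch (boundary quasi-interpolation plus matching of the $\mathrm{D}_{\textup{m}}^{\mathrm{E}}$ and $\mathrm{D}_{\textup{div}}^{\mathrm{E}}$ moments, with the scaling argument for uniform $H^{1}$-stability) is precisely the argument underlying that cited result, so your write-up is a faithful, more detailed reconstruction rather than a genuinely different proof.
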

Meanwhile, for the given $\boldsymbol{v}\in \boldsymbol{Z}$, the discrete inf-sup condition (\ref{inf-sup-uh}) implies
\begin{equation}
\nonumber
\inf_{0\neq\boldsymbol{v}_{h}\in\boldsymbol{Z}_{h}}|\boldsymbol{v}-\boldsymbol{v}_{h}|_{1,\mathrm{\Omega}}
\leq C\inf_{0\neq\boldsymbol{w}_{h}\in\boldsymbol{V}_{h}}|\boldsymbol{v}-\boldsymbol{w}_{h}|_{1,\mathrm{\Omega}}.
\end{equation}
This means that the approximate accuracy of $\boldsymbol{Z}_{h}$ to $\boldsymbol{Z}$ is equal to the approximate accuracy of $\boldsymbol{V}_{h}$ to $\boldsymbol{Z}$. Notably, we suppose $\boldsymbol{v}\in\boldsymbol{H}^{r+1}\cap\boldsymbol{Z}$, $0<r\leq k_{1}$ and base on Lemma \ref{lem:interplate-u}, we can deduce that
\begin{equation}
\inf_{0\neq\boldsymbol{v}_{h}\in\boldsymbol{Z}_{h}}|\boldsymbol{v}-\boldsymbol{v}_{h}|_{1,\mathrm{\Omega}}\leq Ch^{r}|\boldsymbol{v}|_{r+1,\mathrm{\Omega}}.
\label{inf:u-Zh}
\end{equation}

Next, we can analogize the proof process of Corollary 5.5 in \cite{brezzi2014basic} to obtain following result.
\begin{prop}
\label{stability result2}
The discrete bilinear forms $b_{\psi}(\cdot,\cdot)$ satisfies inf-sup condition:
\begin{equation}
\label{inf-sup-Jh}
\tilde{\beta_{2}}\|\psi_h\|_{0,\mathrm{\Omega}}\leq\sup_{\boldsymbol{0}\neq\boldsymbol{K}_h\in\boldsymbol{S}_h}\frac{b_{\psi}(\psi_h,\boldsymbol{K}_h)}{\|\boldsymbol{K}_h\|_{\textup{div},\mathrm{\Omega}}}\quad\forall \psi_h\in \Psi_h,
\end{equation}
where $\tilde{\beta_{2}}$ is a strictly positive constant independent of $h$.
\end{prop}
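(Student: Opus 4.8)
The plan is to reproduce, in the virtual element setting, the classical Fortin-operator proof of the discrete inf--sup condition, exactly as for Corollary~5.5 of \cite{brezzi2014basic} and the Brinkman analysis of \cite{caceres2017mixed}. The backbone is an $H(\textup{div})$-conforming interpolation operator for $\boldsymbol{S}_h$ enjoying a commuting-diagram property with respect to the divergence, together with the already-established continuous inf--sup condition for $b_{\psi}(\cdot,\cdot)$ on $\boldsymbol{S}\times\Psi$.

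First I would construct an interpolation operator $\boldsymbol{\Pi}_h^{\textup{div}}:\boldsymbol{S}\cap\boldsymbol{H}^{1}(\mathrm{\Omega})\to\boldsymbol{S}_h$ by means of the local degrees of freedom $\mathrm{D}_i^{\mathrm{E}}\,(i:=\mathrm{n},\mathrm{g},\bot)$, and establish its two standard properties: (a) the commuting relation $\nabla\cdot(\boldsymbol{\Pi}_h^{\textup{div}}\boldsymbol{K})=\mathcal{P}_{\Psi_h}(\nabla\cdot\boldsymbol{K})$, where $\mathcal{P}_{\Psi_h}$ denotes the $L^2$-orthogonal projection onto $\Psi_h$ --- this follows element by element, since $\nabla\cdot\boldsymbol{S}_h^{k_2}(\mathrm{E})\subseteq\mathcal{P}_{k_2}(\mathrm{E})$ (property \eqref{Sh-Pish}) and the moments of $\nabla\cdot\boldsymbol{K}_h$ against $\mathcal{P}_{k_2}(\mathrm{E})$ are fixed by $\mathrm{D}_{\mathrm{n}}^{\mathrm{E}}$ and $\mathrm{D}_{\mathrm{g}}^{\mathrm{E}}$ through the identity \eqref{Div:J}; and (b) the $h$-uniform stability bound $\|\boldsymbol{\Pi}_h^{\textup{div}}\boldsymbol{K}\|_{\textup{div},\mathrm{\Omega}}\leq C\|\boldsymbol{K}\|_{1,\mathrm{\Omega}}$, obtained by a Piola-type scaling argument on the reference polygon combined with the mesh-regularity assumptions (shortest-edge and star-shapedness conditions) and standard VEM interpolation estimates for $\boldsymbol{S}_h$, cf.\ \cite{beirao2014h,caceres2017mixed}.

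With this operator in hand, the conclusion is quick. Given $\psi_h\in\Psi_h\subset L^2_0(\mathrm{\Omega})$, the continuous inf--sup condition for $b_{\psi}$ (equivalently, surjectivity of $\nabla\cdot:\boldsymbol{S}\to L^2_0(\mathrm{\Omega})$, realized by a Bogovski\u{\i}-type right inverse that also provides the extra regularity required above) yields $\boldsymbol{K}\in\boldsymbol{S}\cap\boldsymbol{H}^{1}(\mathrm{\Omega})$ with $\nabla\cdot\boldsymbol{K}=\psi_h$ and $\|\boldsymbol{K}\|_{1,\mathrm{\Omega}}\leq C\|\psi_h\|_{0,\mathrm{\Omega}}$. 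Setting $\boldsymbol{K}_h:=\boldsymbol{\Pi}_h^{\textup{div}}\boldsymbol{K}\in\boldsymbol{S}_h$ and using that $\psi_h\in\Psi_h$, property (a) gives $\nabla\cdot\boldsymbol{K}_h=\mathcal{P}_{\Psi_h}\psi_h=\psi_h$, hence $b_{\psi}(\psi_h,\boldsymbol{K}_h)=S_c\|\psi_h\|_{0,\mathrm{\Omega}}^2$, while property (b) gives $\|\boldsymbol{K}_h\|_{\textup{div},\mathrm{\Omega}}\leq C\|\psi_h\|_{0,\mathrm{\Omega}}$. Therefore
\[
\sup_{\boldsymbol{0}\neq\boldsymbol{L}_h\in\boldsymbol{S}_h}\frac{b_{\psi}(\psi_h,\boldsymbol{L}_h)}{\|\boldsymbol{L}_h\|_{\textup{div},\mathrm{\Omega}}}\geq\frac{b_{\psi}(\psi_h,\boldsymbol{K}_h)}{\|\boldsymbol{K}_h\|_{\textup{div},\mathrm{\Omega}}}\geq\frac{S_c}{C}\,\|\psi_h\|_{0,\mathrm{\Omega}},
\]
which is the assertion with $\tilde{\beta_2}:=S_c/C$ independent of $h$ (mirroring the treatment of $b_q$ in Proposition~\ref{stability result1}).

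I expect the only genuine difficulty to be Step~(a)--(b): producing $\boldsymbol{\Pi}_h^{\textup{div}}$ with a truly $h$-independent $H(\textup{div})$-stability constant and the exact commuting identity, and --- a technical side issue --- pinning down the regularity class (slightly stronger than $\boldsymbol{H}(\textup{div})$, so that the edge normal moments $\mathrm{D}_{\mathrm{n}}^{\mathrm{E}}$ are well defined) on which the operator acts, so that the lifting of $\psi_h$ in Step~2 lands in its domain. Once these ingredients are assembled from the existing VEM $H(\textup{div})$ theory, the remainder is routine.
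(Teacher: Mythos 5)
Your proposal is correct and follows essentially the route the paper itself relies on: the paper gives no proof of its own but simply defers to Corollary~5.5 of \cite{brezzi2014basic}, which is exactly the Fortin-operator argument you reconstruct (local $H(\textup{div})$ interpolation defined through the degrees of freedom $\mathrm{D}_{\mathrm{n}}^{\mathrm{E}},\mathrm{D}_{\mathrm{g}}^{\mathrm{E}},\mathrm{D}_{\bot}^{\mathrm{E}}$, the commuting-diagram identity $\nabla\cdot\boldsymbol{\Pi}_h^{\textup{div}}=\mathcal{P}_{\Psi_h}\nabla\cdot$, uniform $H(\textup{div})$-stability, and a Bogovski\u{\i}-type lifting of $\psi_h$). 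The only difference is that you spell out the details (including the regularity needed for the edge normal moments) that the paper leaves implicit in the citation.
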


In addition, for the given $\boldsymbol{K}\in \boldsymbol{Y}$, the discrete inf-sup condition (\ref{inf-sup-Jh}) implies
\begin{equation}
\nonumber
\inf_{0\neq\boldsymbol{K}_{h}\in\boldsymbol{Y}_{h}}\|\boldsymbol{K}-\boldsymbol{K}_{h}\|_{\tau,\mathrm{\Omega}}
\leq C\inf_{0\neq\boldsymbol{M}_{h}\in\boldsymbol{S}_{h}}\|\boldsymbol{K}-\boldsymbol{M}_{h}\|_{\tau,\mathrm{\Omega}}.
\end{equation}
This means that the approximate accuracy of $\boldsymbol{Y}_{h}$ to $\boldsymbol{Y}$ is equal to the approximate accuracy of $\boldsymbol{S}_{h}$ to $\boldsymbol{Y}$. Notably, we suppose $\boldsymbol{K}\in\boldsymbol{H}^{\tau}\cap\boldsymbol{Y}$, $\frac{1}{2}<\tau\leq k_{2}+1$ and base on Lemma \ref{lem:interplate-K}, we can deduce that
\begin{equation}
\inf_{0\neq\boldsymbol{K}_{h}\in\boldsymbol{Y}_{h}}\|\boldsymbol{K}-\boldsymbol{K}_{h}\|_{0,\mathrm{\Omega}}\leq Ch^{\tau}\|\boldsymbol{K}\|_{\tau,\mathrm{\Omega}}.
\label{inf:J-Yh}
\end{equation}

As we have the coercivity properties of $a_{v_h}(\cdot,\cdot)$ and $a_{K_h}(\cdot,\cdot)$, the skew-symmetry of $c_{v_h}(\cdot,\cdot,\cdot)$, and discrete inf-sup conditions (\ref{inf-sup-uh}), (\ref{inf-sup-Jh}), the well-posedness of virtual element problem (\ref{variational:discretizations}) can be readily obtained. Then we give the theorem as follows:

\begin{thm}
\label{lem:discrete problem well-posed}
Suppose $\mu:=\delta\check{C}_{\min}^{-2}\|\mathrm{F}_h\|_{*}<1$ with $\check{C}_{\min}=\min\{\alpha^{*}\nu,\eta^{*}S_c\}$, then problem (\ref{variational:discretizations}) has a unique solution $(\boldsymbol{u}_h,\boldsymbol{J}_h,p_h,\phi_h)\in \boldsymbol{V}_h\times\boldsymbol{S}_h\times Q_{h}\times \Psi_{h}$ and satisfy the stability inequality
\begin{equation*}
\|(\boldsymbol{u}_h,\boldsymbol{J}_h)\|_{1,\mathrm{\Omega}}\leq \frac{\|\mathrm{F}_{h}\|_{*}}{\check{C}_{\min}}.
\end{equation*}
\end{thm}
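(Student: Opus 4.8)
The plan is to transport to the discrete level the argument behind Theorem~\ref{lem:continue problem well-posed}. First I would eliminate $p_h$ and $\phi_h$ by working in the discrete kernels: since every solution of \eqref{variational:discretizations} satisfies $\boldsymbol{u}_h\in\boldsymbol{Z}_h$ and $\boldsymbol{J}_h\in\boldsymbol{Y}_h$ because of \eqref{variational:discretizations-p} and \eqref{variational:discretizations-phi}, it suffices to solve the reduced problem: find $(\boldsymbol{u}_h,\boldsymbol{J}_h)\in\boldsymbol{Z}_h\times\boldsymbol{Y}_h$ with
\begin{equation*}
\begin{aligned}
a_{v_h}(\boldsymbol{u}_h,\boldsymbol{v}_h)+c_{v_h}(\boldsymbol{u}_h,\boldsymbol{u}_h,\boldsymbol{v}_h)-d_{K_h}(\boldsymbol{J}_h,\boldsymbol{v}_h)&=(\boldsymbol{f}_h,\boldsymbol{v}_h)_{\mathrm{\Omega}},\\
a_{K_h}(\boldsymbol{J}_h,\boldsymbol{K}_h)+d_{K_h}(\boldsymbol{K}_h,\boldsymbol{u}_h)&=S_c(\boldsymbol{g}_h,\boldsymbol{K}_h)_{\mathrm{\Omega}},
\end{aligned}
\end{equation*}
for all $(\boldsymbol{v}_h,\boldsymbol{K}_h)\in\boldsymbol{Z}_h\times\boldsymbol{Y}_h$, where the $b_q$ and $b_\psi$ terms have disappeared by the very definition of $\boldsymbol{Z}_h$ and $\boldsymbol{Y}_h$; once this reduced problem is settled, $p_h$ and $\phi_h$ are recovered at the end from the discrete inf--sup conditions of Propositions~\ref{stability result1} and \ref{stability result2}.

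For existence I would run a finite-dimensional fixed-point argument. Freeze $\boldsymbol{w}_h\in\boldsymbol{Z}_h$ in the convective slot; the resulting linear problem for $(\boldsymbol{u}_h,\boldsymbol{J}_h)$ is uniquely solvable because the associated bilinear form on $\boldsymbol{Z}_h\times\boldsymbol{Y}_h$ is bounded and coercive: testing it with $(\boldsymbol{u}_h,\boldsymbol{J}_h)$ the two $d_{K_h}$ contributions cancel and $c_{v_h}(\boldsymbol{w}_h,\boldsymbol{u}_h,\boldsymbol{u}_h)=0$ by \eqref{skew-symmetric-uh}, leaving $a_{v_h}(\boldsymbol{u}_h,\boldsymbol{u}_h)+a_{K_h}(\boldsymbol{J}_h,\boldsymbol{J}_h)$, which is bounded below by $\check{C}_{\min}\|(\boldsymbol{u}_h,\boldsymbol{J}_h)\|_{1,\mathrm{\Omega}}^2$ via \eqref{stability-uh} and \eqref{stability-Jh} (using that $\nabla\cdot\boldsymbol{J}_h=0$ on $\boldsymbol{Y}_h$, so $\|\boldsymbol{J}_h\|_{\textup{div},\mathrm{\Omega}}=\|\boldsymbol{J}_h\|_{0,\mathrm{\Omega}}$). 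Testing the right-hand side with the same pair and invoking the definition of $\|\mathrm{F}_h\|_{*}$ yields $\|(\boldsymbol{u}_h,\boldsymbol{J}_h)\|_{1,\mathrm{\Omega}}\le\|\mathrm{F}_h\|_{*}/\check{C}_{\min}$, \emph{independently of the frozen $\boldsymbol{w}_h$}. Hence the solution map is continuous and maps the closed ball of that radius in $\boldsymbol{Z}_h\times\boldsymbol{Y}_h$ into itself, and Brouwer's theorem delivers a fixed point, which solves the reduced problem and already obeys the stated stability inequality.

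For uniqueness I would subtract the reduced equations for two solutions, test the velocity equation with $\boldsymbol{e}_u:=\boldsymbol{u}_h^{(1)}-\boldsymbol{u}_h^{(2)}$ and the current equation with $\boldsymbol{e}_J:=\boldsymbol{J}_h^{(1)}-\boldsymbol{J}_h^{(2)}$, and add: again the $d_{K_h}$ terms cancel and, by \eqref{skew-symmetric-uh}, the only surviving part of the convective difference is $c_{v_h}(\boldsymbol{e}_u,\boldsymbol{u}_h^{(2)},\boldsymbol{e}_u)$. Estimating it through \eqref{prop:bilinear-continuous-uh} by $\delta\,|\boldsymbol{e}_u|_{1,\mathrm{\Omega}}^2\,|\boldsymbol{u}_h^{(2)}|_{1,\mathrm{\Omega}}$ and then by $\delta\check{C}_{\min}^{-1}\|\mathrm{F}_h\|_{*}\,\|(\boldsymbol{e}_u,\boldsymbol{e}_J)\|_{1,\mathrm{\Omega}}^2$ using the a priori bound on $\boldsymbol{u}_h^{(2)}$, one is left with $\check{C}_{\min}(1-\mu)\,\|(\boldsymbol{e}_u,\boldsymbol{e}_J)\|_{1,\mathrm{\Omega}}^2\le 0$, so $\mu<1$ forces $\boldsymbol{e}_u=\boldsymbol{0}$ and $\boldsymbol{e}_J=\boldsymbol{0}$. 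Finally, with $(\boldsymbol{u}_h,\boldsymbol{J}_h)$ fixed, the linear functional $\boldsymbol{v}_h\mapsto(\boldsymbol{f}_h,\boldsymbol{v}_h)_{\mathrm{\Omega}}-a_{v_h}(\boldsymbol{u}_h,\boldsymbol{v}_h)-c_{v_h}(\boldsymbol{u}_h,\boldsymbol{u}_h,\boldsymbol{v}_h)+d_{K_h}(\boldsymbol{J}_h,\boldsymbol{v}_h)$ on $\boldsymbol{V}_h$ and its analogue on $\boldsymbol{S}_h$ vanish on $\boldsymbol{Z}_h$ and $\boldsymbol{Y}_h$ respectively, so \eqref{inf-sup-uh} and \eqref{inf-sup-Jh} supply unique $p_h\in Q_h$ and $\phi_h\in\Psi_h$ completing \eqref{variational:discretizations}.

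I expect the uniqueness step to be the main obstacle: the hypothesis $\mu<1$ must be spent precisely to absorb $c_{v_h}(\boldsymbol{e}_u,\boldsymbol{u}_h^{(2)},\boldsymbol{e}_u)$ against the coercivity, and one must make sure that $\delta$ and the constants hidden in $\check{C}_{\min}$ are genuinely $h$-independent --- which is exactly what \eqref{prop:bilinear-continuous-uh}, \eqref{stability-uh} and \eqref{stability-Jh} guarantee. By contrast, the magnetic coupling $d_{K_h}$ requires no extra work at all, since its skew placement in the two balances makes it cancel in every energy test.
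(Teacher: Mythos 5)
Your proposal is correct and follows the same overall strategy as the paper: linearize the convective term, solve the resulting (generalized) saddle/kernel problem, obtain the uniform a priori bound $\|(\cdot,\cdot)\|_{1,\mathrm{\Omega}}\le\|\mathrm{F}_h\|_{*}/\check{C}_{\min}$ from coercivity and skew-symmetry, close with a fixed-point argument, prove uniqueness by subtracting two solutions and absorbing $c_{v_h}(\boldsymbol{e}_u,\boldsymbol{u}_h^{(2)},\boldsymbol{e}_u)$ via $\mu<1$, and finally recover $p_h,\phi_h$ from the discrete inf--sup conditions \eqref{inf-sup-uh} and \eqref{inf-sup-Jh}. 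The one genuine difference is the fixed-point theorem used for existence: the paper shows that the map $G:\mathcal{B}_R\to\mathcal{B}_R$, $(\boldsymbol{u}_h,\boldsymbol{J}_h)\mapsto(\boldsymbol{w}_h,\boldsymbol{M}_h)$ is a \emph{contraction} (with Lipschitz constant $\mu$) and invokes Banach's theorem, whereas you invoke Brouwer's theorem, needing only continuity of the solution map plus the self-mapping property of the ball. Your route is marginally more economical in hypotheses --- existence of a solution to the reduced problem is obtained without any smallness condition, and $\mu<1$ is spent solely on uniqueness --- while the paper's contraction argument buys an iterative scheme with a guaranteed convergence rate, which is of some practical value for the nonlinear solver. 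Your observation that $\|\boldsymbol{J}_h\|_{\textup{div},\mathrm{\Omega}}=\|\boldsymbol{J}_h\|_{0,\mathrm{\Omega}}$ on $\boldsymbol{Y}_h$ is a detail the paper leaves implicit but is needed to interpret the coercivity of $a_{K_h}$ in the $\|\cdot\|_{\textup{div}}$-norm; note also that, strictly speaking, the coercivity constants coming from \eqref{stability-uh} and \eqref{stability-Jh} are $\alpha_{*}\nu$ and $\eta_{*}S_c$ rather than $\alpha^{*}\nu$ and $\eta^{*}S_c$ as written in the definition of $\check{C}_{\min}$, an inconsistency inherited from the paper rather than introduced by you.
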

\begin{proof}
First, we prove the existence of the solution by applying the Banach fixed point theorem. Given $(\boldsymbol{u}_{h},\boldsymbol{J}_{h})\in\boldsymbol{Z}_{h}\times\boldsymbol{Y}_{h}$,
we consider the saddle problem that finding $(\boldsymbol{w}_{h},p_h,\boldsymbol{M}_{h},\phi_h)\in\boldsymbol{V}_{h}\times
Q_{h}\times\boldsymbol{S}_{h}\times\Psi_{h}$ such that
\begin{subequations}
	\begin{align}
a_{v_h}(\boldsymbol{w}_h,\boldsymbol{v}_h)+c_{v_h}(\boldsymbol{u}_h,\boldsymbol{w}_h,\boldsymbol{v}_h)
-b_{q}(p_h,\boldsymbol{v}_h)-d_{K_h}(\boldsymbol{M}_h,\boldsymbol{v}_h)&=(\boldsymbol{f}_{h},\boldsymbol{v}_h)_{\mathrm{\Omega}},\\
b_{q}(q_h,\boldsymbol{w}_h)&=0,\\
a_{K_h}(\boldsymbol{M}_h,\boldsymbol{K}_h)-b_{\psi}(\phi_h,\boldsymbol{K}_h)+d_{K_h}(\boldsymbol{K}_h,\boldsymbol{w}_h)&=S_c(\boldsymbol{g}_h,\boldsymbol{K}_h)_{\mathrm{\Omega}},\\
b_{\psi}(\psi_{h},\boldsymbol{M}_h)&=0,
\end{align}
\label{variational:discretizations-Banach}
\end{subequations}
for all $(\boldsymbol{v}_{h},q_h,\boldsymbol{K}_{h},\psi_h)\in\boldsymbol{V}_{h}\times
Q_{h}\times\boldsymbol{S}_{h}\times\Psi_{h}$.
From the saddle theory in \cite{glowinski1992finite}, the problem has a unique solution. Let $(\boldsymbol{v}_{h},q_h,\boldsymbol{K}_{h},\psi_h) = (\boldsymbol{w}_{h},p_h,\boldsymbol{M}_{h},\phi_h)$, applying (\ref{stability-uh}), (\ref{stability-Jh}) and (\ref{skew-symmetric-uh}), we obtain the estimate

\begin{equation}
\Check{C}_{\min}\|(\boldsymbol{w}_{h},\boldsymbol{M}_{h})\|_{1,\mathrm{\Omega}}\leq \|\mathrm{F}_h\|_{*},
\label{wM_leq_F}
\end{equation}
where $\Check{C}_{\min} = \min\{\alpha^{*}\nu,\eta^{*}S_c\}$ and $\|\mathrm{F}_h\|_{*}=(\|\boldsymbol{f}_h\|_{-1,\mathrm{\Omega}}+S_c\|\boldsymbol{g}_h\|_{0,\mathrm{\Omega}})^{\frac{1}{2}}$.
Inspired by this, we set $\mathcal{B}_{R} = \big\{(\boldsymbol{v}_h,\boldsymbol{K}_h)\in\boldsymbol{Z}_h\times\boldsymbol{Y}_h:\|(\boldsymbol{v}_h,\boldsymbol{K}_h)\|_{1,\mathrm{\Omega}}\leq R\big\}$ with $R = \|\mathrm{F}\|_{*}/\Check{C}_{\min}$ and consider a map $G:\mathcal{B}_{R}\rightarrow\mathcal{B}_{R}$, $(\boldsymbol{u}_h,\boldsymbol{J}_h)\mapsto(\boldsymbol{w}_h,\boldsymbol{M}_h)$.

Let $(\boldsymbol{u}_{h}^{i},\boldsymbol{J}_{h}^{i})\in \mathcal{B}_R$ and set $(\boldsymbol{w}_{h}^{i},\boldsymbol{M}_{h}^{i})=G(\boldsymbol{u}_{h}^{i},\boldsymbol{J}_{h}^{i})$,
$i = 1,\;2$. Due to definition of (\ref{variational:discretizations-Banach}), where exist $(p_{h}^{i},\phi_{h}^{i})\in Q_{h}\times S_{h}$ such that $(\boldsymbol{w}_{h}^{i},p_{h}^{i},\boldsymbol{M}_{h}^{i},\phi_{h}^{i})$ satisfy the equations
\begin{subequations}
	\begin{align}
a_{v_h}(\boldsymbol{w}_h^{i},\boldsymbol{v}_h)+c_{v_h}(\boldsymbol{u}_h^{i},\boldsymbol{w}_h^{i},\boldsymbol{v}_h)
-b_{q}(p_h^{i},\boldsymbol{v}_h)-d_{K_h}(\boldsymbol{M}_h^{i},\boldsymbol{v}_h)&=(\boldsymbol{f}_{h},\boldsymbol{v}_h)_{\mathrm{\Omega}},\\
b_{q}(q_h,\boldsymbol{w}_h^{i})&=0,\\
a_{K_h}(\boldsymbol{M}_h^{i},\boldsymbol{K}_h)-b_{\psi}(\phi_h^{i},\boldsymbol{K}_h)+d_{K_h}(\boldsymbol{K}_h,\boldsymbol{w}_h^{i})&=S_c(\boldsymbol{g}_h,\boldsymbol{K}_h)_{\mathrm{\Omega}},\\
b_{\psi}(\psi_{h},\boldsymbol{M}_h^{i})&=0,
\end{align}
\label{variational:discretizations-Banach_iwM}
\end{subequations}
for all $(\boldsymbol{v}_{h},q_h,\boldsymbol{K}_{h},\psi_h)\in\boldsymbol{V}_{h}\times
Q_{h}\times\boldsymbol{S}_{h}\times\Psi_{h}$.
Subtracting (\ref{variational:discretizations-Banach_iwM}) as $i = 2$ from (\ref{variational:discretizations-Banach_iwM}) as $i = 1$  and setting $(q_h,\psi_h) = (p_h^1-p_h^2,\phi_h^1-\phi_h^2)$, $(\boldsymbol{v}_h,\boldsymbol{K}_h) =
(\boldsymbol{w}_h^1-\boldsymbol{w}_h^2,\boldsymbol{M}_h^1-\boldsymbol{M}_h^2)$, we deduce
\begin{align}
a_{v_h}(\boldsymbol{w}_h^1-\boldsymbol{w}_h^2,\boldsymbol{w}_h^1-\boldsymbol{w}_h^2)
+a_{K_h}(\boldsymbol{M}_h^1-\boldsymbol{M}_h^2,\boldsymbol{M}_h^1-\boldsymbol{M}_h^2)
= -c_{v_h}(\boldsymbol{u}_h^1-\boldsymbol{u}_h^2,\boldsymbol{w}_h^{2},\boldsymbol{w}_h^1-\boldsymbol{w}_h^2).
\end{align}
Obviously, by using (\ref{stability-uh}), (\ref{stability-Jh}), (\ref{prop:bilinear-continuous-uh}) and (\ref{wM_leq_F}), we have
\begin{align}
\Check{C}_{\min}|(\boldsymbol{w}_h^1-\boldsymbol{w}_h^2,\boldsymbol{M}_h^1-\boldsymbol{M}_h^2)|_{1,\mathrm{\Omega}}\leq \delta|\boldsymbol{u}_h^1-\boldsymbol{u}_h^2|_{1,\mathrm{\Omega}}|\boldsymbol{w}_h^2|_{1,\mathrm{\Omega}}
\leq \delta|\boldsymbol{u}_h^1-\boldsymbol{u}_h^2|_{1,\mathrm{\Omega}}\frac{\|\mathrm{F}_h\|_{*}}{\Check{C}_{\min}},
\end{align}
which equivalent to
\begin{align}
\nonumber
|G(\boldsymbol{u}_h^1,\boldsymbol{J}_h^1)-G(\boldsymbol{u}_h^2,\boldsymbol{J}_h^2)|_{1,\mathrm{\Omega}}
=|(\boldsymbol{w}_h^1-\boldsymbol{w}_h^2,\boldsymbol{M}_h^1-\boldsymbol{M}_h^2)|_{1,\mathrm{\Omega}}
\leq \mu|(\boldsymbol{u}_h^1-\boldsymbol{u}_h^2,\boldsymbol{J}_h^1-\boldsymbol{J}_h^2)|_{1,\mathrm{\Omega}},
\end{align}
where $\mu = \delta\Check{C}_{\min}^{-2}\|\mathrm{F}_h\|_{*}$.
Supposing $\mu = \delta\Check{C}_{\min}^{-2}\|\mathrm{F}_h\|_{*}<1$, then $G$ is a contraction mapping on $\mathcal{B}_R\rightarrow\mathcal{B}_R$. As a consequence, an application of the Banach fixed point theorem shows that $G$ has a fixed point in $\mathcal{B}_{R}$, which is the solution of problem (\ref{variational:discretizations-Banach}).

Next, the uniqueness of the solutions will be proved. Suppose $(\boldsymbol{u}_h^i,p_{h}^i,\boldsymbol{J}_h^i,\phi_h^{i})\in\boldsymbol{V}_{h}\times
Q_{h}\times\boldsymbol{S}_{h}\times\Psi_{h}$ such that
\begin{subequations}
	\begin{align}
a_{v_h}(\boldsymbol{u}_h^{i},\boldsymbol{v}_h)+c_{v_h}(\boldsymbol{u}_h^{i},\boldsymbol{u}_h^{i},\boldsymbol{v}_h)
-b_{q}(p_h^{i},\boldsymbol{v}_h)-d_{K_h}(\boldsymbol{J}_h^{i},\boldsymbol{v}_h)&=(\boldsymbol{f}_{h},\boldsymbol{v}_h)_{\mathrm{\Omega}},\\
b_{q}(q_h,\boldsymbol{u}_h^{i})&=0,\\
a_{K_h}(\boldsymbol{J}_h^{i},\boldsymbol{K}_h)-b_{\psi}(\phi_h^{i},\boldsymbol{K}_h)+d_{K_h}(\boldsymbol{K}_h,\boldsymbol{u}_h^{i})&=S_c(\boldsymbol{g}_h,\boldsymbol{K}_h)_{\mathrm{\Omega}},\\
b_{\psi}(\psi_{h},\boldsymbol{J}_h^{i})&=0,
\end{align}
\label{variational:discretizations-Banach_ui}
\end{subequations}
for all $(\boldsymbol{v}_{h},q_h,\boldsymbol{K}_{h},\psi_h)\in\boldsymbol{V}_{h}\times
Q_{h}\times\boldsymbol{S}_{h}\times\Psi_{h}$. Subtracting (\ref{variational:discretizations-Banach_ui}) as $i=2$ from (\ref{variational:discretizations-Banach_ui}) as $i=1$ yields
\begin{subequations}
	\begin{align}
\nonumber
a_{v_h}(\boldsymbol{u}_h^{1}-\boldsymbol{u}_h^{2},\boldsymbol{v}_h)+c_{v_h}(\boldsymbol{u}_h^{1},\boldsymbol{u}_h^{1},\boldsymbol{v}_h)
-c_{v_h}(\boldsymbol{u}_h^{2},\boldsymbol{u}_h^{2},\boldsymbol{v}_h)
\\-b_{q}(p_h^{1}-p_h^{2},\boldsymbol{v}_h)-d_{K_h}(\boldsymbol{J}_h^{1}-\boldsymbol{J}_h^{2},\boldsymbol{v}_h)&=0,\\
b_{q}(q_h,\boldsymbol{u}_h^{1}-\boldsymbol{u}_h^{2})&=0,\\
a_{K_h}(\boldsymbol{J}_h^{1}-\boldsymbol{J}_h^{2},\boldsymbol{K}_h)-b_{\psi}(\phi_h^{1}-\phi_h^{2},\boldsymbol{K}_h)+d_{K_h}(\boldsymbol{K}_h,\boldsymbol{u}_h^{1}-\boldsymbol{u}_h^{2})&=0,\\
b_{\psi}(\psi_h,\boldsymbol{J}_h^{1}-\boldsymbol{J}_h^{2})&=0.
\end{align}
\label{variational:discretizations-Banach_ui12}
\end{subequations}
Then, we take $(\boldsymbol{v}_{h},\boldsymbol{K}_{h})=(\boldsymbol{u}_{h}^{1}-\boldsymbol{u}_{h}^{2},\boldsymbol{J}_{h}^{1}-\boldsymbol{J}_{h}^{2})$, $(q_{h},\psi_{h})=(p_h^{1}-p_h^{2},\phi_h^{1}-\phi_h^{2})$ in (\ref{variational:discretizations-Banach_ui12}), we can obtain
\begin{align}
\nonumber
a_{v_h}(\boldsymbol{u}_h^{1}-\boldsymbol{u}_h^{2},\boldsymbol{u}_h^{1}-\boldsymbol{u}_h^{2})+
a_{K_h}(\boldsymbol{J}_h^{1}-\boldsymbol{J}_h^{2},\boldsymbol{J}_h^{1}-\boldsymbol{J}_h^{2})=
-c_{v_h}(\boldsymbol{u}_h^{1}-\boldsymbol{u}_h^{2},\boldsymbol{u}_h^{2},\boldsymbol{u}_h^{1}-\boldsymbol{u}_h^{2}).
\end{align}
Using (\ref{stability-uh}), (\ref{stability-Jh}), (\ref{prop:bilinear-continuous-uh}) and (\ref{wM_leq_F}) again, we have
\begin{align}
\nonumber
|(\boldsymbol{u}_h^1-\boldsymbol{u}_h^2,\boldsymbol{J}_h^1-\boldsymbol{J}_h^2)|_{1,\mathrm{\Omega}}
\leq \mu|(\boldsymbol{u}_h^1-\boldsymbol{u}_h^2,\boldsymbol{J}_h^1-\boldsymbol{J}_h^2)|_{1,\mathrm{\Omega}},
\end{align}
Due to $\mu<1$, we can infer
\begin{align}
\boldsymbol{u}_{h}^1 = \boldsymbol{u}_{h}^2,\quad \boldsymbol{J}_{h}^1 = \boldsymbol{J}_{h}^2.
\label{u1_u2,J1_J2}
\end{align}
Taking (\ref{u1_u2,J1_J2}) into (\ref{variational:discretizations-Banach_ui12}), we clearly see
\begin{align*}
\nonumber
b_{q}(p_h^{1}-p_h^{2},\boldsymbol{v}_h)=0 \quad\forall\boldsymbol{v}\in\boldsymbol{V}_{h},\quad
b_{\psi}(\phi_h^{1}-\phi_h^{2},\boldsymbol{K}_h)=0\quad\forall\boldsymbol{K}\in\boldsymbol{S}_{h}.
\end{align*}
Finally, combining the discrete inf-sup conditions (\ref{inf-sup-uh}) and (\ref{inf-sup-Jh}) to get $p_{h}^{1}=p_{h}^{2}$, $\phi_h^{1} = \phi_h^{2}$. This completes the proof.
\end{proof}
At the end of this subsection, we formulate the equivalent kernel form of problem (\ref{variational:discretizations}) as: Find $(\boldsymbol{u}_h$, $p_h$, $\boldsymbol{J}_h$, $
\phi_h)\in\boldsymbol{Z}_h\times Q_h\times\boldsymbol{Y}_h\times\Psi_h$ such that
\begin{subequations}
	\begin{align}
a_{v_h}(\boldsymbol{u}_h,\boldsymbol{v}_h)+c_{v_h}(\boldsymbol{u}_h,\boldsymbol{u}_h,\boldsymbol{v}_h)
-d_{K_h}(\boldsymbol{J}_h,\boldsymbol{v}_h)&=(\boldsymbol{f}_h,\boldsymbol{v}_h)_{\mathrm{\Omega}},\label{variational:kernel-discretizations-u}\\
a_{K_h}(\boldsymbol{J}_h,\boldsymbol{K}_h)+d_{K_h}(\boldsymbol{K}_h,\boldsymbol{u}_h)&=S_c(\boldsymbol{g}_h,\boldsymbol{K}_h)_{\mathrm{\Omega}},\label{variational:kernel-discretizations-J}
\end{align}
\label{variational:kernel-discretizations}
\end{subequations}
for all $(\boldsymbol{v}_h,q_h,\boldsymbol{K}_h,\psi_h)\in\boldsymbol{Z}_h\times Q_h\times\boldsymbol{Y}_h\times\Psi_h$. Now, we notice that if $(\boldsymbol{u}_h,\boldsymbol{J}_h)\in\boldsymbol{V}_h\times\boldsymbol{S}_h$ is the solution to problem $(\ref{variational:discretizations})$, then it is also the solution to problem $(\ref{variational:kernel-discretizations})$ (see the definition of two kernel spaces $\boldsymbol{Z}_h$ and $\boldsymbol{Y}_h$).

\subsection{Preliminary estimates}

On the one hand, we provide the following some interpolation error estimation results.

\begin{lem}[\cite{vacca2018h,gatica2018mixed}]
Let $\boldsymbol{w}\in\boldsymbol{H}^{1+r}\cap\boldsymbol{V}$ for $0<r\leq k_{1}$, then there exists $\boldsymbol{w}_{I}\in\boldsymbol{V}_{h}$ such that
\begin{equation}
\nonumber
\|\boldsymbol{w}-\boldsymbol{w}_{I}\|_{0,\mathrm{\Omega}}+h|\boldsymbol{w}-\boldsymbol{w}_{I}|_{1,\mathrm{\Omega}}\leq Ch^{r+1}|\boldsymbol{w}|_{r+1,\mathrm{\Omega}}.
\end{equation}
\label{lem:interplate-u}
\end{lem}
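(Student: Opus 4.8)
The plan is to prove the estimate locally on each $\mathrm{E}\in\mathcal{T}_h$ and then sum, following the standard VEM interpolation argument of \cite{vacca2018h}. First I would use the unisolvence of the degrees of freedom $\mathrm{D}_i^{\mathrm{E}}$ (the Proposition above) to define, for every element, the local interpolation operator $\mathcal{I}_{\mathrm{E}}\colon\boldsymbol{H}^{1+r}(\mathrm{E})\to\boldsymbol{V}_h^{k_1}(\mathrm{E})$ by requiring $\mathrm{D}_i^{\mathrm{E}}(\mathcal{I}_{\mathrm{E}}\boldsymbol{w})=\mathrm{D}_i^{\mathrm{E}}(\boldsymbol{w})$ for all $i\in\{\textup{v},\textup{e},\textup{m},\textup{div}\}$; the vertex and Gauss--Lobatto edge values make sense because $r>0$ gives the embedding $\boldsymbol{H}^{1+r}(\mathrm{E})\hookrightarrow\boldsymbol{C}^0(\overline{\mathrm{E}})$ in two dimensions. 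Since the boundary degrees of freedom $\mathrm{D}_{\textup{v}}^{\mathrm{E}},\mathrm{D}_{\textup{e}}^{\mathrm{E}}$ are shared by neighbouring elements, patching the local interpolants yields a global $\boldsymbol{w}_I\in\boldsymbol{V}_h\subset\boldsymbol{H}^1_0(\mathrm{\Omega})$.

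Next I would exploit the consistency property $[\mathcal{P}_{k_1}(\mathrm{E})]^2\subseteq\boldsymbol{V}_h^{k_1}(\mathrm{E})$ (which follows by decomposing $-\Delta\boldsymbol{q}=\nabla s+\boldsymbol{g}^{\perp}$ and noting $\mathrm{\Pi}_{k_1}^{\nabla,\mathrm{E}}\boldsymbol{q}=\boldsymbol{q}$ for $\boldsymbol{q}\in[\mathcal{P}_{k_1}(\mathrm{E})]^2$), so that $\mathcal{I}_{\mathrm{E}}\boldsymbol{q}=\boldsymbol{q}$ on degree-$k_1$ polynomials. For an optimal polynomial $\boldsymbol{w}_\pi\in[\mathcal{P}_{k_1}(\mathrm{E})]^2$ one writes $\boldsymbol{w}-\mathcal{I}_{\mathrm{E}}\boldsymbol{w}=(\boldsymbol{w}-\boldsymbol{w}_\pi)+\mathcal{I}_{\mathrm{E}}(\boldsymbol{w}_\pi-\boldsymbol{w})$. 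The first summand is handled by the classical Bramble--Hilbert polynomial approximation estimate on star-shaped domains, $\|\boldsymbol{w}-\boldsymbol{w}_\pi\|_{0,\mathrm{E}}+h_{\mathrm{E}}|\boldsymbol{w}-\boldsymbol{w}_\pi|_{1,\mathrm{E}}\le C h_{\mathrm{E}}^{r+1}|\boldsymbol{w}|_{r+1,\mathrm{E}}$, with $C$ depending only on $k_1$ and $\alpha_s,\alpha_e$. For the second summand I need the (scaled) $\boldsymbol{H}^1$- and $L^2$-stability of $\mathcal{I}_{\mathrm{E}}$: on the finite-dimensional space $\boldsymbol{V}_h^{k_1}(\mathrm{E})$ one has a norm equivalence between $|\cdot|_{1,\mathrm{E}}$ (plus the $L^2$-contribution of $\mathrm{P}_0^{\mathrm{E}}$) and the Euclidean norm of the appropriately scaled vector of degrees of freedom; estimating the interior moments $\mathrm{D}_{\textup{m}}^{\mathrm{E}},\mathrm{D}_{\textup{div}}^{\mathrm{E}}$ of $\boldsymbol{\chi}:=\boldsymbol{w}_\pi-\boldsymbol{w}$ by Cauchy--Schwarz and the vertex/edge values by a scaled trace inequality $\|\boldsymbol{\chi}\|_{0,\infty,\partial\mathrm{E}}\lesssim h_{\mathrm{E}}^{-1}\|\boldsymbol{\chi}\|_{0,\mathrm{E}}+|\boldsymbol{\chi}|_{1,\mathrm{E}}+h_{\mathrm{E}}|\boldsymbol{\chi}|_{2,\mathrm{E}}$ (combined with an inverse estimate on edge polynomials) gives $\|\mathcal{I}_{\mathrm{E}}\boldsymbol{\chi}\|_{0,\mathrm{E}}+h_{\mathrm{E}}|\mathcal{I}_{\mathrm{E}}\boldsymbol{\chi}|_{1,\mathrm{E}}\le C\big(\|\boldsymbol{\chi}\|_{0,\mathrm{E}}+h_{\mathrm{E}}|\boldsymbol{\chi}|_{1,\mathrm{E}}+h_{\mathrm{E}}^{2}|\boldsymbol{\chi}|_{2,\mathrm{E}}\big)$.

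Combining the two bounds by the triangle inequality and inserting the polynomial approximation estimate gives the local bound $\|\boldsymbol{w}-\boldsymbol{w}_I\|_{0,\mathrm{E}}+h_{\mathrm{E}}|\boldsymbol{w}-\boldsymbol{w}_I|_{1,\mathrm{E}}\le C h_{\mathrm{E}}^{r+1}|\boldsymbol{w}|_{r+1,\mathrm{E}}$; squaring, summing over $\mathrm{E}\in\mathcal{T}_h$ and using $h_{\mathrm{E}}\le h$ yields the claimed global estimate. I expect the main obstacle to be the uniform stability of the local interpolation operator, i.e.\ checking that the constant in the degrees-of-freedom/Sobolev-norm equivalence on the nonstandard (enhanced, divergence-constrained) space $\boldsymbol{V}_h^{k_1}(\mathrm{E})$ depends only on $k_1$ and on the mesh-regularity parameters $\alpha_s,\alpha_e$ — precisely the point where star-shapedness, the minimal-edge-length condition, and scaled trace/inverse inequalities on polygons enter; for the detailed verification of this step one may invoke \cite{vacca2018h,dassi2020bricks,gatica2018mixed}.
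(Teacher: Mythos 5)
The paper does not actually prove this lemma --- it is quoted verbatim from \cite{vacca2018h,gatica2018mixed} --- so there is no internal proof to compare against; your outline reproduces the standard argument of those references (DOF-defined local interpolant, polynomial consistency $[\mathcal{P}_{k_1}(\mathrm{E})]^2\subseteq\boldsymbol{V}_h^{k_1}(\mathrm{E})$, Bramble--Hilbert, stability of $\mathcal{I}_{\mathrm{E}}$ via the DOF/Sobolev norm equivalence, then summation), and the architecture is sound. One step fails as written for part of the stated range of $r$: the scaled bound $\|\boldsymbol{\chi}\|_{0,\infty,\partial\mathrm{E}}\lesssim h_{\mathrm{E}}^{-1}\|\boldsymbol{\chi}\|_{0,\mathrm{E}}+|\boldsymbol{\chi}|_{1,\mathrm{E}}+h_{\mathrm{E}}|\boldsymbol{\chi}|_{2,\mathrm{E}}$ presupposes $\boldsymbol{w}\in\boldsymbol{H}^{2}(\mathrm{E})$, whereas the lemma allows $0<r<1$, i.e.\ only $\boldsymbol{w}\in\boldsymbol{H}^{1+r}(\mathrm{E})$; for that range the last term must be replaced by the fractional contribution $h_{\mathrm{E}}^{r}|\boldsymbol{\chi}|_{1+r,\mathrm{E}}$, obtained by scaling the two-dimensional embedding $\boldsymbol{H}^{1+r}\hookrightarrow\boldsymbol{C}^{0}$ for $r>0$, after which the same chain of inequalities closes with the claimed rate. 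Beyond that, the one point you rightly defer to the references --- the uniformity, in $h$ and $\mathrm{E}$, of the norm equivalence between the scaled degrees of freedom and the Sobolev norms on the enhanced, divergence-constrained local space --- is precisely where \cite{vacca2018h} does the real work, so the deferral is appropriate rather than a gap.
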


\begin{lem}[\cite{beirao2016mixed,caceres2017mixed}]
Let $\boldsymbol{K}\in\boldsymbol{H}^{\tau}\cap\boldsymbol{S}$ for $\frac{1}{2}\leq\tau\leq k_{2}+1$, then there exists $\boldsymbol{K}_{I}\in\boldsymbol{S}_{h}$ such that
\begin{equation}
\nonumber
\|\boldsymbol{K}-\boldsymbol{K}_{I}\|_{0,\mathrm{\Omega}}\leq Ch^{\tau}|\boldsymbol{K}|_{\tau,\mathrm{\Omega}}.
\end{equation}
\label{lem:interplate-K}
\end{lem}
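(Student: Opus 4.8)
The plan is to build $\boldsymbol{K}_I$ element by element through the local degrees of freedom $\mathrm{D}_i^{\mathrm{E}}$, $i\in\{\mathrm{n},\mathrm{g},\bot\}$, and then to bound the local interpolation error by a Deny--Lions / Bramble--Hilbert argument combined with a scaled stability estimate for the local interpolation operator. First I would check that, for $\boldsymbol{K}\in\boldsymbol{H}^{\tau}(\mathrm{E})\cap\boldsymbol{H}(\textup{div};\mathrm{E})$ with $\tau\geq\tfrac12$, the functionals $\mathrm{D}_{\mathrm{n}}^{\mathrm{E}}$, $\mathrm{D}_{\mathrm{g}}^{\mathrm{E}}$, $\mathrm{D}_{\bot}^{\mathrm{E}}$ are well-defined bounded linear functionals: the interior moments $\mathrm{D}_{\mathrm{g}}^{\mathrm{E}}$, $\mathrm{D}_{\bot}^{\mathrm{E}}$ are plain $L^2(\mathrm{E})$ pairings against polynomials, while the edge moments $\mathrm{D}_{\mathrm{n}}^{\mathrm{E}}$ make sense by the normal-trace theory for $\boldsymbol{H}(\textup{div})$ fields carrying the extra $\boldsymbol{H}^{\tau}$ regularity (the endpoint $\tau=\tfrac12$ relying on the refined normal-trace analysis of \cite{beirao2016mixed,caceres2017mixed}). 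By the unisolvence established for $\boldsymbol{S}_h^{k_2}(\mathrm{E})$ there is then a unique $\mathcal{I}^{\mathrm{E}}\boldsymbol{K}\in\boldsymbol{S}_h^{k_2}(\mathrm{E})$ sharing all these values; gluing the pieces gives a global field $\boldsymbol{K}_I$ with $\boldsymbol{K}_I|_{\mathrm{E}}=\mathcal{I}^{\mathrm{E}}\boldsymbol{K}$. Since two elements sharing an edge $e$ also share the moments $\mathrm{D}_{\mathrm{n}}^{\mathrm{E}}$ on $e$ and $\boldsymbol{K}_I\cdot\boldsymbol{n}|_e\in\mathcal{P}_{k_2}(e)$, the normal component of $\boldsymbol{K}_I$ is single-valued across all internal edges, so $\boldsymbol{K}_I\in\boldsymbol{H}(\textup{div};\mathrm{\Omega})$, and vanishing of the boundary moments forces $\boldsymbol{K}_I\cdot\boldsymbol{n}=0$ on $\mathrm{\Gamma}_{\mathrm{\Omega}}$; hence $\boldsymbol{K}_I\in\boldsymbol{S}_h$.

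For the \emph{local error} I would use polynomial consistency: since $[\mathcal{P}_{k_2}(\mathrm{E})]^2\subseteq\boldsymbol{S}_h^{k_2}(\mathrm{E})$ (a polynomial of degree $\leq k_2$ trivially has polynomial normal trace, divergence in $\mathcal{P}_{k_2}(\mathrm{E})$ and rotor in $\mathcal{P}_{k_2-1}(\mathrm{E})$), unisolvence yields $\mathcal{I}^{\mathrm{E}}\boldsymbol{q}=\boldsymbol{q}$ for every $\boldsymbol{q}\in[\mathcal{P}_{k_2}(\mathrm{E})]^2$. For an arbitrary such $\boldsymbol{q}$ I then split
\[
\|\boldsymbol{K}-\mathcal{I}^{\mathrm{E}}\boldsymbol{K}\|_{0,\mathrm{E}}\leq\|\boldsymbol{K}-\boldsymbol{q}\|_{0,\mathrm{E}}+\|\mathcal{I}^{\mathrm{E}}(\boldsymbol{K}-\boldsymbol{q})\|_{0,\mathrm{E}},
\]
and estimate the second term by a scaled stability bound for $\mathcal{I}^{\mathrm{E}}$: passing to a reference configuration (legitimate uniformly in $h$ thanks to the star-shapedness and minimal-edge mesh assumptions) and using equivalence of norms on the finite-dimensional reference space, one gets
\[
\|\mathcal{I}^{\mathrm{E}}\boldsymbol{w}\|_{0,\mathrm{E}}^2\leq C\Big(\|\boldsymbol{w}\|_{0,\mathrm{E}}^2+h_{\mathrm{E}}\sum_{e\subset\mathrm{\Gamma}_{\mathrm{E}}}\|\boldsymbol{w}\cdot\boldsymbol{n}\|_{0,e}^2\Big),
\]
the interior degrees $\mathrm{D}_{\mathrm{g}},\mathrm{D}_{\bot}$ contributing only the first term. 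A fractional scaled trace inequality on $\mathrm{E}$ then bounds $h_{\mathrm{E}}\|\boldsymbol{w}\cdot\boldsymbol{n}\|_{0,e}^2$ by $C(\|\boldsymbol{w}\|_{0,\mathrm{E}}^2+h_{\mathrm{E}}^{2\tau}|\boldsymbol{w}|_{\tau,\mathrm{E}}^2)$.

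To \emph{conclude}, I would take $\boldsymbol{q}$ to be the averaged Taylor polynomial of $\boldsymbol{K}$ of degree $k_2$ on $\mathrm{E}$, so that the Bramble--Hilbert lemma gives $\|\boldsymbol{K}-\boldsymbol{q}\|_{0,\mathrm{E}}\leq Ch_{\mathrm{E}}^{\tau}|\boldsymbol{K}|_{\tau,\mathrm{E}}$ and $|\boldsymbol{K}-\boldsymbol{q}|_{\tau,\mathrm{E}}\leq C|\boldsymbol{K}|_{\tau,\mathrm{E}}$ (here $\tau\leq k_2+1$ is exactly what is needed); inserting these into the two displays above yields $\|\boldsymbol{K}-\mathcal{I}^{\mathrm{E}}\boldsymbol{K}\|_{0,\mathrm{E}}\leq Ch_{\mathrm{E}}^{\tau}|\boldsymbol{K}|_{\tau,\mathrm{E}}$. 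Squaring, summing over $\mathrm{E}\in\mathcal{T}_h$, using $h_{\mathrm{E}}\leq h$ and the finite overlap coming from the mesh-regularity hypotheses then gives the asserted global bound. The main obstacle is precisely the scaled stability estimate together with the fractional trace inequality, valid uniformly in $h$ down to $\tau=\tfrac12$ on general star-shaped polygons; this is where the mesh-regularity assumptions are indispensable, and it is the core of the $\boldsymbol{H}(\textup{div})$-VEM interpolation analysis carried out in \cite{beirao2016mixed,caceres2017mixed}, to which the remaining routine details can be referred.
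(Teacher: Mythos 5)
The paper offers no proof of this lemma at all: it is imported verbatim as a known result from \cite{beirao2016mixed,caceres2017mixed}, so there is no in-paper argument to compare against. Your sketch is a faithful reconstruction of the standard interpolation analysis in those references: local interpolant defined by the degrees of freedom $\mathrm{D}_{\mathrm{n}}^{\mathrm{E}},\mathrm{D}_{\mathrm{g}}^{\mathrm{E}},\mathrm{D}_{\bot}^{\mathrm{E}}$ via unisolvence, conformity of the glued field from the single-valued polynomial normal traces, polynomial consistency $\mathcal{I}^{\mathrm{E}}\boldsymbol{q}=\boldsymbol{q}$ for $\boldsymbol{q}\in[\mathcal{P}_{k_2}(\mathrm{E})]^2$, and then Bramble--Hilbert plus a scaled $L^2$-stability bound for $\mathcal{I}^{\mathrm{E}}$. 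Two caveats on the technical core you (rightly) defer to the literature. First, the phrase ``passing to a reference configuration and using equivalence of norms on the finite-dimensional reference space'' is the FEM argument and does not transfer literally to VEM: there is no fixed reference polygon, and the local virtual space is not preserved under Piola maps, so the stability of $\mathcal{I}^{\mathrm{E}}$ in terms of its degrees of freedom must be obtained by scaled estimates on star-shaped domains (or via the commuting property $\nabla\cdot\mathcal{I}^{\mathrm{E}}\boldsymbol{K}=\mathrm{\Pi}^{0}_{k_2}(\nabla\cdot\boldsymbol{K})$ together with inverse estimates), which is precisely what the cited works do. Second, at the endpoint $\tau=\tfrac12$ the edge moments $(\boldsymbol{K}\cdot\boldsymbol{n},q_h)_e$ are not defined by the $L^2(e)$ pairing alone; one needs the extra information $\nabla\cdot\boldsymbol{K}\in L^2$ and a limiting/duality argument, and most statements in the literature in fact require $\tau>\tfrac12$. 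Neither point invalidates your outline --- it matches the intended proof and correctly isolates where the mesh-regularity hypotheses enter.
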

Besides that, we remark that using classical theory and setting $q\in H^{r}$ for $0<r\leq k_{1}$ and
$ \psi\in H^{\tau}$ for $0<r\leq k_{2}+1$, there exist $q_{h}\in Q_{h}$ and $\psi_{h}\in\Psi_{h}$ such that
\begin{equation}
\|q-q_{h}\|_{0,\mathrm{\Omega}}\leq Ch^{r}|q|_{r,\mathrm{\Omega}}
\label{interplate-qh}
\end{equation}
and
\begin{equation}
\|\psi-\psi_{h}\|_{0,\mathrm{\Omega}}\leq Ch^{\tau}|\psi|_{\tau,\mathrm{\Omega}}.
\label{interplate-psih}
\end{equation}

On the other hand, we provide some classical approximation results for $[\mathcal{P}_{k}(\mathrm{E})]^2$ polynomials on star-shaped domains $\mathrm{E}$.

\begin{lem}[\cite{brenner2008mathematical}]
Let $\mathrm{E}\in\mathcal{T}_{h}$, while we set two real numbers $r$, $m$ with $0\leq r\leq k_{1}$ and $1\leq m\leq\infty$. Then there exists a polynomial function $\boldsymbol{v}_{\mathrm{\pi}}\in[\mathcal{P}_{k_1}(\mathrm{E})]^2$ such that
\begin{equation}
\nonumber
\|\boldsymbol{v}-\boldsymbol{v}_{\mathrm{\pi}}\|_{L^{m}(\mathrm{E})}+h_{\mathrm{E}}|\boldsymbol{v}-\boldsymbol{v}_{\mathrm{\pi}}|_{\boldsymbol{W}^{1,m}(\mathrm{E})}\leq Ch_{\mathrm{E}}^{r+1}|\boldsymbol{v}|_{\boldsymbol{W}^{r+1,m}(\mathrm{E})}
\end{equation}
for all $\boldsymbol{v}\in\boldsymbol{H}^{r+1}(\mathrm{E})$.
\label{lem:polymials-uerr}
\end{lem}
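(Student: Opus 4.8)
The statement is the classical Dupont--Scott polynomial approximation estimate on a star-shaped domain, so the proof I would give follows the averaged Taylor polynomial route, combined with the Bramble--Hilbert lemma and a scaling argument; the mesh regularity hypotheses on $\mathcal{T}_h$ are precisely what is needed to make the constant uniform.

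\emph{Construction.} By the second mesh regularity condition, $\mathrm{E}$ is star-shaped with respect to a ball $B_\mathrm{E}$ of radius $\rho_\mathrm{E}\geq\alpha_s h_\mathrm{E}$. For each scalar component $v$ of $\boldsymbol{v}$ I would take $v_{\pi}$ to be the Sobolev averaged Taylor polynomial over $B_\mathrm{E}$,
\[
v_{\pi}(\boldsymbol{x}):=\int_{B_\mathrm{E}}\theta(\boldsymbol{y})\,T^{\ell}_{\boldsymbol{y}}v(\boldsymbol{x})\,d\boldsymbol{y},
\]
where $T^{\ell}_{\boldsymbol{y}}v$ is the Taylor expansion of $v$ of degree $\ell$ about $\boldsymbol{y}$ (with $\ell\leq k_1$ chosen according to the regularity of $\boldsymbol{v}$) and $\theta\in C^{\infty}_{0}(B_\mathrm{E})$, $\theta\geq0$, $\int_{B_\mathrm{E}}\theta=1$. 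Then $\boldsymbol{v}_{\pi}=(v_{1,\pi},v_{2,\pi})\in[\mathcal{P}_{k_1}(\mathrm{E})]^{2}$, and the operator reproduces polynomials: $\boldsymbol{v}_{\pi}=\boldsymbol{v}$ whenever $\boldsymbol{v}\in[\mathcal{P}_{k_1}(\mathrm{E})]^{2}$. This reproduction property is what triggers the Bramble--Hilbert argument.

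\emph{Estimate and scaling.} Next I would invoke the Bramble--Hilbert lemma in Dupont--Scott form: for a domain of diameter $h_\mathrm{E}$ that is star-shaped with respect to $B_\mathrm{E}$, hence with chunkiness parameter bounded by $\alpha_s^{-1}$, one has
\[
|v-v_{\pi}|_{W^{j,m}(\mathrm{E})}\leq C(k_1,m,\alpha_s)\,h_\mathrm{E}^{\,r+1-j}\,|v|_{W^{r+1,m}(\mathrm{E})},\qquad j=0,1.
\]
Taking $j=0$ and $j=1$, summing over the two components of $\boldsymbol{v}$, and adding the resulting inequalities gives exactly the claimed bound. The powers of $h_\mathrm{E}$ are produced by rescaling $\mathrm{E}$ to a domain $\widehat{\mathrm{E}}$ of unit diameter, under which $B_\mathrm{E}$ becomes a ball of radius $\geq\alpha_s$; thus $\widehat{\mathrm{E}}$ ranges over a family of domains with a uniform lower bound on the star-shapedness ball, on which the dimensionless estimate holds with a single constant, and each spatial derivative contributes one factor of $h_\mathrm{E}$ when one scales back.

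\emph{Main obstacle.} The only real work is the uniformity of $C$: one must check that the constant in the Bramble--Hilbert/Poincar\'e step depends on $\mathrm{E}$ solely through $k_1$, $m$, and the chunkiness parameter, and that the latter is bounded by $\alpha_s^{-1}$ independently of $h$. This is exactly where the star-shapedness hypothesis is indispensable. Non-integer values of $r+1$ are handled by interpolation between consecutive integer orders, and the case $m=\infty$ by the standard modifications of the averaged-Taylor estimates; alternatively, the whole statement is contained in \cite{brenner2008mathematical}.
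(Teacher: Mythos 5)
Your proposal is correct: the paper gives no proof of this lemma at all (it is simply cited from Brenner--Scott), and your averaged-Taylor-polynomial/Bramble--Hilbert/scaling argument is precisely the standard Dupont--Scott proof contained in that reference, with the star-shapedness hypothesis correctly identified as the source of the uniform constant. The remarks on non-integer $r$ and $m=\infty$ appropriately cover the edge cases of the stated range.
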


\begin{lem}[Section 3.2 of \cite{beirao2016mixed}]
Let $\mathrm{E}\in\mathcal{T}_{h}$, while we set one real number $\tau$ with $0\leq \tau\leq k_{2}+1$. Then there exists a polynomial function $\boldsymbol{K}_{\mathrm{\pi}}\in[\mathcal{P}_{k_2}(\mathrm{E})]^2$ such that
\begin{equation}
\nonumber
\|\boldsymbol{K}-\boldsymbol{K}_{\mathrm{\pi}}\|_{0,\mathrm{E}}\leq Ch_{\mathrm{E}}^{\tau}|\boldsymbol{K}|_{\tau,\mathrm{E}}
\end{equation}
for all $\boldsymbol{K}\in\boldsymbol{H}^{\tau}(\mathrm{E})$.
\label{lem:polymials-Kerr}
\end{lem}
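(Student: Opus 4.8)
The statement is the standard polynomial approximation estimate on a star-shaped polygon, so the plan is to adapt the Dupont--Scott theory (see \cite{brenner2008mathematical}) componentwise and to exploit the mesh-regularity hypothesis imposed at the beginning of Section~\ref{sec:virtual}. First I would dispose of the case of an \emph{integer} smoothness index. Fix an integer $s$ with $1\le s\le k_2+1$ and, using the assumption that $\mathrm{E}$ is star-shaped with respect to a ball $B$ of radius $\rho\ge\alpha_s h_{\mathrm{E}}$, form the averaged Taylor polynomial $\boldsymbol{Q}^{s-1}\boldsymbol{K}$ of degree $s-1$ by averaging the pointwise Taylor expansion of each component of $\boldsymbol{K}$ over $B$ against a fixed smooth weight supported in $B$. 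The Bramble--Hilbert lemma in the Dupont--Scott form then gives
\begin{equation}
\nonumber
\|\boldsymbol{K}-\boldsymbol{Q}^{s-1}\boldsymbol{K}\|_{0,\mathrm{E}}\le C\,h_{\mathrm{E}}^{s}\,|\boldsymbol{K}|_{s,\mathrm{E}},
\end{equation}
with $C$ depending only on $s$, the space dimension, and the chunkiness parameter $\sigma_{\mathrm{E}}:=h_{\mathrm{E}}/\rho\le\alpha_s^{-1}$; since $\boldsymbol{Q}^{s-1}\boldsymbol{K}\in[\mathcal{P}_{s-1}(\mathrm{E})]^2\subseteq[\mathcal{P}_{k_2}(\mathrm{E})]^2$ for $s\le k_2+1$, the choice $\boldsymbol{K}_{\pi}:=\boldsymbol{Q}^{s-1}\boldsymbol{K}$ settles the integer case (the value $\tau=0$ being trivial).

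For a \emph{non-integer} $\tau$ with $1/2<\tau<k_2+1$, set $\tau=(1-\theta)\lfloor\tau\rfloor+\theta\lceil\tau\rceil$ with $\theta\in(0,1)$, note that $\lceil\tau\rceil-1=\lfloor\tau\rfloor\le k_2$, and recall that $\boldsymbol{H}^{\tau}(\mathrm{E})$ is the real-interpolation space $[\boldsymbol{H}^{\lfloor\tau\rfloor}(\mathrm{E}),\boldsymbol{H}^{\lceil\tau\rceil}(\mathrm{E})]_{\theta}$. By the integer-case argument, the linear map $\mathrm{I}-\boldsymbol{Q}^{\lfloor\tau\rfloor}$ satisfies $\|(\mathrm{I}-\boldsymbol{Q}^{\lfloor\tau\rfloor})\boldsymbol{K}\|_{0,\mathrm{E}}\le Ch_{\mathrm{E}}^{\lfloor\tau\rfloor}|\boldsymbol{K}|_{\lfloor\tau\rfloor,\mathrm{E}}$ and $\|(\mathrm{I}-\boldsymbol{Q}^{\lfloor\tau\rfloor})\boldsymbol{K}\|_{0,\mathrm{E}}\le Ch_{\mathrm{E}}^{\lceil\tau\rceil}|\boldsymbol{K}|_{\lceil\tau\rceil,\mathrm{E}}$; interpolating these two bounds yields $\|\boldsymbol{K}-\boldsymbol{Q}^{\lfloor\tau\rfloor}\boldsymbol{K}\|_{0,\mathrm{E}}\le Ch_{\mathrm{E}}^{\tau}|\boldsymbol{K}|_{\tau,\mathrm{E}}$, so one may take $\boldsymbol{K}_{\pi}:=\boldsymbol{Q}^{\lfloor\tau\rfloor}\boldsymbol{K}$. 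To make the fractional power of $h_{\mathrm{E}}$ and the homogeneous seminorm come out correctly, I would carry out the interpolation on the dilated element $\widehat{\mathrm{E}}:=h_{\mathrm{E}}^{-1}(\mathrm{E}-\boldsymbol{x}_{\mathrm{E}})$ of unit diameter, where the interpolation inequality has a scale-independent constant, and then scale back, the Jacobian factors from the change of variables assembling into the asserted power $h_{\mathrm{E}}^{\tau}$.

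The one point requiring care is the uniformity of $C$ across the whole mesh sequence $\{\mathcal{T}_h\}$, and this is exactly where the mesh-regularity assumptions pay off: the star-shapedness bound forces $\sigma_{\mathrm{E}}\le\alpha_s^{-1}$, which keeps the Dupont--Scott constant uniform, and one checks that the averaged Taylor projector is equivariant under the dilation $\boldsymbol{x}\mapsto h_{\mathrm{E}}^{-1}(\boldsymbol{x}-\boldsymbol{x}_{\mathrm{E}})$ so that the reference-configuration argument is legitimate. (The shortest-edge condition is not needed here.) I expect the bookkeeping of the fractional scaling and this equivariance check to be the only genuine chores; the remainder is a verbatim transcription of Section~3.2 of \cite{beirao2016mixed} applied to each component, which is why in the paper it is enough to cite that reference.
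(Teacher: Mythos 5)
Your proposal is correct and follows exactly the standard Dupont--Scott/Bramble--Hilbert argument (averaged Taylor polynomial for integer smoothness, real interpolation on a rescaled unit-diameter configuration for fractional $\tau$, with uniformity of constants coming from the star-shapedness bound $\rho\ge\alpha_s h_{\mathrm{E}}$) that the paper itself invokes: the lemma is stated there without proof, by citation to Section~3.2 of the reference, and that reference's argument is precisely the one you reconstruct. The only cosmetic remark is that your fractional case is phrased for $\tau>1/2$ while the statement allows all $0\le\tau\le k_2+1$, but the same interpolation between the $\lfloor\tau\rfloor$ and $\lceil\tau\rceil$ bounds covers $0<\tau<1$ verbatim, so nothing is lost.
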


\begin{lem}[\cite{brenner2008mathematical}]
Let $\boldsymbol{v}\in\boldsymbol{H}^{r+1}\cap\boldsymbol{V}$ with $0\leq r\leq k_{1}$. Then it holds that
\begin{equation}
\nonumber
|c_{v}(\boldsymbol{v},\boldsymbol{v},\boldsymbol{w})
-c_{v_h}(\boldsymbol{v},\boldsymbol{v},\boldsymbol{w})|\leq Ch^{r}(\|\boldsymbol{v}\|_{r,\mathrm{\Omega}}+|\boldsymbol{v}|_{1,\mathrm{\Omega}}+\|\boldsymbol{v}\|_{r+1,\mathrm{\Omega}})\|\boldsymbol{v}\|_{r+1,\mathrm{\Omega}}|\boldsymbol{w}|_{1,\mathrm{\Omega}}
\end{equation}
for all $\boldsymbol{w}\in\boldsymbol{V}$.
\label{trilinear:c_v-c_vh}
\end{lem}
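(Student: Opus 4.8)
The plan is to bound the difference element by element and then reduce everything to polynomial best-approximation, Sobolev embeddings and polynomial inverse inequalities. First I note that, although $c_{v_h}$ was introduced on $\boldsymbol{V}_{h}$, the local operators $\mathrm{\Pi}_{k_1}^{0,\mathrm{E}}$ and $\mathrm{\Pi}_{k_1-1}^{0,\mathrm{E}}$ are meaningful for any $\boldsymbol{L}^{2}(\mathrm{E})$, respectively $\boldsymbol{H}^{1}(\mathrm{E})$, argument, so $c_{v_h}^{\mathrm{E}}(\boldsymbol{v},\boldsymbol{v},\boldsymbol{w})$ makes sense for $\boldsymbol{v}\in\boldsymbol{H}^{r+1}\cap\boldsymbol{V}$ and $\boldsymbol{w}\in\boldsymbol{V}$. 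On each $\mathrm{E}\in\mathcal{T}_{h}$ I choose the polynomial $\boldsymbol{v}_{\pi}\in[\mathcal{P}_{k_1}(\mathrm{E})]^{2}$ provided by Lemma \ref{lem:polymials-uerr} and write
\[
c_{v}^{\mathrm{E}}(\boldsymbol{v},\boldsymbol{v},\boldsymbol{w})-c_{v_h}^{\mathrm{E}}(\boldsymbol{v},\boldsymbol{v},\boldsymbol{w})=\eta_{1}^{\mathrm{E}}+\eta_{2}^{\mathrm{E}}+\eta_{3}^{\mathrm{E}},
\]
with $\eta_{1}^{\mathrm{E}}:=c_{v}^{\mathrm{E}}(\boldsymbol{v},\boldsymbol{v},\boldsymbol{w})-c_{v}^{\mathrm{E}}(\boldsymbol{v}_{\pi},\boldsymbol{v}_{\pi},\boldsymbol{w})$, $\eta_{2}^{\mathrm{E}}:=c_{v}^{\mathrm{E}}(\boldsymbol{v}_{\pi},\boldsymbol{v}_{\pi},\boldsymbol{w})-c_{v_h}^{\mathrm{E}}(\boldsymbol{v}_{\pi},\boldsymbol{v}_{\pi},\boldsymbol{w})$ and $\eta_{3}^{\mathrm{E}}:=c_{v_h}^{\mathrm{E}}(\boldsymbol{v}_{\pi},\boldsymbol{v}_{\pi},\boldsymbol{w})-c_{v_h}^{\mathrm{E}}(\boldsymbol{v},\boldsymbol{v},\boldsymbol{w})$.

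For $\eta_{1}^{\mathrm{E}}$ I use the bilinearity of $c_{v}^{\mathrm{E}}$ in its first two arguments, $\eta_{1}^{\mathrm{E}}=c_{v}^{\mathrm{E}}(\boldsymbol{v}-\boldsymbol{v}_{\pi},\boldsymbol{v},\boldsymbol{w})+c_{v}^{\mathrm{E}}(\boldsymbol{v}_{\pi},\boldsymbol{v}-\boldsymbol{v}_{\pi},\boldsymbol{w})$, and estimate each term by H\"older's inequality in the $\boldsymbol{L}^{4}$--$\boldsymbol{L}^{2}$--$\boldsymbol{L}^{4}$ pattern dictated by the definition of $c_{v}^{\mathrm{E}}$, keeping the copy of $\boldsymbol{w}$ in $\boldsymbol{L}^{4}(\mathrm{E})$; for $\eta_{3}^{\mathrm{E}}$ I argue identically after inserting the projections and using their $L^{p}$-stability together with a bound of the type (\ref{norm:L^6-continous}). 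The factors $\|\boldsymbol{v}-\boldsymbol{v}_{\pi}\|$ and the corresponding projection errors are then controlled by Lemma \ref{lem:polymials-uerr} and the standard approximation properties of $\mathrm{\Pi}_{k_1}^{0,\mathrm{E}}$, $\mathrm{\Pi}_{k_1-1}^{0,\mathrm{E}}$: for $r\geq1$ one passes through $\|\boldsymbol{v}-\boldsymbol{v}_{\pi}\|_{0,\infty,\mathrm{E}}\leq Ch_{\mathrm{E}}^{r}|\boldsymbol{v}|_{r+1,\mathrm{E}}$ (Bramble--Hilbert scaling plus an inverse estimate applied to the polynomial $\boldsymbol{v}_{\pi}-\mathrm{\Pi}_{k_1}^{0,\mathrm{E}}\boldsymbol{v}$), while for $r=0$ one only invokes $\boldsymbol{H}^{1}(\mathrm{E})\hookrightarrow\boldsymbol{L}^{4}(\mathrm{E})$ and no rate is claimed. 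This yields $|\eta_{1}^{\mathrm{E}}|+|\eta_{3}^{\mathrm{E}}|\leq Ch_{\mathrm{E}}^{r}\big(\|\boldsymbol{v}\|_{r,\mathrm{E}}+|\boldsymbol{v}|_{1,\mathrm{E}}\big)\|\boldsymbol{v}\|_{r+1,\mathrm{E}}\,|\boldsymbol{w}|_{1,\mathrm{E}}$.

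The main work is $\eta_{2}^{\mathrm{E}}$. Since $\boldsymbol{v}_{\pi}\in[\mathcal{P}_{k_1}(\mathrm{E})]^{2}$ we have $\mathrm{\Pi}_{k_1}^{0,\mathrm{E}}\boldsymbol{v}_{\pi}=\boldsymbol{v}_{\pi}$ and $\mathrm{\Pi}_{k_1-1}^{0,\mathrm{E}}\nabla\boldsymbol{v}_{\pi}=\nabla\boldsymbol{v}_{\pi}$, so a short computation gives
\[
\eta_{2}^{\mathrm{E}}=\tfrac12\big(\boldsymbol{v}_{\pi}\cdot\nabla\boldsymbol{v}_{\pi},(\mathrm{I}-\mathrm{\Pi}_{k_1}^{0,\mathrm{E}})\boldsymbol{w}\big)_{\mathrm{E}}-\tfrac12\big((\mathrm{I}-\mathrm{\Pi}_{k_1-1}^{0,\mathrm{E}})\nabla\boldsymbol{w},\boldsymbol{v}_{\pi}\otimes\boldsymbol{v}_{\pi}\big)_{\mathrm{E}}.
\]
Exploiting that $(\mathrm{I}-\mathrm{\Pi}_{k_1}^{0,\mathrm{E}})\boldsymbol{w}$ and $(\mathrm{I}-\mathrm{\Pi}_{k_1-1}^{0,\mathrm{E}})\nabla\boldsymbol{w}$ are $L^{2}$-orthogonal to polynomials of the corresponding degree, I replace $\boldsymbol{v}_{\pi}\cdot\nabla\boldsymbol{v}_{\pi}$ and $\boldsymbol{v}_{\pi}\otimes\boldsymbol{v}_{\pi}$ by their own polynomial projections up to remainders, and obtain
\[
|\eta_{2}^{\mathrm{E}}|\leq Ch_{\mathrm{E}}\|(\mathrm{I}-\mathrm{\Pi}_{k_1}^{0,\mathrm{E}})(\boldsymbol{v}_{\pi}\cdot\nabla\boldsymbol{v}_{\pi})\|_{0,\mathrm{E}}|\boldsymbol{w}|_{1,\mathrm{E}}+C\|(\mathrm{I}-\mathrm{\Pi}_{k_1-1}^{0,\mathrm{E}})(\boldsymbol{v}_{\pi}\otimes\boldsymbol{v}_{\pi})\|_{0,\mathrm{E}}|\boldsymbol{w}|_{1,\mathrm{E}}.
\]
Since $\boldsymbol{v}_{\pi}\cdot\nabla\boldsymbol{v}_{\pi}$ and $\boldsymbol{v}_{\pi}\otimes\boldsymbol{v}_{\pi}$ are polynomials, these projection remainders are $Ch_{\mathrm{E}}^{s}$ times high-order seminorms; the point is to trade those seminorms for low-order norms of $\boldsymbol{v}$ by means of the Leibniz rule and polynomial inverse inequalities, together with the stability bounds $\|\boldsymbol{v}_{\pi}\|_{0,\infty,\mathrm{E}}\leq C\|\boldsymbol{v}\|_{r,\mathrm{E}}$ (for $r\geq1$) and $\|\nabla\boldsymbol{v}_{\pi}\|_{0,\mathrm{E}}\leq C\|\boldsymbol{v}\|_{r+1,\mathrm{E}}$. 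This produces $|\eta_{2}^{\mathrm{E}}|\leq Ch_{\mathrm{E}}^{r}\|\boldsymbol{v}\|_{r+1,\mathrm{E}}^{2}|\boldsymbol{w}|_{1,\mathrm{E}}$, which is of the required form since $\|\boldsymbol{v}\|_{r+1,\mathrm{E}}^{2}\leq(\|\boldsymbol{v}\|_{r,\mathrm{E}}+|\boldsymbol{v}|_{1,\mathrm{E}}+\|\boldsymbol{v}\|_{r+1,\mathrm{E}})\|\boldsymbol{v}\|_{r+1,\mathrm{E}}$.

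Summing the three contributions over $\mathrm{E}\in\mathcal{T}_{h}$, applying the discrete Cauchy--Schwarz inequality and passing from element to global norms — the $\boldsymbol{L}^{4}$ factors through the embedding $\|\cdot\|_{0,4,\mathrm{\Omega}}\leq\lambda_{2}^{1/2}|\cdot|_{1,\mathrm{\Omega}}$ recalled in Section \ref{sec:Pre} and the Sobolev norms of $\boldsymbol{v}$ directly — gives the claimed estimate. The only genuinely delicate point is $\eta_{2}^{\mathrm{E}}$: it does not vanish, because the products $\boldsymbol{v}_{\pi}\cdot\nabla\boldsymbol{v}_{\pi}$ and $\boldsymbol{v}_{\pi}\otimes\boldsymbol{v}_{\pi}$ have degree larger than $k_{1}$, so recovering the full order $h^{r}$ (rather than merely $h$) rests on the Leibniz/inverse-inequality bookkeeping just described and on handling separately the limit case $r=0$, where only boundedness is asserted.
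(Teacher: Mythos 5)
The paper offers no proof of Lemma \ref{trilinear:c_v-c_vh} at all --- it is stated with a bare citation --- so your argument can only be judged on its own merits. Your architecture (localize, insert the polynomial $\boldsymbol{v}_{\pi}$ of Lemma \ref{lem:polymials-uerr}, split into $\eta_{1}^{\mathrm{E}}+\eta_{2}^{\mathrm{E}}+\eta_{3}^{\mathrm{E}}$) is a legitimate variant of the standard VEM Navier--Stokes argument, and your handling of $\eta_{1}^{\mathrm{E}}$, $\eta_{3}^{\mathrm{E}}$ and of the case $r=0$ is acceptable. The genuine gap is in $\eta_{2}^{\mathrm{E}}$, which you yourself identify as the crux but do not actually close. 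Two concrete problems. First, the stability bound $\|\boldsymbol{v}_{\pi}\|_{0,\infty,\mathrm{E}}\leq C\|\boldsymbol{v}\|_{r,\mathrm{E}}$ that you invoke ``for $r\geq1$'' is false at $r=1$: in two dimensions $\boldsymbol{H}^{1}\not\hookrightarrow\boldsymbol{L}^{\infty}$, and even for $r\geq2$ an element-wise embedding with an $h$-uniform constant is not free (one must pass through $\|\boldsymbol{v}\|_{2,\mathrm{\Omega}}$ on the fixed domain, or do the scaling explicitly). Second, the tool you name for converting high-order seminorms of the polynomial products back into norms of $\boldsymbol{v}$ --- ``polynomial inverse inequalities'' --- is the wrong one: each use of $\|p\|_{0,\infty,\mathrm{E}}\leq Ch_{\mathrm{E}}^{-1}\|p\|_{0,\mathrm{E}}$ costs a negative power of $h_{\mathrm{E}}$ that is never recovered, so the bookkeeping as described does not reach the rate $h^{r}$ for $r\geq2$.

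The step can be repaired, but only by a different mechanism: after the double-orthogonality trick, compare the polynomial product with the exact one, e.g.
\begin{equation}
\nonumber
\|(\mathrm{I}-\mathrm{\Pi}_{k_1}^{0,\mathrm{E}})(\boldsymbol{v}_{\pi}\cdot\nabla\boldsymbol{v}_{\pi})\|_{0,\mathrm{E}}
\leq 2\|\boldsymbol{v}_{\pi}\cdot\nabla\boldsymbol{v}_{\pi}-\boldsymbol{v}\cdot\nabla\boldsymbol{v}\|_{0,\mathrm{E}}
+\|(\mathrm{I}-\mathrm{\Pi}_{k_1}^{0,\mathrm{E}})(\boldsymbol{v}\cdot\nabla\boldsymbol{v})\|_{0,\mathrm{E}},
\end{equation}
then apply the Bramble--Hilbert estimate to $\boldsymbol{v}\cdot\nabla\boldsymbol{v}$ together with two-dimensional Sobolev multiplication bounds of the type $|\boldsymbol{v}\cdot\nabla\boldsymbol{v}|_{r-1,\mathrm{E}}\leq C\|\boldsymbol{v}\|_{r,\mathrm{E}}\|\boldsymbol{v}\|_{r+1,\mathrm{E}}$, and control the difference term by $L^{4}$/$L^{\infty}$ approximation estimates for $\boldsymbol{v}-\boldsymbol{v}_{\pi}$ (which is exactly where the factor $\|\boldsymbol{v}\|_{r,\mathrm{\Omega}}\|\boldsymbol{v}\|_{r+1,\mathrm{\Omega}}$ in the statement originates). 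This is also why the published proofs insert the projections one factor at a time into the forms evaluated at $\boldsymbol{v}$ itself rather than at $\boldsymbol{v}_{\pi}$: the product estimates are then applied to $\boldsymbol{v}$ directly and no inverse inequality is needed. As written, your proof asserts the conclusion of this step rather than establishing it.
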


\begin{lem}[\cite{brenner2008mathematical}]
Let $\delta$ be the constant defined in (\ref{prop:bilinear-continuous-uh}). Then it holds that
\begin{equation}
\nonumber
|c_{v_h}(\boldsymbol{u},\boldsymbol{u},\boldsymbol{w})
-c_{v_h}(\boldsymbol{v},\boldsymbol{v},\boldsymbol{w})|\leq \delta\big(|\boldsymbol{v}|_{1,\mathrm{\Omega}}|\boldsymbol{w}|_{1,\mathrm{\Omega}}+|\boldsymbol{u}-\boldsymbol{v}+\boldsymbol{w}|_{1,\mathrm{\Omega}}(|\boldsymbol{u}|_{1,\mathrm{\Omega}}+|\boldsymbol{v}|_{1,\mathrm{\Omega}})\big)|\boldsymbol{w}|_{1,\mathrm{\Omega}}
\end{equation}
for all $\boldsymbol{u},\boldsymbol{v},\boldsymbol{w}\in\boldsymbol{V}$.
\label{trilinear:c_vh-c_vh}
\end{lem}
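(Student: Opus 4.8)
The plan is to reduce the quadratic difference to two genuinely trilinear terms, one perturbing each of the first two slots, and then to invoke nothing beyond the skew-symmetry (\ref{skew-symmetric-uh}) and the uniform continuity (\ref{prop:bilinear-continuous-uh}) of $c_{v_h}$. First I would set $\boldsymbol{\theta}:=\boldsymbol{u}-\boldsymbol{v}$ and, using that $c_{v_h}(\cdot,\cdot,\cdot)$ is trilinear (immediate from its definition (\ref{trilinear:local discrete}) in terms of the $L^2$-projections), add and subtract $c_{v_h}(\boldsymbol{u},\boldsymbol{v},\boldsymbol{w})$ to obtain
\begin{equation*}
c_{v_h}(\boldsymbol{u},\boldsymbol{u},\boldsymbol{w})-c_{v_h}(\boldsymbol{v},\boldsymbol{v},\boldsymbol{w})
=c_{v_h}(\boldsymbol{u},\boldsymbol{\theta},\boldsymbol{w})+c_{v_h}(\boldsymbol{\theta},\boldsymbol{v},\boldsymbol{w}).
\end{equation*}

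For the first term I would use $c_{v_h}(\boldsymbol{u},\boldsymbol{w},\boldsymbol{w})=0$ from (\ref{skew-symmetric-uh}) to replace the middle argument $\boldsymbol{\theta}$ by $\boldsymbol{\theta}+\boldsymbol{w}=\boldsymbol{u}-\boldsymbol{v}+\boldsymbol{w}$ without changing the value, so that (\ref{prop:bilinear-continuous-uh}) bounds $|c_{v_h}(\boldsymbol{u},\boldsymbol{\theta},\boldsymbol{w})|$ by $\delta\,|\boldsymbol{u}|_{1,\mathrm{\Omega}}|\boldsymbol{u}-\boldsymbol{v}+\boldsymbol{w}|_{1,\mathrm{\Omega}}|\boldsymbol{w}|_{1,\mathrm{\Omega}}$. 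For the second term I would instead split the first argument, $\boldsymbol{\theta}=(\boldsymbol{\theta}+\boldsymbol{w})-\boldsymbol{w}$, giving $c_{v_h}(\boldsymbol{\theta},\boldsymbol{v},\boldsymbol{w})=c_{v_h}(\boldsymbol{u}-\boldsymbol{v}+\boldsymbol{w},\boldsymbol{v},\boldsymbol{w})-c_{v_h}(\boldsymbol{w},\boldsymbol{v},\boldsymbol{w})$, and apply (\ref{prop:bilinear-continuous-uh}) to each piece to get the bound $\delta\,|\boldsymbol{u}-\boldsymbol{v}+\boldsymbol{w}|_{1,\mathrm{\Omega}}|\boldsymbol{v}|_{1,\mathrm{\Omega}}|\boldsymbol{w}|_{1,\mathrm{\Omega}}+\delta\,|\boldsymbol{v}|_{1,\mathrm{\Omega}}|\boldsymbol{w}|_{1,\mathrm{\Omega}}^{2}$. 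Adding the three contributions and factoring out $|\boldsymbol{w}|_{1,\mathrm{\Omega}}$ reproduces the asserted inequality.

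I do not expect a genuine obstacle here; this is a standard Brenner--Scott-type estimate. The only ``creative'' step is choosing to insert $\pm\boldsymbol{w}$ into the middle slot of the first term and into the first slot of the second term, which is precisely what produces the combination $\boldsymbol{u}-\boldsymbol{v}+\boldsymbol{w}$ appearing on the right-hand side; after that one only uses the triangle inequality and the constants already fixed. The single point I would be careful to spell out is that $\delta$ in (\ref{prop:bilinear-continuous-uh}) and the identity (\ref{skew-symmetric-uh}) were stated for discrete arguments, while the lemma is written for $\boldsymbol{u},\boldsymbol{v},\boldsymbol{w}\in\boldsymbol{V}$; exactly as in Lemma \ref{trilinear:c_v-c_vh}, this causes no trouble because $c_{v_h}$ is assembled from $L^2$-projections that are bounded on all of $\boldsymbol{H}^1(\mathrm{E})$, so both the continuity bound and the skew-symmetry extend verbatim to $\boldsymbol{V}$.
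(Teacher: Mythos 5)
Your proof is correct: the decomposition $c_{v_h}(\boldsymbol{u},\boldsymbol{u},\boldsymbol{w})-c_{v_h}(\boldsymbol{v},\boldsymbol{v},\boldsymbol{w})=c_{v_h}(\boldsymbol{u},\boldsymbol{u}-\boldsymbol{v},\boldsymbol{w})+c_{v_h}(\boldsymbol{u}-\boldsymbol{v},\boldsymbol{v},\boldsymbol{w})$, the insertion of $\pm\boldsymbol{w}$ via skew-symmetry (\ref{skew-symmetric-uh}), and the three applications of (\ref{prop:bilinear-continuous-uh}) reproduce the stated bound exactly, and your remark about extending $\delta$ and the skew-symmetry from $\boldsymbol{V}_h$ to $\boldsymbol{V}$ addresses the only genuine subtlety. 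The paper offers no proof of its own (it only cites \cite{brenner2008mathematical}), and your argument is the standard one that reference supplies.
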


Finally, we note the results regarding the approximation of the load terms.

\begin{lem}[\cite{beirao2013basic,brenner2008mathematical}]
Assuming $\boldsymbol{f}\in\boldsymbol{H}^{r+1}$, $-1\leq r\leq k_{1}$ and setting $\boldsymbol{f}_{h}:=\mathrm{\Pi}_{k_1}^{0,\mathrm{E}}\boldsymbol{f}$. Then we have
\begin{equation}
|(\boldsymbol{f}_{h}-\boldsymbol{f},\boldsymbol{v})_{\mathrm{\Omega}}|\leq Ch^{r+2}|\boldsymbol{f}|_{r+1,\mathrm{\Omega}}|\boldsymbol{v}|_{1,\mathrm{\Omega}}\quad\forall\boldsymbol{v}\in\boldsymbol{V}_{h}.
\end{equation}
\label{load term:f-fh_error}
\end{lem}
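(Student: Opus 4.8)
The plan is to exploit the $L^2$-orthogonality of the local projection $\mathrm{\Pi}_{k_1}^{0,\mathrm{E}}$ against polynomials of degree $k_1$, combined with a \emph{double} polynomial approximation argument: one for $\boldsymbol{f}$, which can be approximated to full order, and one for $\boldsymbol{v}$, which is only $\boldsymbol{H}^1$ and hence may be replaced by a constant at the cost of a single power of $h_{\mathrm{E}}$. First I would split the error into element contributions,
\[
(\boldsymbol{f}_{h}-\boldsymbol{f},\boldsymbol{v})_{\mathrm{\Omega}}=\sum_{\mathrm{E}\in\mathcal{T}_h}(\mathrm{\Pi}_{k_1}^{0,\mathrm{E}}\boldsymbol{f}-\boldsymbol{f},\boldsymbol{v})_{\mathrm{E}}.
\]
On each $\mathrm{E}$, choose $\boldsymbol{v}_{\mathrm{\pi}}\in[\mathcal{P}_0(\mathrm{E})]^2\subset[\mathcal{P}_{k_1}(\mathrm{E})]^2$ realizing the estimate of Lemma \ref{lem:polymials-uerr} with $r=0$, $m=2$, so that $\|\boldsymbol{v}-\boldsymbol{v}_{\mathrm{\pi}}\|_{0,\mathrm{E}}\leq Ch_{\mathrm{E}}|\boldsymbol{v}|_{1,\mathrm{E}}$. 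Since $\boldsymbol{v}_{\mathrm{\pi}}\in[\mathcal{P}_{k_1}(\mathrm{E})]^2$, the defining property of the $L^2$-projection gives $(\mathrm{\Pi}_{k_1}^{0,\mathrm{E}}\boldsymbol{f}-\boldsymbol{f},\boldsymbol{v}_{\mathrm{\pi}})_{\mathrm{E}}=0$, hence
\[
(\mathrm{\Pi}_{k_1}^{0,\mathrm{E}}\boldsymbol{f}-\boldsymbol{f},\boldsymbol{v})_{\mathrm{E}}=(\mathrm{\Pi}_{k_1}^{0,\mathrm{E}}\boldsymbol{f}-\boldsymbol{f},\boldsymbol{v}-\boldsymbol{v}_{\mathrm{\pi}})_{\mathrm{E}}.
\]

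Next I would apply the Cauchy--Schwarz inequality in $L^2(\mathrm{E})$ and bound the two factors separately. For the first, the best-approximation property of $\mathrm{\Pi}_{k_1}^{0,\mathrm{E}}$ in $L^2(\mathrm{E})$ yields $\|\mathrm{\Pi}_{k_1}^{0,\mathrm{E}}\boldsymbol{f}-\boldsymbol{f}\|_{0,\mathrm{E}}\leq\|\boldsymbol{f}-\boldsymbol{f}_{\mathrm{\pi}}\|_{0,\mathrm{E}}$ for every $\boldsymbol{f}_{\mathrm{\pi}}\in[\mathcal{P}_{k_1}(\mathrm{E})]^2$; taking $\boldsymbol{f}_{\mathrm{\pi}}$ as in Lemma \ref{lem:polymials-uerr} for the exponent $r$ in the statement (and, in the borderline case $r=-1$, simply using that $\mathrm{\Pi}_{k_1}^{0,\mathrm{E}}$ is an $L^2$-contraction, so $\|\mathrm{\Pi}_{k_1}^{0,\mathrm{E}}\boldsymbol{f}-\boldsymbol{f}\|_{0,\mathrm{E}}\leq\|\boldsymbol{f}\|_{0,\mathrm{E}}$) gives $\|\mathrm{\Pi}_{k_1}^{0,\mathrm{E}}\boldsymbol{f}-\boldsymbol{f}\|_{0,\mathrm{E}}\leq Ch_{\mathrm{E}}^{r+1}|\boldsymbol{f}|_{r+1,\mathrm{E}}$. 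Multiplying by the bound on $\|\boldsymbol{v}-\boldsymbol{v}_{\mathrm{\pi}}\|_{0,\mathrm{E}}$ produces the local estimate
\[
|(\mathrm{\Pi}_{k_1}^{0,\mathrm{E}}\boldsymbol{f}-\boldsymbol{f},\boldsymbol{v})_{\mathrm{E}}|\leq Ch_{\mathrm{E}}^{r+2}|\boldsymbol{f}|_{r+1,\mathrm{E}}\,|\boldsymbol{v}|_{1,\mathrm{E}}.
\]

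Finally I would sum over $\mathrm{E}\in\mathcal{T}_h$, use $h_{\mathrm{E}}\leq h$, and apply the discrete Cauchy--Schwarz inequality for sums to factor $\sum_{\mathrm{E}}|\boldsymbol{f}|_{r+1,\mathrm{E}}|\boldsymbol{v}|_{1,\mathrm{E}}$ into $\big(\sum_{\mathrm{E}}|\boldsymbol{f}|_{r+1,\mathrm{E}}^2\big)^{1/2}\big(\sum_{\mathrm{E}}|\boldsymbol{v}|_{1,\mathrm{E}}^2\big)^{1/2}=|\boldsymbol{f}|_{r+1,\mathrm{\Omega}}\,|\boldsymbol{v}|_{1,\mathrm{\Omega}}$, which is exactly the claimed bound. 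I do not expect any genuine obstacle: this is the standard VEM "consistency of the loading term" computation, and the only points deserving a word of care are the extraction of the extra power of $h_{\mathrm{E}}$ — which is possible precisely because $\boldsymbol{v}$ is subtracted only by a constant while $\boldsymbol{f}$ is approximated to full order $h_{\mathrm{E}}^{r+1}$ — and the verification that the endpoint index $r=-1$ is covered by the contraction property of $\mathrm{\Pi}_{k_1}^{0,\mathrm{E}}$.
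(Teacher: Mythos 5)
Your argument is correct, and it is the standard ``consistency of the load term'' proof: the orthogonality of $\mathrm{\Pi}_{k_1}^{0,\mathrm{E}}$ against $[\mathcal{P}_{k_1}(\mathrm{E})]^2$ lets you subtract a constant $\boldsymbol{v}_{\pi}$ from the test function and gain one extra power of $h_{\mathrm{E}}$ beyond the $h_{\mathrm{E}}^{r+1}$ coming from the approximation of $\boldsymbol{f}$, and your treatment of the endpoint $r=-1$ via the $L^2$-contraction property is the right fix. Note that the paper does not prove this lemma itself --- it is quoted from \cite{beirao2013basic,brenner2008mathematical} --- so there is no internal proof to compare against; your reconstruction matches the argument in those references. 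It is also instructive to contrast it with the proof the paper does give for the companion estimate on $\boldsymbol{g}$ (Lemma \ref{load term:g-gh_error}): there the test function $\boldsymbol{K}$ is only controlled in $L^2$, no extra power of $h$ is claimed, and consequently the orthogonality step you use is omitted and one simply applies Cauchy--Schwarz with the polynomial approximation of $\boldsymbol{g}$. Your proof correctly identifies that the $H^1$-control of $\boldsymbol{v}$ is precisely what makes the additional factor $h$ available in the $\boldsymbol{f}$-estimate.
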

\begin{lem}
Assuming $\boldsymbol{g}\in\boldsymbol{H}^{\tau}$, $0\leq \tau\leq k_{2}+1$ and setting $\boldsymbol{g}_{h}:=\mathrm{\Pi}_{k_2}^{0,\mathrm{E}}\boldsymbol{g}$. Then we have
\begin{equation}
|(\boldsymbol{g}_{h}-\boldsymbol{g},\boldsymbol{K})_{\mathrm{\Omega}}|\leq Ch^{\tau}|\boldsymbol{g}|_{\tau,\mathrm{\Omega}}\|\boldsymbol{K}\|_{0,\mathrm{\Omega}}\quad\forall\boldsymbol{K}\in\boldsymbol{S}_{h}.
\end{equation}
\label{load term:g-gh_error}
\end{lem}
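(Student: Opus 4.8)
The plan is to argue elementwise and to exploit the single structural fact about the discrete load functional that we actually need: by definition $\mathrm{\Pi}_{k_2}^{0,\mathrm{E}}$ is the $L^2(\mathrm{E})$-orthogonal projection onto $[\mathcal{P}_{k_2}(\mathrm{E})]^2$, so $\|\boldsymbol{g}-\mathrm{\Pi}_{k_2}^{0,\mathrm{E}}\boldsymbol{g}\|_{0,\mathrm{E}}$ is precisely the best $L^2$-approximation error of $\boldsymbol{g}$ by vector polynomials of degree at most $k_2$ on $\mathrm{E}$. First I would write, using $\boldsymbol{g}_h|_{\mathrm{E}}=\mathrm{\Pi}_{k_2}^{0,\mathrm{E}}\boldsymbol{g}$,
\begin{equation*}
(\boldsymbol{g}_h-\boldsymbol{g},\boldsymbol{K})_{\mathrm{\Omega}}=\sum_{\mathrm{E}\in\mathcal{T}_h}(\mathrm{\Pi}_{k_2}^{0,\mathrm{E}}\boldsymbol{g}-\boldsymbol{g},\boldsymbol{K})_{\mathrm{E}},
\end{equation*}
and then bound each summand by the Cauchy--Schwarz inequality as $|(\mathrm{\Pi}_{k_2}^{0,\mathrm{E}}\boldsymbol{g}-\boldsymbol{g},\boldsymbol{K})_{\mathrm{E}}|\le\|\mathrm{\Pi}_{k_2}^{0,\mathrm{E}}\boldsymbol{g}-\boldsymbol{g}\|_{0,\mathrm{E}}\,\|\boldsymbol{K}\|_{0,\mathrm{E}}$.

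Next I would estimate the elementwise projection error via polynomial approximation. Since $\boldsymbol{g}\in\boldsymbol{H}^{\tau}$ with $0\le\tau\le k_2+1$, Lemma \ref{lem:polymials-Kerr} applied to $\boldsymbol{g}$ on $\mathrm{E}$ produces $\boldsymbol{g}_{\pi}\in[\mathcal{P}_{k_2}(\mathrm{E})]^2$ with $\|\boldsymbol{g}-\boldsymbol{g}_{\pi}\|_{0,\mathrm{E}}\le Ch_{\mathrm{E}}^{\tau}|\boldsymbol{g}|_{\tau,\mathrm{E}}$, and by optimality of the $L^2$-projection $\|\boldsymbol{g}-\mathrm{\Pi}_{k_2}^{0,\mathrm{E}}\boldsymbol{g}\|_{0,\mathrm{E}}\le\|\boldsymbol{g}-\boldsymbol{g}_{\pi}\|_{0,\mathrm{E}}\le Ch_{\mathrm{E}}^{\tau}|\boldsymbol{g}|_{\tau,\mathrm{E}}$. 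Inserting this, using $h_{\mathrm{E}}\le h$, and applying the discrete Cauchy--Schwarz inequality over $\mathcal{T}_h$ gives
\begin{equation*}
|(\boldsymbol{g}_h-\boldsymbol{g},\boldsymbol{K})_{\mathrm{\Omega}}|\le Ch^{\tau}\Big(\sum_{\mathrm{E}\in\mathcal{T}_h}|\boldsymbol{g}|_{\tau,\mathrm{E}}^2\Big)^{1/2}\Big(\sum_{\mathrm{E}\in\mathcal{T}_h}\|\boldsymbol{K}\|_{0,\mathrm{E}}^2\Big)^{1/2}=Ch^{\tau}|\boldsymbol{g}|_{\tau,\mathrm{\Omega}}\|\boldsymbol{K}\|_{0,\mathrm{\Omega}},
\end{equation*}
which is the claimed bound.

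I do not anticipate a genuine obstacle: the argument is routine and uses only the $L^2(\mathrm{E})$-norm of $\boldsymbol{K}$, which is controlled by $\|\boldsymbol{K}\|_{\textup{div},\mathrm{E}}$. The one point worth emphasizing is precisely what one should \emph{not} attempt here: unlike the velocity load term in Lemma \ref{load term:f-fh_error}, where an extra power of $h$ is gained by pairing $\boldsymbol{f}-\mathrm{\Pi}_{k_1}^{0,\mathrm{E}}\boldsymbol{f}$ against $\boldsymbol{v}_h$ minus a polynomial, no such improvement is available for $\boldsymbol{K}\in\boldsymbol{S}_h$ because this virtual field carries no elementwise extra Sobolev regularity that we may invoke; hence the estimate legitimately retains the factor $h^{\tau}$ rather than a higher power.
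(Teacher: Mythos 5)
Your proposal is correct and follows essentially the same route as the paper's proof: split the pairing elementwise, apply Cauchy--Schwarz on each element, bound $\|\boldsymbol{g}-\mathrm{\Pi}_{k_2}^{0,\mathrm{E}}\boldsymbol{g}\|_{0,\mathrm{E}}$ via the best-approximation property together with Lemma~\ref{lem:polymials-Kerr}, and conclude with a discrete Cauchy--Schwarz over $\mathcal{T}_h$. Your write-up is in fact slightly more careful than the paper's (which omits the square on $|\boldsymbol{g}|_{\tau,\mathrm{E}}$ inside the final sum), and your closing remark correctly identifies why no extra power of $h$ is available here, unlike in Lemma~\ref{load term:f-fh_error}.
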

\begin{proof}
By observing (\ref{load term:Pig})-(\ref{load term:g}), then the standard $L^2$-orthogonality and approximation estimates on star-shaped domains yield
\begin{equation}
\nonumber
(\boldsymbol{g}_{h},\boldsymbol{K})_{\mathrm{\Omega}}-(\boldsymbol{g},\boldsymbol{K})_{\mathrm{\Omega}} = \sum_{\mathrm{E}\in\mathcal{T}_{h}}(\mathrm{\Pi}_{k_2}^{0,\mathrm{E}}\boldsymbol{g}-\boldsymbol{g},\boldsymbol{K})_{\mathrm{E}}\quad\forall\boldsymbol{K}\in \boldsymbol{S}_{h},
\end{equation}
then applying Lemma \ref{lem:polymials-Kerr}, we infer that
\begin{equation}
\nonumber
(\boldsymbol{g}_{h},\boldsymbol{K})_{\mathrm{\Omega}}-(\boldsymbol{g},\boldsymbol{K})_{\mathrm{\Omega}} \leq C\sum_{\mathrm{E}\in\mathcal{T}_{h}}h_{\mathrm{E}}^{\tau}|\boldsymbol{g}|_{\tau,\mathrm{E}}\|\boldsymbol{K}\|_{0,\mathrm{E}}
\leq Ch^{\tau}\Big(\sum_{\mathrm{E}\in\mathcal{T}_{h}}|\boldsymbol{g}|_{\tau,\mathrm{E}}\Big)^{\frac{1}{2}}\|\boldsymbol{K}\|_{0,\mathrm{\Omega}}.
\end{equation}
\end{proof}

\subsection{Convergence analysis of the model}

\begin{thm}
Under the assumption conditions of Theorem \ref{lem:continue problem well-posed} and \ref{lem:discrete problem well-posed}, we set $(\boldsymbol{u},\boldsymbol{J},p,\phi)$ is the solution of problem \ref{variational:continous} and $(\boldsymbol{u}_h,\boldsymbol{J}_h,p_h,\phi_h)$ is the solution of virtual problem \ref{variational:discretizations}, while assuming $\boldsymbol{u}\in\boldsymbol{H}^{r+1}$, $p\in\boldsymbol{H}^{r}$, $\boldsymbol{J}\in\boldsymbol{H}^{\tau}$ and $\phi\in\boldsymbol{H}^{\tau}$ with $0<r\leq k_1$, $\frac{1}{2}\leq\tau\leq k_2+1$, then we have
\begin{align}
|\boldsymbol{u}-\boldsymbol{u}_h|_{1,\mathrm{\Omega}}+
\|\boldsymbol{J}-\boldsymbol{J}_h\|_{0,\mathrm{\Omega}}&\leq h^{\min\{r,\tau\}}\mathcal{V}_{2}(\|\boldsymbol{u}\|_{r+1,\mathrm{\Omega}},|\boldsymbol{f}|_{r+1,\mathrm{\Omega}},|\boldsymbol{g}|_{\tau,\mathrm{\Omega}},|\boldsymbol{J}|_{\tau,\mathrm{\Omega}}),\label{error:u-uh J-Jh}
\\\|p-p_{h}\|_{0,\mathrm{\Omega}}&\leq h^{\min\{\tau,r\}}\mathcal{V}_{3}(\|\boldsymbol{u}\|_{r+1,\mathrm{\Omega}},|\boldsymbol{f}|_{r+1,\mathrm{\Omega}},|\boldsymbol{g}|_{\tau,\mathrm{\Omega}},|\boldsymbol{J}|_{\tau,\mathrm{\Omega}},|p|_{r,\mathrm{\Omega}}), \label{error:p-ph}
\\\|\phi-\phi_{h}\|_{0,\mathrm{\Omega}}&\leq h^{\min\{\tau,r\}}\mathcal{V}_{4}(\|\boldsymbol{u}\|_{r+1,\mathrm{\Omega}},|\boldsymbol{f}|_{r+1,\mathrm{\Omega}},|\boldsymbol{g}|_{\tau,\mathrm{\Omega}},|\boldsymbol{J}|_{\tau,\mathrm{\Omega}},|\phi|_{\tau,\mathrm{\Omega}}), \label{error:phi-phih}
\end{align}
\label{thm:error}
\end{thm}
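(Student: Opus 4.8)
The plan is to carry out the analysis on the equivalent kernel formulations \eqref{variational:kernel-continous} and \eqref{variational:kernel-discretizations}, which is legitimate since $\boldsymbol{Z}_h\subset\boldsymbol{Z}$ and $\boldsymbol{Y}_h\subset\boldsymbol{Y}$ allow the continuous kernel equations to be tested against discrete functions. Using \eqref{inf:u-Zh} and \eqref{inf:J-Yh} together with Lemmas~\ref{lem:interplate-u} and \ref{lem:interplate-K}, I first fix a divergence-free quasi-interpolant $\boldsymbol{u}_I\in\boldsymbol{Z}_h$ with $\|\boldsymbol{u}-\boldsymbol{u}_I\|_{0,\mathrm{\Omega}}+h|\boldsymbol{u}-\boldsymbol{u}_I|_{1,\mathrm{\Omega}}\le Ch^{r+1}|\boldsymbol{u}|_{r+1,\mathrm{\Omega}}$ and $\boldsymbol{J}_I\in\boldsymbol{Y}_h$ with $\|\boldsymbol{J}-\boldsymbol{J}_I\|_{0,\mathrm{\Omega}}\le Ch^{\tau}|\boldsymbol{J}|_{\tau,\mathrm{\Omega}}$, and set $\boldsymbol{e}_u:=\boldsymbol{u}_I-\boldsymbol{u}_h\in\boldsymbol{Z}_h$, $\boldsymbol{e}_J:=\boldsymbol{J}_I-\boldsymbol{J}_h\in\boldsymbol{Y}_h$, so that $\nabla\cdot\boldsymbol{e}_J=0$ and $\|\boldsymbol{e}_J\|_{\textup{div},\mathrm{\Omega}}=\|\boldsymbol{e}_J\|_{0,\mathrm{\Omega}}$. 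Subtracting \eqref{variational:kernel-discretizations} from \eqref{variational:kernel-continous} and inserting $\boldsymbol{u}_I,\boldsymbol{J}_I$ yields, for all $(\boldsymbol{v}_h,\boldsymbol{K}_h)\in\boldsymbol{Z}_h\times\boldsymbol{Y}_h$, the error identities
\begin{align*}
a_{v_h}(\boldsymbol{e}_u,\boldsymbol{v}_h)-d_{K_h}(\boldsymbol{e}_J,\boldsymbol{v}_h)
&=\big[a_{v_h}(\boldsymbol{u}_I,\boldsymbol{v}_h)-a_v(\boldsymbol{u},\boldsymbol{v}_h)\big]+\big[c_{v_h}(\boldsymbol{u}_h,\boldsymbol{u}_h,\boldsymbol{v}_h)-c_v(\boldsymbol{u},\boldsymbol{u},\boldsymbol{v}_h)\big]\\
&\quad-\big[d_{K_h}(\boldsymbol{J}_I,\boldsymbol{v}_h)-d_K(\boldsymbol{J},\boldsymbol{v}_h)\big]-(\boldsymbol{f}_h-\boldsymbol{f},\boldsymbol{v}_h)_{\mathrm{\Omega}},\\
a_{K_h}(\boldsymbol{e}_J,\boldsymbol{K}_h)+d_{K_h}(\boldsymbol{K}_h,\boldsymbol{e}_u)
&=\big[a_{K_h}(\boldsymbol{J}_I,\boldsymbol{K}_h)-a_K(\boldsymbol{J},\boldsymbol{K}_h)\big]+\big[d_{K_h}(\boldsymbol{K}_h,\boldsymbol{u}_I)-d_K(\boldsymbol{K}_h,\boldsymbol{u})\big]\\
&\quad-S_c(\boldsymbol{g}_h-\boldsymbol{g},\boldsymbol{K}_h)_{\mathrm{\Omega}}.
\end{align*}

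Choosing $\boldsymbol{v}_h=\boldsymbol{e}_u$, $\boldsymbol{K}_h=\boldsymbol{e}_J$ and adding, the coupling terms $-d_{K_h}(\boldsymbol{e}_J,\boldsymbol{e}_u)$ and $+d_{K_h}(\boldsymbol{e}_J,\boldsymbol{e}_u)$ cancel, and by the stability estimates \eqref{stability-uh}, \eqref{stability-Jh} the left-hand side is bounded below by $\alpha_{*}\nu|\boldsymbol{e}_u|_{1,\mathrm{\Omega}}^{2}+\eta_{*}S_c\|\boldsymbol{e}_J\|_{0,\mathrm{\Omega}}^{2}\ge C\|(\boldsymbol{e}_u,\boldsymbol{e}_J)\|_{1,\mathrm{\Omega}}^{2}$. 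On the right-hand side the $a_{v_h}$- and $a_{K_h}$-consistency contributions are handled in the standard VEM way by inserting the local polynomial best approximations $\boldsymbol{u}_{\pi},\boldsymbol{K}_{\pi}$ of Lemmas~\ref{lem:polymials-uerr} and \ref{lem:polymials-Kerr}: the polynomial parts vanish by the $k_1$- and $k_2$-consistency \eqref{k_1-consistency}, \eqref{k_2-consistency}, and the remaining parts are controlled using the stability bounds and the interpolation/polynomial estimates, giving a bound of order $h^{\min\{r,\tau\}}$, with constants built from $\|\boldsymbol{u}\|_{r+1,\mathrm{\Omega}}$ and $|\boldsymbol{J}|_{\tau,\mathrm{\Omega}}$, times $\|(\boldsymbol{e}_u,\boldsymbol{e}_J)\|_{1,\mathrm{\Omega}}$; the $d_{K_h}$-consistency terms split into a polynomial-projection error on the factor multiplying $\boldsymbol{B}$ plus a $\|\boldsymbol{v}_h-\mathrm{\Pi}^{0}\boldsymbol{v}_h\|_{0,\mathrm{\Omega}}$-type term, both of the same order; and the load terms are bounded by Lemmas~\ref{load term:f-fh_error} and \ref{load term:g-gh_error}.

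The only non-routine term is $c_{v_h}(\boldsymbol{u}_h,\boldsymbol{u}_h,\boldsymbol{e}_u)-c_v(\boldsymbol{u},\boldsymbol{u},\boldsymbol{e}_u)$, which I split as $\big[c_{v_h}(\boldsymbol{u}_h,\boldsymbol{u}_h,\boldsymbol{e}_u)-c_{v_h}(\boldsymbol{u},\boldsymbol{u},\boldsymbol{e}_u)\big]+\big[c_{v_h}(\boldsymbol{u},\boldsymbol{u},\boldsymbol{e}_u)-c_v(\boldsymbol{u},\boldsymbol{u},\boldsymbol{e}_u)\big]$; the second piece is $Ch^{r}(\cdots)|\boldsymbol{e}_u|_{1,\mathrm{\Omega}}$ by Lemma~\ref{trilinear:c_v-c_vh}, while Lemma~\ref{trilinear:c_vh-c_vh} applied with arguments $(\boldsymbol{u}_h,\boldsymbol{u},\boldsymbol{e}_u)$, combined with the identity $\boldsymbol{u}_h-\boldsymbol{u}+\boldsymbol{e}_u=\boldsymbol{u}_I-\boldsymbol{u}$, bounds the first piece by $\delta|\boldsymbol{u}|_{1,\mathrm{\Omega}}|\boldsymbol{e}_u|_{1,\mathrm{\Omega}}^{2}+\delta|\boldsymbol{u}-\boldsymbol{u}_I|_{1,\mathrm{\Omega}}\big(|\boldsymbol{u}_h|_{1,\mathrm{\Omega}}+|\boldsymbol{u}|_{1,\mathrm{\Omega}}\big)|\boldsymbol{e}_u|_{1,\mathrm{\Omega}}$. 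The last summand is $\mathcal{O}(h^{\min\{r,\tau\}})|\boldsymbol{e}_u|_{1,\mathrm{\Omega}}$ using the a priori bounds of Theorems~\ref{lem:continue problem well-posed} and \ref{lem:discrete problem well-posed}, while the kickback term $\delta|\boldsymbol{u}|_{1,\mathrm{\Omega}}|\boldsymbol{e}_u|_{1,\mathrm{\Omega}}^{2}$ is absorbed into the coercive left-hand side under the smallness hypotheses. Collecting everything gives $\|(\boldsymbol{e}_u,\boldsymbol{e}_J)\|_{1,\mathrm{\Omega}}\le Ch^{\min\{r,\tau\}}\mathcal{V}_2$, and a triangle inequality with the interpolation bounds for $\boldsymbol{u}-\boldsymbol{u}_I$ and $\boldsymbol{J}-\boldsymbol{J}_I$ yields \eqref{error:u-uh J-Jh}.

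For the pressure and potential I return to the full problem \eqref{variational:discretizations}: testing \eqref{variational:discretizations-u} against an arbitrary $\boldsymbol{v}_h\in\boldsymbol{V}_h$ and subtracting \eqref{variational:continous-u} isolates $b_q(p_h-p,\boldsymbol{v}_h)$, whose right-hand side consists of $a$-, $c$-, $d$- and load-consistency differences, each bounded by $Ch^{\min\{r,\tau\}}(\cdots)|\boldsymbol{v}_h|_{1,\mathrm{\Omega}}$ once \eqref{error:u-uh J-Jh} is available; inserting any $q_h\in Q_h$, using the discrete inf-sup condition \eqref{inf-sup-uh}, the continuity of $b_q$ in terms of $\|p-q_h\|_{0,\mathrm{\Omega}}$, optimizing over $q_h$ with \eqref{interplate-qh}, and a last triangle inequality give \eqref{error:p-ph}; the same argument applied to \eqref{variational:discretizations-J} minus \eqref{variational:continous-J} tested against $\boldsymbol{K}_h\in\boldsymbol{S}_h$, using \eqref{inf-sup-Jh} and \eqref{interplate-psih}, gives \eqref{error:phi-phih}. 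I expect the crux to be the kickback step: one has to verify that the $\mathcal{O}(|\boldsymbol{e}_u|_{1,\mathrm{\Omega}}^{2})$ contribution from the convective nonlinearity is strictly dominated by the coercivity constant $\min\{\alpha_{*}\nu,\eta_{*}S_c\}$, which may call for the smallness thresholds of Theorems~\ref{lem:continue problem well-posed} and \ref{lem:discrete problem well-posed} to be quantified slightly more sharply; the rest is careful but routine bookkeeping of the virtual-element consistency errors.
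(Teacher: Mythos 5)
Your proposal is correct and follows essentially the same route as the paper: an energy argument on the kernel formulations with divergence-free interpolants $\boldsymbol{u}_I\in\boldsymbol{Z}_h$, $\boldsymbol{J}_I\in\boldsymbol{Y}_h$, consistency/stability of the VEM forms via the local polynomial projections, the two trilinear-form lemmas with the quadratic kickback absorbed under the smallness hypotheses, and then the discrete inf-sup conditions for $p$ and $\phi$. The only (immaterial) difference is the ordering of arguments in Lemma \ref{trilinear:c_vh-c_vh}, which makes your kickback coefficient $\delta|\boldsymbol{u}|_{1,\mathrm{\Omega}}$ rather than the paper's $\delta|\boldsymbol{u}_h|_{1,\mathrm{\Omega}}$; both are controlled by the assumed a priori bounds.
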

where $\mathcal{V}_{i}$ $(i = 2,3,4)$ are suitable functions independent of $h$.
\begin{proof}
Let $(\boldsymbol{u}_{I},\boldsymbol{J}_{I})\in\boldsymbol{Z}_{h}\times\boldsymbol{Y}_{h}$ be an interpolation approximate of  $(\boldsymbol{u},\boldsymbol{J})$ and satisfy (\ref{inf:u-Zh}) and (\ref{inf:J-Yh}), respectively. Two symbols $\mathcal{E}_h^u:=\boldsymbol{u}_h-\boldsymbol{u}_I$ and $\mathcal{E}_h^J:=\boldsymbol{J}_h-\boldsymbol{J}_I$ are introduced,
 and then we add and subtract $(\boldsymbol{u}_{I},\boldsymbol{J}_{I})$ in (\ref{variational:kernel-discretizations}) to find that
\begin{subequations}
	\begin{align}
a_{v_h}(\mathcal{E}_h^u,\boldsymbol{v}_h)-d_{K_h}(\mathcal{E}_h^J,\boldsymbol{v}_h)
=&(\boldsymbol{f}_h,\boldsymbol{v}_h)_{\mathrm{\Omega}} - a_{v_h}(\boldsymbol{u}_I,\boldsymbol{v}_h)
-c_{v_h}(\boldsymbol{u}_h,\boldsymbol{u}_h,\boldsymbol{v}_h)
+d_{K_h}(\boldsymbol{J}_I,\boldsymbol{v}_h),\label{variational:KD-uI}\\
a_{K_h}(\mathcal{E}_h^J,\boldsymbol{K}_h)+d_{K_h}(\boldsymbol{K}_h,\mathcal{E}_h^u)=
&S_c(\boldsymbol{g}_h,\boldsymbol{K}_h)_{\mathrm{\Omega}}-a_{K_h}(\boldsymbol{J}_I,\boldsymbol{K}_h)-
d_{K_h}(\boldsymbol{K}_h,\boldsymbol{u}_I).\label{variational:KD-JI}
\end{align}
\label{variational:KD-uI-JI}
\end{subequations}

Next, we set $\boldsymbol{v}_h=\mathcal{E}_h^u$ in (\ref{variational:KD-uI}) and $\boldsymbol{K}_h=\mathcal{E}_h^J$ in (\ref{variational:KD-JI}), then add the resulting equations together to obtain the following concise forms
\begin{align*}
LHS:= a_{v_h}(\mathcal{E}_h^u,\mathcal{E}_h^u)+a_{K_h}(\mathcal{E}_h^J,\mathcal{E}_h^J)
\end{align*}
and
\begin{align*}
RHS:=\;&(\boldsymbol{f}_h,\mathcal{E}_h^u)_{\mathrm{\Omega}} +S_c(\boldsymbol{g}_h,\mathcal{E}_h^J)_{\mathrm{\Omega}}- a_{v_h}(\boldsymbol{u}_I,\mathcal{E}_h^u)-a_{K_h}(\boldsymbol{J}_I,\mathcal{E}_h^J)
-\\&c_{v_h}(\boldsymbol{u}_h,\boldsymbol{u}_h,\mathcal{E}_h^u)
+d_{K_h}(\boldsymbol{J}_I,\mathcal{E}_h^u)- d_{K_h}(\mathcal{E}_h^J,\boldsymbol{u}_I).
\end{align*}
Subsequently, we observe that $(\mathcal{E}_h^u,\mathcal{E}_h^J)\in\boldsymbol{Z}\times\boldsymbol{Y}$ such that the equation (\ref{variational:kernel-continous}) with
$(\boldsymbol{v},\boldsymbol{J})=(\mathcal{E}_h^u,\mathcal{E}_h^J)$ can be rewritten as
\begin{subequations}
	\begin{align}
a_{v}(\boldsymbol{u},\mathcal{E}_h^u)+c_{v}(\boldsymbol{u},\boldsymbol{u},\mathcal{E}_h^u)
-d_{K}(\boldsymbol{J},\mathcal{E}_h^u)&=(\boldsymbol{f},\mathcal{E}_h^u)_{\mathrm{\Omega}},\label{variational:KD-rewritten with E_h^u}\\
a_{K}(\boldsymbol{J},\mathcal{E}_h^J)+d_{K}(\mathcal{E}_h^J,\boldsymbol{u})&=S_c(\boldsymbol{g},\mathcal{E}_h^J)_{\mathrm{\Omega}},\label{variational:KD-rewritten with E_h^J}
\end{align}
\label{variational:KD-rewritten with E_h^u-E_h^J}
\end{subequations}
we substitute (\ref{variational:KD-rewritten with E_h^u-E_h^J}) in $RHS$ to yield
\begin{align*}
FRHS:=\;&(\boldsymbol{f}_h,\mathcal{E}_h^u)_{\mathrm{\Omega}}-(\boldsymbol{f},\mathcal{E}_h^u)_{\mathrm{\Omega}}+
S_c(\boldsymbol{g}_h,\mathcal{E}_h^J)_{\mathrm{\Omega}}-S_c(\boldsymbol{g},\mathcal{E}_h^J)_{\mathrm{\Omega}}\\&-
a_{v_h}(\boldsymbol{u}_I,\mathcal{E}_h^u)+a_{v}(\boldsymbol{u},\mathcal{E}_h^u)-
a_{K_h}(\boldsymbol{J}_I,\mathcal{E}_h^J)+a_{K}(\boldsymbol{J},\mathcal{E}_h^J)\\&-
c_{v_h}(\boldsymbol{u}_h,\boldsymbol{u}_h,\mathcal{E}_h^u)+c_{v}(\boldsymbol{u},\boldsymbol{u},\mathcal{E}_h^u)+
d_{K_h}(\boldsymbol{J}_I,\mathcal{E}_h^u)-d_{K}(\boldsymbol{J},\mathcal{E}_h^u)-
d_{K_h}(\mathcal{E}_h^J,\boldsymbol{u}_I)+d_{K}(\mathcal{E}_h^J,\boldsymbol{u})
\\=\;&\Big\{(\boldsymbol{f}_h-\boldsymbol{f},\mathcal{E}_h^u)_{\mathrm{\Omega}}
+S_c(\boldsymbol{g}_h-\boldsymbol{g},\mathcal{E}_h^J)_{\mathrm{\Omega}}\Big\}
+\Big\{a_{v}(\boldsymbol{u},\mathcal{E}_h^u)-a_{v_h}(\boldsymbol{u}_I,\mathcal{E}_h^u)\Big\}
\\&+\Big\{a_{K}(\boldsymbol{J},\mathcal{E}_h^J)-a_{K_h}(\boldsymbol{J}_I,\mathcal{E}_h^J)\Big\}
+\Big\{c_{v}(\boldsymbol{u},\boldsymbol{u},\mathcal{E}_h^u)-c_{v_h}(\boldsymbol{u}_h,\boldsymbol{u}_h,\mathcal{E}_h^u)\Big\}
\\&+\Big\{d_{K_h}(\boldsymbol{J}_I,\mathcal{E}_h^u)-d_{K}(\boldsymbol{J},\mathcal{E}_h^u)\Big\}
+\Big\{d_{K}(\mathcal{E}_h^J,\boldsymbol{u})-d_{K_h}(\mathcal{E}_h^J,\boldsymbol{u}_I)\Big\}\\
:=\;&\mathcal{N}_1+\mathcal{N}_2+\mathcal{N}_3+\mathcal{N}_4+\mathcal{N}_5+\mathcal{N}_6.
\end{align*}

Now, we separately estimate these six items $\mathcal{N}_{i}$ $(i = 1,2,\cdots,6)$.
\begin{itemize}
\setstretch{1.25}
\item[$\blacktriangleright$] The item $\mathcal{N}_1$: From Lemma \ref{load term:f-fh_error}, \ref{load term:g-gh_error}, we easily obtain
\begin{align}
\mathcal{N}_1 \leq C\big(h^{r+2}|\boldsymbol{f}|_{r+1,\mathrm{\Omega}}|\mathcal{E}_h^u|_{1,\mathrm{\Omega}}+h^{\tau}|\boldsymbol{g}|_{\tau,\mathrm{\Omega}}|\mathcal{E}_h^J|_{0,\mathrm{\Omega}}\big).
\label{mathrcal_N1}
\end{align}

\item[$\blacktriangleright$] The item $\mathcal{N}_2$: By adding and subtracting $\boldsymbol{u}_{\pi}$ (the piecewise linear polynomial approximate of $\boldsymbol{u}$), while applying the $k_1$-consistency (\ref{k_1-consistency}), (\ref{continue-Omega-a_v}), Lemma \ref{lem:polymials-uerr} and (\ref{inf:u-Zh}), we find
\begin{align}
\nonumber
\mathcal{N}_2 &=
\sum_{\mathrm{E}\in\mathcal{T}_{h}}\big(a_{v}^{\mathrm{E}}(\boldsymbol{u},\mathcal{E}_h^u)
-a_{v_h}^{\mathrm{E}}(\boldsymbol{u}_I,\mathcal{E}_h^u)\big)\\\nonumber
&=\sum_{\mathrm{E}\in\mathcal{T}_{h}}\big(a_{v}^{\mathrm{E}}(\boldsymbol{u}-\boldsymbol{u}_{\pi},\mathcal{E}_h^u)
-a_{v_h}^{\mathrm{E}}(\boldsymbol{u}_I-\boldsymbol{u}_{\pi},\mathcal{E}_h^u)\big)
\\\nonumber&\leq C \sum_{\mathrm{E}\in\mathcal{T}_{h}}\big(|\boldsymbol{u}-\boldsymbol{u}_{\pi}|_{1,\mathrm{E}}+
|\boldsymbol{u}-\boldsymbol{u}_I|_{1,\mathrm{E}}\big)|\mathcal{E}_h^u|_{1,\mathrm{E}}\\
&\leq Ch^{r}|\boldsymbol{u}|_{r+1,\mathrm{\Omega}}|\mathcal{E}_h^u|_{1,\mathrm{\Omega}}.
\label{mathrcal_N2}
\end{align}

\item[$\blacktriangleright$] The item $\mathcal{N}_3$: By adding and subtracting $\boldsymbol{u}_{\pi}$ (the piecewise linear polynomial approximate of $\boldsymbol{u}$), while applying the $k_2$-consistency (\ref{k_2-consistency}), (\ref{continue-Omega-a_K}), Lemma \ref{lem:polymials-Kerr} and (\ref{inf:J-Yh}), we find

\begin{align}
\nonumber
\mathcal{N}_3 &=
\sum_{\mathrm{E}\in\mathcal{T}_{h}}\big(a_{K}^{\mathrm{E}}(\boldsymbol{J},\mathcal{E}_h^J)-a_{K_h}^{\mathrm{E}}(\boldsymbol{J}_I,\mathcal{E}_h^J)\big)\\\nonumber
&=\sum_{\mathrm{E}\in\mathcal{T}_{h}}\big(a_{K}^{\mathrm{E}}(\boldsymbol{J}-\boldsymbol{J}_{\pi},\mathcal{E}_h^J)
-a_{K_h}^{\mathrm{E}}(\boldsymbol{J}_I-\boldsymbol{J}_{\pi},\mathcal{E}_h^J)
\big)
\\\nonumber&\leq C \sum_{\mathrm{E}\in\mathcal{T}_{h}}\big(\|\boldsymbol{J}-\boldsymbol{J}_{\pi}\|_{0,\mathrm{E}}+
\|\boldsymbol{J}-\boldsymbol{J}_I\|_{0,\mathrm{E}}\big)\|\mathcal{E}_h^J\|_{0,\mathrm{E}}\\
&\leq Ch^{\tau}|\boldsymbol{J}|_{\tau,\mathrm{\Omega}}\|\mathcal{E}_h^J\|_{0,\mathrm{\Omega}}.
\label{mathrcal_N3}
\end{align}

\item[$\blacktriangleright$] The item $\mathcal{N}_4$: Obviously, we clearly observe that
\begin{align}
\mathcal{N}_4 &= \big(c_{v}(\boldsymbol{u},\boldsymbol{u},\mathcal{E}_h^u)-c_{v_h}(\boldsymbol{u},\boldsymbol{u},\mathcal{E}_h^u)\big)
+\big(c_{v_h}(\boldsymbol{u},\boldsymbol{u},\mathcal{E}_h^u)-c_{v_h}(\boldsymbol{u}_h,\boldsymbol{u}_h,\mathcal{E}_h^u)\big),
\label{trilinear term:u-uh-estimate results-equality1}
\end{align}
while taking the $\mathcal{E}_h^u = \boldsymbol{u}_{h}-\boldsymbol{u}_I$, Lemma \ref{trilinear:c_v-c_vh}, \ref{trilinear:c_vh-c_vh}, (\ref{inf:u-Zh}) and into (\ref{trilinear term:u-uh-estimate results-equality1}), then we infer that
\begin{align}
\nonumber
\mathcal{N}_{4} &\leq Ch^{r}\Big(\|\boldsymbol{u}\|_{r,\mathrm{\Omega}}+|\boldsymbol{u}|_{1,\mathrm{\Omega}}+\|\boldsymbol{u}\|_{r+1,\mathrm{\Omega}}\Big)\|\boldsymbol{u}\|_{r+1,\mathrm{\Omega}}|\mathcal{E}_h^u|_{1,\mathrm{\Omega}}
\\\nonumber&\quad+\delta\Big(|\boldsymbol{u}_h|_{1,\mathrm{\Omega}}|\mathcal{E}_h^u|_{1,\mathrm{\Omega}}
+|\boldsymbol{u}-\boldsymbol{u}_I|_{1,\mathrm{\Omega}}(|\boldsymbol{u}|_{1,\mathrm{\Omega}}+|\boldsymbol{u}_h|_{1,\mathrm{\Omega}})\Big)|\mathcal{E}_h^u|_{1,\mathrm{\Omega}}
\\\nonumber\quad&\leq
Ch^{r}(\|\boldsymbol{u}\|_{r,\mathrm{\Omega}}+|\boldsymbol{u}|_{1,\mathrm{\Omega}}+\|\boldsymbol{u}\|_{r+1,\mathrm{\Omega}})\|\boldsymbol{u}\|_{r+1,\mathrm{\Omega}}|\mathcal{E}_h^u|_{1,\mathrm{\Omega}}
\\\nonumber&\quad+\delta\big(|\boldsymbol{u}_h|_{1,\mathrm{\Omega}}|\mathcal{E}_h^u|_{1,\mathrm{\Omega}}
+Ch^{r}\|\boldsymbol{u}\|_{r+1,\mathrm{\Omega}}(|\boldsymbol{u}|_{1,\mathrm{\Omega}}+|\boldsymbol{u}_h|_{1,\mathrm{\Omega}})\big)|\mathcal{E}_h^u|_{1,\mathrm{\Omega}}
\\&\leq Ch^{r}\|\boldsymbol{u}\|_{r+1,\mathrm{\Omega}}\big(\|\boldsymbol{u}\|_{r,\mathrm{\Omega}}+2|\boldsymbol{u}|_{1,\mathrm{\Omega}}+
\|\boldsymbol{u}\|_{r+1,\mathrm{\Omega}}+|\boldsymbol{u}_h|_{1,\mathrm{\Omega}}\big)|\mathcal{E}_h^u|_{1,\mathrm{\Omega}}
+\delta|\boldsymbol{u}_h|_{1,\mathrm{\Omega}}|\mathcal{E}_h^u|_{1,\mathrm{\Omega}}^2.
\label{mathrcal_N4}
\end{align}

\item[$\blacktriangleright$]
The item $\mathcal{N}_5$:
By using Poincar\'{e} type inequality, continuity of $\mathrm{\Pi}_{k_2}^{0,\mathrm{E}}$ with respect to $\|\cdot\|_{0,\mathrm{E}}$, (\ref{continue-E-d_K}), (\ref{inf:J-Yh}) and Lemma  \ref{lem:polymials-Kerr}, we obtain
\begin{align}
\nonumber
\mathcal{N}_{5} &= \sum_{\mathrm{E}\in\mathcal{T}_{h}}\big(d_{K_h}^{\mathrm{E}}(\boldsymbol{J}_I-\boldsymbol{J}_{\pi},
\mathcal{E}_h^u)-d_{K}^{\mathrm{E}}(\boldsymbol{J}-\boldsymbol{J}_{\pi},\mathcal{E}_h^u)\big)
\\\nonumber&\leq C\sum_{\mathrm{E}\in\mathcal{T}_{h}}\big(\|\boldsymbol{J}-\boldsymbol{J}_{I}\|_{0,\mathrm{E}}\|\boldsymbol{B}\|_{0,3,\mathrm{E}}\|\mathcal{E}_h^u\|_{0,6,\mathrm{E}}
+\|\boldsymbol{J}-\boldsymbol{J}_{\pi}\|_{0,\mathrm{E}}\|\boldsymbol{B}\|_{0,3,\mathrm{E}}\|\mathcal{E}_h^u\|_{0,6,\mathrm{E}}\big)
\\&\leq Ch^{\tau}|\boldsymbol{J}|_{\tau,\mathrm{\Omega}}
|\mathcal{E}_h^u|_{1,\mathrm{\Omega}}.
\label{mathrcal_N5}
\end{align}
\item[$\blacktriangleright$] The item $\mathcal{N}_6$: By using Poincar\'{e} type inequality, continuity of $\mathrm{\Pi}_{k_2}^{0,\mathrm{E}}$ with respect to $|\cdot|_{0,\mathrm{E}}$, (\ref{continue-E-d_K}), (\ref{inf:u-Zh}) and Lemma \ref{lem:polymials-uerr}, we obtain
\begin{align}
\nonumber
\mathcal{N}_{6} &= \sum_{\mathrm{E}\in\mathcal{T}_{h}}\big(
d_{K_h}(\mathcal{E}_h^J,\boldsymbol{u}_{\pi}-\boldsymbol{u}_I)
+d_{K_h}(\mathcal{E}_h^J,\boldsymbol{u}-\boldsymbol{u}_{\pi})\big)
\\\nonumber&\leq C\sum_{\mathrm{E}\in\mathcal{T}_{h}}\big(\|\mathcal{E}_h^J\|_{0,\mathrm{E}}\|\boldsymbol{B}\|_{0,3,\mathrm{E}}\|\boldsymbol{u}-\boldsymbol{u}_{\pi}\|_{0,6,\mathrm{E}}
+\|\mathcal{E}_h^J\|_{0,\mathrm{E}}\|\boldsymbol{B}\|_{0,3,\mathrm{E}}\|\boldsymbol{u}-\boldsymbol{u}_{I}\|_{0,6,\mathrm{E}}\big)
\\&\leq Ch^{r}
|\boldsymbol{u}|_{r+1,\mathrm{\Omega}}\|\mathcal{E}_h^J\|_{0,\mathrm{\Omega}}.
\label{mathrcal_N6}
\end{align}
\end{itemize}

Based on the above estimated results, we have
\begin{align}
\nonumber
RHS&\leq Ch^{r}\Big(h^2|\boldsymbol{f}|_{r+1,\mathrm{\Omega}}+\|\boldsymbol{u}\|_{r+1,\mathrm{\Omega}}\big(\|\boldsymbol{u}\|_{r,\mathrm{\Omega}}+2\|\boldsymbol{u}\|_{r+1,\mathrm{\Omega}}+2|\boldsymbol{u}|_{1,\mathrm{\Omega}}+|\boldsymbol{u}_h|_{1,\mathrm{\Omega}}\big)\Big)|\mathcal{E}_h^u|_{1,\mathrm{\Omega}} \\\nonumber&\quad+ Ch^{\tau}\big(|\boldsymbol{g}|_{\tau,\mathrm{\Omega}}+|\boldsymbol{J}|_{\tau,\mathrm{\Omega}}\big)\|\mathcal{E}_h^J\|_{0,\mathrm{\Omega}}
+Ch^{\tau}|\boldsymbol{J}|_{\tau,\mathrm{\Omega}}
|\mathcal{E}_h^u|_{1,\mathrm{\Omega}}+Ch^{r}|\boldsymbol{u}|_{r+1,\mathrm{\Omega}}\|\mathcal{E}_h^J\|_{0,\mathrm{\Omega}}+\delta|\boldsymbol{u}_h|_{1,\mathrm{\Omega}}|\mathcal{E}_h^u|_{1,\mathrm{\Omega}}^2
\\\nonumber&\leq
Ch^{r}\Big(h^2|\boldsymbol{f}|_{r+1,\mathrm{\Omega}}+|\boldsymbol{J}|_{\tau,\mathrm{\Omega}}+\|\boldsymbol{u}\|_{r+1,\mathrm{\Omega}}\big(\|\boldsymbol{u}\|_{r,\mathrm{\Omega}}+2\|\boldsymbol{u}\|_{r+1,\mathrm{\Omega}}+2|\boldsymbol{u}|_{1,\mathrm{\Omega}}+|\boldsymbol{u}_h|_{1,\mathrm{\Omega}}\big)\Big)(|\mathcal{E}_h^u|_{1,\mathrm{\Omega}}^2\\\nonumber&\quad+\|\mathcal{E}_h^J\|_{0,\mathrm{\Omega}}^2)^{\frac{1}{2}} +  Ch^{\tau}\big(|\boldsymbol{g}|_{\tau,\mathrm{\Omega}}+|\boldsymbol{J}|_{\tau,\mathrm{\Omega}}+|\boldsymbol{u}|_{r+1,\mathrm{\Omega}}\big)(|\mathcal{E}_h^u|_{1,\mathrm{\Omega}}^2+\|\mathcal{E}_h^J\|_{0,\mathrm{\Omega}}^2)^{\frac{1}{2}} \\\nonumber&\quad+\delta|\boldsymbol{u}_h|_{1,\mathrm{\Omega}}(|\mathcal{E}_h^u|_{1,\mathrm{\Omega}}^2+\|\mathcal{E}_h^J\|_{0,\mathrm{\Omega}}^2)^{\frac{1}{2}}.
\end{align}

Now, we bring stabilities (\ref{stability-uh}) and (\ref{stability-Jh}) into the $LHS$,
\begin{align}
\nonumber
LHS\geq \check{C}_{\min}(|\mathcal{E}_h^u|_{1,\mathrm{\Omega}}^2+\|\mathcal{E}_h^J\|_{0,\mathrm{\Omega}}^2),
\end{align}
while combining with estimation of $RHS$ and Theorem \ref{lem:discrete problem well-posed}, \ref{lem:continue problem well-posed} to obtain that
\begin{align}
(1-\delta\check{C}_{\min}^{-2}\|\mathrm{F}\|_{*})(|\mathcal{E}_h^u|_{1,\mathrm{\Omega}}+\|\mathcal{E}_h^J\|_{0,\mathrm{\Omega}})\leq h^{\min\{r,\tau\}}\mathcal{V}_{1}(\|\boldsymbol{u}\|_{r+1,\mathrm{\Omega}};|\boldsymbol{f}|_{r+1,\mathrm{\Omega}};|\boldsymbol{g}|_{\tau,\mathrm{\Omega}};|\boldsymbol{J}|_{\tau,\mathrm{\Omega}}),
\end{align}
where $\mathcal{V}_{1}$ is suitable function independent of $h$.

Particularly, we obverse that $(1-\delta\check{C}_{\min}^{-2}\|\mathrm{F}\|_{*})=(1-\mu)>0$, and applying (\ref{inf:u-Zh}), (\ref{inf:J-Yh}) and triangle inequality, it implies that
\begin{align}
\nonumber
|\boldsymbol{u}-\boldsymbol{u}_h|_{1,\mathrm{\Omega}}+\|\boldsymbol{J}-\boldsymbol{J}_h\|_{0,\mathrm{\Omega}}\leq h^{\min\{r,\tau\}}\mathcal{V}_{2}(\|\boldsymbol{u}\|_{r+1,\mathrm{\Omega}};|\boldsymbol{f}|_{r+1,\mathrm{\Omega}};\|\boldsymbol{g}\|_{\tau,\mathrm{\Omega}};\|\boldsymbol{J}\|_{\tau,\mathrm{\Omega}}),
\end{align}
where $\mathcal{V}_{2}$ is suitable function independent of $h$.
This means that we have completed the proof of (\ref{error:u-uh J-Jh}).

Based on the above conclusion, we begin to prove (\ref{error:p-ph}). Let $(\boldsymbol{u},p,\boldsymbol{J},\phi)\in\boldsymbol{H}^{r+1}\cap\boldsymbol{V}\times H^{r}\cap Q\times\boldsymbol{H}^{\tau}\cap\boldsymbol{S}\times H^{\tau}\cap\Psi$ be the solution of problem (\ref{variational:continous}) and $(\boldsymbol{u}_{h},p_{h},\boldsymbol{J}_{h},\phi_{h})\in\boldsymbol{V}_{h}\times Q_{h}\times\boldsymbol{S}_{h}\times\Psi_{h}$ be the solution of problem (\ref{variational:discretizations}). Then there exists $q_h\in Q_{h}$ such that
\begin{align}
\nonumber
b_{q}(q_h-p_h,\boldsymbol{v}_h) =\;& b_{q}(q_h-p,\boldsymbol{v}_h)+\big\{(\boldsymbol{f}_h,\boldsymbol{v}_h)-(\boldsymbol{f},\boldsymbol{v}_h)\big\}+\big\{a_{v}(\boldsymbol{u},\boldsymbol{v}_h)
-a_{v_h}(\boldsymbol{u}_h,\boldsymbol{v}_h)\big\}+ \\\nonumber&
\big\{c_{v}(\boldsymbol{u},\boldsymbol{u},\boldsymbol{v}_h)-c_{v_h}(\boldsymbol{u}_h,\boldsymbol{u}_h,\boldsymbol{v}_h)\big\}
+\big\{d_{K_h}(\boldsymbol{J}_h,\boldsymbol{v}_h)-d_{K}(\boldsymbol{J},\boldsymbol{v}_h)\big\}
\\\nonumber=\;&\mathcal{M}_{1}+\mathcal{M}_{2}+\mathcal{M}_{3}+\mathcal{M}_{4}+\mathcal{M}_{5}
\end{align}
for all $(\boldsymbol{v}_{h},q_{h},\boldsymbol{K}_{h},\psi_{h})\in\boldsymbol{V}_{h}\times Q_{h}\times\boldsymbol{S}_{h}\times\boldsymbol{\Psi}_{h}$. Now, we separately estimate these six items $\mathcal{M}_{i}$ $(i = 1,2,3,4,5)$.
\begin{itemize}
\setstretch{1.25}
\item[$\blacktriangleright$] The item $\mathcal{M}_1$: From (\ref{interplate-qh}), we easily obtain
\begin{align}
\mathcal{M}_1 \leq Ch^{r}|p|_{r,\mathrm{\Omega}}|\boldsymbol{v}_h|_{1,\mathrm{\Omega}}.
\label{mathrcal_M1}
\end{align}
\item[$\blacktriangleright$] The item $\mathcal{M}_2$: From Lemma \ref{load term:f-fh_error}, we easily obtain
\begin{align}
\mathcal{M}_2 \leq Ch^{r+2}|\boldsymbol{f}|_{r+1,\mathrm{\Omega}}|\boldsymbol{v}_h|_{1,\mathrm{\Omega}}.
\label{mathrcal_M2}
\end{align}

\item[$\blacktriangleright$] The item $\mathcal{M}_3$: By adding and subtracting $\boldsymbol{u}_{\pi}$ (the piecewise linear polynomial approximate of $\boldsymbol{u}$), while applying the $k_1$-consistency (\ref{k_1-consistency}), (\ref{continue-E-a_v}), we find
\begin{align}
\nonumber
\mathcal{M}_3 &=
\sum_{\mathrm{E}\in\mathcal{T}_{h}}\big(a_{v}^{\mathrm{E}}(\boldsymbol{u},\boldsymbol{v}_h)
-a_{v_h}^{E}(\boldsymbol{u}_h,\boldsymbol{v}_h)\big)\\\nonumber
&=\sum_{\mathrm{E}\in\mathcal{T}_{h}}\big(a_{v}^{\mathrm{E}}(\boldsymbol{u}-\boldsymbol{u}_{\pi},\boldsymbol{v}_h)
-a_{v_h}^{E}(\boldsymbol{u}_h-\boldsymbol{u}_{\pi},\boldsymbol{v}_h)\big)
\\&\leq C \sum_{\mathrm{E}\in\mathcal{T}_{h}}\big(|\boldsymbol{u}-\boldsymbol{u}_{\pi}|_{1,\mathrm{E}}+
|\boldsymbol{u}-\boldsymbol{u}_h|_{1,\mathrm{E}}\big)|\boldsymbol{v}_h|_{1,\mathrm{E}}.
\label{mathrcal_M3}
\end{align}

\item[$\blacktriangleright$] The item $\mathcal{M}_3$: Obviously, we clearly observe that
\begin{align}
\mathcal{M}_4 &= \big(c_{v}(\boldsymbol{u},\boldsymbol{u},\boldsymbol{v}_h)-c_{v_h}(\boldsymbol{u},\boldsymbol{u},\boldsymbol{v}_h)\big)
+\big(c_{v_h}(\boldsymbol{u}-\boldsymbol{u}_h,\boldsymbol{u}_h,\boldsymbol{v}_h)+c_{v_h}(\boldsymbol{u},\boldsymbol{u}-\boldsymbol{u}_h,\boldsymbol{v}_h)\big),
\label{trilinear term:u-uh-estimate results-equality1}
\end{align}
while taking Lemma \ref{trilinear:c_v-c_vh} and (\ref{prop:bilinear-continuous-uh}) into (\ref{trilinear term:u-uh-estimate results-equality1}), then we infer that
\begin{align}
\nonumber
\mathcal{M}_{4} &\leq Ch^{r}\Big(\|\boldsymbol{u}\|_{r,\mathrm{\Omega}}+|\boldsymbol{u}|_{1,\mathrm{\Omega}}+\|\boldsymbol{u}\|_{r+1,\mathrm{\Omega}}\Big)\|\boldsymbol{u}\|_{r+1,\mathrm{\Omega}}|\boldsymbol{v}_{h}|_{1,\mathrm{\Omega}}
+\\&\quad\delta\Big(|\boldsymbol{u}_h|_{1,\mathrm{\Omega}}|\boldsymbol{u}-\boldsymbol{u}_h|_{1,\mathrm{\Omega}}
+|\boldsymbol{u}|_{1,\mathrm{\Omega}}|\boldsymbol{u}-\boldsymbol{u}_h|_{1,\mathrm{\Omega}}\Big)|\boldsymbol{v}_{h}|_{1,\mathrm{\Omega}}.
\label{mathrcal_M4}
\end{align}

\item[$\blacktriangleright$]
The item $\mathcal{M}_5$:
By using Poincar$\acute{e}$ type inequality, continuity of $\mathrm{\Pi}_{k_2}^{0,\mathrm{E}}$ with respect to $\|\cdot\|_{0,\mathrm{E}}$ and (\ref{norm:L^6-continous}), we obtain
\begin{align}
\nonumber
\mathcal{M}_{5} &= \sum_{\mathrm{E}\in\mathcal{T}_{h}}\big(d_{K_h}^{\mathrm{E}}(\boldsymbol{J}_h-\boldsymbol{J}_{\pi},
\boldsymbol{v}_{h})-d_{K}^{\mathrm{E}}(\boldsymbol{J}-\boldsymbol{J}_{\pi},\boldsymbol{v}_{h})\big)
\\\nonumber&\leq
C\sum_{\mathrm{E}\in\mathcal{T}_{h}}\big(\|\boldsymbol{J}-\boldsymbol{J}_{h}\|_{0,\mathrm{E}}\|\boldsymbol{B}\|_{0,3,\mathrm{E}}\|\boldsymbol{v}_{h}\|_{0,6,\mathrm{E}}
+\|\boldsymbol{J}-\boldsymbol{J}_{\pi}\|_{0,\mathrm{E}}\|\boldsymbol{B}\|_{0,3,\mathrm{E}}\|\boldsymbol{v}_{h}\|_{0,6,\mathrm{E}}\big)
\\&\leq C\sum_{\mathrm{E}\in\mathcal{T}_{h}}\big(\|\boldsymbol{J}-\boldsymbol{J}_{h}\|_{0,\mathrm{E}}|\boldsymbol{v}_{h}|_{1,\mathrm{E}}
+\|\boldsymbol{J}-\boldsymbol{J}_{\pi}\|_{0,\mathrm{E}}|\boldsymbol{v}_{h}|_{1,\mathrm{E}}\big).
\label{mathrcal_M5}
\end{align}
\end{itemize}
Based on the above results, Lemma \ref{lem:polymials-uerr}, \ref{lem:polymials-Kerr}, (\ref{error:u-uh J-Jh}), (\ref{interplate-qh}), inf-sup condition (\ref{inf-sup-uh}) and triangle inequality, we readily infer that
\begin{align}
\nonumber
\|p-p_h\|_{0,\mathrm{\Omega}}\leq h^{\min\{\tau,r\}}\mathcal{V}_{3}(\|\boldsymbol{u}\|_{r+1,\mathrm{\Omega}},|\boldsymbol{f}|_{r+1,\mathrm{\Omega}},\|\boldsymbol{g}\|_{\tau,\mathrm{\Omega}},\|\boldsymbol{J}\|_{\tau,\mathrm{\Omega}},\|p\|_{\tau,\mathrm{\Omega}}),
\end{align}
where $\mathcal{S}_{3}$ is suitable function independent of $h$.

Finally, we begin to prove (\ref{error:phi-phih}). Similar to the proof process of (\ref{error:p-ph}), there exists $\psi_h\in \Psi_{h}$ such that
\begin{align}
\nonumber
b_{\psi}(\psi_{h}-\phi_h,\boldsymbol{K}_h) =\;& b_{\psi}(\psi_{h}-\phi,\boldsymbol{K}_h)+\big\{(\boldsymbol{g}_h,\boldsymbol{K}_h)-(\boldsymbol{g},\boldsymbol{K}_h)\big\}+\big\{a_{K}(\boldsymbol{J},\boldsymbol{K}_h)
-a_{K_h}(\boldsymbol{J}_h,\boldsymbol{K}_h)\big\}+ \\\nonumber&
+\big\{d_{K}(\boldsymbol{K}_{h},\boldsymbol{u})-d_{K_h}(\boldsymbol{K}_{h},\boldsymbol{u}_h)\big\}
\\\nonumber=\;&\mathcal{Q}_{1}+\mathcal{Q}_{2}+\mathcal{Q}_{3}+\mathcal{Q}_{4},
\end{align}
Now, we separately estimate these six items $\mathcal{Q}_{i}$ $(i = 1,2,3,4)$.
\begin{itemize}
\setstretch{1.25}
\item[$\blacktriangleright$] The item $\mathcal{Q}_1$: From (\ref{interplate-psih}), we easily obtain
\begin{align}
\mathcal{Q}_1 \leq Ch^{\tau}|\phi|_{\tau}\|\boldsymbol{K}_h\|_{\textup{div}}.
\label{mathrcal_Q1}
\end{align}
\item[$\blacktriangleright$] The item $\mathcal{Q}_2$: From Lemma \ref{load term:g-gh_error}, we easily obtain
\begin{align}
\mathcal{Q}_2 \leq Ch^{\tau}|\boldsymbol{g}|_{\tau}\|\boldsymbol{K}_h\|_{0}.
\label{mathrcal_Q2}
\end{align}

\item[$\blacktriangleright$] The item $\mathcal{Q}_3$: By adding and subtracting $\boldsymbol{J}_{\pi}$ (the piecewise linear polynomial approximate of $\boldsymbol{J}$), while applying the $k_2$-consistency (\ref{k_2-consistency}), (\ref{continue-E-a_K}) and Lemma \ref{lem:polymials-Kerr}, we find
\begin{align}
\nonumber
\mathcal{Q}_3 &=
\sum_{\mathrm{E}\in\mathcal{T}_{h}}\big(a_{K}^{\mathrm{E}}(\boldsymbol{J},\boldsymbol{K}_h)
-a_{K_h}^{E}(\boldsymbol{J}_h,\boldsymbol{K}_h)\big)\\\nonumber
&=\sum_{\mathrm{E}\in\mathcal{T}_{h}}\big(a_{K}^{\mathrm{E}}(\boldsymbol{J}-\boldsymbol{J}_{\pi},\boldsymbol{K}_h)
-a_{K_h}^{E}(\boldsymbol{J}_h-\boldsymbol{J}_{\pi},\boldsymbol{K}_h)
\big)
\\&\leq C \sum_{\mathrm{E}\in\mathcal{T}_{h}}\big(\|\boldsymbol{J}-\boldsymbol{J}_{\pi}\|_{0,\mathrm{E}}+
\|\boldsymbol{J}-\boldsymbol{J}_h\|_{0,\mathrm{E}}\big)\|\boldsymbol{K}_h\|_{0,\mathrm{E}}.
\label{mathrcal_Q3}
\end{align}

\item[$\blacktriangleright$] The item $\mathcal{Q}_4$: By using Poincar\'{e} type inequality, continuity of $\mathrm{\Pi}_{k_2}^{0,\mathrm{E}}$ with respect to $\|\cdot\|_{0,\mathrm{E}}$, (\ref{norm:L^6-continous}), (\ref{inf:u-Zh}) and Lemma \ref{lem:polymials-uerr}, we obtain
\begin{align}
\nonumber
\mathcal{Q}_{4} &= \sum_{\mathrm{E}\in\mathcal{T}_{h}}\big(d_{K}(\boldsymbol{K}_{h},\boldsymbol{u}-\boldsymbol{u}_{\pi})
+d_{K_h}(\boldsymbol{K}_{h},\boldsymbol{u}_{\pi}-\boldsymbol{u}_h)
\big)
\\\nonumber&\leq C\sum_{\mathrm{E}\in\mathcal{T}_{h}}\big(\|\boldsymbol{K}_{h}\|_{0,\mathrm{E}}\|\boldsymbol{B}\|_{0,3,\mathrm{E}}\|\boldsymbol{u}-\boldsymbol{u}_{\pi}\|_{0,6,\mathrm{E}}
+\|\boldsymbol{K}_{h}\|_{0,\mathrm{E}}\|\boldsymbol{B}\|_{0,3,\mathrm{E}}\|\boldsymbol{u}_{\pi}-\boldsymbol{u}_{h}\|_{0,6,\mathrm{E}}\big)
\\&\leq C\sum_{\mathrm{E}\in\mathcal{T}_{h}}\big(\|\boldsymbol{K}_{h}\|_{0,\mathrm{E}}|\boldsymbol{u}-\boldsymbol{u}_{\pi}|_{1,\mathrm{E}}
+\|\boldsymbol{K}_{h}\|_{0,\mathrm{E}}|\boldsymbol{u}-\boldsymbol{u}_{h}|_{1,\mathrm{E}}\big).
\label{mathrcal_Q4}
\end{align}
\end{itemize}
Based on the above results (\ref{mathrcal_Q1})-(\ref{mathrcal_Q4}), Lemma \ref{lem:polymials-uerr}, \ref{lem:polymials-Kerr}, (\ref{error:u-uh J-Jh}), (\ref{interplate-psih}), inf-sup condition (\ref{inf-sup-Jh}) and triangle inequality, we readily infer that
\begin{align}
\nonumber
\|\phi-\phi_h\|_{0}\leq h^{\min\{\tau,r\}}\mathcal{V}_{4}(\|\boldsymbol{u}\|_{r+1},|\boldsymbol{f}|_{r+1},|\boldsymbol{g}|_{\tau},|\boldsymbol{J}|_{\tau},|\phi|_{\tau}),
\end{align}
where $\mathcal{V}_{4}$ is suitable function independent of $h$.
\end{proof}

\section{Numerical experiments\label{sec:Num}}

In this section, we present two numerical examples to verify the effectiveness of the proposed full divergence-free hight order virtual finite element method for solving inductionless MHD equations on polygonal meshes. In the first experiment, we investigate the convergence and conservations of the proposed method.
Throughout all examples, Stokes iterative method is used to deal with nonlinear term and iterative tolerance $\|\boldsymbol{u}_{h}^{n+1}-\boldsymbol{u}_{h}^{n}\|_{0} = 10^{-6}$ (the $\boldsymbol{u}_{h}^{n}$ and $\boldsymbol{u}_{h}^{n+1}$ denote the numerical solution for velocity at iteration steps $n$ and $n+1$, respectively), the numerical experiment results are realized by MATLAB software.

\subsection{Inductionless MHD problem with a smooth solution\label{subsec:Smooth test}}
In this test, we consider the inductionless MHD equations on $\mathrm{\Omega}=[0,1]^2\in\mathbb{R}^{2}$ with homogeneous boundary conditions, the physics parameters $\nu=S_{c}=1$, the $\boldsymbol{B}=(0,0,1)$ , and we let the functions $\boldsymbol{f}$ and $\boldsymbol{g}$ such that exact solution is
\begin{align*}
&p = sin(2\pi x)sin(2\pi y),\quad \phi = sin(x)-sin(y),\\
&J_1 = sin(\pi x)cos(\pi y),\quad J_2 = -sin(\pi y)cos(\pi x),\\
&u_1 = -0.5cos(x)^2cos(y)sin(y),\quad u_2 = 0.5cos(y)^2cos(x)sin(x).
\end{align*}
Two sets of polynomial degree of accuracy are designed for this numerical experiment which are $k_1 = 2$, $k_2=1$ and $k_1 = 3$, $k_2 = 2$. In regard to mesh partition of the computational domain, we use the following three types, see Fig.\ref{fig.1}-\ref{fig.2}:

\begin{itemize}
\setstretch{1.25}
\item [$\bullet$] $\{\mathcal{T}_{h}^{a}\}_{N_t^a}$: Sets of Rondom meshes with $N_t^a,$
\item [$\bullet$] $\{\mathcal{T}_{h}^{b}\}_{N_t^b}$: Sets of Remapped square meshes with $N_t^b,$
\item [$\bullet$] $\{\mathcal{T}_{h}^{c}\}_{N_t^c}$: Sets of nonConvex meshes with
    $N_t^c,$
\end{itemize}
where $N_t^i = 25$, $100$, $225$, $400$, $625$ denote the total number of elements in $\mathcal{T}_h^{i}$ ($i = a$, $b$, $c$).

\begin{figure}
\centering
\subfloat[$\{\mathcal{T}_{h}^{a}\}_{100}$]{
\label{fig.1a}
\includegraphics[width=5.8cm]{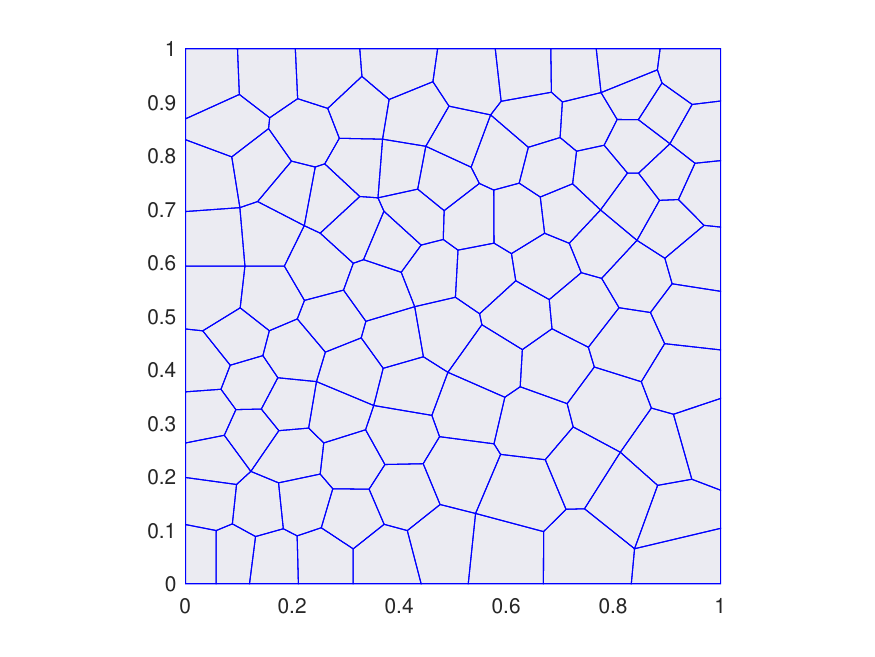}
}
\hspace{-1.20cm}
\subfloat[$\{\mathcal{T}_{h}^{b}\}_{100}$]{
\label{fig.1b}
\includegraphics[width=5.8cm]{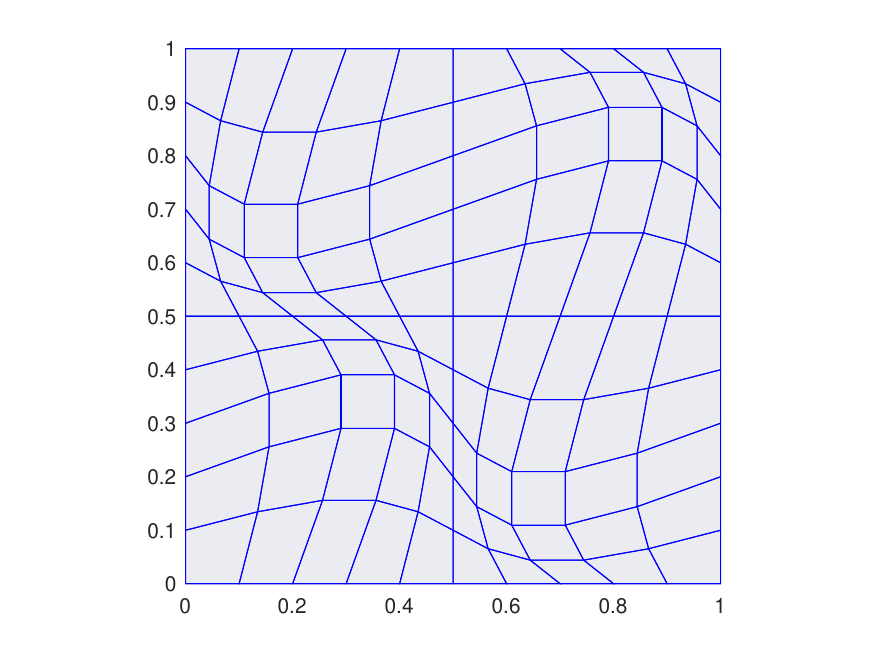}}
\hspace{-1.20cm}
\subfloat[$\{\mathcal{T}_{h}^{c}\}_{100}$]{
\label{fig.1c}
\includegraphics[width=5.8cm]{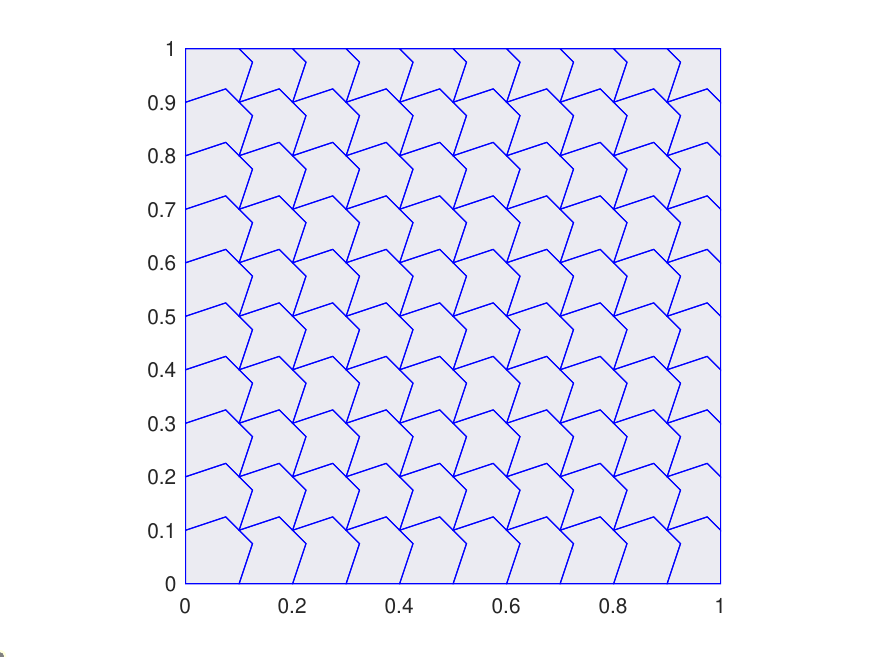}}
\caption{A set of coarse polygonal meshes ($N_{t}^{i} = 100$) used to the first numerical experiment.}
\label{fig.1}
\end{figure}

\begin{figure}
\centering
\subfloat[$\{\mathcal{T}_{h}^{a}\}_{625}$]{
\label{fig.2a}
\includegraphics[width=5.8cm]{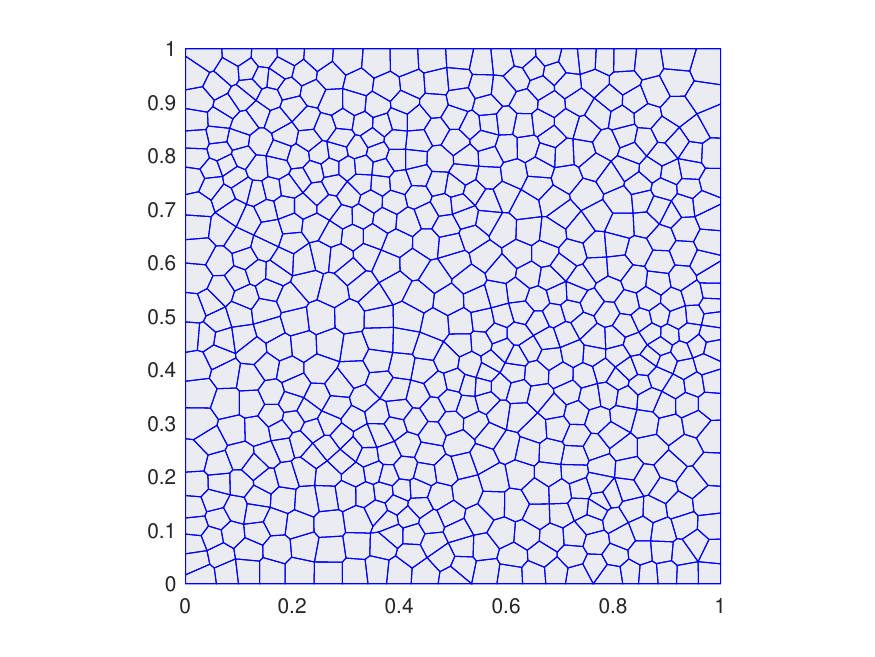}
}
\hspace{-1.20cm}
\subfloat[$\{\mathcal{T}_{h}^{b}\}_{625}$]{
\label{fig.2b}
\includegraphics[width=5.8cm]{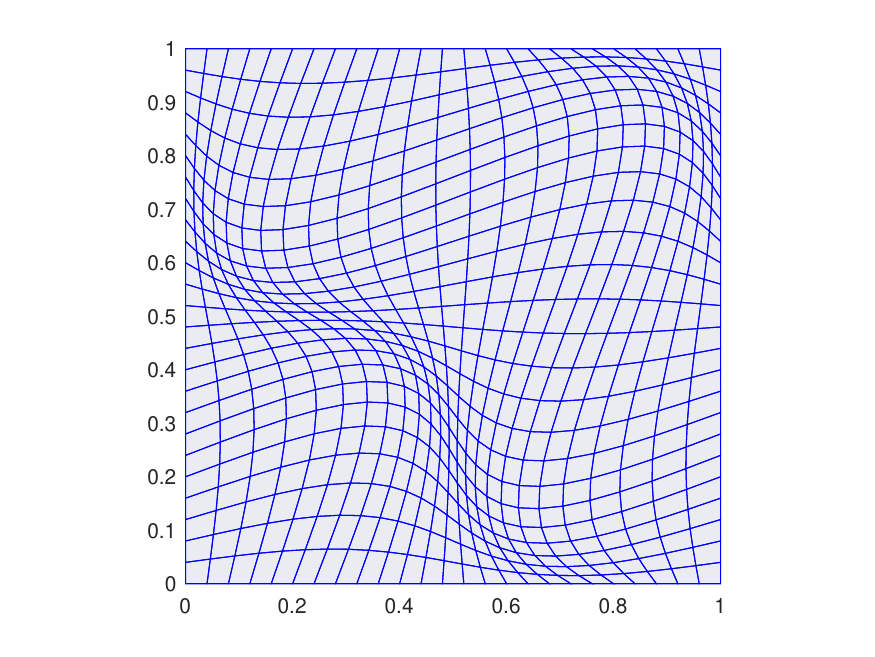}}
\hspace{-1.20cm}
\subfloat[$\{\mathcal{T}_{h}^{c}\}_{625}$]{
\label{fig.2c}
\includegraphics[width=5.8cm]{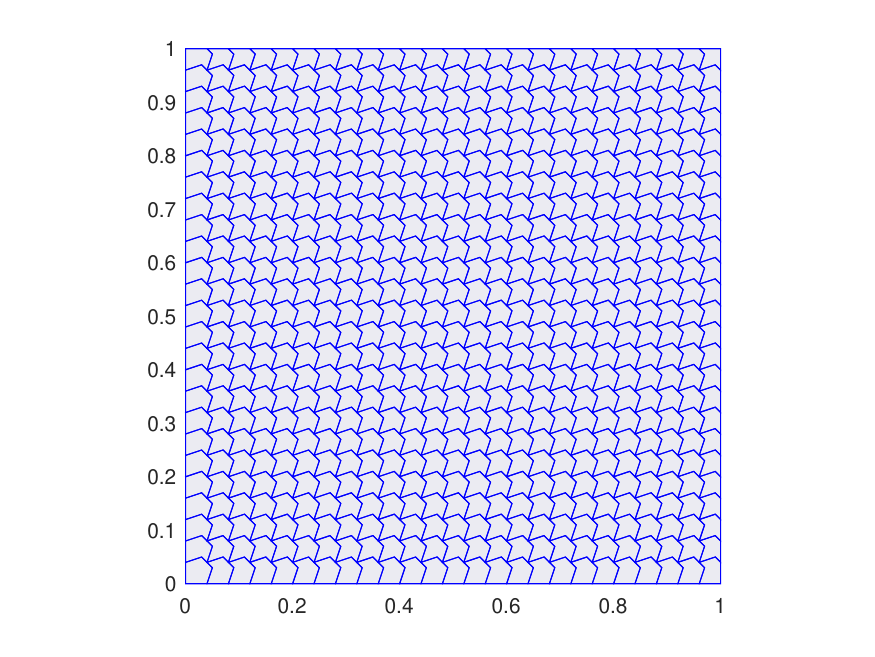}}
\caption{A set of fine polygonal meshes ($N_{t}^{i} = 625$) used to the first numerical experiment.}
\label{fig.2}
\end{figure}

In order to continue the convergence test, we have to note that the VEM solutions $\boldsymbol{u}_{h}$ and $\boldsymbol{J}_{h}$ are not explicitly known inside the elements, so we choose polynomial projections $\mathrm{\Pi}_{k_1}^{\nabla}\boldsymbol{u}$ and $\mathrm{\Pi}_{k_2}^{0}\boldsymbol{J}$ to compare with $\boldsymbol{u}$ and $\boldsymbol{J}$, respectively. Additionally, the variables $p_{h}$ and $\phi_{h}$ are piecewise polynomials such that we can directly to compute errors. In short, we apply the computable scheme
\begin{align*}
&\text{error}(\boldsymbol{u},H^{1}):=\sqrt{\sum_{E\in\mathcal{T}_{h}}\|\nabla\boldsymbol{u}-\nabla\mathrm{\Pi}_{k_1}^{\nabla}\boldsymbol{u}\|_{0,E}^2},\\
&\text{error}(\boldsymbol{J},L^{2}):=\sqrt{\sum_{E\in\mathcal{T}_{h}}\|\boldsymbol{J}-\mathrm{\Pi}_{k_2}^{0}\boldsymbol{J}\|_{0,E}^2},\\
&\text{error}(p,L^2):=\|p-p_h\|_{0,\mathrm{\Omega}},\quad\text{error}(\phi,L^2):=\|\phi-\phi_h\|_{0,\mathrm{\Omega}},
\end{align*}
and we associate with each discretization an average mehs-size
\begin{equation}
\nonumber
h:=\frac{1}{N_t}\sum_{j=1}^{N_t}h_{E_{j}}.
\end{equation}

Next, we give the convergent order test under the first set of polynomial degree of accuracy $k_1 = 2$, $k_2=1$, and we consider three different types of meshes, see Fig.\ref{fig.3}-\ref{fig.4}. It is not difficult see that the convergence rates in Fig.\ref{fig.3}-\ref{fig.4} achieve the expected results.
\begin{figure}
\centering
\subfloat[$\{\mathcal{T}_{h}^{a}\}_{N_t^a}$]{
\label{fig.3a}
\includegraphics[width=5.5cm]{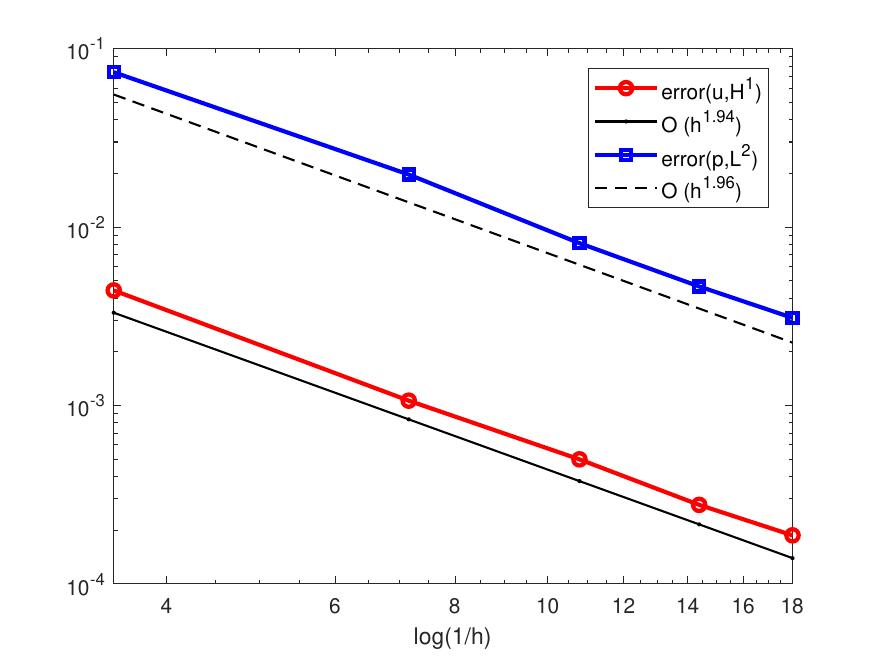}
}
\hspace{-0.7cm}
\subfloat[$\{\mathcal{T}_{h}^{b}\}_{N_t^b}$]{
\label{fig.3b}
\includegraphics[width=5.5cm]{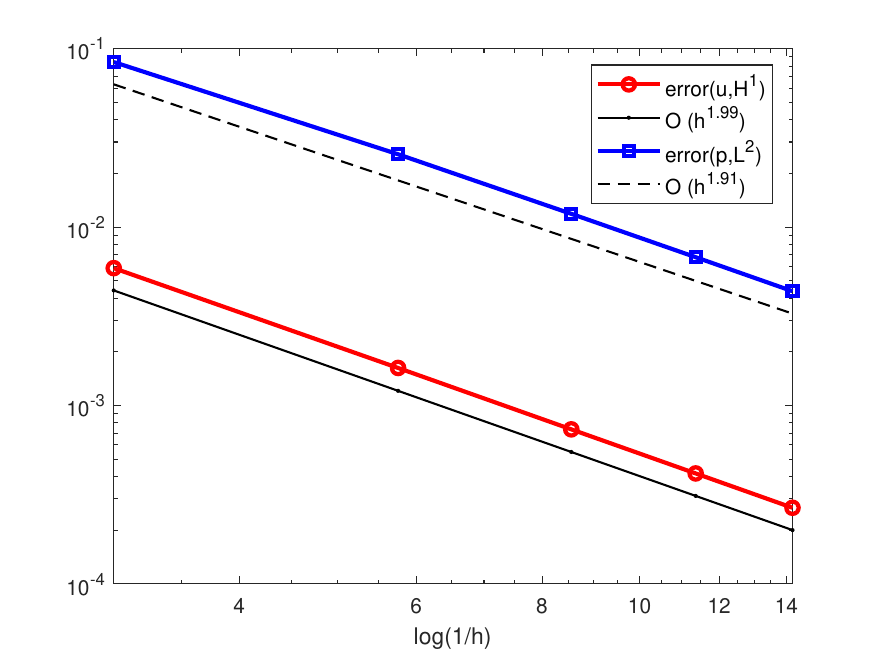}}
\hspace{-0.7cm}
\subfloat[$\{\mathcal{T}_{h}^{c}\}_{N_t^c}$]{
\label{fig.3b}
\includegraphics[width=5.5cm]{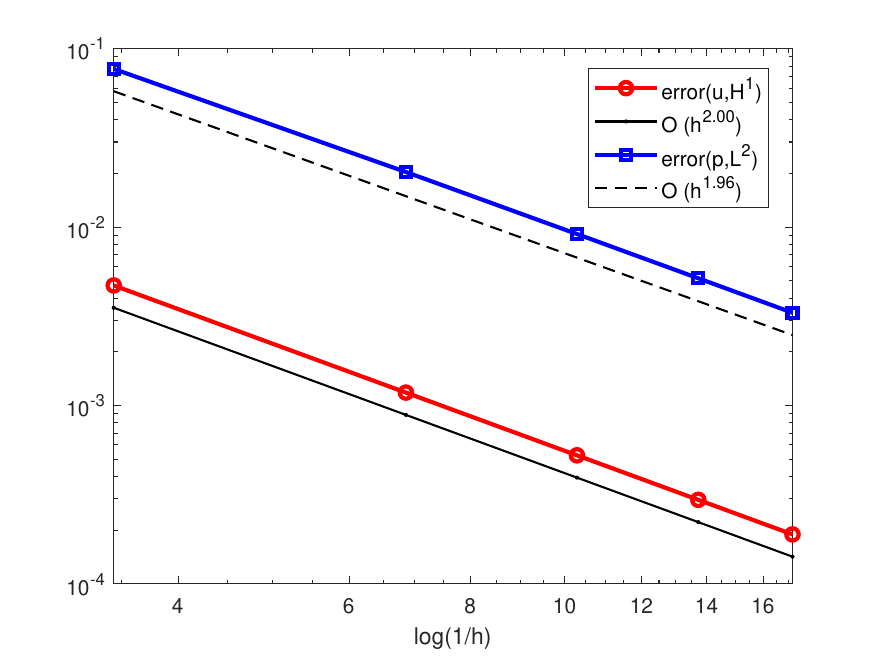}}
\caption{Convergence lines for the velocity and pressure with $k_1=2$, $k_2=1$.}
\label{fig.3}
\end{figure}
\begin{figure}
\centering
\subfloat[$\{\mathcal{T}_{h}^{a}\}_{N_t^a}$]{
\label{fig.4a}
\includegraphics[width=5.5cm]{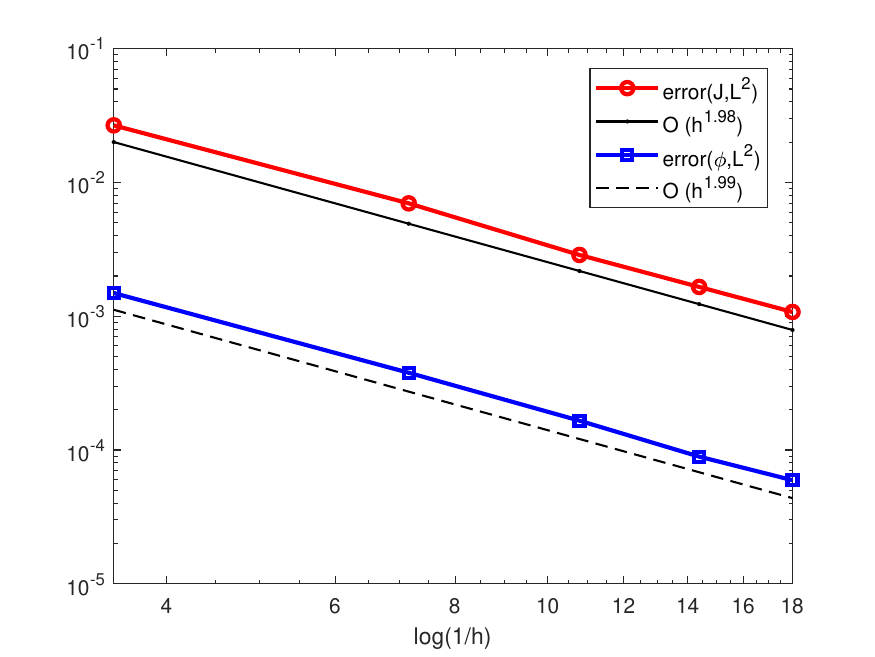}
}
\hspace{-0.7cm}
\subfloat[$\{\mathcal{T}_{h}^{b}\}_{N_t^b}$]{
\label{fig.4b}
\includegraphics[width=5.5cm]{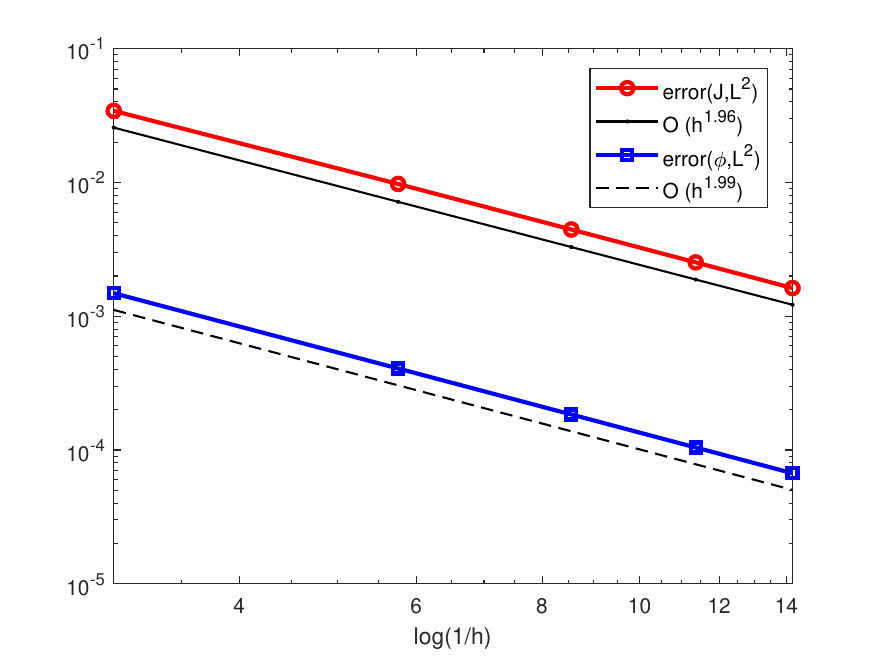}}
\hspace{-0.7cm}
\subfloat[$\{\mathcal{T}_{h}^{c}\}_{N_t^c}$]{
\label{fig.4b}
\includegraphics[width=5.5cm]{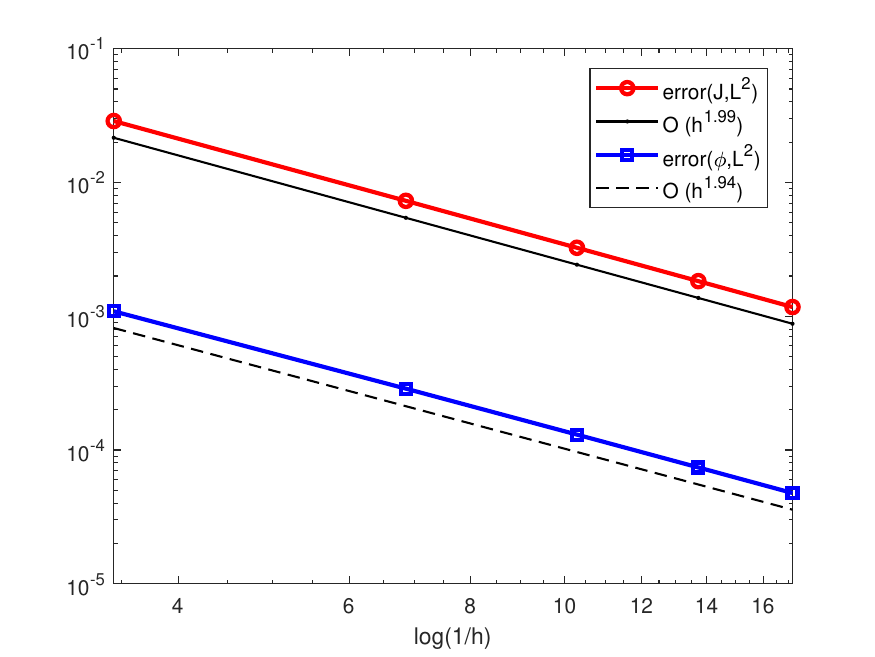}}
\caption{Convergence lines for the current density and electric potential with $k_1=2$, $k_2=1$.}
\label{fig.4}
\end{figure}
Moreover, we also give the convergent order test under the first set of polynomial degree of accuracy $k_1 = 3$, $k_2=2$, and we consider three different types of meshes, see Fig.\ref{fig.1}-\ref{fig.2}. It is not difficult see that the convergence rates in Fig.\ref{fig.5}-\ref{fig.6} achieve the expected results.
\begin{figure}
\centering
\subfloat[$\{\mathcal{T}_{h}^{a}\}_{N_t^a}$]{
\label{fig.5a}
\includegraphics[width=5.5cm]{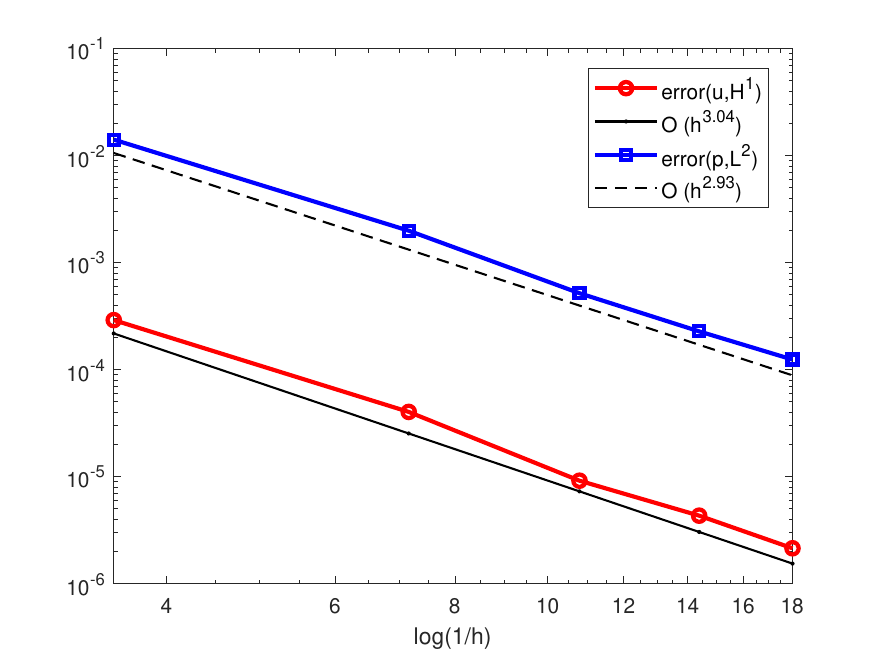}
}
\hspace{-0.7cm}
\subfloat[$\{\mathcal{T}_{h}^{b}\}_{N_t^b}$]{
\label{fig.5b}
\includegraphics[width=5.5cm]{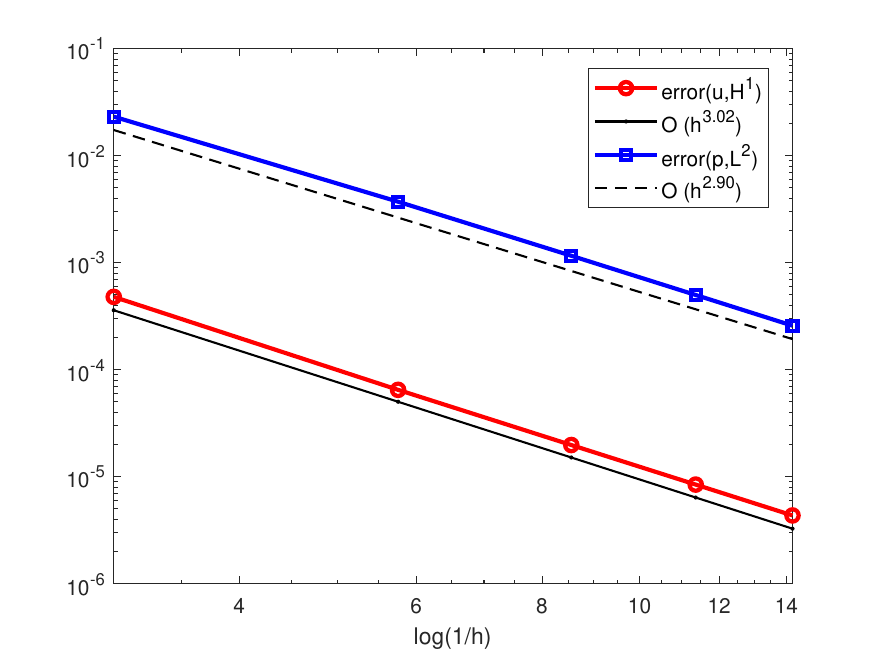}}
\hspace{-0.7cm}
\subfloat[$\{\mathcal{T}_{h}^{c}\}_{N_t^c}$]{
\label{fig.5b}
\includegraphics[width=5.5cm]{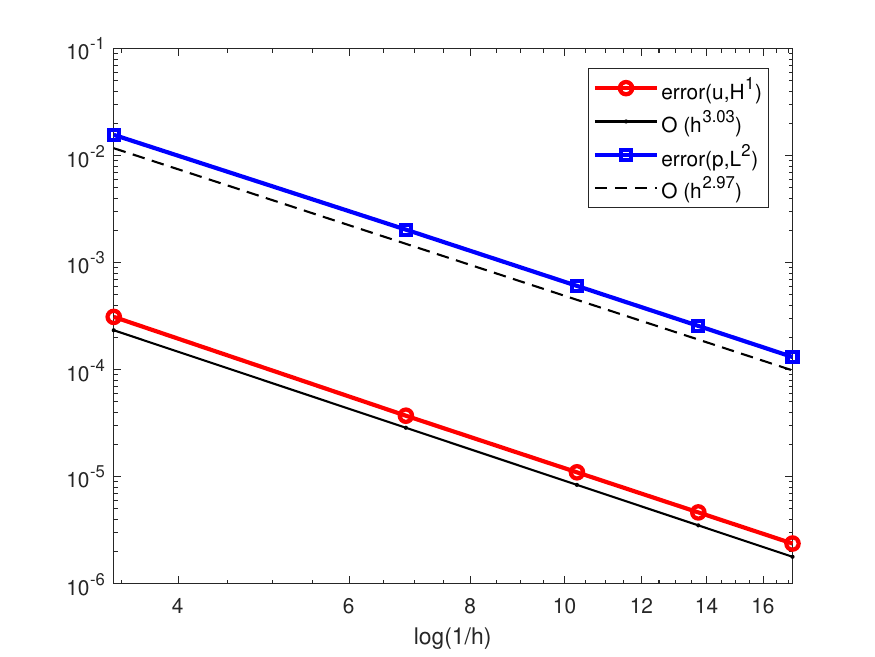}}
\caption{Convergence lines for the velocity and pressure with $k_1=3$, $k_2=2$.}
\label{fig.5}
\end{figure}
\begin{figure}
\centering
\subfloat[$\{\mathcal{T}_{h}^{a}\}_{N_t^a}$]{
\label{fig.6a}
\includegraphics[width=5.5cm]{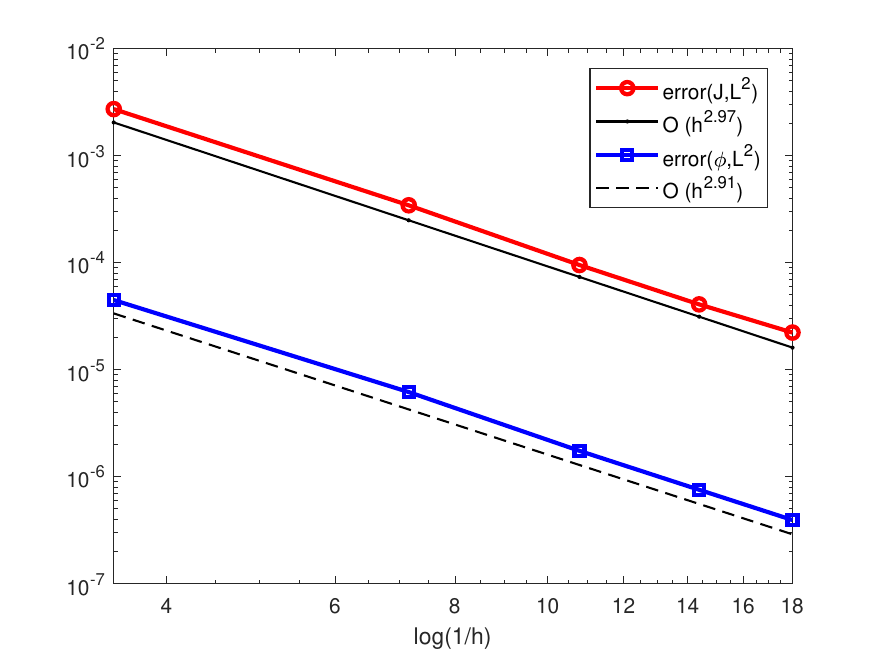}
}
\hspace{-0.7cm}
\subfloat[$\{\mathcal{T}_{h}^{b}\}_{N_t^b}$]{
\label{fig.6b}
\includegraphics[width=5.5cm]{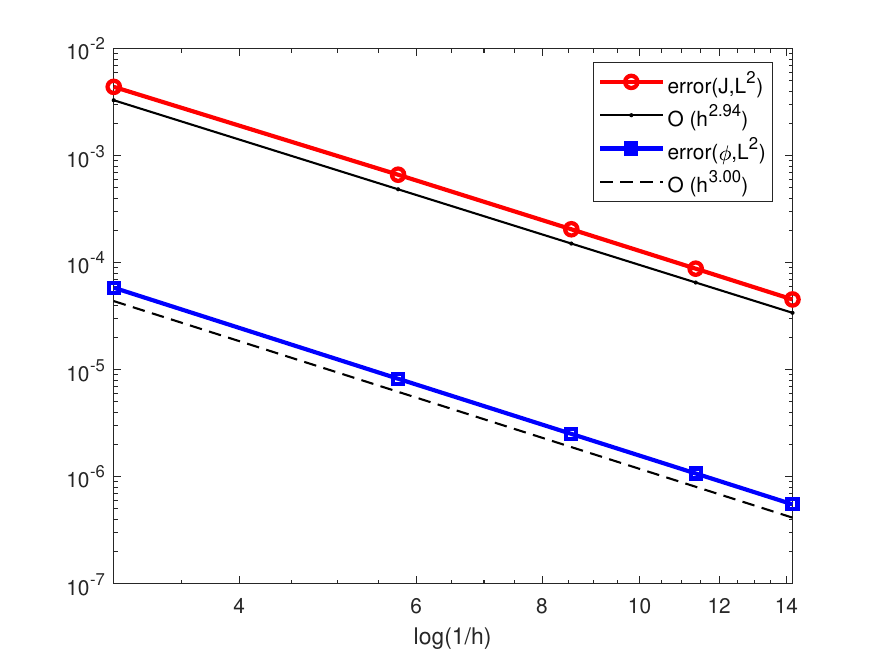}}
\hspace{-0.7cm}
\subfloat[$\{\mathcal{T}_{h}^{c}\}_{N_t^c}$]{
\label{fig.6b}
\includegraphics[width=5.5cm]{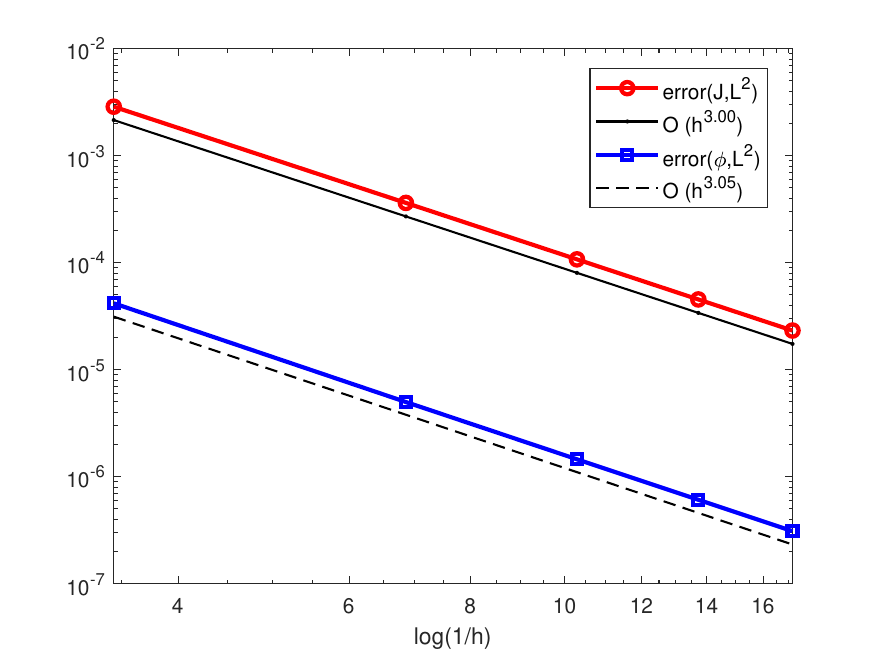}}
\caption{Convergence lines for the current density and electric potential with $k_1 = 3$, $k_2=2$.}
\label{fig.6}
\end{figure}

Finally, we give the results of conservation properties through Table \ref{tab:FD}. We find that the proposed method fully satisfies the conservations of mass and charge, that is, both $\nabla\cdot\boldsymbol{u}_{h} = 0$ and $\nabla\cdot\boldsymbol{J}_{h} = 0$ achieve machine accuracy.
\begin{table}\footnotesize
\setstretch{1.5}
\setlength{\belowcaptionskip}{5pt}
\centering
\setlength{\tabcolsep}{0.1mm}{
\caption{Conservation of the proposed method under three types of meshes\label{tab:FD}}
\begin{tabular*}{16cm}{@{\extracolsep{\fill}} c c c c c c c }
\hline
 & & $k_1=2$ & &$k_2=1$ & &\\
\hline
\multicolumn{1}{l}{\multirow{3}*{$\quad\{\mathcal{T}_{h}^{a}\}_{N_t^a}$}} &$N_t^a$ & 25 & 100 & 225 & 400 & 625$\quad$\\
& $\|\nabla\cdot\boldsymbol{u}_{h}\|_{0}$ & 8.5688e-07 & 2.8491e-07 & 6.6119e-09 & 2.4267e-08 & 8.7604e-10$\quad$\\
& $\|\nabla\cdot\boldsymbol{J}_{h}\|_{0}$ & 2.7686e-15 & 5.7942e-15 & 8.7247e-15 & 1.1518e-14 & 1.4877e-14$\quad$\\
\hline
\multicolumn{1}{l}{\multirow{3}*{$\quad\{\mathcal{T}_{h}^{b}\}_{N_t^b}$}} &$N_t^b$ & 25 & 100 & 225 & 400 & 625$\quad$\\
& $\|\nabla\cdot\boldsymbol{u}_{h}\|_{0}$ & 1.5832e-16 & 2.7300e-16 & 4.8885e-16 & 6.5470e-16 & 8.7174e-16$\quad$\\
& $\|\nabla\cdot\boldsymbol{J}_{h}\|_{0}$ & 3.0312e-15 & 5.9777e-15 & 9.5903e-15 & 1.3096e-14 & 1.6391e-14$\quad$\\
\hline
\multicolumn{1}{l}{\multirow{3}*{$\quad\{\mathcal{T}_{h}^{c}\}_{N_t^c}$}} &$N_t^c$ & 25 & 100 & 225 & 400 & 625$\quad$\\
& $\|\nabla\cdot\boldsymbol{u}_{h}\|_{0}$ & 1.1745e-16 & 2.6489e-16 & 3.9901e-16 & 4.8691e-16 & 6.2374e-16$\quad$\\
& $\|\nabla\cdot\boldsymbol{J}_{h}\|_{0}$ & 2.9465e-15 & 6.0252e-15 & 9.4660e-15 & 1.2578e-14 & 1.5259e-14$\quad$\\
\hline
 & & $k_1=3$ & &$k_2=2$ & &\\
\hline
\multicolumn{1}{l}{\multirow{3}*{$\quad\{\mathcal{T}_{h}^{a}\}_{N_t^a}$}} &$N_t^a$ & 25 & 100 & 225 & 400 & 625$\quad$\\
& $\|\nabla\cdot\boldsymbol{u}_{h}\|_{0}$ & 8.2948e-10 & 6.8605e-11 & 4.3703e-13 & 1.6281e-12 & 2.6458e-14$\quad$\\
& $\|\nabla\cdot\boldsymbol{J}_{h}\|_{0}$ & 3.8793e-15 & 8.4146e-15 & 1.3391e-14 & 1.7778e-14 & 2.2439e-14$\quad$\\
\hline
\multicolumn{1}{l}{\multirow{3}*{$\quad\{\mathcal{T}_{h}^{b}\}_{N_t^b}$}} &$N_t^b$ & 25 & 100 & 225 & 400 & 625$\quad$\\
& $\|\nabla\cdot\boldsymbol{u}_{h}\|_{0}$ & 6.7811e-16 & 1.1385e-15 & 2.1360e-15 & 2.9119e-15 & 3.7930e-15$\quad$\\
& $\|\nabla\cdot\boldsymbol{J}_{h}\|_{0}$ & 5.2034e-15 & 1.0417e-14 & 1.7125e-14 & 2.2661e-14 & 2.7996e-14$\quad$\\
\hline
\multicolumn{1}{l}{\multirow{3}*{$\quad\{\mathcal{T}_{h}^{c}\}_{N_t^c}$}} &$N_t^c$ & 25 & 100 & 225 & 400 & 625$\quad$\\
& $\|\nabla\cdot\boldsymbol{u}_{h}\|_{0}$ & 3.8742e-16 & 5.8802e-16 & 8.5715e-16 & 1.0343e-15 & 1.3430e-15$\quad$\\
& $\|\nabla\cdot\boldsymbol{J}_{h}\|_{0}$ & 4.8106e-15 & 9.4772e-15 & 1.3886e-14 & 1.7857e-14 & 2.2509e-14$\quad$\\
\hline
\end{tabular*}}
\end{table}

\subsection{Robustness of pressure and electric potential- Revise by using Newton iterative}
In this subsection, the robustness of the pressure and the electric potential are only investigated under the situation $k_{1} = 2$ and $k_{2} = 1$. In other higher-order cases, we can still obtain similar results. Referencing \cite{john2017divergence,zhang2021coupled,zhang2022fully}, we set $\mathrm{\Omega}=[0,1]^{2}$, $\boldsymbol{B}=(0,0,1)$, $\nu=Sc=1$, and choose the force terms $\boldsymbol{f}$ and $\boldsymbol{g}$ such that the exact solutions are given by
\begin{align}
\nonumber
&\boldsymbol{u}=(0,0),\quad p = (x^3+2x^2-\frac{11}{12})s_{1},\\\nonumber
&\boldsymbol{J}=(0,0),\quad \phi = (x^2+x-\frac{5}{6})s_{2},
\end{align}
where $s_{1},s_{2}\leq0$ are parameters.

On the one hand, we set $s_{1}=0$ and vary $s_{2}$ from 1, 100, 10000.
According to the errors of the numerical solutions given in Tables 2-4, we observe that the errors in velocity, pressure, and current density are independent of the mesh at a fixed $s_2$. As varying $s_2$ from 1, 100, 10000, due to an increase in the right-hand side function $\boldsymbol{g}$ such that the errors of numerical solutions will have a little bigger. In addition,
the errors for electric potential converge optimally. The results show that our
scheme is robust with respect to the electric potential.
\begin{table}\footnotesize
\setstretch{1}
\setlength{\belowcaptionskip}{5pt}
\centering
\setlength{\tabcolsep}{0.1mm}{
\caption{Robustness of the electric potential with $s_{2} = 1, 100, 10000$ under three types of meshes. \label{tab:robust-potential-s_2-1}}
\begin{tabular*}{16cm}{@{\extracolsep{\fill}} c c l l l l l }
\hline
 $s_{2} = 1$ & & $k_1=2$ & &$k_2=1$ & &\\
\hline
\multicolumn{1}{l}{\multirow{5}*{$\quad\{\mathcal{T}_{h}^{a}\}_{N_t^a}$}} &$N_t^a$ & 25 & 100 & 225 & 400 & 625$\quad$\\
& $\|\boldsymbol{u}-\boldsymbol{u}_{h}\|_{1}$ & 1.4038e-17 & 2.2981e-17 & 3.0505e-17 & 3.9662e-17 & 4.5267e-17$\quad$\\
& $\|p-p_h\|_{0}$ & 5.5188e-17 & 3.6679e-17 & 4.9080e-17 & 3.7742e-17 & 4.2697e-17$\quad$\\
& $\|\boldsymbol{J}-\boldsymbol{J}_{h}\|_{0}$ & 1.1694e-15 & 2.3260e-15 & 3.5396e-15 & 5.0605e-15 & 6.9883e-15$\quad$\\
& $\|\phi-\phi_h\|_{0}$ & 3.7525e-03(-) & 1.0049e-03(1.84) & 4.1406e-04(2.05) & 2.3643e-04(1.91) & 1.4899e-04(2.09)$\quad$\\
\hline
\multicolumn{1}{l}{\multirow{5}*{$\quad\{\mathcal{T}_{h}^{b}\}_{N_t^b}$}} &$N_t^b$ & 25 & 100 & 225 & 400 & 625$\quad$\\
& $\|\boldsymbol{u}-\boldsymbol{u}_{h}\|_{1}$ & 1.0300e-17 & 1.3279e-17 & 1.3266e-17 & 1.6146e-17 & 1.6368e-17$\quad$\\
& $\|p-p_h\|_{0}$ &  6.8436e-17 & 5.6800e-17  & 3.9894e-17 & 3.6314e-17 & 3.5646e-17$\quad$\\
& $\|\boldsymbol{J}-\boldsymbol{J}_{h}\|_{0}$ & 1.2599e-15 & 2.2999e-15 & 3.5039e-15 & 4.2394e-15 & 5.6181e-15$\quad$\\
& $\|\phi-\phi_h\|_{0}$ & 4.2123e-03(-) & 1.1670e-03(1.97) & 5.2978e-04(1.99) & 3.0027e-04(1.99) & 1.9285e-04(2.00)$\quad$\\
\hline
\multicolumn{1}{l}{\multirow{5}*{$\quad\{\mathcal{T}_{h}^{c}\}_{N_t^c}$}} &$N_t^c$ & 25 & 100 & 225 & 400 & 625$\quad$\\
& $\|\boldsymbol{u}-\boldsymbol{u}_{h}\|_{1}$ & 2.1006e-17 & 2.9916e-17 & 3.7833e-17 & 4.6568e-17 & 5.0823e-17$\quad$\\
& $\|p-p_h\|_{0}$ & 7.6793e-17 & 6.3892e-17 & 4.5267e-17 & 4.9397e-17 & 4.9483e-17$\quad$\\
& $\|\boldsymbol{J}-\boldsymbol{J}_{h}\|_{0}$ & 1.3120e-15 & 3.0357e-15 & 4.4069e-15 & 6.1490e-15 & 7.2545e-15$\quad$\\
& $\|\phi-\phi_h\|_{0}$ & 3.3705e-03(-) & 8.3988e-04(2.00) & 3.7286e-04(2.00) & 2.0961e-04(2.00) & 1.3411e-04(2.00)$\quad$\\
\hline
 $s_{2} = 100$ & & $k_1=2$ & &$k_2=1$ & &\\
\hline
\multicolumn{1}{l}{\multirow{5}*{$\quad\{\mathcal{T}_{h}^{a}\}_{N_t^a}$}} &$N_t^a$ & 25 & 100 & 225 & 400 & 625$\quad$\\
& $\|\boldsymbol{u}-\boldsymbol{u}_{h}\|_{1}$ & 1.2840e-15 & 2.4069e-15 & 3.1347e-15 & 4.0484e-15 & 5.0694e-15$\quad$\\
& $\|p-p_h\|_{0}$ & 7.1870e-15 & 3.9017e-15 & 4.4053e-15 & 3.8523e-15 & 3.9589e-15$\quad$\\
& $\|\boldsymbol{J}-\boldsymbol{J}_{h}\|_{0}$ & 1.1714e-13 & 2.3217e-13 & 3.6027e-13 & 5.0793e-13 & 7.3304e-13$\quad$\\
& $\|\phi-\phi_h\|_{0}$ & 3.7525e-01(-) & 1.0049e-01(1.85) & 4.1406e-02(2.05) & 2.3643e-02(1.91) & 1.4899e-02(2.09)$\quad$\\
\hline
\multicolumn{1}{l}{\multirow{5}*{$\quad\{\mathcal{T}_{h}^{b}\}_{N_t^b}$}} &$N_t^b$ & 25 & 100 & 225 & 400 & 625$\quad$\\
& $\|\boldsymbol{u}-\boldsymbol{u}_{h}\|_{1}$ & 9.4523e-16 & 1.4999e-15 & 1.2935e-15 & 1.6928e-15 & 1.6773e-15$\quad$\\
& $\|p-p_h\|_{0}$ & 7.0120e-15 & 4.1375e-15 & 3.7761e-15 & 3.5601e-15 & 4.2104e-15$\quad$\\
& $\|\boldsymbol{J}-\boldsymbol{J}_{h}\|_{0}$ & 1.2035e-13 & 2.3500e-13 & 3.4596e-13 & 4.1909e-13 & 5.4691e-13$\quad$\\
& $\|\phi-\phi_h\|_{0}$ & 4.2123e-01(-) & 1.1670e-01(1.97) & 5.2978e-02(1.99) & 3.0027e-02(1.99) & 1.9285e-02(2.00) $\quad$\\
\hline
\multicolumn{1}{l}{\multirow{5}*{$\quad\{\mathcal{T}_{h}^{c}\}_{N_t^c}$}} &$N_t^c$ & 25 & 100 & 225 & 400 & 625$\quad$\\
& $\|\boldsymbol{u}-\boldsymbol{u}_{h}\|_{1}$ & 2.4746e-15 & 2.6882e-15 & 3.7472e-15 & 4.4290e-15 & 6.0902e-15$\quad$\\
& $\|p-p_h\|_{0}$ & 7.6446e-15 & 6.3170e-15 & 4.3570e-15 & 5.0499e-15 & 4.4499e-15$\quad$\\
& $\|\boldsymbol{J}-\boldsymbol{J}_{h}\|_{0}$ & 1.5589e-13 & 3.0238e-13 & 4.4964e-13 & 6.2002e-13 & 7.4859e-13$\quad$\\
& $\|\phi-\phi_h\|_{0}$ & 3.3705e-01(-) & 8.3988e-02(2.00) & 3.7286e-02(2.00) & 2.0961e-02(2.00) & 1.3411e-02(2.00)$\quad$\\
\hline
 $s_{2} = 10000$ & & $k_1=2$ & &$k_2=1$ & &\\
\hline
\multicolumn{1}{l}{\multirow{5}*{$\quad\{\mathcal{T}_{h}^{a}\}_{N_t^a}$}} &$N_t^a$ & 25 & 100 & 225 & 400 & 625$\quad$\\
& $\|\boldsymbol{u}-\boldsymbol{u}_{h}\|_{1}$ & 1.4234e-13 & 2.0567e-13 & 3.0912e-13 & 3.9268e-13 & 4.6491e-13$\quad$\\
& $\|p-p_h\|_{0}$ & 6.8886e-13 & 4.3126e-13 & 4.1756e-13 & 3.6822e-13 & 3.8735e-13$\quad$\\
& $\|\boldsymbol{J}-\boldsymbol{J}_{h}\|_{0}$ & 1.1725e-11 & 2.3065e-11 & 3.5231e-11 & 5.0388e-11 & 7.0641e-11$\quad$\\
& $\|\phi-\phi_h\|_{0}$ & 3.7525e+01(-) & 1.0049e+01(1.85) & 4.1406e+00(2.05) & 2.3643e+00(1.91) & 1.4899e+00(2.09)$\quad$\\
\hline
\multicolumn{1}{l}{\multirow{5}*{$\quad\{\mathcal{T}_{h}^{b}\}_{N_t^b}$}} &$N_t^b$ & 25 & 100 & 225 & 400 & 625$\quad$\\
& $\|\boldsymbol{u}-\boldsymbol{u}_{h}\|_{1}$ & 8.6213e-14 & 1.2054e-13 & 1.3848e-13 & 1.4475e-13 & 1.8435e-13$\quad$\\
& $\|p-p_h\|_{0}$ & 6.0537e-13 & 4.5795e-13 & 3.9193e-13 & 3.3546e-13 & 4.1438e-13$\quad$\\
& $\|\boldsymbol{J}-\boldsymbol{J}_{h}\|_{0}$ & 1.1377e-11 & 2.2713e-11 & 3.5203e-11 & 4.0336e-11 & 5.7377e-11$\quad$\\
& $\|\phi-\phi_h\|_{0}$ & 4.2123e+01(-) & 1.1670e+01(1.97) & 5.2978e+00(1.99) & 3.0027e+00(1.99) & 1.9285e+00(2.00)$\quad$\\
\hline
\multicolumn{1}{l}{\multirow{5}*{$\quad\{\mathcal{T}_{h}^{c}\}_{N_t^c}$}} &$N_t^c$ & 25 & 100 & 225 & 400 & 625$\quad$\\
& $\|\boldsymbol{u}-\boldsymbol{u}_{h}\|_{1}$ & 2.5246e-13 & 2.5583e-13 & 3.6619e-13 & 4.9866e-13 & 5.6965e-13$\quad$\\
& $\|p-p_h\|_{0}$ & 8.8202e-13 & 6.4251e-13 & 5.6004e-13 & 5.3525e-13 & 5.3539e-13$\quad$\\
& $\|\boldsymbol{J}-\boldsymbol{J}_{h}\|_{0}$ & 1.5581e-11 & 2.9220e-11 & 4.3027e-11 & 6.1705e-11 & 7.1356e-11$\quad$\\
& $\|\phi-\phi_h\|_{0}$ & 3.3705e+01(-) & 8.3988e+00(2.01) & 3.7286e+00(2.00) & 2.0961e+00(2.00) & 1.3411e+00(2.00)$\quad$\\
\hline
\end{tabular*}}
\end{table}

On the other hand, we set $s_{2}=0$ and vary $s_{1}$ from 1, 100, 10000.
According to the errors of the numerical solutions given in Tables 5-7, we observe that the errors in velocity, pressure, and current density are independent of the mesh at a fixed $s_1$. As varying $s_1$ from 1, 100, 10000, due to an increase in the right-hand side function $\boldsymbol{f}$ such that the errors of numerical solutions will have a little bigger. In addition,
the errors for electric potential converge optimally. The results show that our
scheme is robust with respect to the electric potential.
\begin{table}\footnotesize
\setstretch{1}
\setlength{\belowcaptionskip}{5pt}
\centering
\setlength{\tabcolsep}{0.1mm}{
\caption{Robustness of the pressure with $s_{1} = 1,\;100,\;10000$ under three types of meshes. \label{tab:robust-potential-s_1-1}}
\begin{tabular*}{16cm}{@{\extracolsep{\fill}} c c l l l l l }
\hline
 $s_{1} = 1$ & & $k_1=2$ & &$k_2=1$ & &\\
\hline
\multicolumn{1}{l}{\multirow{5}*{$\quad\{\mathcal{T}_{h}^{a}\}_{N_t^a}$}} &$N_t^a$ & 25 & 100 & 225 & 400 & 625$\quad$\\
& $\|\boldsymbol{u}-\boldsymbol{u}_{h}\|_{1}$ & 1.1586e-16 & 8.9756e-17 & 8.6007e-17 & 8.8406e-17 & 8.3868e-17$\quad$\\
& $\|p-p_h\|_{0}$ & 1.3033e-02(-) & 3.5604e-03(1.82) & 1.4612e-03(2.06) & 8.6327e-04(1.79) & 5.5409e-04(2.01)$\quad$\\
& $\|\boldsymbol{J}-\boldsymbol{J}_{h}\|_{0}$ & 2.2776e-18 & 1.0237e-18 & 8.4768e-19 & 6.8322e-19 & 5.9517e-19$\quad$\\
& $\|\phi-\phi_h\|_{0}$ & 7.0098e-19 & 6.3796e-19 & 3.2599e-19 & 2.8614e-19 & 3.1854e-19$\quad$\\
\hline
\multicolumn{1}{l}{\multirow{5}*{$\quad\{\mathcal{T}_{h}^{b}\}_{N_t^b}$}} &$N_t^b$ & 25 & 100 & 225 & 400 & 625$\quad$\\
& $\|\boldsymbol{u}-\boldsymbol{u}_{h}\|_{1}$ & 7.9077e-17 & 8.1391e-17 & 9.2824e-17 & 8.1362e-17 & 8.0287e-17$\quad$\\
& $\|p-p_h\|_{0}$ & 1.5147e-02(-) & 4.2029e-03(1.97) & 1.9084e-03(1.99) & 1.0818e-03(1.99) & 6.9481e-04(2.00)$\quad$\\
& $\|\boldsymbol{J}-\boldsymbol{J}_{h}\|_{0}$ & 9.9887e-19 & 8.6839e-19 & 8.7459e-19 & 6.3256e-19 & 5.3379e-19$\quad$\\
& $\|\phi-\phi_h\|_{0}$ & 6.0627e-19 & 3.4229e-19 & 2.7354e-19 & 2.3673e-19 & 2.1563e-19$\quad$\\
\hline
\multicolumn{1}{l}{\multirow{5}*{$\quad\{\mathcal{T}_{h}^{c}\}_{N_t^c}$}} &$N_t^c$ & 25 & 100 & 225 & 400 & 625$\quad$\\
& $\|\boldsymbol{u}-\boldsymbol{u}_{h}\|_{1}$ & 1.2267e-16 & 1.0013e-16 & 9.3688e-17 & 9.9873e-17 & 7.9725e-17$\quad$\\
& $\|p-p_h\|_{0}$ & 1.1462e-02(-) & 2.9339e-03(1.97) & 1.3157e-03(1.98) & 7.4352e-04(1.98) & 4.7721e-04(1.99) $\quad$\\
& $\|\boldsymbol{J}-\boldsymbol{J}_{h}\|_{0}$ & 4.1419e-18 & 1.8081e-18 & 1.2560e-18 & 1.0301e-18 & 6.9548e-19$\quad$\\
& $\|\phi-\phi_h\|_{0}$ & 7.7735e-19 & 3.4985e-19 & 3.5279e-19 & 2.8518e-19 & 1.7725e-19$\quad$\\
\hline
 $s_{1} = 100$ & & $k_1=2$ & &$k_2=1$ & &\\
\hline
\multicolumn{1}{l}{\multirow{5}*{$\quad\{\mathcal{T}_{h}^{a}\}_{N_t^a}$}} &$N_t^a$ & 25 & 100 & 225 & 400 & 625$\quad$\\
& $\|\boldsymbol{u}-\boldsymbol{u}_{h}\|_{1}$ & 9.5984e-15 & 7.7415e-15 & 9.0305e-15 & 8.7799e-15 & 8.1488e-15$\quad$\\
& $\|p-p_h\|_{0}$ & 1.3033e+00(-) & 3.5604e-01(1.82) & 1.4612e-01(2.06) & 8.6327e-02(1.79) & 5.5409e-02(2.01)$\quad$\\
& $\|\boldsymbol{J}-\boldsymbol{J}_{h}\|_{0}$ & 1.8599e-16 & 9.5700e-17 & 8.8457e-17 & 6.8496e-17 & 5.9400e-17$\quad$\\
& $\|\phi-\phi_h\|_{0}$ & 4.2950e-17 & 2.4435e-17 & 4.1856e-17 & 3.2372e-17 & 1.7283e-17$\quad$\\
\hline
\multicolumn{1}{l}{\multirow{5}*{$\quad\{\mathcal{T}_{h}^{b}\}_{N_t^b}$}} &$N_t^b$ & 25 & 100 & 225 & 400 & 625$\quad$\\
& $\|\boldsymbol{u}-\boldsymbol{u}_{h}\|_{1}$ & 7.8585e-15 & 8.9846e-15 & 8.9851e-15 & 8.3134e-15 & 7.8817e-15$\quad$\\
& $\|p-p_h\|_{0}$ & 1.5147e+00(-) & 4.2029e-01(1.97) & 1.9084e-01(1.99) & 1.0818e-01(1.99) & 6.9481e-02(2.00)$\quad$\\
& $\|\boldsymbol{J}-\boldsymbol{J}_{h}\|_{0}$ & 9.6668e-17 & 9.9319e-17 & 8.6918e-17 & 6.1416e-17 & 5.2059e-17$\quad$\\
& $\|\phi-\phi_h\|_{0}$ & 2.9379e-17 & 2.1606e-17 & 2.1082e-17 & 2.8409e-17 & 1.6956e-17$\quad$\\
\hline
\multicolumn{1}{l}{\multirow{5}*{$\quad\{\mathcal{T}_{h}^{c}\}_{N_t^c}$}} &$N_t^c$ & 25 & 100 & 225 & 400 & 625$\quad$\\
& $\|\boldsymbol{u}-\boldsymbol{u}_{h}\|_{1}$ & 1.1457e-14 & 1.0516e-14 & 9.3583e-15 & 9.6834e-15 & 7.8838e-15$\quad$\\
& $\|p-p_h\|_{0}$ & 1.1462e+00(-) & 2.9339e-01(1.97) & 1.3157e-01(1.98) & 7.4352e-02(1.98) & 4.7721e-02(1.99)$\quad$\\
& $\|\boldsymbol{J}-\boldsymbol{J}_{h}\|_{0}$ & 3.6189e-16 & 1.9780e-16 & 1.2236e-16 & 1.0353e-16 & 6.8903e-17$\quad$\\
& $\|\phi-\phi_h\|_{0}$ & 5.3327e-17 & 4.0973e-17 & 2.1874e-17 & 1.6691e-17 & 2.6032e-17$\quad$\\
\hline
 $s_{1} = 10000$ & & $k_1=2$ & &$k_2=1$ & &\\
\hline
\multicolumn{1}{l}{\multirow{5}*{$\quad\{\mathcal{T}_{h}^{a}\}_{N_t^a}$}} &$N_t^a$ & 25 & 100 & 225 & 400 & 625$\quad$\\
& $\|\boldsymbol{u}-\boldsymbol{u}_{h}\|_{1}$ & 1.0344e-12 & 8.6090e-13 & 8.4833e-13 & 8.2517e-13 & 8.0170e-13$\quad$\\
& $\|p-p_h\|_{0}$ & 1.3033e+02(-) & 3.5604e+01(1.82) & 1.4612e+01(2.06) & 8.6327e+00(1.80) & 5.5409e+00(2.01)$\quad$\\
& $\|\boldsymbol{J}-\boldsymbol{J}_{h}\|_{0}$ & 2.2218e-14 & 1.0016e-14 & 8.4395e-15 & 6.7571e-15 & 5.8470e-15$\quad$\\
& $\|\phi-\phi_h\|_{0}$ & 3.1215e-15 & 2.2643e-15 & 2.2568e-15 & 2.3685e-15 & 1.0430e-15$\quad$\\
\hline
\multicolumn{1}{l}{\multirow{5}*{$\quad\{\mathcal{T}_{h}^{b}\}_{N_t^b}$}} &$N_t^b$ & 25 & 100 & 225 & 400 & 625$\quad$\\
& $\|\boldsymbol{u}-\boldsymbol{u}_{h}\|_{1}$ & 1.0834e-12 & 9.0888e-13 & 9.3482e-13  & 8.2879e-13 & 7.8379e-13$\quad$\\
& $\|p-p_h\|_{0}$ & 1.5147e+02(-) & 4.2029e+01(1.97) & 1.9084e+01(1.99) & 1.0818e+01(1.99) & 6.9481e+00(2.00)$\quad$\\
& $\|\boldsymbol{J}-\boldsymbol{J}_{h}\|_{0}$ & 1.4619e-14 & 9.9619e-15 & 8.3761e-15 & 6.1658e-15 & 5.3620e-15$\quad$\\
& $\|\phi-\phi_h\|_{0}$ & 1.0911e-14 & 3.4001e-15 & 3.8574e-15 & 3.0887e-15 & 1.4449e-15$\quad$\\
\hline
\multicolumn{1}{l}{\multirow{5}*{$\quad\{\mathcal{T}_{h}^{c}\}_{N_t^c}$}} &$N_t^c$ & 25 & 100 & 225 & 400 & 625$\quad$\\
& $\|\boldsymbol{u}-\boldsymbol{u}_{h}\|_{1}$ & 1.2496e-12 & 1.0182e-12 & 9.2227e-13 & 9.8056e-13 & 7.6705e-13$\quad$\\
& $\|p-p_h\|_{0}$ & 1.1462e+02(-) & 2.9339e+01(1.97) & 1.3157e+01(1.98) & 7.4352e+00(1.98) & 4.7721e+00(1.99)$\quad$\\
& $\|\boldsymbol{J}-\boldsymbol{J}_{h}\|_{0}$ & 3.6673e-14 & 1.9954e-14 & 1.2263e-14 & 1.0301e-14 & 6.8154e-15$\quad$\\
& $\|\phi-\phi_h\|_{0}$ & 8.2712e-15 & 4.4074e-15 & 2.5894e-15 & 1.5982e-15 & 1.7892e-15$\quad$\\
\hline
\end{tabular*}}
\end{table}

Overall, the robustness of the pressure and the electric potential are satisfied under our scheme and the results are consistent with the predictions in Theorem \ref{thm:error}.

\subsection{Inductionless MHD problem with singular solution\label{subsec:Singular test}}
In this subsection, the inductionless MHD problem with singular solution is considered in a non-convex L-shaped domain $\mathrm{\Omega}=[-0.5,0.5]\times[-0.5,0.5]\setminus[0,0.5]\times(-0.5,0]$ to
verify the ability of the proposed method to capture the singularities. Let $\boldsymbol{B}=(0,0,1)$, $Re = Sc = 1$, and the functions $\boldsymbol{f}$ and $\boldsymbol{g}$ such that
exact solution is
\begin{align*}
&u_1(r,\vartheta) = r^{\varepsilon}\big((1+\varepsilon)\sin(\vartheta)\Theta(\vartheta)+
\cos(\vartheta)\Theta^{'}(\vartheta)\big),\quad u_2 = r^{\varepsilon}\big(-(1+\varepsilon)\cos(\vartheta)\Theta(\vartheta)+
\sin(\vartheta)\Theta^{'}(\vartheta)\big),\\
&p(r,\vartheta) = -r^{\varepsilon-1}\big((1+\varepsilon)^{2}\Theta^{'}(\vartheta)+\Theta^{'''}(\vartheta)\big)/(1-\varepsilon),\quad
\phi(r,\vartheta) = 0,\\
&J_1(r,\vartheta) = -\big(2\sin(\vartheta/3)\big)/(3r^{1/3}),\quad J_2(r,\vartheta) = \big(2\cos(\vartheta/3)\big)/(3r^{1/3}),
\end{align*}
where $(r,\vartheta)$ denotes the polar coordinate, $\mu = \frac{3\pi}{2}$, the value of the parameter $\varepsilon$ is the smallest positive solution of $\sin(\varepsilon\mu)+\varepsilon\sin(\mu)=0$ and
\begin{equation}
\nonumber
\Theta(\vartheta) = \sin\big((1+\varepsilon)\vartheta\big)\cos(\varepsilon\mu)/(1+\varepsilon)-
\cos\big((1+\varepsilon)-\sin\big((1-\varepsilon)\vartheta\big)\cos(\varepsilon\mu)/(1-\varepsilon)
+\cos\big((1-\varepsilon).
\end{equation}
Significantly, we get $(\boldsymbol{u},p)\in\boldsymbol{H}^{1+\varepsilon}\times H^{\varepsilon}$ and $\boldsymbol{J}\in \boldsymbol{H}^{2/3}$. the conductive boundary condition is considered in this example, it means that the electric potential $\phi=0$ on $\partial\mathrm{\Omega}$. By observing, we can clearly determine that the exact solution have strong singularity at the corner.

 Due to the low regularity of the exact solutions, the pursuit of using higher-order schemes to solve this problem may slightly reduce the error but will not change the convergence law. Therefore, we only provide the calculation results under situation $k_1 = 2$ and $k_2 =1$ here. From the errors and convergence rates of the numerical solutions under the L-domain triangular meshes $\{\mathcal{T}_{h}^{L}\}_{N_{t}^{L}}$ given in Table \ref{tab:L-k2k1}, we can conclude that the convergence rates are consistent with the results of Theorem \ref{thm:error} in theoretical analysis. Moreover, the discrete velocity and current density also satisfy the conditions of divergence-fee. Finally, we display the numerical solution on the grid $\{\mathcal{T}_{h}^{L}\}_{384}$ in figure \ref{fig.7}, which clearly shows that the proposed method can effectively capture singularities.

\begin{table}\footnotesize
\setstretch{1}
\setlength{\belowcaptionskip}{5pt}
\centering
\setlength{\tabcolsep}{0.1mm}{
\caption{The errors and convergence rates of numerical solutions under triangular meshes---Revise-data $\{\mathcal{T}_{h}^{L}\}_{N_{t}^{L}}$.\label{tab:L-k2k1}}
\begin{tabular*}{16cm}{@{\extracolsep{\fill}} r l l l l l l }
\hline
\quad$N_t^L$ & $\|\boldsymbol{u}-\boldsymbol{u}_{h}\|_{1}$ & $\|p-p_h\|_{0}$   & $\|\boldsymbol{J}-\boldsymbol{J}_{h}\|_{0}$ & $\|\phi-\phi_h\|_{0}$ & $\|\nabla\cdot\boldsymbol{u}_{h}\|_{0}$ & $\|\nabla\cdot\boldsymbol{J}_{h}\|_{0}\quad$\\
96   &  5.1869e-01(-)    & 6.7796e-01(-)    & 4.3097e-02(-)    & 9.3642e-04(-) & 3.2502e-11 & 8.9327e-15 \quad\\
384  &  3.5333e-01(0.554) & 4.3717e-01(0.633) & 2.7374e-02(0.655) & 3.8768e-04(1.272) & 1.1154e-11 & 1.7795e-14 \quad\\
1536 &  2.4159e-01(0.548) & 2.9173e-01(0.584) & 1.7308e-02(0.661) & 1.7780e-04(1.124) & 3.8246e-12 & 3.5711e-14 \quad\\
1944 &  2.2651e-01(0.547) & 2.7281e-01(0.569) & 1.6007e-02(0.663) & 1.5661e-04(1.077) & 3.1884e-12 & 4.0106e-14 \quad\\
2345 &  1.9356e-01(0.546) & 2.3191e-01(0.565) & 1.3223e-02(0.664) & 1.1532e-04(1.063) & 2.0447e-12 & 5.3722e-14 \quad\\
\hline
\end{tabular*}}
\end{table}
\begin{figure}
\centering
\subfloat[$\{\mathcal{T}_{h}^{L}\}_{384}$]{
\label{fig.7a}
\includegraphics[width=6.65cm]{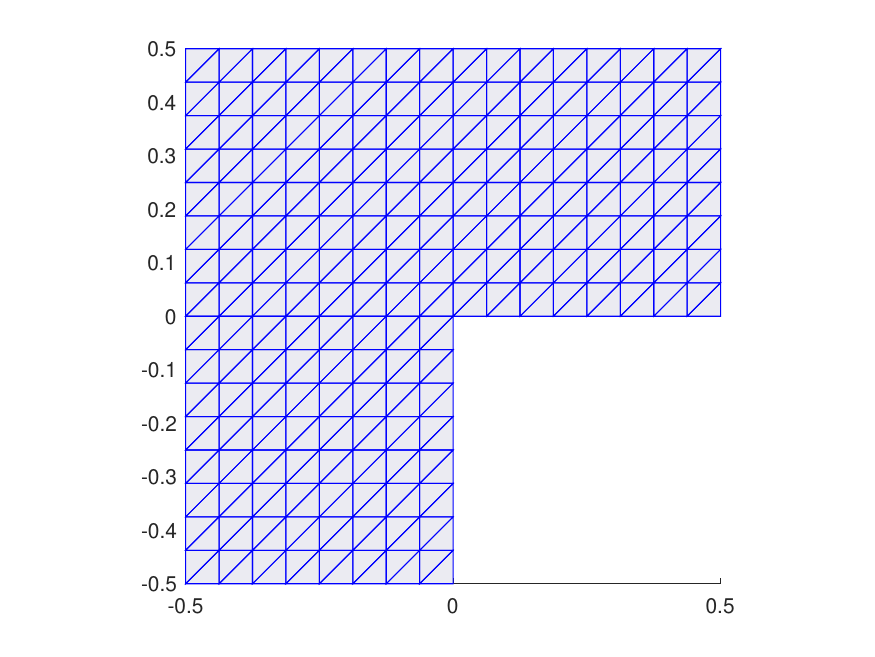}
}
\hspace{-1.41cm}
\subfloat[Numerical approximation of $\boldsymbol{u}$]{
\label{fig.7b}
\includegraphics[width=5.35cm]{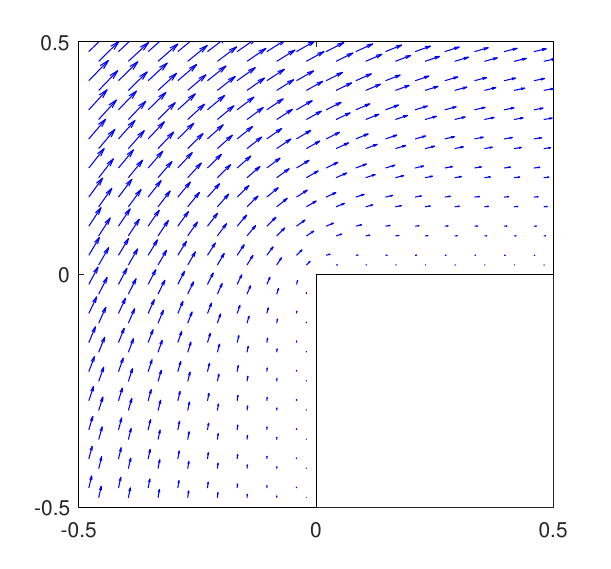}}
\hspace{-0.61cm}
\subfloat[Numerical approximation of $\boldsymbol{J}$]{
\label{fig.7c}
\includegraphics[width=5.35cm]{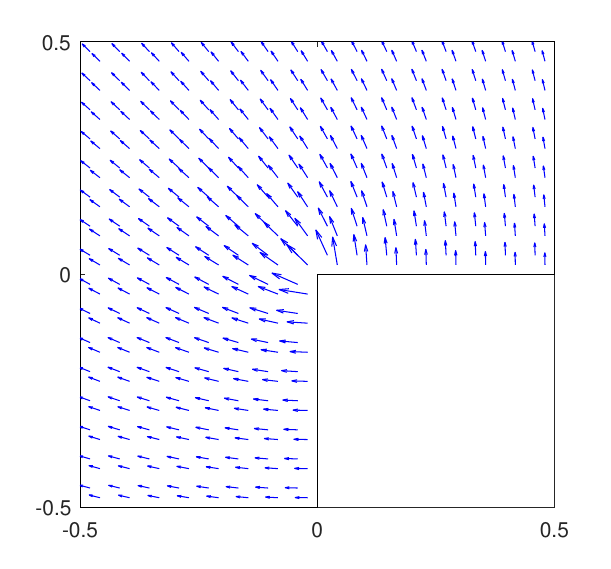}}
\caption{A set of fine polygonal meshes ($N_{t}^{i} = 625$) used to the first numerical experiment.}
\label{fig.7}
\end{figure}

\subsection{The inductionless MHD flow around a circular cylinder}
In this subsection, we consider a type of 2D benchmark problem, which is the flow around a circular cylinder \cite{liu2022pressure}.  The main feature of this model is that with the change of Reynolds number from low to high, the flow of the fluid will gradually evolve into a flow that loses symmetry, rather than maintaining non-separation from the cylinder, without vortices, and the upstream and downstream flow lines of the cylinder are symmetrical. The physical structure, boundary conditions and mesh used for this numerical simulation see Fig.\ref{fig.8}.
\begin{figure}
\centering
\subfloat[Physical structure]{
\label{fig.8a}
\includegraphics[width=7.6cm]{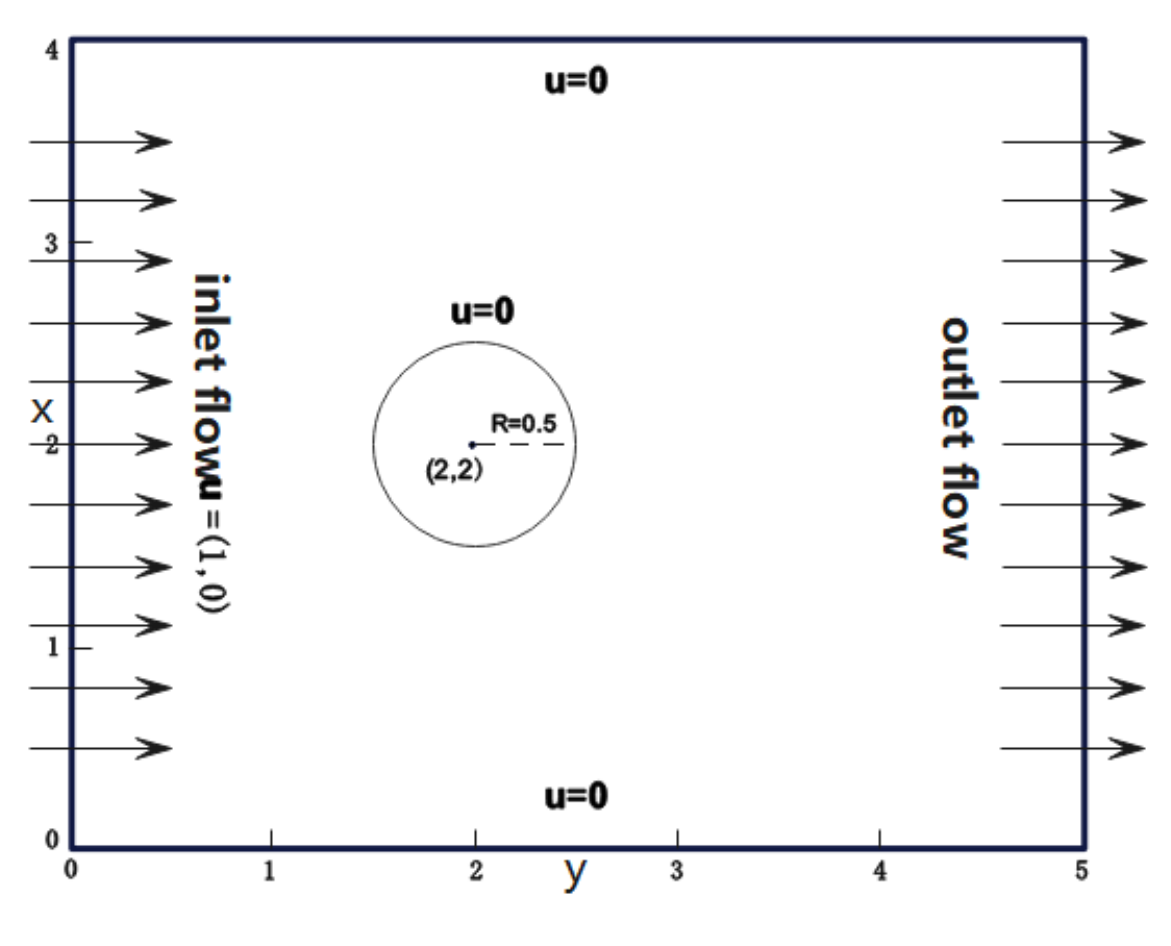}
}
\subfloat[$\{\mathcal{T}_{h}^{c}\}_{3000}$]{
\label{fig.8b}
\includegraphics[width=8.6cm,height=6.4cm]{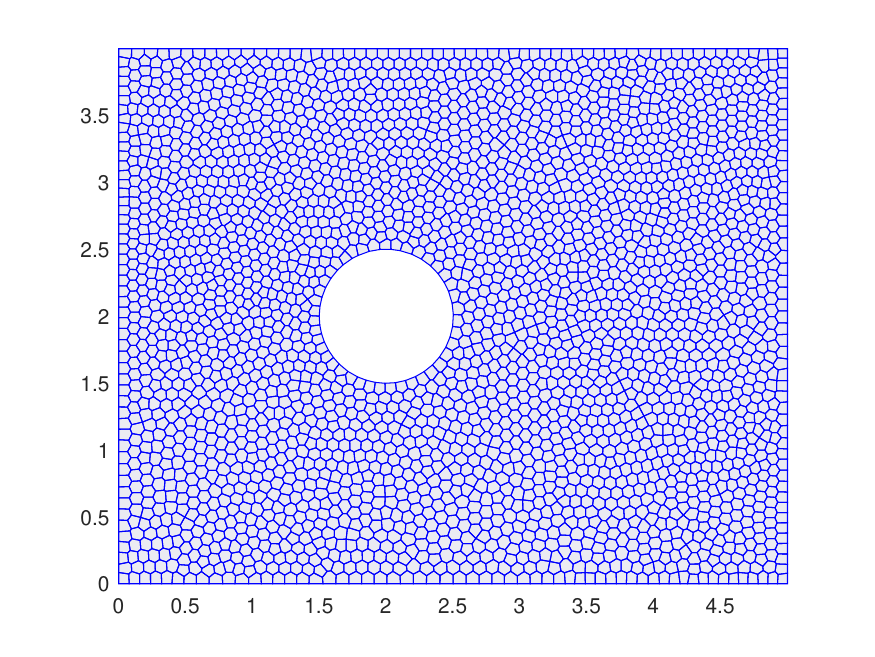}}
\caption{Physical structure and mesh used for the numerical simulation.}
\label{fig.8}
\end{figure}

Obviously, we can clearly observing that the streamlines of cylinder are symmetric about both $x=2$ and $y=2$ at $\nu = 1$ and the fluid flowing near the top of the cylinder begins to separate and forms a fixed pair of symmetrical eddies downstream of the cylinder at $\nu = 0.01$.  This is consistent with the theoretical analysis of physical phenomena.

\begin{figure}
\centering
\subfloat[Counter]{
\label{fig.9a}
\includegraphics[width=7.6cm]{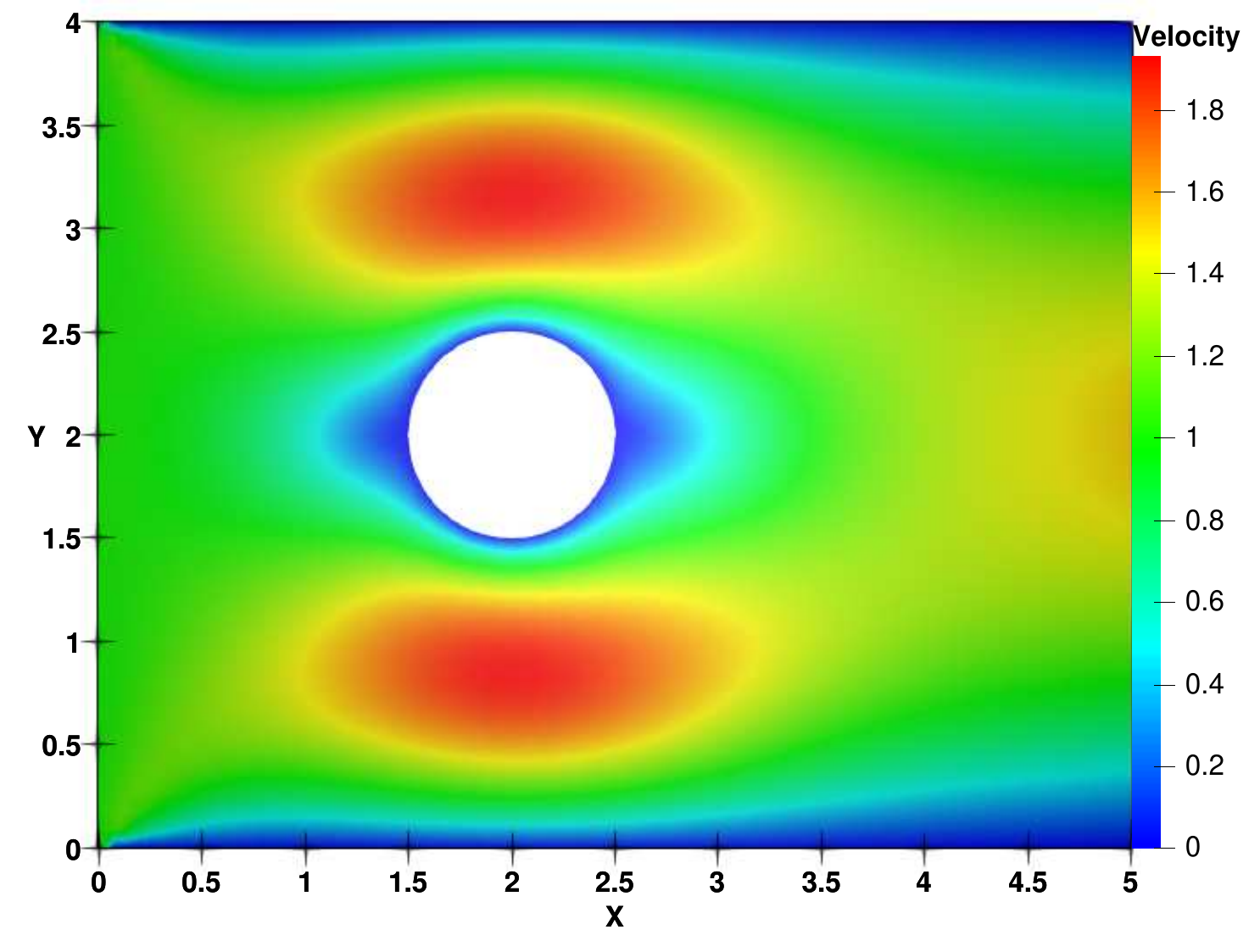}
}
\hspace{0.2cm}
\subfloat[Streamlines]{
\label{fig.9b}
\includegraphics[width=7.6cm]{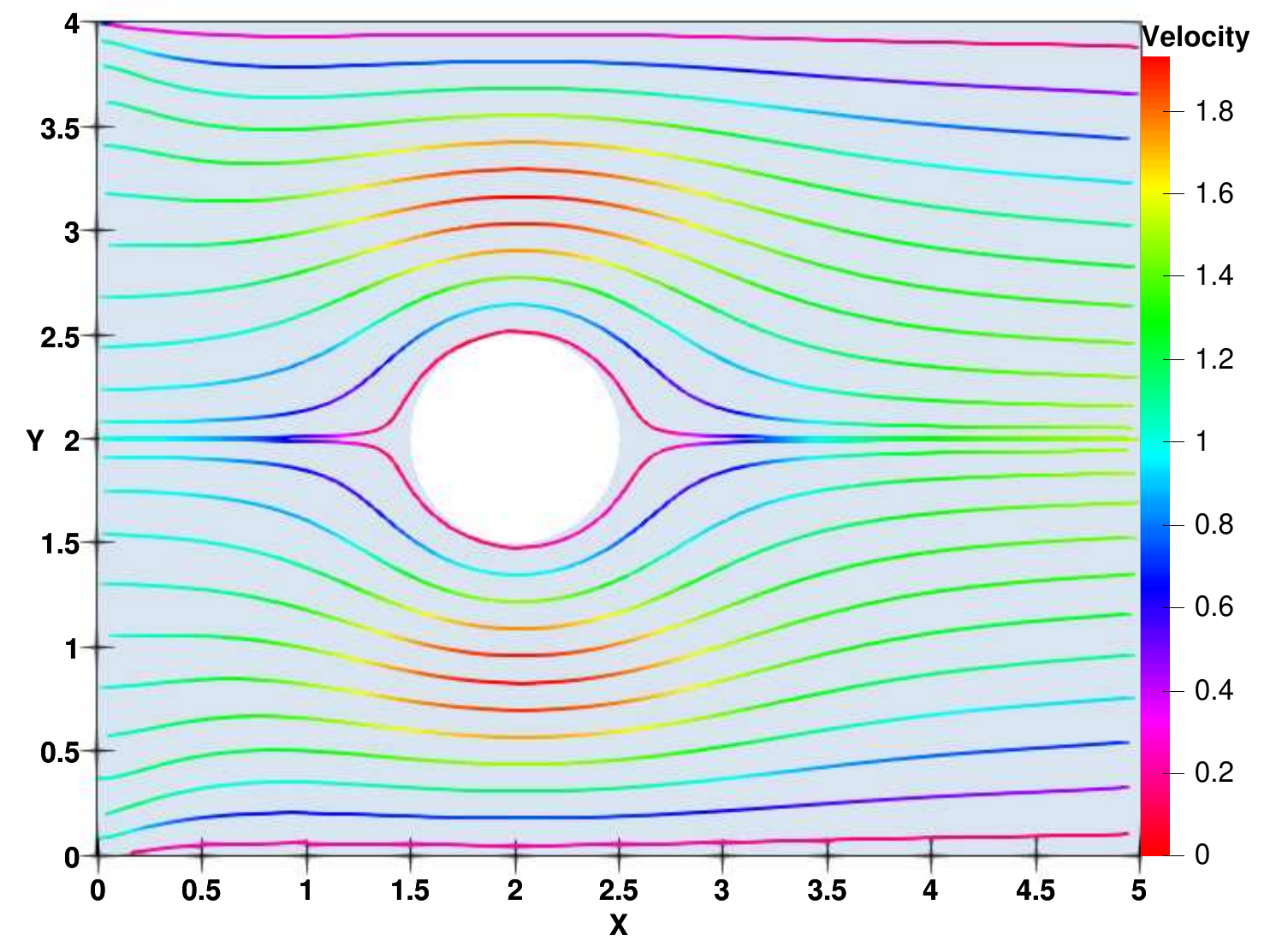}
}
\caption{Counter and streamlines of discrete velocity $\boldsymbol{u}_{h}$ at $\nu = 1$.}
\label{fig.9}
\end{figure}
\begin{figure}
\centering
\subfloat[Counter]{
\label{fig.10a}
\includegraphics[width=7.6cm]{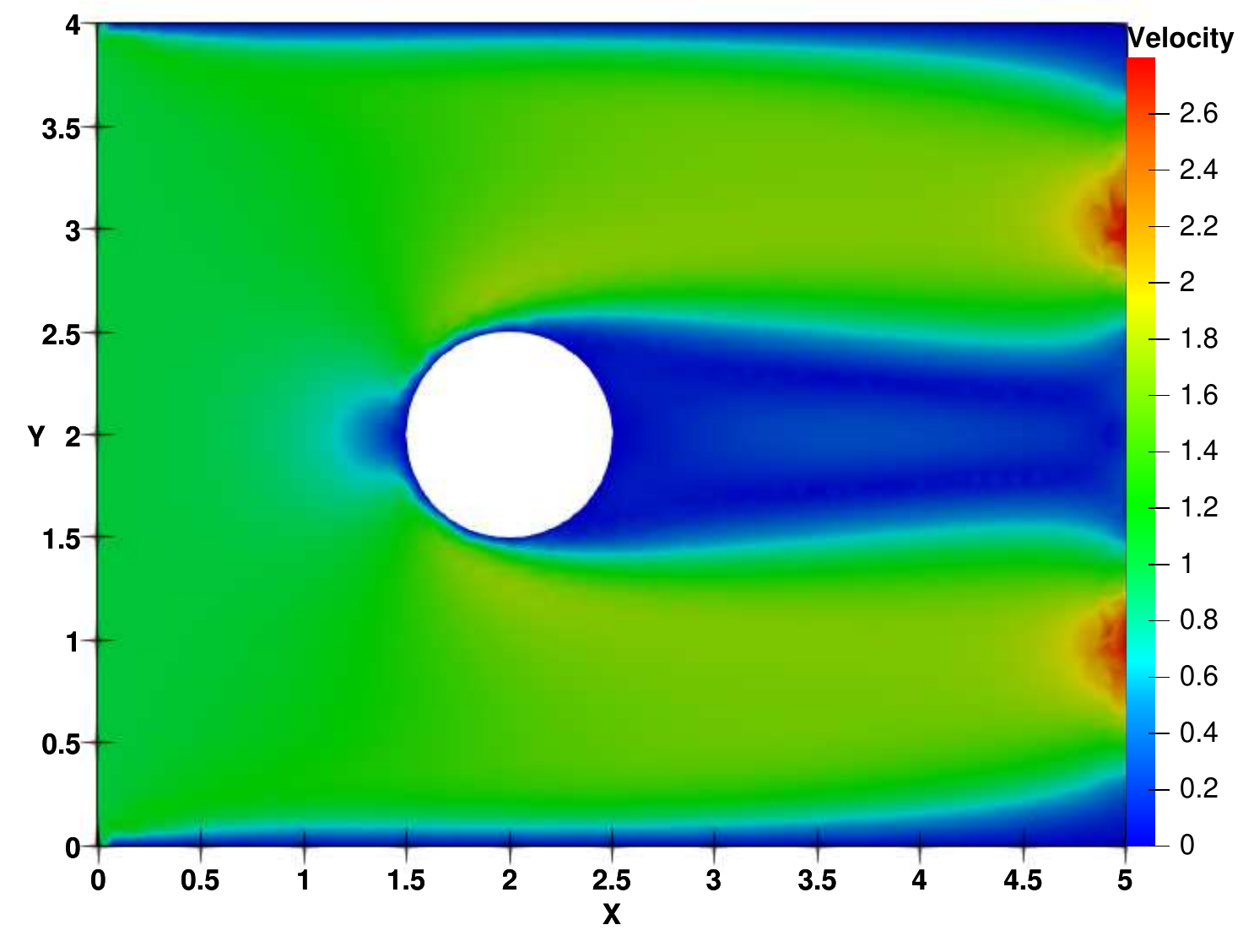}
}
\hspace{0.25cm}
\subfloat[Streamlines]{
\label{fig.10b}
\includegraphics[width=7.6cm]{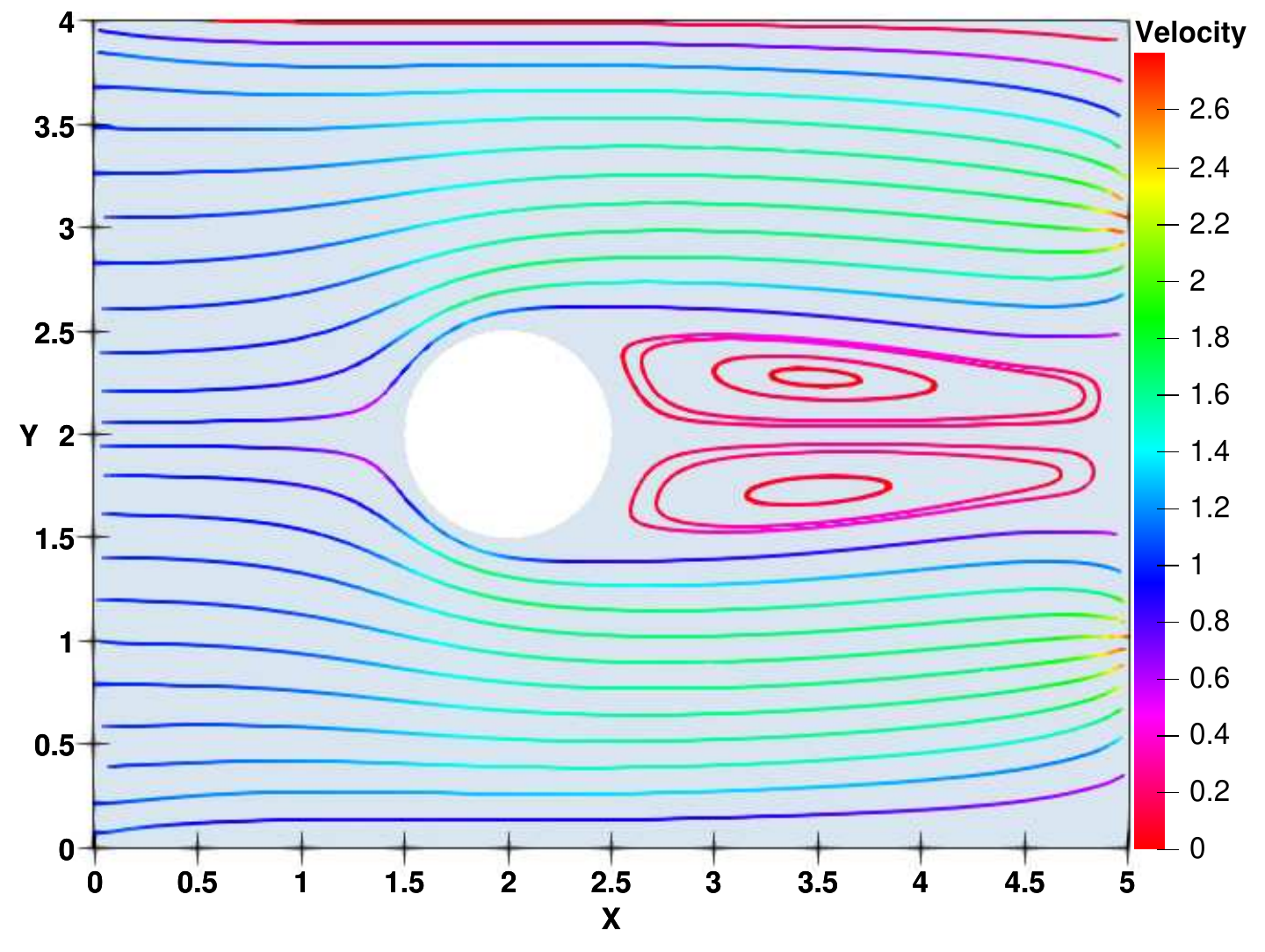}
}
\caption{Counter and streamlines of discrete velocity $\boldsymbol{u}_{h}$ at $\nu = 0.01$.}
\label{fig.11}
\end{figure}

\subsection{The backstage step inductionless MHD flow}
\section{Conclusion\label{sec:conclu}}
In this paper, we present a full divergence-free of high order virtual finite element method to approximation of stationary inductionless magnetohydrodynamic
equations on polygonal meshes and process rigorous error analysis to show that the proposed method is stable and convergent. In the following work, we will propose the lowest order 2D/3D virtual finite element method to solve the steady inductionless MHD equations. Of course, this format still satisfies full divergence-free.


\bibliographystyle{plain}
\bibliography{ref}

\end{document}